\numberwithin{equation}{subsection}
\theoremstyle{plain}
\newtheorem{mainthm}[equation]{Main Theorem}
\newtheorem{thm}[equation]{Theorem}
\newtheorem{prop}[equation]{Proposition}
\newtheorem{lem}[equation]{Lemma} 
\newtheorem{cor}[equation]{Corollary}
\newtheorem{conj}[equation]{Conjecture}
\newtheorem*{cor*}{Corollary}
\newtheorem*{prob*}{Problem}
\newtheorem*{thm*}{Theorem}
\newtheorem*{thma*}{Theorem A}
\newtheorem*{thmb*}{Theorem B}
\theoremstyle{remark}
\newtheorem{exm}[equation]{Example}
\newtheorem{defn}[equation]{Definition}
\newtheorem{rmk}[equation]{Remark}
\newenvironment{enumalph}
{\begin{enumerate}}
{\end{enumerate}}
\DeclareMathOperator{\Frob}{Frob}
\DeclareMathOperator{\Gal}{Gal}
\DeclareMathOperator{\GL}{GL}
\DeclareMathOperator{\Nm}{Nm}
\DeclareMathOperator{\Tr}{Tr}
\DeclareMathOperator{\Res}{Res}
\newcommand{\dD}{\mathrm{d}}
\newcommand{\Fsf}{\mathsf F}
\newcommand{\Lsf}{\mathsf L}
\newcommand{\defi}[1]{\emph{\textsf{#1}}} 				% for defined terms
\newcommand{\tyS}{S}
\newcommand{\C}{\mathbb C}
\newcommand{\F}{\mathbb F}
\newcommand{\PP}{\mathbb P}
\newcommand{\Q}{\mathbb Q}
\newcommand{\Z}{\mathbb Z}
\newcommand{\eps}{\epsilon}
\newcounter{DworkClusters}
\newcounter{L2L2Clusters}
\newcommand{\qstroke}{%
  \text{\ooalign{\hidewidth\raisebox{-0.1ex}{-\kern-.25em}\hidewidth\cr$q$\cr}\hspace{0.2ex}}%
}
\newcommand{\qq}{q^\times}
\newcommand{\ZZ}{\mathbb Z}
\newcommand{\calF}{\mathcal{F}}
\newcommand{\ii}{\sqrt{-1}}
\newcommand{\Fil}{\mathcal{F}}
\newcommand{\psmod}[1]{~(\textup{\text{mod}}~{#1})}
\newcommand{\fracpadding}{}
\newcommand{\setfracpadding}[1][2pt]{%
  \sbox0{$\frac{1}{2}$}%
  \dimen0=\ht0 \advance\dimen0 #1\relax
  \dimen2=\dp0 \advance\dimen2 #1\relax
  \edef\fracpadding{\vrule width 0pt height \the\dimen0 depth \the\dimen2\relax}%
}
\newcommand{\afrac}[2]{\fracpadding\frac{#1}{#2}}
\newcommand{\frakp}{\mathfrak{p}}
\newcommand{\frakn}{\mathfrak{n}}
\newcommand{\legen}[2]{\biggl(\displaystyle{\frac{#1}{#2}}\biggr)}
\newcommand{\qm}{\mkern-4mu}
\begin{document} 

\title[Hypergeometric K3 Quartic Pencils]{Hypergeometric decomposition of \\ symmetric K3 quartic pencils}

\author{Charles F. Doran}
\address{University of Alberta, Department of Mathematics, Edmonton, AB Canada}
\email{doran@math.ualberta.edu}

\author{Tyler L. Kelly}
\address{School of Mathematics, University of Birmingham, Edgbaston, Birmingham, UK, B15 2TT}
\email{t.kelly.1@bham.ac.uk}

\author{Adriana Salerno}
\address{Department of Mathematics, Bates College, 3 Andrews Rd., Lewiston, ME 04240, USA}
\email{asalerno@bates.edu}

\author{Steven Sperber}
\address{School of Mathematics, University of Minnesota, 206 Church Street SE, Minneapolis, MN 55455 USA}
\email{sperber@umn.edu}

\author{John Voight}
\address{Department of Mathematics, Dartmouth College, 6188 Kemeny Hall, Hanover, NH 03755, USA}
\email{jvoight@gmail.com}

\author{Ursula Whitcher}
\address{Mathematical Reviews, 416 Fourth St, Ann Arbor, MI 48103, USA}
\email{uaw@umich.edu}

\date{\today}
\setcounter{tocdepth}{1}

\begin{abstract}
We study the hypergeometric functions associated to five one-parameter deformations of Delsarte K3 quartic hypersurfaces in projective space. We compute all of their Picard--Fuchs differential equations; we count points using Gauss sums and rewrite this in terms of finite field hypergeometric sums; then we match up each differential equation to a factor of the zeta function, and we write this in terms of global $L$-functions.  This computation gives a complete, explicit description of the motives for these pencils in terms of hypergeometric motives.
\end{abstract}

\maketitle

\tableofcontents

\section{Introduction}

\subsection{Motivation}

There is a rich history of explicit computation of hypergeometric functions associated to certain pencils of algebraic varieties.  Famously, in the 1950s, Igusa \cite{Igusa} studied the Legendre family of elliptic curves and found a spectacular relation between the ${}_2F_1$-hypergeometric Picard--Fuchs differential equation satisfied by the holomorphic period and the trace of Frobenius.
More generally, the link between the study of Picard--Fuchs equations and point counts via hypergeometric functions has intrigued many mathematicians.  Clemens \cite{clemens} referred to this phenomenon as ``Manin's unity of mathematics.''  Dwork studied the now-eponymous Dwork pencil \cite[\S 6j, p.~73]{padic}, and Candelas--de la Ossa--Rodr\'{i}guez-Villegas considered the factorization of the zeta function for the Dwork pencil of Calabi--Yau threefolds in \cite{CORV,CORV2}, linking physical and mathematical approaches.  More recently, given a finite-field hypergeometric function defined over $\Q$, Beukers--Cohen--Mellit \cite{BCM} construct a variety whose trace of Frobenius is equal to the finite field hypergeometric sum up to certain trivial factors. 

\subsection{Our context}

In this paper, we provide a complete factorization of the zeta function and more generally a factorization of the $L$-series for some pencils of Calabi--Yau varieties, namely, families of K3 surfaces.  We study certain Delsarte quartic pencils in $\PP^3$ (also called \emph{invertible pencils}) which arise naturally in the context of mirror symmetry, listed in \eqref{table:5families}.  Associated to each family we have a discrete group of symmetries acting symplectically (i.e., fixing the holomorphic form).
Our main theorem (Theorem \ref{mainthm} below) shows that hypergeometric functions are naturally associated to this collection of Delsarte hypersurface pencils in two ways: as Picard--Fuchs differential equations and as traces of Frobenius yielding point counts over finite fields. 
\begin{equation} \label{table:5families}
\begin{tabular}{c|c|cc}
 Pencil & Equation	&  Symmetries & Bad primes \\
\hline	\hline
\rule{0pt}{2.5ex}   $\Fsf_4$ & $x_0^4+x_1^4 + x_2^4 + x_3^4 - 4\psi x_0x_1x_2x_3$ 	& $\mu_4 \times \mu_4$	& $2$ \\
 $\Fsf_1\Lsf_3$ & $x_0^4 + x_1^3x_2 + x_2^3x_3 + x_3^3x_1 - 4\psi x_0x_1x_2x_3$ & $\mu_7$ & $2,7$ \\
 $\Fsf_2\Lsf_2$ & $x_0^4 + x_1^4 + x_2^3x_3 + x_3^3x_2 - 4\psi x_0x_1x_2x_3$	& $\mu_8$ & $2$ \\
 $\Lsf_2\Lsf_2$ & $x_0^3x_1 + x_1^3x_0 + x_2^3x_3 + x_3^3x_2 - 4\psi x_0x_1x_2x_3$ & $\mu_4 \times \mu_2$ & $2$ \\
 $\Lsf_4$ & $x_0^3x_1 + x_1^3x_2 + x_2^3x_3 + x_3^3 x_0 - 4\psi x_0x_1x_2x_3$ & $\mu_5$ & $2,5$
\end{tabular}
\end{equation}
Here we write $\mu_n$ for the group of $n$th roots of unity. The labels $\Fsf$ and $\Lsf$ stand for ``Fermat" and ``loop", respectively, as in \cite{commonfactor}. 

In previous work \cite{commonfactor}, we showed that these five pencils share a common factor in their zeta functions, a polynomial of degree 3 associated to the hypergeometric Picard--Fuchs differential equation satisfied by the holomorphic form---see also recent work of Kloosterman \cite{Klo17}. Also of note is that the pencils are also related in that one can take a finite group quotient of each family and find that they are then birational to one another \cite{BvGK}.  However, these pencils (and their zeta functions) are \emph{not} the same!  
In this article, we investigate the remaining factors explicitly (again recovering the common factor). In fact, we show that each pencil is associated with a distinct and beautiful collection of auxiliary hypergeometric functions. 

\subsection{Notation}

We use the symbol $\diamond \in \calF=\{\Fsf_4,\Fsf_2\Lsf_2,\Fsf_1\Lsf_3,\Lsf_2\Lsf_2,\Lsf_4\}$ to signify one of the five \textup{K3}\/ pencils in \textup{\eqref{table:5families}}.  Let $\psi \in \Q \smallsetminus \{0,1\}$.  Let $S=S(\diamond,\psi)$ be the set of bad primes in \textup{\eqref{table:5families}} together with the primes dividing the numerator or denominator of either $\psi^4$ or $\psi^4-1$.  Then for $p \not \in S$, the K3 surface $X_{\diamond,\psi}$ has good reduction at $p$, and for $q=p^r$ we let
\begin{equation} 
P_{\diamond,\psi,q}(T) \colonequals \det(1 - \Frob_p^r T \,|\, H_{\textup{\'et},\textup{prim}}^2(X_{\diamond,\psi},\Q_\ell)) \in 1+T\Z[T] 
\end{equation}
be the characteristic polynomial of the $q$-power Frobenius acting on primitive second degree \'etale cohomology for $\ell \neq p$, which is independent of $\ell$. (Recall that the primitive cohomology of a hypersurface in $\PP^n$ is orthogonal to the hyperplane class.) Accordingly, the zeta function of $X_{\diamond,\psi}$ over $\F_q$ is 
\begin{equation} 
Z_q(X_{\diamond,\psi},T) = \frac{1}{(1-T)(1-qT)P_{\diamond,\psi,q}(T)(1-q^2 T)}.\end{equation}
The Hodge numbers of $X_{\diamond,\psi}$ imply that the polynomial $P_{\diamond,\psi,q}(T)$ has degree 21.  Packaging these together, we define the (incomplete) $L$-series
\begin{equation} 
L_S(X_{\diamond,\psi},s) \colonequals \prod_{p \not \in S} P_{\diamond,\psi,p}(p^{-s})^{-1}
\end{equation}
convergent for $s \in \C$ in a right half-plane.  

Our main theorem explicitly identifies the Dirichlet series $L_S(X_{\diamond,\psi},s)$ as a product of hypergeometric $L$-series. To state this precisely, we now introduce a bit more notation.  Let $\pmb{\alpha}=\{\alpha_1,\dots,\alpha_d\}$ and $\pmb{\beta}=\{\beta_1,\dots,\beta_d\}$ be multisets with $\alpha_i,\beta_i \in \Q_{\geq 0}$ that modulo $\Z$ are disjoint. We associate a field of definition $K_{\pmb{\alpha},\pmb{\beta}}$ to $\pmb{\alpha},\pmb{\beta}$, which is an explicitly given finite abelian extension of $\Q$.  For certain prime powers $q$ and $t \in \F_q$, there is a \emph{finite field hypergeometric sum} $H_q(\pmb{\alpha};\pmb{\beta}\,|\,t) \in K_{\pmb{\alpha},\pmb{\beta}}$ defined by Katz \cite{Katz} as a finite field analogue of the complex hypergeometric function, normalized by McCarthy \cite{DM}, extended by Beukers--Cohen--Mellit \cite{BCM}, and pursued by many authors: see section \ref{defn:hgmdef} for the definition and further discussion, and section \ref{defn:hgmdef-hybrid} for an extension of this definition.  We package together the exponential generating series associated to these hypergeometric sums into an $L$-series $L_S( H(\pmb{\alpha};\pmb{\beta}\,|\,t), s)$: see section~\ref{S:finalauto} for further notation.

\subsection{Results}

Our main theorem is as follows.

\begin{mainthm} \label{mainthm}
The following equalities hold with $t=\psi^{-4}$ and $S=S(\diamond,\psi)$.
\begin{enumalph}
\item For the Dwork pencil $\Fsf_4$, 
\begin{align*}
L_S(X_{\Fsf_4,\psi}, s) &= L_S( H(\tfrac{1}{4}, \tfrac{1}{2}, \tfrac{3}{4}; 0, 0, 0\,|\, t), s) \\
&\qquad \cdot L_S( H(\tfrac{1}{4}, \tfrac{3}{4}; 0, \tfrac{1}{2} \,|\, t), s-1, \phi_{-1})^3 \\
&\qquad \cdot L_S( H(\tfrac{1}{2}; 0 \,|\, t) , \Q(\sqrt{-1}), s-1, \phi_{\sqrt{-1}})^6 
\end{align*}
where
\begin{equation} \label{eqn:yupphichar0}
\begin{aligned}
\phi_{-1}(p) & \colonequals \legen{-1}{p} = (-1)^{(p-1)/2} &  & \text{ is associated to $\Q(\sqrt{-1}) \,|\, \Q$, and} \\
\phi_{\sqrt{-1}}(\frakp)& \colonequals \legen{\sqrt{-1}}{\frakp}=(-1)^{(\Nm(\frakp)-1)/4} & & \text{ is associated to $\Q(\zeta_8)\,|\,\Q(\sqrt{-1})$.}
\end{aligned}
\end{equation}
\item For the Klein--Mukai pencil $\Fsf_1 \Lsf_3$, 
\begin{align*}
L_S(X_{\Fsf_1\Lsf_3,\psi}, s) &= L_S( H(\tfrac{1}{4}, \tfrac{1}{2}, \tfrac{3}{4}; 0, 0, 0\,|\, t), s) \\
&\qquad \cdot L_S( H(\tfrac{1}{14}, \tfrac{9}{14}, \tfrac{11}{14}; 0, \tfrac{1}{4}, \tfrac{3}{4} \,|\, t^{-1}), \Q(\zeta_7), s-1) 
\end{align*}
where
\[ L_S( H(\tfrac{1}{14}, \tfrac{9}{14}, \tfrac{11}{14}; 0, \tfrac{1}{4}, \tfrac{3}{4} \,|\, t^{-1}), s) = L_S( H(\tfrac{3}{14}, \tfrac{5}{14}, \tfrac{13}{14}; 0, \tfrac{1}{4}, \tfrac{3}{4} \,|\, t^{-1}), s) \]
are defined over $K=\Q(\sqrt{-7})$.  
\item For the pencil $\Fsf_2 \Lsf_2$, 
\begin{align*}
L_S(X_{\Fsf_2\Lsf_2,\psi}, s) &= L_S( H(\tfrac{1}{4}, \tfrac{1}{2}, \tfrac{3}{4}; 0, 0, 0\,|\, t), s) \\
&\qquad \cdot L_S(\Q(\zeta_8)\,|\,\Q,s-1)^2 L_S( H(\tfrac{1}{4}, \tfrac{3}{4}; 0, \tfrac{1}{2} \,|\, t), s-1, \phi_{-1}) \\
&\qquad \cdot L_S( H(\tfrac{1}{2};0 \,|\, t) , \Q(\sqrt{-1}), s-1, \phi_{\sqrt{-1}}) \\
&\qquad \cdot L_S( H(\tfrac{1}{8}, \tfrac{5}{8}; 0, \tfrac{1}{4} \,|\, t^{-1}), \Q(\zeta_8), s-1, \phi_{\sqrt{2}})  
\end{align*}
where
\[
L_S( H(\tfrac{1}{8}, \tfrac{5}{8}; 0, \tfrac{1}{4} \,|\, t^{-1}), s) = L_S( H(\tfrac{3}{8}, \tfrac{7}{8}; 0, \tfrac{3}{4} \,|\, t^{-1}), s) \]
are defined over $K=\Q(\sqrt{-1})$, 
\begin{equation} \label{eqn:yupphichar23}
\begin{aligned}
\phi_{\sqrt{2}}(\frakp) & \colonequals \legen{\sqrt{2}}{\frakp} \equiv 2^{(\Nm(\frakp)-1)/4} \pmod{\frakp} &  & \text{ is associated to $\Q(\zeta_8,\sqrt[4]{2}) \,|\, \Q(\zeta_8)$,}
\end{aligned}
\end{equation}
and $L(\Q(\zeta_8)\,|\,\Q,s) \colonequals \zeta_{\Q(\zeta_8)}(s)/\zeta_\Q(s)$ is the ratio of the Dedekind zeta function of $\Q(\zeta_8)$ and the Riemann zeta function.
\item For the pencil $\Lsf_2 \Lsf_2$,
\begin{align*}
L_S(X_{\Lsf_2\Lsf_2,\psi}, s) &= L_S( H(\tfrac{1}{4}, \tfrac{1}{2}, \tfrac{3}{4}; 0, 0, 0\,|\, t), s) \\
&\qquad \cdot \zeta_{\Q(\sqrt{-1})}(s-1)^4  L_S( H(\tfrac{1}{4}, \tfrac{3}{4}; 0, \tfrac{1}{2} \,|\, t), s-1, \phi_{-1}) \\
&\qquad \cdot     L_S( H(\tfrac{1}{8}, \tfrac{3}{8}, \tfrac{5}{8}, \tfrac{7}{8}; 0, \tfrac{1}{4}, \tfrac{1}{2}, \tfrac{3}{4}\,|\, t), \Q(\sqrt{-1}), s-1, \phi_{\sqrt{-1}}\phi_{\psi})
\end{align*}
where
\begin{equation}
\begin{aligned}
\phi_{\psi}(p) & \colonequals \legen{\psi}{p} & & \text{ is associated to $\Q(\sqrt{\psi})\,|\,\Q$}.
\end{aligned}
\end{equation}
\item For the pencil $\Lsf_4$,
\begin{align*}
L_S(X_{\Lsf_4,\psi}, s) &= L_S( H(\tfrac{1}{4}, \tfrac{1}{2}, \tfrac{3}{4}; 0, 0, 0\,|\, t), s) \\
&\qquad \cdot \zeta(s-1)^2 L_S( H( \tfrac{1}{5}, \tfrac{2}{5}, \tfrac{3}{5}, \tfrac{4}{5}; 0, \tfrac{1}{4}, \tfrac{1}{2}, \tfrac{3}{4} \,|\, t^{-1}) , \Q(\zeta_5), s-1) .
\end{align*}
\end{enumalph}
\end{mainthm}

We summarize Theorem \ref{mainthm} for each of our five pencils in \eqref{table:alphabetaintro}: we list the degree of the $L$-factor, the hypergeometric parameters, and the base field indicating when it arises from base change.  A Dedekind (or Riemann) zeta function factor has factors denoted by -.
\begin{equation} \label{table:alphabetaintro}
{% \renewcommand{\arraystretch}{1.5} 
\centering \setfracpadding
\begin{tabular}{c|c|c|c|c}
  Pencil   & Degree & $\pmb{\alpha}$ & $\pmb{\beta}$ & Base Field \\ \hline \hline
   & 3 & $\afrac{1}{4},\afrac{1}{2},\afrac{3}{4}$ & $0,0,0$ & $\Q$ \\ %\cline{2-5} 
 $\Fsf_4$          & $2\cdot 3 = 6$ & $\afrac{1}{4}, \afrac{3}{4}$ & $0, \afrac{1}{2}$ & $\Q$  \\
           & $2\cdot 6 = 12$ & $\afrac{1}{2}$ & 0 & $\Q(\sqrt{-1})$, from $\Q$ \\ \hline
   \multirow{2}{*}{$\Fsf_1\Lsf_3$} & 3 & $\afrac{1}{4},\afrac{1}{2},\afrac{3}{4}$ & $0,0,0$ & $\Q$ \\ %\cline{2-5}
           & 18 & $\afrac{1}{14}, \afrac{9}{14},\afrac{11}{14}$ & $0, \afrac{1}{4}, \afrac{3}{4}$ & $\Q(\zeta_7)$, from $\Q(\sqrt{-7})$  \\
 \hline
   &  3 & $\afrac{1}{4},\afrac{1}{2},\afrac{3}{4}$ & $0,0,0$ & $\Q$ \\
	   & $3\cdot 2 = 6$ & - & - & $\Q(\zeta_8)$, from $\Q$ \\
\multirow{2}{*}{$\Fsf_2\Lsf_2$}		     	   & 2 & $\afrac{1}{4}, \afrac{3}{4}$ & $0, \afrac{1}{2}$ & $\Q$ \\  %\cline{2-5} 
   & 2 & $\afrac{1}{2}$ & $0$ & $\Q(\sqrt{-1})$, from $\Q$ \\ %\cline{2-5}

	   & 8 & $\afrac{1}{8}, \afrac{5}{8}$ & $0,\afrac{1}{4}$ & $\Q(\zeta_8)$,  from $\Q(\sqrt{-1})$ \\
\hline
 &  3 & $\afrac{1}{4},\afrac{1}{2},\afrac{3}{4}$ & $0,0,0$ & $\Q$ \\
\multirow{2}{*}{$\Lsf_2\Lsf_2$} 	   & $2\cdot 4 = 8$ & - & - & $\Q(\sqrt{-1})$, from $\Q$ \\
 	   & 2 & $\afrac{1}{4}, \afrac{3}{4}$ & $0, \afrac{1}{2}$ & $\Q$ \\  %\cline{2-5} 
	   & 8 & $\afrac{1}{8}, \afrac{3}{8}, \afrac{5}{8}, \afrac{7}{8}$ & $0,\afrac{1}{4}, \afrac{1}{2}, \afrac{3}{4}$ & $\Q(\sqrt{-1})$, from $\Q$ \\
	    \hline
   & 3 & $\afrac{1}{4},\afrac{1}{2},\afrac{3}{4}$ & $0,0,0$ & $\Q$ \\ %\cline{2-5} 
  $\Lsf_4$	   	   & $1\cdot 2 = 2$ & - & - & $\Q$ \\
	   & 16 & $\afrac{1}{5}, \afrac{2}{5}, \afrac{3}{5}, \afrac{4}{5}$ & $0, \afrac{1}{4}, \afrac{1}{2}, \afrac{3}{4}$ & $\Q(\zeta_5)$, from $\Q$  \\

\end{tabular}}
\end{equation}

We extensively checked the equality of Euler factors in Main Theorem \ref{mainthm} in numerical cases (for many primes and values of the parameter $\psi$): for K3 surfaces we used code written by Costa \cite{CT}, and for the finite field hypergeometric sums we used code in Pari/GP and Magma \cite{Magma}, the latter available for download \cite{jvcode}.  See also Example \ref{exm:computedtable}.

Additionally, each pencil has the common factor $L_S(H(\frac{1}{4},\frac{1}{2},\frac{3}{4};0,0,0\,|\,t),s)$, giving another proof of a result in previous work \cite{commonfactor}: we have a factorization over $\Q[T]$
\begin{equation} 
P_{\diamond,\psi,p}(T) = Q_{\diamond,\psi,p}(T)R_{\psi,p}(T) 
\end{equation}
with $R_{\psi,p}(T)$ of degree $3$ independent of $\diamond \in \calF$.  
The common factor $R_{\psi,p}(T)$ is given by the action of Frobenius on the transcendental part in cohomology, and the associated completed $L$-function $L(H(\tfrac{1}{4},\tfrac{1}{2},\tfrac{3}{4};0,0,0 \,|\, t),s)$ is automorphic by  Elkies--Sch\"utt \cite{ES} (or see our summary \cite[\S 5.2]{commonfactor}): it arises from a family of classical modular forms on $\GL_2$ over $\Q$, and in particular, it has analytic continuation and functional equation.  See also recent work of Naskr\k{e}cki \cite{Naskrecki2}.

The remaining factors in each pencil in Main Theorem \ref{mainthm} yield a factorization of $Q_{\diamond,\psi,p}(T)$, corresponding to the algebraic part in cohomology (i.e., the Galois action on the N\'eron--Severi group).  In particular, the polynomial $Q_{\diamond,\psi,p}(T)$ has reciprocal roots of the form $p$ times a root of unity.  The associated hypergeometric functions are \emph{algebraic} by the criterion of Beukers--Heckman \cite{BH}, and the associated $L$-functions can be explicitly identified as Artin $L$-functions: see section \ref{sec:alghyp}. The algebraic $L$-series can also be explicitly computed when they are defined over $\Q$ \cite{CohenNotes,Naskrecki}. For example, if we look at the Artin $L$-series associated to the Dwork pencil $\Fsf_4$, Cohen has given the following $L$-series relations (see Proposition~\ref{Lseries gen cor F4}):
\begin{equation} \label{eqn:LSH}
\begin{aligned}
    L_S(H(\tfrac14, \tfrac34; 0, \tfrac12\,|\,\psi^{-4}), s, \phi_{-1}) &= L_S(s, \phi_{1-\psi^2})L_S(s,\phi_{-1-\psi^2}) \\
    L_S(H(\tfrac12;0\,|\,\psi^{-4}, \Q(\sqrt{-1}),s, \phi_{\sqrt{-1}}) &= L_S(s, \phi_{2(1-\psi^4)})L_S(s,\phi_{-2(1-\psi^4)}).
    \end{aligned}
\end{equation}
In particular, it follows that the minimal field of definition of the N\'eron--Severi group of $X_{\Fsf_4,\psi}$ is $\Q(\zeta_8,\sqrt{1-\psi^2},\sqrt{1+\psi^2})$.  
The expressions \eqref{eqn:LSH}, combined with Main Theorem \ref{mainthm}(a), resolve a conjecture of Duan \cite{LianDuan}. (For geometric constructions of the N\'eron--Severi group of $X_{\Fsf_4,\psi}$, see Bini--Garbagnati \cite{BG} and Kloosterman \cite{Klo17}; the latter also provides an approach to explicitly construct generators of the N\'eron--Severi group for four of the five families studied here, with the stubborn case $\Fsf_1 \Lsf_3$ still unresolved.)  Our theorem yields a explicit factorization of $Q_{\diamond,\psi,q}(T)$ for the Dwork pencil over $\F_q$ for any odd $q$ (see Corollary~\ref{Q for F4}).  As a final application, Corollary \ref{cor:factorzeta} shows how the algebraic hypergeometric functions imply the existence of a factorization of $Q_{\diamond,\psi,p}(T)$ over $\Q[T]$ depending only on $q$ for all families.

\begin{rmk}
Our main theorem can be rephrased as saying that the motive associated to primitive middle-dimensional cohomology for each pencil of K3 surfaces decomposes into the direct sum of hypergeometric motives as constructed by Katz \cite{Katz}.  These motives then govern both the arithmetic and geometric features of these highly symmetric pencils.  Absent a reference, we do not invoke the theory of hypergeometric motives in our proof.  
\end{rmk}

\subsection{Contribution and relation to previous work}\label{subsec:PreviousWork}

Our main result gives a complete decomposition of the cohomology for the five K3 pencils into hypergeometric factors.
We provide formulas for each pencil and for all prime powers $q$, giving an understanding of the pencil over $\Q$.  Addressing these subtleties, and consequently giving a result for the global $L$-function, are unique to our treatment.
Our point of view is computational and explicit; we expect that our methods will generalize and perhaps provide an algorithmic approach to the hypergeometric decomposition for other pencils. 

As mentioned above, the study of the hypergeometricity of periods and point counts enjoys a long-standing tradition.  Using his $p$-adic cohomology theory, Dwork \cite[\S 6j, p.~73]{padic} showed for the family $\Fsf_4$ that middle-dimensional cohomology decomposes into pieces according to three types of differential equations.  Kadir in her Ph.D.\ thesis \cite[\S 6.1]{kadir} recorded a factorization of the zeta function for $\Fsf_4$, a computation due to de la Ossa.  Building on the work of Koblitz \cite{Koblitz}, Salerno \cite[\S 4.2.1--4.2.2]{Salerno:Thesis} used Gauss sums in her study of the Dwork pencil in arbitrary dimension; under certain restrictions on $q$, she gave a formula for the number of points modulo $p$ in terms of truncated hypergeometric functions as defined by Katz \cite{Katz} as well as an explicit formula \cite[\S 5.4]{Sal13} for the point count for the family $\Fsf_4$.  Goodson \cite[Theorems 1.1--1.3]{Goodson} looked again at $\Fsf_4$ and proved a similar formula for the point counts over $\F_q$ for all primes $q=p$ and prime powers $q \equiv 1 \pmod{4}$.  In \cite{ling}, Fuselier et al.${}\!{}$  define an alternate finite field hypergeometric function (which differs from those by Katz, McCarthy, and Beukers-Cohen-Mellit) that makes it possible to prove identities that are analogous to well-known ones for classical hypergeometric functions. They then use these formulas to compute the number of points of certain hypergeometric varieties. 

Several authors have also studied the role of hypergeometric functions over finite fields for the Dwork pencil in arbitrary dimension, which for K3 surfaces is the family $\Fsf_4$ given in Main Theorem \ref{mainthm}.  McCarthy \cite{McC16} extended the definition of $p$-adic hypergeometric functions to provide a formula for the number of $\F_p$ points on the Dwork pencil in arbitrary dimension for all odd primes $p$, extending his results \cite{McC12} for the quintic threefold pencil.  Goodson \cite[Theorem 1.2]{Goodson2} then used McCarthy's formalism to rewrite the formula for the point count for the Dwork family in arbitrary dimension in terms of hypergeometric functions when $(n+1) \mid (q-1)$ and $n$ is even. 
See also Katz \cite{katz:dwork}, who took another look at the Dwork family.

Miyatani \cite[Theorem 3.2.1]{Miyatani} has given a general formula that applies to each of the five families, but with hypotheses on the congruence class of $q$.  It is not clear that one can derive our decomposition from the theorem of Miyatani.

A different line of research has been used to describe the factorization structure of the zeta function for pencils of K3 surfaces or Calabi--Yau varieties that can recover part of Main Theorem \ref{mainthm}.   
Kloosterman \cite{Klo07, Klo07:err} has shown that one can use a group action to describe the distinct factors of the zeta function for any one-parameter monomial deformation of a diagonal hypersurface in weighted projective space. He then applied this approach \cite{Klo17} to study the K3 pencils above and generalize our work on the common factor. His approach is different from both that work and the present one: he uses the Shioda map \cite{shioda} to provide a dominant rational map from a monomial deformation of a diagonal (Fermat) hypersurface to the K3 pencils.  The Shioda map has been used in the past \cite{BvGK} to recover the result of Doran--Greene--Judes matching Picard--Fuchs equations for the quintic threefold examples, and it was generalized to hypersurfaces of fake weighted-projective spaces and BHK mirrors \cite{Bini, Kel}.  Kloosterman also provides some information about the other factors in some cases.

\subsection{Proof strategy and plan of paper}

The proof of Main Theorem~\ref{mainthm} is an involved calculation.  Roughly speaking, we use the action of the group of symmetries to calculate hypergeometric periods and then use this decomposition to guide an explicit decomposition of the point count into finite field hypergeometric sums.  

Our proof follows three steps.  First, in section~\ref{S:PFEqns}, we find all Picard--Fuchs equations via the diagrammatic method developed by Candelas--de la Ossa--Rodr\'{i}guez-Villegas \cite{CORV, CORV2} and Doran--Greene--Judes \cite{DGJ08} for the Dwork pencil of quintic threefolds.  For each of our five families, we give the Picard--Fuchs equations in convenient hypergeometric form.  

Second, in section~\ref{S:PointCounts}, we carry out the core calculations by counting points over $\mathbb{F}_q$ for the corresponding pencils using Gauss sums.  This technique begins with the original method of Weil \cite{Weil}, extended by Delsarte and Furtado Gomida, and fully explained by Koblitz \cite{Koblitz}.  We then take these formulas and, using the hypergeometric equations found in section~\ref{S:PFEqns} and careful manipulation, link these counts to finite field hypergeometric functions.  The equations computed in section \ref{S:PFEqns} do not enter \emph{directly} into the proof of the theorem, but they give an answer that can then be verified by some comparatively straightforward manipulations.  These calculations confirm the match predicted by Manin's ``unity'' (see \cite{clemens}).

Finally, in section~\ref{S:mainthmpf}, we use the point counts from section~\ref{S:PointCounts} to explicitly describe the $L$-series for each pencil, and prove Main Theorem~\ref{mainthm}.  We conclude by relating the $L$-series to factors of the zeta function for each pencil.

\subsection{Acknowledgements}

The authors heartily thank Xenia de la Ossa for her input and many discussions about this project. They also thank Simon Judes for sharing his expertise, Frits Beukers, David Roberts, Fernando Rodr\'{i}guez-Villegas, and Mark Watkins for numerous helpful discussions, Edgar Costa for sharing his code for computing zeta functions, and the anonymous referee for helpful corrections and comments.  The authors would like to thank the American Institute of Mathematics (AIM) and its SQuaRE program, the Banff International Research Station, SageMath, and the MATRIX Institute for facilitating their work together. Doran acknowledges support from NSERC and the hospitality of the ICERM at Brown University and the CMSA at Harvard University. Kelly acknowledges that this material is based upon work supported by the NSF under Award No.\ DMS-1401446 and the EPSRC under EP/N004922/1.  Voight was supported by an NSF CAREER Award (DMS-1151047) and a Simons Collaboration Grant (550029). 

\section{Picard--Fuchs equations}\label{S:PFEqns}

In this section, we compute the Picard--Fuchs equations associated to all primitive cohomology for our five symmetric pencils of K3 surfaces defined in \eqref{table:5families}. Since we are working with pencils in projective space, we are able to represent 21 of the $h^2(X_\psi) = 22$ dimensions of the second degree cohomology as elements in the Jacobian ring, that is, the primitive cohomology of degree two for the quartic pencils in $\PP^3$. We employ a more efficient version of the Griffiths--Dwork technique which exploits discrete symmetries. This method was previously used by Candelas--de la Ossa--Rodr\'{i}guez-Villegas \cite{CORV, CORV2} and Doran-Greene--Judes \cite{DGJ08}. G\"{a}hrs \cite{Gahrs} used a similar combinatorial technique to study Picard--Fuchs equations for holomorphic forms on invertible pencils. After explaining the Griffiths--Dwork technique for symmetric pencils in projective space, we carry out the computation for two examples in thorough detail, and then state the results of the computation for three others.

\subsection{Setup}

We briefly review the computational technique of Griff\-iths--Dwork \cite{CORV, CORV2,DGJ08}, and we begin with the setup in some generality.

Let $X \subset \PP^n$ be a smooth projective hypersurface over $\C$ defined by the vanishing of $F(x_0, \ldots, x_n) \in \C[x_0,\dots,x_n]$ homogeneous of degree $d$.  Let $A^i(X)$ be the space of rational $i$-forms on $\PP^n$ with polar locus contained in $X$, or equivalently regular $i$-forms on $\PP^n \setminus X$.  
By Griffiths \cite[Corollary 2.1]{Gri}, any $\varphi \in A^n(X)$ can be written as
\begin{equation}
\varphi = \frac{Q(x_0, \ldots, x_n)}{F(x_0,\ldots, x_n)^k}\Omega_0,
\end{equation} 
where $k \geq 0$ and $Q \in \C[x_0,\dots,x_n]$ is homogeneous of degree $\deg Q = k\deg F - (n+1)$ and
\begin{equation}
\Omega_0 \colonequals  \sum_{i=0}^n (-1)^i x_i \,\dD{x_0}\wedge \ldots \wedge \dD{x_{i-1}} \wedge \dD{x_{i+1}} \wedge \ldots \wedge \dD{x_n}.
\end{equation}
We define the de Rham cohomology groups
\begin{equation} 
\mathcal{H}^i(X) \colonequals \frac{A^i(X)}{\dD{A^{i-1}}(X)}.
\end{equation}

There is a \defi{residue map} 
\[ \Res \colon \mathcal{H}^n(X) \rightarrow H^{n-1}(X,\C) \] 
made famous by seminal work of Griffiths \cite{Gri}, mapping into the middle-dimensional Betti cohomology of the hypersurface $X$. Given $\varphi \in A^n(X)$, we choose an $(n-1)$-cycle $\gamma$ in $X$ and $T(\gamma)$ a circle bundle over $\gamma$ with an embedding into the complement $\PP^n \setminus X$ that encloses $\gamma$, and define $\Res(\varphi)$ to be the $(n-1)$-cocycle such that
\begin{equation}\label{residueRelation}
\frac{1}{2\pi \ii } \int_{T(\gamma)} \varphi = \int_{\gamma} \Res(\varphi),
\end{equation}
well-defined for $\varphi \in \mathcal{H}^n(X)$.  Two circle bundles $T(\gamma)$ with small enough radius are homologous in $H_n(\PP^n \setminus X, \ZZ)$, so the class $\Res(\varphi)  \in H^{n-1}(X,\C)$ is well-defined. 

There is a filtration on $\mathcal{H}^n(X)$ by an upper bound on the order of the pole along $X$: 
$$
\mathcal{H}^n_1(X) \subseteq \mathcal{H}^n_2(X) \subseteq \ldots \subseteq \mathcal{H}^n_{n}(X) = \mathcal{H}^n(X).
$$
This filtration on $\mathcal{H}^n(X)$ is compatible with the Hodge filtration on $H^{n-1}(X,\C)$: if we define 
\[ \Fil^k(X) \colonequals H^{n-1,0}(X,\C) \oplus \ldots \oplus H^{k, n-k-1}(X, \C), \] 
then the residue map restricts to $\Res\colon \mathcal{H}^n_k(X) \rightarrow \Fil^{n-k}(X)$.  

In certain circumstances, we may be able to reduce the order of the pole \cite[Formula 4.5]{Gri}: we have
\begin{equation}\label{reducePoleOrder}
\frac{\Omega_0}{F(x_i)^{k+1}} \sum_{j=0}^n Q_j(x_i) \frac{ \partial F(x_i)}{\partial x_j} = \frac{1}{k} \frac{\Omega_0}{F(x_i)^k} \sum_{j=0}^n \frac{\partial Q_j(x_i)}{\partial x_j} + \omega
\end{equation}
where $\omega$ is an exact rational form.  In fact, equation \eqref{reducePoleOrder} implies that the order of a form $\varphi$ can be lowered (up to an exact form) if and only if the polynomial $Q$ is in the Jacobian ideal $J(F)$, that is, the (homogeneous) ideal generated by all partial derivatives of $F$.  So for $k \geq 1$ we have a natural identification
\begin{equation}
\frac{\mathcal{H}^n_k(X)}{\mathcal{H}^n_{k-1}(X)} \xrightarrow{\sim} \left(\frac{\C[x_0, \ldots, x_n]}{J(F)}\right)_{k\deg F -(n+1)}
\end{equation}
which by the residue map induces an identification
\begin{equation} \label{eqn:Cxo}
\left(\frac{\C[x_0, \ldots, x_n]}{J(F)}\right)_{k\deg F -(n+1)} \rightarrow H^{n-k, k-1}(X),
\end{equation}
whose image is the primitive cohomology group $H^{n-k,k-1}_{\textup{prim}}(X)$, which we know is the cohomology orthogonal to the hyperplane class since $X$ is a hypersurface in $\PP^n$. 

\begin{exm}  For $X$ a quartic hypersurface in $\PP^3$, the identification \eqref{eqn:Cxo} reads
\begin{equation} \label{eqn:jp4}
\frac{\C[x_0, x_1,x_2,x_3]_{4k}}{J(F)} \simeq H^{2-k,k}_{\textup{prim}}(X).
\end{equation}
In this case, the Hodge numbers are given by $h^{2,0} = 1$, $h^{1,1} = 35 - 4\cdot 4 = 19$, and $h^{0,2} = 165 - 4\cdot 56 + 6\cdot 10 = 1$.
\end{exm}

\subsection{Griffiths--Dwork technique}

Now suppose that $X_{\psi}$ is a pencil of hypersurfaces in the parameter $\psi$, defined by $F_\psi=0$.  
Let $\{\gamma_j\}_j$ be a basis for $H_{n-1}(X_{\psi},\C)$ with cardinality $h_{n-1} \colonequals\dim_{\C} H_{n-1}(X_{\psi},\C)$. 

\begin{rmk}
There is a subtle detail about taking a parallel transport using an Ehresmann connection to obtain a (locally) unique horizontal family of homology classes \cite[\S2.3]{DGJ08}.  This detail does not affect our computations. 
\end{rmk}

We then choose a basis of (possibly $\psi$-dependent) $(n-1)$-forms $\Omega_{X_\psi, i} \in H^{n-1}(X,\C)$ so that each of the forms $\Omega_{X_\psi, i} \in H^{n-1}(X,\C) $ has fixed bidegree $(p,q)$ which provides a basis for the Hodge decomposition $H^{n-1}(X,\C) = \bigoplus_{p+q = n-1} H^{p,q}(X)$ for each fixed $\psi$. 
We now examine the period integrals 
$$
\int_{\gamma_j} \Omega_{X_\psi, i}
$$
for $1 \leq i,j \leq  h_{n-1}$. 

We want to understand how these integrals vary with respect to the pencil parameter $\psi$. To do so, we simply differentiate with respect to $\psi$, or equivalently integrate on the complement of $X_\psi$ in $\PP^n$ as outlined above.  Using the residue relation \eqref{residueRelation}, we rewrite: 
\begin{equation}
\int_{\gamma_{j}} \Omega_{X_\psi, i} = \int_{T(\gamma_{j})} \frac{Q_i}{F_\psi^{k}} \Omega_0,
\end{equation}
for some $Q_i \in \C[x_0, \ldots, x_n]_{k\deg F_{\psi} -(n+1)}$ and $k \in \Z_{\geq 0}$ (and circle bundle $T(\gamma_{j})$ with sufficiently small radius as above). By viewing $F_\psi$ as a function $F\colon \C\rightarrow \C[x_0,\ldots, x_n]$ with parameter $\psi$, we can differentiate $F(\psi)$ with respect to $\psi$ and study how this period integral varies:
\begin{equation}\label{ddpsi}
\frac{\dD}{\dD\psi} \int_{T(\gamma_j)} \frac{Q_i}{F(\psi)^k}\Omega_0 = -k \int_{T(\gamma_j)} \frac{Q_i}{F(\psi)^{k+1}} \frac{\dD F}{\dD \psi}\Omega_0 
\end{equation}
Note that the right-hand side of \eqref{ddpsi} gives us a new $(n-1)$-form. 

We know that we will find a linear relation if we differentiate $\dim_\C H^{n-1}(X_\psi,\C)$ times, giving us a single-variable ordinary differential equation called the \defi{Picard--Fuchs equation} for the period $\int_{\gamma_{j}} \Omega_{X_\psi, i}$.  In practice, fewer derivatives may be necessary.

For simplicity, we suppose that $F_\psi$ is linear in the variable $\psi$.  Then the \defi{Griffiths--Dwork technique} for finding the Picard--Fuchs equation is the following procedure (see \cite{CoxKatz} or \cite{DGJ08} for a more detailed exposition):
\begin{enumerate}[1.]
\item Differentiate the period $b$ times, $1 \leq b \leq h_{n-1}$. We obtain the equation
$$
\left(\frac{\dD}{\dD\psi}\right)^b \int_{T(\gamma_j)} \frac{Q_i}{F(\psi)^k}\Omega_0 = \frac{(k+b-1)!}{(k-1)!} \int_{T(\gamma_j)} \frac{Q_i}{F(\psi)^{k+b}}\left(-\displaystyle{\frac{\dD F}{\dD \psi}}\right)^b\Omega_0.
$$

\item Write 
\begin{equation} 
Q_i \left(-\displaystyle{\frac{\dD F}{\dD \psi}}\right)^b = \sum_{j=1}^{h_{n-1}} \alpha_j Q_j + J
\end{equation}
where $\alpha_k \in \C(\psi)$ and $J$ is in the Jacobian ideal, so we may write $J = \sum_i A_i \displaystyle{\frac{\partial F_\psi}{\partial x_i}}$ with $A_i \in \C(\psi)[x_0, \ldots, x_n]$ for all $i$.  

\item Use \eqref{reducePoleOrder} to reduce the order of the pole of $\displaystyle{\frac{J}{F(\psi)^k}\Omega_0}$.  We obtain a new numerator polynomial of lower degree.
\item Repeat steps 2 and 3 for the new numerator polynomials, until the $b$th derivative is expressed in terms of the chosen basis for cohomology.
\item Use linear algebra to find a $\C(\psi)$-linear relationship between the derivatives.
\end{enumerate}

While algorithmic and assured to work, this method can be quite tedious to perform.  Moreover,  the structure of the resulting differential equation may not be readily apparent.

\subsection{A diagrammatic Griffiths--Dwork method}

In this section, we give a computational technique that uses discrete symmetries of pencils of Calabi--Yau hypersurfaces introduced by Candelas--de la Ossa--Rodr\'{i}guez-Villegas \cite{CORV, CORV2}.  To focus on the case at hand, we specialize to the case of quartic surfaces and explain this method so their diagrammatic and effective adaptation of the Griffiths--Dwork technique can be performed for the five pencils that we want to study. 

Let $x^{\mathbf{v}} \colonequals x_0^{v_0}x_1^{v_1}x_2^{v_2}x_3^{v_3}$ and let $k(\mathbf{v}) \colonequals \frac{1}{4} \sum_i v_i$; for a monomial arising from \eqref{eqn:jp4}, we have $k(\mathbf{v}) \in \Z_{\geq 0}$. Fix a cycle $\gamma$, and consider the periods 
\begin{equation}
(v_0,v_1,v_2,v_3)  \colonequals  \int_{T(\gamma)}  \frac{x^{\mathbf{v}}}{F_\psi^{k(\mathbf{v})+1}}\Omega_0 .
\end{equation}

Consider the relation:
\begin{equation} \label{eqn:partiali}
\partial_i \left( \frac{x_i x^{\mathbf{v}}}{F_\psi^{k(\mathbf{v})+1}}\right) =  \frac{x^{\mathbf{v}}}{F_\psi^{k(\mathbf{v})+1}}(1+v_i) - (k(\mathbf{v})+1) \frac{ x^{\mathbf{v}}}{F_\psi^{k(\mathbf{v})+2}}x_i \partial_i F_\psi.
\end{equation}
We can use \eqref{eqn:partiali} in order to simplify the computation of the Picard--Fuchs equation: integrating over $T(\gamma)$, the left hand side vanishes, so we can solve for $(v_0,v_1,v_2,v_3)$: 
\begin{equation}\label{DiagramPartials}
(1+v_i) (v_0,v_1,v_2,v_3)  \colonequals  \int_{T(\gamma)} \frac{x^{\mathbf{v}}}{F_\psi^{k(\mathbf{v})+1}} \Omega_0 = (k(\mathbf{v})+1)\int_{T(\gamma)} \frac{x^{\mathbf{v}}x_i\partial_iF_\psi}{F_\psi^{k(\mathbf{v})+2}}\Omega_0.
\end{equation}

\begin{exm}\label{dworkexamp}
Consider the Dwork pencil $\Fsf_4$, the pencil defined by the vanishing of 
$$
F_{\psi} = x_0^4 + x_1^4 + x_2^4 + x_3^4 - 4\psi x_0x_1x_2x_3.
$$
Simplifying the right-hand side of \eqref{DiagramPartials} gives us the relation of periods:
\[ (1+v_i) (v_0,v_1,v_2,v_3)= 4(k(\mathbf{v})+1)\left((v_0,\ldots, v_i + 4, \ldots, v_3) - \psi (v_0+1, v_1+1, v_2+1,v_3+1)\right) \]
or in a more useful form
\begin{equation}\label{stair}
(v_0,\ldots, v_i + 4, \ldots, v_3) = \frac{1+v_i}{4(k(\mathbf{v})+1)} (v_0,v_1,v_2,v_3) +  \psi (v_0+1, v_1+1, v_2+1,v_3+1)
\end{equation}
for $i=0,1,2,3$.
\end{exm}

Recall we can also find a relation between various $(v_0,v_1,v_2,v_3)$ by differentiating with respect to $\psi$. Rewriting \eqref{ddpsi} in the current notation, we obtain
\begin{equation}\label{psiderivative}
\frac{\dD}{\dD\psi} (v_0,v_1,v_2,v_3) = (4(k(\mathbf{v}) +1)) (v_0+1,v_1+1,v_2+1,v_3+1),
\end{equation}
yielding a dependence of the monomials with respect to the successive derivatives with respect to $\psi$.

Using the relations \eqref{DiagramPartials} and \eqref{psiderivative}, we will compute the Picard--Fuchs equations associated to periods that come from primitive cohomology.  The key observation is that these two operations respect the symplectic symmetry group.  

Restricting now to our situation, let $\diamond \in \{\Fsf_4,\Fsf_2\Lsf_2,\Fsf_1\Lsf_3,\Lsf_2\Lsf_2,\Lsf_4\}$ signify one of the five \textup{K3}\/ families in \textup{\eqref{table:5families}} defined by $F_{\diamond,\psi}$ and having symmetry group $H=H_{\diamond}$ as in \eqref{table:5families}.  Then $H$ acts on the $19$-dimensional $\C$-vector space
\begin{equation} \label{eqn:defofVJF}
V \colonequals (\C[x_0, x_1,x_2, x_3] / J(F_\psi))_4 
\end{equation}
giving a representation $H \to \GL(V)$. As $H$ is abelian, we may decompose $V = \bigoplus_\chi W_\chi$ where $H$ acts on $W_\chi$ by a (one-dimensional) character $\chi\colon H \to \C^\times$.  
Conveniently, each subspace $W_{\chi}$ has a monomial basis.  Moreover, the relations from the Jacobian ideal \eqref{DiagramPartials} and \eqref{psiderivative} respect the action of $H$, so we can apply the Griffiths--Dwork technique to the smaller subspaces $W_\chi$. 

\subsection{Hypergeometric differential equations} \label{sec:hypdiffeq}

In fact, we will find that all of our Picard--Fuchs differential equations are hypergeometric. In this section, we briefly recall the definitions \cite{slater}.

\begin{defn}\label{defn:hypergeometricFunction}
Let $n,m\in\Z$, let $\alpha_1,\dots,\alpha_n \in \Q$ and $\beta_1,\dots,\beta_m\in\Q_{>0}$, and write $\pmb{\alpha}=\{\alpha_j\}_j$ and $\pmb{\beta}=\{\beta_j\}_j$ as multisets.  The \defi{(generalized) hypergeometric function}  is the formal series 
\begin{equation}
F(\pmb{\alpha};\pmb{\beta} \,|\, z) \colonequals \sum_{k=0}^{\infty}\frac{(\alpha_1)_k\cdots(\alpha_n)_k}{(\beta_1)_k\cdots(\beta_m)_k}z^k \in \Q[[z]],\label{E:hyperg}
\end{equation}
\noindent where $(x)_k$ is the rising factorial (or \defi{Pochhammer symbol})
\[(x)_k \colonequals x(x+1)\cdots(x+k-1)=\frac{\Gamma(x+k)}{\Gamma(x)} \]
and $(x)_0 \colonequals 1$.  We call $\pmb{\alpha}$ the \defi{numerator parameters} and $\pmb{\beta}$ the \defi{denominator parameters}.
\end{defn}

We consider the differential operator 
\[ \theta \colonequals z \displaystyle{\frac{\dD}{\dD z}} \] 
and define the \defi{hypergeometric differential operator} 
\begin{equation} \label{eqn:Dab'lldoit}
D(\pmb{\alpha};\pmb{\beta} \,|\,z) \colonequals (\theta +\beta_1 -1)\cdots(\theta + \beta_m -1) - z (\theta+\alpha_1)\cdots(\theta + \alpha_n). 
\end{equation}
When $\beta_1=1$, the hypergeometric function $F(\pmb{\alpha};\pmb{\beta} \,|\, z)$ is annihilated by $D(\pmb{\alpha};\pmb{\beta}\,|\,z)$.

\subsection{The Dwork pencil \texorpdfstring{$\Fsf_4$}{Fsf4}}\label{PF:Dwork}

We now proceed to calculate Picard--Fuchs equations for our five pencils.  
We begin in this section with the Dwork pencil $\Fsf_4$, the one-parameter family of projective hypersurfaces $X_\psi \subset \PP^4$
defined by the vanishing of the polynomial 
$$
F_{ \psi}  \colonequals   x_0^4+x_1^4 + x_2^4 + x_3^4 - 4\psi x_0x_1x_2x_3.
$$
The differential equations associated to this pencil were studied by Dwork \cite[\S 6j]{padic}; our approach is a bit more detailed and explicit, and this case is a good warmup as the simplest of the five cases we will consider.

There is a $H=(\Z/4\Z)^2$ symmetry of this family generated by the automorphisms
\begin{equation}
\begin{aligned}
g_1(x_0:x_1:x_2:x_3) &= (-\ii x_0:\ii x_1:x_2:x_3) \\
g_2(x_0:x_1:x_2:x_3) &= (-\ii x_0:x_1:\ii x_2:x_3).
\end{aligned}
\end{equation}
A character $\chi\colon H \rightarrow \C^\times$ is determined by $\chi(g_1),\chi(g_2) \in \langle \sqrt{-1} \rangle$, and we write $\chi_{(a_1,a_2)}$ for the character with $\chi_{(a_1,a_2)}(g_i) = \ii^{a_i}$ with $a_i \in \Z/4\Z$ for $i=1,2$, totalling $16$ characters.  We then decompose $V$ defined in \eqref{eqn:defofVJF} into irreducible subspaces with a monomial basis.  We cluster these subspaces into three types up to the permutation action by $S_4$ on coordinates:
\begin{enumerate}[(i)]
\item $(a_1,a_2)=(0,0)$ (the $H$-invariant subspace), spanned by $x_0x_1x_2x_3$;
\item $(a_1,a_2)$ both even but not both zero, e.g., the subspace with $(a_1,a_2)=(0,2)$ spanned by $x_0^2x_1^2, x_2^2 x_3^2$; and
\item $(a_1,a_2)$ not both even, e.g., the subspace with $(a_1,a_2)=(2,1)$, spanned by $x_0^3x_1$.
\end{enumerate}
Up to permutation of coordinates, there are $1,3,12$ subspaces of types (i),(ii),(iii), respectively. 
By symmetry, we just need to compute the Picard--Fuchs equations associated to one subspace of each of these types. In other words, we only need to find equations satisfied by the monomials $x_0x_1x_2x_3, x_0^2x_1^2, x_2^2 x_3^2, $ and $x_0^3x_1$, 
corresponding to $(1,1,1,1)$, $(2,2,0,0)$, $(0,0,2,2)$, and $(3,1,0,0)$, respectively. 

The main result for this subsection is as follows.

\begin{prop} \label{prop:F4H2result}
The primitive middle-dimensional cohomology group $H^2_{\textup{prim}}(X_{\Fsf_4, \psi},\C)$ has $21$ periods whose Picard--Fuchs equations are hypergeometric differential equations as follows:
\[ 
\begin{gathered}
\text{$3$ periods are annihilated by $D(\tfrac{1}{4}, \tfrac{1}{2}, \tfrac{3}{4} ; 1, 1, 1 \,|\,\psi^{-4})$,} \\
\text{$6$ periods are annihilated by $D(\tfrac{1}{4}, \tfrac{3}{4}; 1, \tfrac{1}{2} \,|\,\psi^{-4})$, and} \\
\text{$12$ periods are annihilated by $D(\tfrac{1}{2};1 \,|\,\psi^{-4})$.}
\end{gathered} \]
\end{prop}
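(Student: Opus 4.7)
The plan is to apply the diagrammatic Griffiths--Dwork method of section~\ref{sec:hypdiffeq} on each $H$-isotypic summand of the Jacobian ring $R_\psi = \C[x_0,\ldots,x_3]/J(F_{\Fsf_4,\psi})$. Primitive middle cohomology is identified via residues with the graded pieces $(R_\psi)_0\oplus (R_\psi)_4\oplus (R_\psi)_8$ of total dimension $1+19+1=21$, and since the $H=\mu_4\times\mu_4$ action commutes with Jacobian reduction, each character subspace carries its own Picard--Fuchs system. By $S_4$-symmetry permuting coordinates, it suffices to analyze one representative per $S_4$-orbit of characters.

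Concretely, the monomial $x_0^a x_1^b x_2^c x_3^d$ transforms by the character $\chi_{(b-a,c-a)}$, and $(R_\psi)_4$ decomposes into three $S_4$-orbits of characters: (i) the $H$-invariant line spanned by $x_0x_1x_2x_3$; (ii) the orbit of $\chi_{(0,2)}$, consisting of three $2$-dimensional subspaces each spanned by a pair like $\{x_0^2x_1^2,x_2^2x_3^2\}$; (iii) the orbit of $\chi_{(2,1)}$, consisting of twelve $1$-dimensional subspaces each spanned by a monomial like $x_0^3x_1 \equiv \psi x_1^2x_2x_3 \pmod{J(F_{\Fsf_4,\psi})}$. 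Together with $(R_\psi)_0$ and $(R_\psi)_8$, which combine with the invariant line in $(R_\psi)_4$ into a $3$-dimensional $\partial_\psi$-cyclic system, this gives the multiplicities $3$, $3\cdot 2=6$, $12\cdot 1=12$ summing to $21$.

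The main computation is carried out on each orbit representative, iterating \eqref{psiderivative} and \eqref{stair}. For type (i), three derivatives of the holomorphic period $(0,0,0,0)$ reach $(3,3,3,3)$, and a four-step cyclic reduction through $(0,4,4,4)$, $(0,0,4,4)$, $(1,1,5,5)$, $(2,2,2,6)$ expresses it as a $\Q(\psi)$-linear combination of $(0,0,0,0), (1,1,1,1), (2,2,2,2), (3,3,3,3)$; rescaling by $\psi$ (to match Dwork's $\pi_0(\psi) = {}_3F_2(\tfrac14,\tfrac12,\tfrac34;1,1\,|\,\psi^{-4})$) yields $D(\tfrac14,\tfrac12,\tfrac34;1,1,1\,|\,\psi^{-4})$. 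For type (ii), two reductions of $(3,3,1,1)$ close the pair $\{(2,2,0,0),(0,0,2,2)\}$ into a $2\times 2$ first-order system whose solutions are combinations of $(1\pm\psi^2)^{-1/2}$; this matches $D(\tfrac14,\tfrac34;1,\tfrac12\,|\,\psi^{-4})$ after a $\psi$-gauge transformation. For type (iii), the stair relation first gives $(3,1,0,0)=\psi(0,2,1,1)$; then $\partial_\psi(0,2,1,1)=8(1,3,2,2)$ reduces via the cyclic chain $(1,3,2,2)\to\psi(2,0,3,3)\to\psi^2(3,1,0,4)\to\psi^3(4,2,1,1)$, whose last two steps contribute pole-reducing corrections summing to $(1,3,2,2)=\tfrac{\psi^3}{4(1-\psi^4)}(0,2,1,1)$; the resulting first-order ODE matches $D(\tfrac12;1\,|\,\psi^{-4})$ after normalization $\tilde y=\psi^2(0,2,1,1)$.

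The main technical obstacle is the careful bookkeeping of pole-reducing corrections of weight $(1+v_i)/(4(k(\mathbf{v})+1))$ in each stair step: naive polynomial identities $x_i^4 \equiv \psi x_0 x_1 x_2 x_3 \pmod{J(F)}$ correspond to taking $v_i=-1$ (where no correction appears), but all other applications contribute strictly lower-pole terms, and omitting a single such contribution in a cyclic chain propagates through and spoils the hypergeometric coefficients. Once the three ODEs are derived correctly, matching them to the normal form $D(\pmb{\alpha};\pmb{\beta}\,|\,\psi^{-4})$ (with the power-of-$\psi$ gauge absorbed into the hypergeometric normalization) produces the three stated differential operators.
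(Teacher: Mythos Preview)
Your proposal is correct and follows essentially the same approach as the paper: decompose $(R_\psi)_4$ into $H$-isotypic pieces, reduce by $S_4$-symmetry to the three orbit representatives (i)--(iii), and run the diagrammatic stair/derivative method on each, which the paper carries out in Lemmas~\ref{lem:F4hol}, \ref{lem:F4twoSquares}, \ref{lem:F4mono31}. The only cosmetic difference is your reduction chain for type (i): the paper instead climbs through $(4,0,0,0)$, $(4,4,0,0)$, $(4,4,4,0)$ using the identity $(\eta+1)\mathbf{v} = 4(k(\mathbf{v})+1)(\mathbf{v}+4e_i)$, which closes more cleanly than the path through $(1,1,5,5)$ and $(2,2,2,6)$ you sketched (those higher-pole terms spawn side branches that require further bookkeeping before the loop closes).
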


By the interlacing criterion \cite[Theorem 4.8]{BH}, the latter two hypergeometric equations have algebraic solutions.

We state and prove each case of Proposition~\ref{prop:F4H2result} with an individual lemma.

\begin{lem} \label{lem:F4hol}
The Picard--Fuchs equation associated to the period $\psi (0,0,0,0)$ is the hypergeometric differential equation $D(\tfrac{1}{4}, \tfrac{1}{2}, \tfrac{3}{4} ; 1, 1, 1 \,|\,\psi^{-4})$.
\end{lem}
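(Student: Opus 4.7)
My plan is to derive the Picard--Fuchs ODE for $\pi_0 := (0,0,0,0)$ using the diagrammatic relations \eqref{stair} and \eqref{psiderivative}, together with the pole-reduction \eqref{reducePoleOrder}, and then identify it with the stated hypergeometric operator after the substitution $z = \psi^{-4}$.

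First, iterating \eqref{psiderivative} gives the ``diagonal'' periods $\pi_n := (n,n,n,n) = \tfrac{1}{4^n n!}\bigl(\tfrac{d}{d\psi}\bigr)^n \pi_0$ for $n \geq 0$, so any $\C(\psi)$-linear relation among $\pi_0,\pi_1,\pi_2,\pi_3$ becomes a third-order linear ODE for $\pi_0$. Such a relation exists because the Jacobian ring $R = \C[x_0,\dots,x_3]/J(F_\psi)$ of the smooth quartic is Artinian Gorenstein with socle in degree $8$, so $(x_0x_1x_2x_3)^3 \in R_{12} = 0$. To extract an explicit relation, I would iteratively apply the substitution $x_0x_1x_2x_3 = \tfrac{x_i^4}{\psi} - \tfrac{x_i\partial_iF}{4\psi}$ (rearranging $\partial_iF = 4x_i^3 - 4\psi\prod_{j\neq i}x_j$) inside the integral $\pi_3 = \int (x_0x_1x_2x_3)^3 F_\psi^{-4}\,\Omega_0$, dispatching the $x_i^4$ terms through \eqref{stair} and the $\partial_iF$ terms through \eqref{reducePoleOrder}. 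Cycling the index $i$ preserves $S_4$-symmetry, and the observation that $\partial_iQ = 0$ whenever $Q$ omits $x_i$ causes many off-diagonal contributions to drop out after \eqref{reducePoleOrder}; the surviving terms lie in the one-dimensional $H$-invariant subspaces of $R_0, R_4, R_8$ (spanned respectively by $1$, $x_0x_1x_2x_3$, $(x_0x_1x_2x_3)^2$), and therefore reduce to $\C(\psi)$-multiples of $\pi_0, \pi_1, \pi_2$.

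Second, I would rewrite the resulting relation in Euler-operator form with $\vartheta := \psi\tfrac{d}{d\psi}$ acting on $\pi := \psi\pi_0$. The expected ODE is
\[ \psi^4 \vartheta^3 \pi + (1-\vartheta)(2-\vartheta)(3-\vartheta)\pi = 0.\]
Under the substitution $z = \psi^{-4}$ one has $\theta := z\tfrac{d}{dz} = -\vartheta/4$, and the displayed operator equals (up to the scalar $-1/(64\psi^4)$) the hypergeometric operator
\[\theta^3 - z\bigl(\theta+\tfrac14\bigr)\bigl(\theta+\tfrac12\bigr)\bigl(\theta+\tfrac34\bigr) = D\bigl(\tfrac14,\tfrac12,\tfrac34;1,1,1\,\big|\, z\bigr),\]
completing the proof.

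The main obstacle is the explicit Jacobian reduction in the first step. Although $R_{12} = 0$ guarantees that $(x_0x_1x_2x_3)^3 \in J(F_\psi)$, finding a decomposition whose divergence lands cleanly on the one-dimensional $H$-invariant subspaces of $R_8, R_4, R_0$ demands carefully symmetric bookkeeping. As a consistency check, the final ODE agrees with the series expansion $\pi_0(\psi) = \tfrac{c}{\psi}\cdot{}_3F_2\bigl(\tfrac14,\tfrac12,\tfrac34;1,1\,\big|\,\psi^{-4}\bigr)$ obtainable directly by residue expansion around $\psi = \infty$, and the recursion $(4n+1)(4n+2)(4n+3)\, a_n = 64(n+1)^3\, a_{n+1}$ for $a_n = \tfrac{(1/4)_n(1/2)_n(3/4)_n}{(n!)^3}$ confirms the Pochhammer structure of the claimed operator.
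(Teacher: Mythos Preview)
Your proposal correctly identifies the target ODE and performs the final substitution $z=\psi^{-4}$, $\theta=-\vartheta/4$ cleanly, but the heart of the argument --- deriving the explicit third-order relation among $\pi_0,\pi_1,\pi_2,\pi_3$ --- is only planned, not executed. You yourself flag that ``the main obstacle is the explicit Jacobian reduction,'' and indeed the strategy of iteratively substituting $x_0x_1x_2x_3 = x_i^4/\psi - x_i\partial_iF/(4\psi)$ inside $(x_0x_1x_2x_3)^3 F_\psi^{-4}$ and then cycling $i$ while tracking divergences, while in principle workable, generates cross-terms whose cancellation you have not demonstrated. The appeal to $R_{12}=0$ guarantees existence of \emph{some} third-order relation but gives you no control over its coefficients.

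The paper bypasses this reduction entirely. Rather than expressing $(x_0x_1x_2x_3)^3$ in the Jacobian ideal, it runs the relations \eqref{stair} and \eqref{psiderivative} along two paths out of $(0,0,0,0)$: the diagonal path through $(1,1,1,1),(2,2,2,2),(3,3,3,3)$ via $\psi$-derivatives, and the ``staircase'' path through $(4,0,0,0),(4,4,0,0),(4,4,4,0)$ via \eqref{stair}. The two paths close up through the one-line identity $\psi(4,4,4,0)=(3,3,3,3)$, which is \eqref{stair} applied to $\mathbf{v}=(3,3,3,-1)$ where the constant term vanishes because $1+v_3=0$. Combining the two paths with the commutation rule $(\eta-a)\psi^a=\psi^a\eta$ yields $(\eta-2)(\eta-1)\eta\,(0,0,0,0)=\psi^4(\eta+1)^3(0,0,0,0)$ in a handful of lines, with no Jacobian bookkeeping whatsoever. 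This closing identity is the simplification you are missing.

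Your final ``consistency check'' via residue expansion at $\psi=\infty$ is a good heuristic, but note that it verifies the ODE only for a single cycle (the one picking out the holomorphic series), whereas the Picard--Fuchs equation is an identity in cohomology that must hold for every cycle $\gamma$. The diagrammatic derivation, like your Jacobian-reduction plan, works at the cohomology level and therefore proves the statement for all periods simultaneously; the series expansion alone does not.
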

\begin{proof}
We recall the equations \eqref{stair} and \eqref{psiderivative}:
\begin{equation}
(v_0,\ldots, v_i + 4, \ldots, v_3) = \frac{1+v_i}{4(k(\mathbf{v})+1)} (v_0,v_1,v_2,v_3) +  \psi (v_0+1, v_1+1, v_2+1,v_3+1);
\end{equation}
\begin{equation}
\frac{\dD}{\dD\psi} (v_0,v_1,v_2,v_3) = (4(k(\mathbf{v}) +1)) (v_0+1,v_1+1,v_2+1,v_3+1).
\end{equation}
These equations imply a dependence among the terms 
\[ (v_0,v_1,v_2,v_3), (v_0+1,v_1+1,v_2+1,v_3+1), \text{ and }(v_0,\ldots, v_i + 4, \ldots, v_3) \] 
denoted in the following diagram:
$$\xymatrix{
(v_0,v_1,v_2,v_3) \ar[r] \ar[d] & (v_0+1,v_1+1,v_2+1,v_3+1) \\
(v_0,\ldots, v_i + 4, \ldots, v_3) 
}$$

In order to use these dependences, we build up a larger diagram:
\begin{equation}
\begin{gathered}
\xymatrix{
	& (0,0,0,0) \ar[r] \ar[d] & (1,1,1,1)  \ar[r] \ar[d] & (2,2,2,2) \ar[r] \ar[d] & (3,3,3,3) \\
	& (4,0,0,0) \ar[r] \ar[d] & (5,1,1,1)  \ar[r] \ar[d] & (6,2,2,2) & \\
	& (4,4,0,0) \ar[r] \ar[d] & (5,5,1,1) &  & \\
	(3,3,3,-1) \ar[r] \ar[d] & (4,4,4,0) &&& \\
	(3,3,3,3) &&&&&}
\end{gathered}
\end{equation}
It may be useful to point out that the same period must appear in two places by simple linear algebra: the vectors $(4,0,0,0), (0,4,0,0), (0,0,4,0), (0,0,0,4)$ and $(1,1,1,1)$ are linearly dependent.

Using \eqref{stair} and \eqref{psiderivative} and letting $\eta \colonequals \psi \displaystyle{\frac{\dD}{\dD\psi}}$, we see that:
\begin{equation}\begin{aligned}
(0,0,0,0) &= \frac{1}{4} (\eta + 1) (4,0,0,0) \\
(4,0,0,0) &= \frac{1}{8} (\eta + 1) (4,4,0,0) \\
(4,4,0,0) &= \frac{1}{12} (\eta + 1) (4,4,4,0) \\
\psi (4,4,4,0) &= (3,3,3,3)
\end{aligned}\end{equation}
Now, we can use the fact that $(\eta - a)\psi^a = \psi^a \eta$ for $a \in \Z$ to great effect:
\begin{equation}\begin{aligned} 
\eta (0,0,0,0) &= 4 \psi (1,1,1,1) \\
(\eta - 1)\eta (0,0,0,0) &= 4 \psi \eta(1,1,1,1) = 8\cdot 4 \psi^2 (2,2,2,2) \\
(\eta -2)(\eta -1)\eta (0,0,0,0) & =12 \cdot 8\cdot 4 \psi^2 \eta(2,2,2,2) = 12\cdot 8\cdot 4 \psi^3 (3,3,3,3) \\
	&= 12\cdot 8\cdot 4 \psi^4 (4,4,4,0) \\
	&= 8 \cdot 4 \psi^4 (\eta+1)(4,4,0,0) \\
	&= 4\psi^4(\eta+1)^2 (4,0,0,0) \\
	&= \psi^4(\eta+1)^3 (0,0,0,0).
\end{aligned}\end{equation}
We conclude that
\begin{equation}
\left[  (\eta -2)(\eta -1)\eta - \psi^4 (\eta+1)^3\right] (0,0,0,0) = 0.
\end{equation}
We then multiply by $\psi$ to obtain
\begin{align*}
\left[  \psi(\eta -2)(\eta -1)\eta - \psi^4 \psi (\eta+1)^3\right] (0,0,0,0) &= 0\\
\left[  (\eta -3)(\eta -2)(\eta-1) - \psi^4 (\eta)^3\right] \psi (0,0,0,0) &= 0.
\end{align*}
Finally, substitute $t  \colonequals \psi^{-4}$ and let $\theta  \colonequals  t \displaystyle{\frac{\dD}{\dD t}} = -\eta/4$ to see that
\begin{equation}\begin{aligned}\label{hyperF4hol}
\left[  (-4\theta -3)(-4\theta -2)(-4\theta-1) - t^{-1} (-4\theta)^3\right] \psi (0,0,0,0) &= 0\\
\left[  -t(\theta +\tfrac{3}{4})(\theta +\tfrac{1}{2})(\theta+\tfrac{1}{4}) + \theta^3\right] \psi (0,0,0,0) &= 0\\
\left[  \theta^3 -t(\theta +\tfrac{1}{4})(\theta +\tfrac{1}{2})(\theta+\tfrac{3}{4}) \right] \psi (0,0,0,0) &= 0,
\end{aligned}\end{equation}
which is the differential equation $D(\tfrac{1}{4}, \tfrac{1}{2}, \tfrac{3}{4} ; 1, 1, 1 \,|\,t)$.
\end{proof}

\begin{lem} \label{lem:F4twoSquares}
The Picard--Fuchs equation associated to both $\psi (2,2,0,0)$ and $\psi(0,0,2,2)$ is $D(\tfrac{1}{4}, \tfrac{3}{4}; 1, \tfrac{1}{2} \,|\, \psi^{-4})$.
\end{lem}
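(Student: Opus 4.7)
The strategy mirrors that of Lemma \ref{lem:F4hol}, now applied to the character subspace of $V$ (defined in \eqref{eqn:defofVJF}) containing $x_0^2 x_1^2$. Since the $H$-isotypic component generated by $x_0^2 x_1^2$ has monomial basis $\{x_0^2 x_1^2,\ x_2^2 x_3^2\}$, and the operations \eqref{stair} and \eqref{psiderivative} commute with $H$, the Gauss--Manin derivative $\eta \colonequals \psi\,\dD/\dD\psi$ preserves the $2$-dimensional span of the corresponding classes. Consequently $(2,2,0,0)$ satisfies a Picard--Fuchs equation of order at most $2$.

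The plan is to apply \eqref{psiderivative} twice to $(2,2,0,0)$, obtaining
\[
\eta(2,2,0,0) = 8\psi(3,3,1,1) \quad \text{and} \quad \eta^2(2,2,0,0) = 8\psi(3,3,1,1) + 96\psi^2(4,4,2,2),
\]
and then to cascade \eqref{stair} to rewrite the high-pole-order monomials $(4,4,2,2)$, $(5,5,3,3)$, $(1,5,3,3)$, etc., as $\Q[\psi]$-linear combinations of $(2,2,0,0)$ and $(0,0,2,2)$. The resulting linear dependence among $(2,2,0,0)$, $\eta(2,2,0,0)$, and $\eta^2(2,2,0,0)$ in this $2$-dimensional span yields a second-order relation in $\eta$ with coefficients in $\Q[\psi^4]$. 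Multiplying by $\psi$ (using $\psi(\eta - a) = (\eta - a - 1)\psi$ to pass from $(2,2,0,0)$ to $\psi(2,2,0,0)$) and substituting $t = \psi^{-4}$, $\theta = -\eta/4$, the hypergeometric operator $D(\tfrac{1}{4}, \tfrac{3}{4}; 1, \tfrac{1}{2}\,|\,t)$ should emerge; the denominator parameter $\tfrac{1}{2}$ traces to the factor $(1+v_i)/(4(k(\mathbf{v})+1)) = 1/8$ appearing when $(0,4,2,2)$ is reduced to $(0,0,2,2)$ via \eqref{stair}.

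For $\psi(0,0,2,2)$, the Dwork pencil's symmetry $(x_0{:}x_1{:}x_2{:}x_3) \mapsto (x_2{:}x_3{:}x_0{:}x_1)$ preserves both $F_\psi$ and $\psi$ while swapping the two basis monomials, so $\psi(0,0,2,2)$ satisfies the same Picard--Fuchs equation. The main obstacle is the bookkeeping of the reductions: unlike in Lemma~\ref{lem:F4hol}, where a single diagonal chain of \eqref{stair} closes the loop back to $(0,0,0,0)$, here several parallel reduction chains (e.g.\ through both $(0,4,2,2)$ and $(4,0,2,2)$, and then through $(5,5,3,3) \to (1,5,3,3)\to \ldots$) must be tracked simultaneously and recombined before the $(2,2,0,0)$ and $(0,0,2,2)$ contributions separate, so that the coefficients of the order-$2$ operator can be read off with the correct denominator parameter $\tfrac{1}{2}$.
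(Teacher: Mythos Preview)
Your strategy is sound in principle---reducing $(2,2,0,0)$, $\eta(2,2,0,0)$, $\eta^{2}(2,2,0,0)$ to the $\C(\psi)$-span of $\{(2,2,0,0),(0,0,2,2)\}$ and reading off the linear dependence will work---but it is substantially more laborious than the paper's argument, and what you have written is a plan rather than a proof (``the hypergeometric operator \ldots\ should emerge''). One slip: the monomial $(5,5,3,3)$ has $k(\mathbf{v})=4$ and cannot arise from two applications of $\eta$ followed by \eqref{stair}, since \eqref{stair} never raises pole order; only $k=3$ monomials such as $(4,4,2,2)$, $(1,5,3,3)$, $(2,2,4,4)$, \ldots\ occur in your cascade.

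The paper avoids the parallel reduction chains entirely by first deriving a coupled \emph{first-order} system rather than going straight to second order. Using the chain
\[
(2,2,0,0)\ \xrightarrow{\eqref{psiderivative}}\ (3,3,1,1)\ \xrightarrow{\eqref{stair}}\ (4,0,2,2)\ \xrightarrow{\eqref{stair}}\ (0,0,2,2),\,(1,1,3,3),
\]
with the key observation that $(3,3,1,1)=\psi\,(4,0,2,2)$ because the coefficient $\tfrac{1+v_1}{4(k+1)}$ vanishes at $v_1=-1$, one obtains
\[
\eta(2,2,0,0)=\psi^{2}(\eta+1)(0,0,2,2),
\]
and by the $(x_0{:}x_1{:}x_2{:}x_3)\mapsto(x_2{:}x_3{:}x_0{:}x_1)$ symmetry the same relation with the two periods swapped. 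Composing these two first-order relations gives directly
\[
(\eta-2)\eta\,(2,2,0,0)=\psi^{4}(\eta+3)(\eta+1)\,(2,2,0,0),
\]
after which the change of variable to $t=\psi^{-4}$ is exactly as you describe. This yields the operator in two lines; your direct approach would recover the same equation, but only after solving the $2\times 2$ system over $\C(\psi)$ that expresses $(3,3,1,1)$ and $(4,4,2,2)$ in the basis, which is precisely the bookkeeping you flag as the main obstacle.
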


\begin{proof}
By iterating the use of \eqref{stair}, we can construct a diagram including both $ (2,2,0,0)$ and $(0,0,2,2)$:
\begin{equation} \label{eqn:0022}
\begin{gathered}
\xymatrix{
&&& (0,0,2,2) \ar[d] \ar[r] & (1,1,3,3) \\
&&(3,-1,1,1) \ar[d] \ar[r] & (4,0,2,2) & \\
&(2,2,0,0) \ar[d] \ar[r] &(3,3,1,1) && \\
(1,1,3,-1) \ar[d] \ar[r] & (2,2,4,0) &&& \\
(1,1,3,3) &&&&}
\end{gathered}
\end{equation}
We then obtain the following relations:
\begin{equation}\begin{aligned}
\eta (2,2,0,0) &= \psi^2(\eta + 1) (0,0,2,2); \\
\eta (0,0,2,2) &= \psi^2(\eta + 1) (2,2,0,0).
\end{aligned}\end{equation}
We then can use these relations to make a Picard--Fuchs equation associated to the period $ (2,2,0,0)$:
\begin{equation}\begin{aligned}
(\eta - 2) \eta (2,2,0,0) & = \psi^2  (\eta + 1)\eta (0,0,2,2) \\
	& = \psi^2(\eta+1) \left(\psi^2(\eta+1) (2,2,0,0)\right) \\
	& = 2\psi^4(\eta +1) (2,2,0,0) + \psi^4(\eta+1) \eta (2,2,0,0) \\
	&\qquad + \psi^4(\eta+1) (2,2,0,0) \\
	& = \psi^4(\eta^2+4\eta +3) (2,2,0,0) \\
	& = \psi^4(\eta+1)(\eta+3) (2,2,0,0).
\end{aligned}\end{equation}

By symmetry, we get the same equation for the period $(0,0,2,2)$, so we have:
\begin{equation}\begin{aligned}
\left[ (\eta-2)\eta - \psi^4(\eta+1)(\eta+3)\right] (2,2,0,0) &= 0 \\
\left[ (\eta-2)\eta - \psi^4(\eta+1)(\eta+3) \right] (0,0,2,2) &= 0
\end{aligned}\end{equation}
Now multiply by $\psi$ and then change variables to $t \colonequals \psi^{-4}$ with $\theta \colonequals t \tfrac{\dD}{\dD t} = -{4} \eta$ to obtain:
\begin{align*}
\left[\psi (\eta-2)\eta - \psi\psi^4(\eta+1)(\eta+3) \right] (2,2,0,0) &= 0\\
\left[ (\eta-3)(\eta-1) - \psi^4\eta(\eta+2) \right] \psi(2,2,0,0)&= 0\\
\left[ (-4\theta-3)(-4\theta-1) - t^{-1}(-4\theta)(-4\theta+2)\right] \psi(2,2,0,0) &= 0 \\
\left[ t(\theta + \tfrac{3}{4})(\theta + \tfrac{1}{4}) - \theta(\theta-\tfrac{1}{2}) \right] \psi(2,2,0,0) &= 0 \\
\left[\theta(\theta-\tfrac{1}{2}) -t(\theta + \tfrac{1}{4})(\theta +\tfrac{3}{4}) \right]\psi (2,2,0,0) &= 0.
\end{align*}
 This Picard--Fuchs equation is $D(\tfrac{1}{4}, \tfrac{3}{4}; 1, \tfrac{1}{2} \,|\, \psi^{-4})$.
\end{proof} 

\begin{lem} \label{lem:F4mono31}
The Picard--Fuchs equation associated to $\psi (3,1,0,0)$ is $D(\frac{1}{2};1 \,|\,\psi^{-4})$.
\end{lem}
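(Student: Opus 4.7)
The plan is to apply the diagrammatic Griffiths--Dwork technique in the same style as Lemmas~\ref{lem:F4hol} and~\ref{lem:F4twoSquares}. The monomials $x_0^3 x_1$ and $x_1^2 x_2 x_3$ both transform under the character $\chi_{(2,1)}$ of $H$, and the single Jacobian relation $x_1 \partial_0 F_\psi = 4 x_0^3 x_1 - 4\psi x_1^2 x_2 x_3$ cuts the character-$\chi_{(2,1)}$ isotypic component of $(\C[x]/J(F_\psi))_4$ down to one dimension. Since the relevant cohomology piece is one-dimensional, the Picard--Fuchs operator should be first order, consistent with $D(\tfrac{1}{2};1 \,|\, t) = \theta - t(\theta + \tfrac{1}{2})$ being first order.

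Concretely, I will record five identities linking six monomial periods lying in the character $\chi_{(2,1)}$. Three come from the formal $v_i = -1$ specialization of \eqref{stair} (the same device used with $(3,-1,1,1)$ in Lemma~\ref{lem:F4twoSquares}, where the coefficient $\tfrac{1+v_i}{4(k+1)}$ vanishes), applied to $(-1,1,0,0)$, $(1,-1,2,2)$, and $(2,0,-1,3)$:
\[
(3,1,0,0) = \psi(0,2,1,1), \quad (1,3,2,2) = \psi(2,0,3,3), \quad (2,0,3,3) = \psi(3,1,0,4).
\]
Two come from genuine applications of the stair \eqref{stair}, to $(0,2,1,1)$ with $i=0$ and to $(3,1,0,0)$ with $i=3$:
\[
(4,2,1,1) = \tfrac{1}{8}(0,2,1,1) + \psi(1,3,2,2), \quad (3,1,0,4) = \tfrac{1}{8}(3,1,0,0) + \psi(4,2,1,1).
\]
The final ingredient is the derivative relation \eqref{psiderivative}: $\eta(3,1,0,0) = 8\psi(4,2,1,1)$.

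Chaining the three trick relations and the two stair relations to eliminate $(0,2,1,1)$, $(1,3,2,2)$, $(2,0,3,3)$, and $(3,1,0,4)$ in favor of $(3,1,0,0)$ and $(4,2,1,1)$, I expect to obtain
\[
(1-\psi^4)\,(4,2,1,1) = \frac{1+\psi^4}{8\psi}\,(3,1,0,0).
\]
Substituting the derivative relation $(4,2,1,1) = \tfrac{1}{8\psi}\,\eta(3,1,0,0)$ then gives
\[
(1-\psi^4)\,\eta(3,1,0,0) = (1+\psi^4)(3,1,0,0).
\]
Setting $\omega = \psi(3,1,0,0)$ and using $\eta\psi = \psi(\eta+1)$ converts this to $(1-\psi^4)\eta\omega = 2\omega$, and the change of variables $t = \psi^{-4}$, $\theta = t\,\dD/\dD t = -\eta/4$ yields $(1-t)\theta\omega = \tfrac{t}{2}\omega$, which is equivalent to $D(\tfrac{1}{2};1\,|\,t)\omega = 0$. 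I do not expect a serious obstacle: the main care is to verify that each of the $v_i = -1$ shortcuts and stair moves stays inside the character-$\chi_{(2,1)}$ isotypic piece, and to track the commutator $[\eta,\psi^a] = a\psi^a$ correctly when passing from $(3,1,0,0)$ to $\omega = \psi(3,1,0,0)$.
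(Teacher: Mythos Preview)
Your proposal is correct and follows essentially the same approach as the paper: the same diagram of monomials, the same three $v_i=-1$ shortcuts, the same stair relation expressing $(3,1,0,4)$, and the derivative $\eta(3,1,0,0)=8\psi(4,2,1,1)$, leading to the identical Picard--Fuchs equation $[(\eta-1)-\psi^4(\eta+1)](3,1,0,0)=0$ before the change of variables. The only organizational difference is that you first eliminate algebraically to obtain $(1-\psi^4)(4,2,1,1)=\tfrac{1+\psi^4}{8\psi}(3,1,0,0)$ and then insert the single derivative relation, whereas the paper interleaves the derivative step $\eta(0,2,1,1)=8\psi(1,3,2,2)$ earlier; this is a cosmetic reordering, not a different method.
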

\begin{proof}
Our strategy again is to use \eqref{stair} and \eqref{psiderivative} in the order represented by the diagram below to study the period $(3,1,0,0)$:
\begin{equation}
\begin{gathered}
\xymatrix{
&&&(2,0,-1,3) \ar[d] \ar[r] & (3,1,0,4) \\
&& (1,-1,2,2) \ar[d] \ar[r] & (2,0,3,3) &\\
(-1,1,0,0)\ar[d] \ar[r]  & (0,2,1,1) \ar[d] \ar[r] & (1,3,2,2) &&\\
(3,1,0,0) \ar[d] \ar[r] & (4,2,1,1) &&& \\
(3,1,0,4)
}
\end{gathered}
\end{equation}
Using \eqref{stair} iteratively in the upper part of the diagram, we see that:
\begin{equation}
(1,3,2,2) = \psi^2(3,1,0,4).
\end{equation}
Then using \eqref{psiderivative}, we then have that 
\begin{equation}
\eta (0,2,1,1) = 8\psi(1,3,2,2) = 8\psi^3 (3,1,0,4).
\end{equation}
Now, using \eqref{stair} again, we have that $(3,1,0,0) = \psi (0,2,1,1)$ and we can then compute:
\begin{equation}\begin{aligned}
(\eta -1) (3,1,0,0) &= \psi \eta(0,2,1,1) \\
	&=8\psi^4(3,1,0,4) \\
	&=8 \psi^4\left[\frac{1}{8} (3,1,0,0) + \psi (4,2,1,1)\right] \\
	&= 8 \psi^4 \left[\frac{1}{8} (3,1,0,0) + \frac{1}{8} \eta (3,1,0,0)\right] \\
	&= \psi^4 (\eta + 1) (3,1,0,0).
\end{aligned}\end{equation}
We then get the Picard--Fuchs equation associated to the period $(3,1,0,0)$:
\begin{equation}
\left[ (\eta - 1) - \psi^4 (\eta +1) \right] (3,1,0,0) = 0.
\end{equation}

We now will multiply by $\psi$ and then change variables to $t = \psi^{-4}$ as in the previous lemma 
to obtain:
\begin{align*}
\left[\psi (\eta - 1) - \psi\psi^4 (\eta +1) \right] (3,1,0,0) &= 0\\
\left[ (\eta - 2) - \psi^4 \eta \right] \psi(3,1,0,0) &= 0\\
\left[ (-4\theta - 2) - t^{-1} (-4\theta) \right]\psi (3,1,0,0) &= 0 \\
\left[ \theta -t(\theta + \tfrac{1}{2}) \right]\psi (3,1,0,0) &= 0,
\end{align*}
giving rise to the hypergeometric differential equation $D(\frac{1}{2};1 \,|\,\psi^{-4})$.
\end{proof}

We now conclude this section with the proof of the main result.

\begin{proof}[Proof of Proposition \textup{\ref{prop:F4H2result}}]
We combine Lemmas~\ref{lem:F4hol},~\ref{lem:F4twoSquares}, and~\ref{lem:F4mono31} with the consideration of the number of subspaces of each type described above.
\end{proof}

\subsection{The Klein--Mukai pencil \texorpdfstring{$\Fsf_1\Lsf_3$}{Fsf1Lsf3}} \label{PF:KM}

We now consider the Klein--Mukai pencil $\Fsf_1\Lsf_3$, the one-parameter family of hypersurfaces $X_\psi \subset \PP^4$ defined by the vanishing of
$$
F_\psi \colonequals x_0^3x_1+ x_1^3x_2 + x_2^3x_0 + x_3^4 - 4\psi x_0x_1x_2x_3.
$$
The polynomial $F_\psi$ is related to the defining polynomial~\eqref{table:5families} by a change in the order of variables.

There is a $H=\Z / 7\Z$ scaling symmetry of this family generated by the automorphism $(x_i)$ by the element
$$
g(x_0: x_1: x_2: x_3) = (\xi x_0 : \xi^4 x_1 : \xi^2x_2 : x_3),
$$
where $\xi$ is a seventh root of unity.  There are seven characters $\chi_k\colon H \to \C^\times$ defined by $\chi_k(g) = \xi^k$ for $k \in \Z/7\Z$.  
Note that the monomial bases for the subspaces $W_{\chi_1}, W_{\chi_2},$ and $W_{\chi_4}$ are cyclic permutations of one another under the variables $x_0, x_1,$ and $x_2$. Analogously, so are subspaces $W_{\chi_3}, W_{\chi_5},$ and $W_{\chi_6}$. So we have three types of clusters:
\begin{enumerate}[(i)]
\item   $W_{\chi_0}$ has the monomial basis $\{x_0x_1x_2x_3\}$;
\item  $W_{\chi_1}$ has the monomial basis $\{x_1^2x_3^2,  x_0^2x_1x_2, x_2^2x_1x_3\}$; and
\item  $W_{\chi_3}$ has the monomial basis $\{x_2^3x_1,  x_1^2x_2x_3, x_3^2x_0x_2\}$.
\end{enumerate}

There is one cluster of type (i) and three clusters each of types (ii) and (iii), so $h^{1,1}$ is decomposed as $19 = 1 + 3\cdot 3 + 3 \cdot 3$.

\begin{prop} \label{prop:kleinmukaidiff}
The group $H^2_{\textup{prim}}(X_{\Fsf_1\Lsf_3, \psi})$ has $21$ periods whose Picard--Fuchs equations are hypergeometric differential equations, with $3$ periods annihilated by
\[ D(\tfrac{1}{4}, \tfrac{1}{2}, \tfrac{3}{4} ; 1, 1, 1 \,|\,\psi^{-4}) \]
and $3$ periods each annihilated by the following $6$ operators:
\[ 
\begin{gathered}
D(\tfrac{1}{14}, \tfrac{9}{14}, \tfrac{11}{14} ; \tfrac{1}{4}, \tfrac{3}{4}, 1 \,|\,\psi^{4}), 
D(\tfrac{-3}{14}, \tfrac{1}{14}, \tfrac{9}{14} ; 0, \tfrac{1}{4}, \tfrac{3}{4} \,|\,\psi^{4}), 
D(\tfrac{-5}{14}, \tfrac{-3}{14}, \tfrac{1}{14} ; \tfrac{-1}{4}, 0, \tfrac{1}{4} \,|\,\psi^{4}), \\ 
D(\tfrac{3}{14}, \tfrac{5}{14}, \tfrac{13}{14} ; \tfrac{1}{4}, \tfrac{3}{4}, 1 \,|\,\psi^4), 
D(\tfrac{-1}{14}, \tfrac{3}{14}, \tfrac{5}{14} ; 0, \tfrac{1}{4}, \tfrac{3}{4} \,|\,\psi^4), 
D(\tfrac{-11}{14}, \tfrac{-1}{14}, \tfrac{5}{14} ; \tfrac{-1}{4}, 0, \tfrac{1}{4} \,|\,\psi^4). 
\end{gathered}
\]
\end{prop}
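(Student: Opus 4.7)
The strategy is to apply the diagrammatic Griffiths--Dwork method of Section~\ref{PF:Dwork} cluster by cluster. In each case the reductions produce a single-variable ODE in $\eta=\psi\tfrac{\dD}{\dD\psi}$; substituting $\theta=-\eta/4$, $t=\psi^{\pm 4}$, and (where needed) multiplying by a suitable power of $\psi$ to absorb a starting monomial of fixed degree converts this ODE into the hypergeometric form \eqref{eqn:Dab'lldoit}.

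For the trivial cluster $W_{\chi_0}$ the argument is essentially verbatim from Lemma~\ref{lem:F4hol}: the only relevant monomials are $1$, $x_0x_1x_2x_3$, and $(x_0x_1x_2x_3)^2$, and both the deformation $-4\psi x_0x_1x_2x_3$ and the $\chi_0$-parts of the Jacobian relations involve only these monomials, so the Dwork staircase produces the same operator $D(\tfrac{1}{4}, \tfrac{1}{2}, \tfrac{3}{4}; 1, 1, 1 \,|\, \psi^{-4})$, contributing $3$ periods. For a cluster of type (ii) I would fix the representative $W_{\chi_1}$ with basis $\{x_1^2 x_3^2,\ x_0^2 x_1 x_2,\ x_2^2 x_1 x_3\}$. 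The new feature is that each $x_i\partial_i F_\psi$ is now a trinomial, for instance $x_0\partial_0 F_\psi = 3x_0^3 x_1 + x_0 x_2^3 - 4\psi x_0 x_1 x_2 x_3$, so the three basis periods are coupled by the Griffiths relations. I would then build the analogue of the diagram of Lemma~\ref{lem:F4twoSquares}: starting from each basis monomial, apply \eqref{psiderivative} to shift up by $(1,1,1,1)$ and then iteratively apply \eqref{DiagramPartials} for various $i$ to reduce back into the cluster; by order three in $\eta$ the linear system over $\C(\psi)$ closes and yields one ODE per basis monomial. After substituting $t=\psi^4$ and clearing the power of $\psi$ coming from the starting monomial, these three ODEs become the first three operators listed in the proposition, whose numerator parameter multisets $\{\tfrac{1}{14}, \tfrac{9}{14}, \tfrac{11}{14}\}$, $\{-\tfrac{3}{14}, \tfrac{1}{14}, \tfrac{9}{14}\}$, $\{-\tfrac{5}{14}, -\tfrac{3}{14}, \tfrac{1}{14}\}$ agree modulo $\Z$ and whose denominator parameters shift successively by $-\tfrac{1}{4}$, reflecting the quartic action. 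The three-fold cyclic symmetry permuting $(x_0,x_1,x_2)$ sends $W_{\chi_1}$ to $W_{\chi_2}$ and $W_{\chi_4}$ and identifies the basis elements accordingly, so each operator annihilates three periods (one per cluster). A cluster of type (iii), taking $W_{\chi_3}$ on the basis $\{x_2^3 x_1,\ x_1^2 x_2 x_3,\ x_3^2 x_0 x_2\}$ as representative, is handled identically and yields the remaining three operators, whose numerator parameters lie in the complementary coset $\{\tfrac{3}{14}, \tfrac{5}{14}, \tfrac{13}{14}\}$ modulo $\Z$.

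The principal obstacle is the bookkeeping in the diagrammatic reductions for clusters of types (ii) and (iii). Because each $x_i \partial_i F_\psi$ is a trinomial rather than a binomial, the tree-like staircases of Section~\ref{PF:Dwork} become coupled webs of monomial periods, and the choice of which partial to apply at each step determines whether the two extraneous trinomial terms can be eliminated using earlier relations or instead force the system open. Once a good traversal is identified, the appearance of the parameters $\tfrac{k}{14}$ arises naturally from the interaction of the order-$7$ scaling symmetry with the quartic change of variables $t = \psi^4$ and the substitution $\theta = -\eta/4$.
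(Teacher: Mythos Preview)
Your overall plan matches the paper's: decompose by characters, run the diagrammatic Griffiths--Dwork method on a representative of each cluster type, then transport via the cyclic symmetry. For $W_{\chi_0}$ your sketch is fine and is exactly what the paper does.

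Where you diverge from the paper is precisely at the obstacle you flag. You propose to apply \eqref{DiagramPartials} for individual $i$ and accept that each $x_i\partial_iF_\psi$ is a trinomial, then hope the resulting coupled web closes. The paper instead resolves this obstacle \emph{before} starting the diagrams: it takes integer linear combinations of the $x_i\partial_iF_\psi$ so that the right-hand side collapses to a single monomial plus the deformation term. For example,
\[
(9x_0\partial_0 + x_1\partial_1 - 3x_2\partial_2)F_\psi = 28\bigl(x_0^3x_1 - \psi\, x_0x_1x_2x_3\bigr),
\]
and analogously for the shifts $(0,3,1,0)$, $(1,0,3,0)$, $(0,0,0,4)$. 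This produces binomial period relations \eqref{F1L3stair} of exactly the same shape as the Fermat staircase \eqref{stair}, with the coefficients $f_j(\mathbf{v})/(28(k(\mathbf{v})+1))$ recording the linear combination used. Once you have these, the diagrams for $W_{\chi_1}$ and $W_{\chi_3}$ are no more intricate than \eqref{eqn:0022}: each step moves by one of the four binomial shifts, the three basis periods in a cluster are cyclically linked by three first-order relations of the form $\eta(\cdot)=\psi^a(c+\eta)(\cdot)$, and composing them yields the third-order hypergeometric operator directly. The fourteenths appear because the constants $c$ arising from the $f_j$ are multiples of $\tfrac{1}{7}$, and the variable change $u=\psi^4$ divides by $4$.

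So the gap in your proposal is not conceptual but technical: you have not identified how to decouple the trinomial relations, and without that step the ``good traversal'' you hope to find does not exist in the single-$i$ form of \eqref{DiagramPartials}. The fix is to first invert the $3\times3$ block of the exponent matrix coming from the loop $x_0^3x_1+x_1^3x_2+x_2^3x_0$ over $\Q$ (determinant $28$), which is exactly what the linear combinations above accomplish.
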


Again, the latter $6$ operators have an algebraic solution.  To prove Proposition \ref{prop:kleinmukaidiff}, we again use the diagrammatic method outlined above, but in this case we have different periods that are related. Notice that we have the following differentials $\partial_i$ multiplied by $x_i$: 
\begin{equation}\begin{aligned}
x_0\partial_0 F_\psi  &= 3 x_0^3 x_1 + x_2^3x_0 - 4 \psi x_0x_1x_2x_3 \\
x_1\partial_1 F_\psi  &= 3 x_1^3 x_2 + x_0^3x_1 - 4 \psi x_0x_1x_2x_3 \\
x_2\partial_2 F_\psi  &= 3 x_2^3 x_0 + x_1^3x_2 - 4 \psi x_0x_1x_2x_3 \\
x_3 \partial_3 F_\psi &= 4x_4^4 - 4\psi x_0x_1x_2x_3
\end{aligned}\end{equation}
We can make linear combinations of these equations so that the right hand side is just a linear combination of two monomials, for example,
\begin{equation}
(9x_0\partial_0 + x_1\partial_1 - 3x_2\partial_2)F_\psi = 28(x_0^3x_1 - \psi x_0x_1x_2x_3).
\end{equation}
Now using \eqref{DiagramPartials}, we obtain the following period relations analogous to \eqref{stair}, written in multi-index notation:
\begin{equation}\begin{aligned}\label{F1L3stair}
\mathbf{v}+(3,1,0,0) &= \frac{f_0(\mathbf{v})}{28(k(\mathbf{v})+1)}\mathbf{v} + \psi (\mathbf{v}+(1,1,1,1)), \\
\mathbf{v}+(0,3,1,0) &= \frac{f_1(\mathbf{v})}{28(k(\mathbf{v})+1)}\mathbf{v} + \psi (\mathbf{v}+(1,1,1,1)), \\
\mathbf{v}+(0,0,3,1) &= \frac{f_2(\mathbf{v})}{28(k(\mathbf{v})+1)}\mathbf{v} + \psi (\mathbf{v}+(1,1,1,1)), \text{ and} \\
\mathbf{v}+(0,0,0,4) &= \frac{1+v_3}{4(k(\mathbf{v})+1)}\mathbf{v} + \psi (\mathbf{v}+(1,1,1,1));
\end{aligned}\end{equation}
where 
\begin{equation}\begin{aligned}
f_0(\mathbf{v}) &\colonequals 9(v_0+1) +(v_1+1) - 3(v_2+1), \\
f_1(\mathbf{v}) &\colonequals -3(v_0+1) +9(v_1+1) + (v_2+1), \text{ and} \\
f_2(\mathbf{v}) &\colonequals (v_0+1) -3 (v_1+1) + 9(v_2+1).
\end{aligned}\end{equation}

\begin{lem} \label{prop:F1L3hol}
Let $t = \psi^{-4}$. The Picard--Fuchs equation associated to the period $\psi (0,0,0,0)$ is the hypergeometric differential equation $D(\tfrac{1}{4}, \tfrac{1}{2}, \tfrac{3}{4} ; 1, 1, 1 \,|\,t)$.
\end{lem}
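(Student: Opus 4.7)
The strategy is parallel to the proof of Lemma~\ref{lem:F4hol} for the Dwork pencil: derive the \emph{same} hypergeometric operator $D(\tfrac{1}{4}, \tfrac{1}{2}, \tfrac{3}{4}; 1, 1, 1 \,|\, t)$, consistent with the common-factor phenomenon for the holomorphic period across all five pencils. The proof needs three ingredients: a rewriting of $(\eta-2)(\eta-1)\eta(0,0,0,0)$ in terms of $(3,3,3,3)$, a closing identity $(3,3,3,3) = \psi(4,4,4,0)$, and a staircase expressing $(4,4,4,0)$ as a polynomial in $\eta+1$ applied to $(0,0,0,0)$, where as before $\eta \colonequals \psi\,\dD/\dD\psi$.

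The first step applies \eqref{psiderivative} three times and is identical to the Dwork case:
\[ (\eta-2)(\eta-1)\eta\,(0,0,0,0) \;=\; 4\psi \cdot 8\psi \cdot 12\psi\,(3,3,3,3) \;=\; 384\,\psi^3\,(3,3,3,3). \]
For the closing identity, observe that the Fermat block $x_3^4 - 4\psi x_0 x_1 x_2 x_3$ of $F_\psi$ is identical to the corresponding piece in the Dwork polynomial, so $\partial_3 F_\psi$ is unchanged. Applying the $(0,0,0,4)$-shift in \eqref{F1L3stair} with the formal index $\mathbf{v} = (3,3,3,-1)$ (so that $1 + v_3 = 0$ kills the $\mathbf{v}$-coefficient) yields $(3,3,3,3) = \psi(4,4,4,0)$ verbatim as in Lemma~\ref{lem:F4hol}.

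The new phenomenon is the staircase from $(0,0,0,0)$ to $(4,4,4,0)$. The decisive observation is that the three ``loop'' monomials $x_0^3 x_1$, $x_1^3 x_2$, $x_2^3 x_0$ of $F_\psi$ contribute the three non-Fermat shifts in \eqref{F1L3stair}, and their exponent vectors sum to $(4,4,4,0)$. I would therefore climb
\[ (0,0,0,0)\;\longrightarrow\;(0,3,1,0)\;\longrightarrow\;(1,3,4,0)\;\longrightarrow\;(4,4,4,0), \]
invoking in turn the $f_1$-, $f_2$-, and $f_0$-relations. The crucial routine check is that at each stage
\[ f_1(0,0,0,0) \;=\; f_2(0,3,1,0) \;=\; f_0(1,3,4,0) \;=\; 7, \]
so the Jacobian coefficient $\tfrac{f_i(\mathbf{v})}{28(k(\mathbf{v})+1)}$ collapses to $\tfrac{1}{4(k(\mathbf{v})+1)}$, exactly matching the Dwork staircase. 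Combined with $\psi(\mathbf{v}+(1,1,1,1)) = \tfrac{1}{4(k(\mathbf{v})+1)}\,\eta(\mathbf{v})$ from \eqref{psiderivative}, every step reads $(\text{next}) = \tfrac{1}{4(k+1)}(\eta+1)(\text{current})$, and iterating three times gives $(4,4,4,0) = \tfrac{1}{384}(\eta+1)^3(0,0,0,0)$.

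Assembling the three ingredients yields $[(\eta-2)(\eta-1)\eta - \psi^4(\eta+1)^3](0,0,0,0) = 0$. Multiplying by $\psi$ and substituting $t = \psi^{-4}$, $\theta = -\eta/4$, exactly as at the end of the proof of Lemma~\ref{lem:F4hol}, converts this to $D(\tfrac{1}{4}, \tfrac{1}{2}, \tfrac{3}{4}; 1, 1, 1 \,|\, t)\,\psi(0,0,0,0) = 0$. The main obstacle is identifying the correct ordering of the three loop-shifts so that the coefficients $f_i$ align to the universal value $7$; this cancellation is forced by the cyclic $\Z/3\Z$ symmetry permuting $(x_0,x_1,x_2)$, which ensures that the $(\eta+1)$ operator reappears uniformly at each step as in Dwork.
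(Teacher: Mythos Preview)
Your proof is correct and follows essentially the same approach as the paper's. The paper builds the identical diagram and derives the same operator $[(\eta-2)(\eta-1)\eta - \psi^4(\eta+1)^3]$; the only difference is that the paper climbs the staircase in the order $(0,0,0,0)\to(3,1,0,0)\to(3,4,1,0)\to(4,4,4,0)$ (applying $f_0,f_1,f_2$), whereas you apply the three loop shifts in the cyclically rotated order $f_1,f_2,f_0$ --- both orderings produce $f_i=7$ at each step, so the computations coincide.
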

\begin{proof}
We  build the following diagram using \eqref{F1L3stair} and \eqref{psiderivative}:
$$
\xymatrix{
	& (0,0,0,0) \ar[r] \ar[d] & (1,1,1,1)  \ar[r] \ar[d] & (2,2,2,2) \ar[r] \ar[d] & (3,3,3,3) \\
	& (3,1,0,0) \ar[r] \ar[d] & (4,2,1,1)  \ar[r] \ar[d] & (5,3,2,2) & \\
	& (3,4,1,0) \ar[r] \ar[d] & (4,5,2,1) &  & \\
	(3,3,3,-1) \ar[r] \ar[d] & (4,4,4,0) &&& \\
	(3,3,3,3) &&&&&}
$$
When one runs through this computation, one can see that we get the same Picard--Fuchs equation for the invariant period as we did with the Fermat:
\begin{equation}
\left[(\eta-2)(\eta-1)\eta - \psi^4 (\eta+1)^3\right] (0,0,0,0) = 0
 \end{equation}
By multiplying by $\psi$ and changing variables to $t = \psi^{-4}$ and $\theta = t \displaystyle{\frac{\dD}{\dD t}}$, we can see by following through the computation seen in ~\eqref{hyperF4hol} that:
$$
\left[  \theta^3 -t(\theta +\tfrac{3}{4})(\theta +\tfrac{1}{2})(\theta+\tfrac{1}{4}) \right] \psi (0,0,0,0) = 0,
$$
which is the differential equation $D(\tfrac{1}{4}, \tfrac{1}{2}, \tfrac{3}{4} ; 1, 1, 1 \,|\,t)$.
\end{proof}
 
\begin{lem}\label{prop:F1L3char1}
For the Klein--Mukai family $X_\psi$, 
\[ \begin{gathered}
\text{the period $(0,1,2,1)$  is annihilated by $D(\tfrac{1}{14}, \tfrac{9}{14}, \tfrac{11}{14} ; \tfrac{1}{4}, \tfrac{3}{4}, 1 \,|\,\psi^{4})$,} \\ 
\text{the period $\psi(0,2,0,2)$  is annihilated by $D(\tfrac{-3}{14}, \tfrac{1}{14}, \tfrac{9}{14} ; 0, \tfrac{1}{4}, \tfrac{3}{4} \,|\,\psi^{4})$, and} \\ 
\text{the period $\psi^3 (2,1,1,0)$  is annihilated by  $D(\tfrac{-5}{14}, \tfrac{-3}{14}, \tfrac{1}{14} ; \tfrac{-1}{4}, 0, \tfrac{1}{4} \,|\,\psi^{4})$.} 
\end{gathered} \]
\end{lem}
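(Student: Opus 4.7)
The plan is to mimic the diagrammatic approach of Lemmas \ref{lem:F4hol}--\ref{lem:F4mono31}, now using the Klein--Mukai stair relations \eqref{F1L3stair} in place of \eqref{stair} and working within the three-dimensional character subspace $W_{\chi_1}$. By the setup in section \ref{PF:KM}, the three periods $(0,2,0,2)$, $(2,1,1,0)$, and $(0,1,2,1)$ form a monomial basis for $W_{\chi_1}$; since the relations \eqref{F1L3stair} and \eqref{psiderivative} both preserve the $H$-isotypic decomposition, we can expect a closed system after three applications of $\eta = \psi \tfrac{\dD}{\dD\psi}$, yielding a third-order ODE for each of the three scaled periods.

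First I would focus on $(0,1,2,1)$. Using \eqref{psiderivative} to shift by $(1,1,1,1)$ and then iterating the stair relations \eqref{F1L3stair} to reduce each resulting higher-pole-order period back into the span of the three basis monomials of $W_{\chi_1}$, I would build a diagram (analogous to \eqref{eqn:0022}) expressing $\eta^k(0,1,2,1)$ for $k=0,1,2,3$ as a $\C(\psi)$-linear combination of the basis. Since $\dim_{\C} W_{\chi_1}=3$, these four expressions are linearly dependent, which gives a third-order linear ODE in $\psi$. After multiplying by a suitable power of $\psi$, clearing denominators, and substituting $t = \psi^{4}$ (note the positive exponent, in contrast to the Dwork case; here $\theta = t\tfrac{\dD}{\dD t} = \eta/4$), I would match the result against the stated operator $D(\tfrac{1}{14}, \tfrac{9}{14}, \tfrac{11}{14}; \tfrac{1}{4}, \tfrac{3}{4}, 1 \,|\,\psi^{4})$.

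The ODEs for $\psi(0,2,0,2)$ and $\psi^3(2,1,1,0)$ can then be obtained either by running the same diagrammatic procedure starting from those monomials or, more efficiently, by exploiting the identity $(\eta-a)\psi^{a}=\psi^{a}\eta$: multiplying a period by $\psi^j$ conjugates the hypergeometric operator and translates every numerator and denominator parameter by an amount determined by $j$, which is consistent with the pattern of parameters in the three stated operators. A convergence check across the three derivations provides a useful sanity test.

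The main obstacle is bookkeeping: the coefficient $f_i(\mathbf{v})$ in \eqref{F1L3stair} is a nontrivial linear form in the exponents, so successive applications produce several distinct intermediate terms (unlike the Dwork case, where the coefficient $\tfrac{1+v_i}{4(k+1)}$ collapses neatly). In particular, several degree-four monomials of character $\chi_1$ that arise along the way (for instance $x_0 x_3^3$ and $x_2^4$) are \emph{not} in the chosen basis; they must be rewritten using additional stair relations (notably the fourth equation of \eqref{F1L3stair} together with the ones involving $(3,1,0,0), (0,3,1,0), (1,0,3,0)$) to reduce everything back into the three-term basis. Closure within $W_{\chi_1}$ is guaranteed by $H$-equivariance of all operations, and the indicial exponents of the resulting operator at $t=0$ should reproduce the expected Hodge grading of the subspace.
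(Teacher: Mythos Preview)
Your proposal is correct and follows the same diagrammatic Griffiths--Dwork strategy as the paper. The paper's execution is slightly cleaner than what you sketch: rather than computing $\eta^k(0,1,2,1)$ for $k=0,1,2,3$ and reducing everything back into the basis, it builds a single diagram that yields three \emph{cyclic first-order} relations of the form $\eta(0,1,2,1)=\psi(\eta+\tfrac{1}{7})(0,2,0,2)$, $\eta(0,2,0,2)=\psi^2(\eta+\tfrac{11}{7})(2,1,1,0)$, $\eta(2,1,1,0)=\psi(\eta+\tfrac{2}{7})(0,1,2,1)$, and then composes them; with this path the auxiliary monomials you worry about (like $x_0x_3^3$ and $x_2^4$) never appear. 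Your observation about conjugation by $\psi^j$ via $(\eta-a)\psi^a=\psi^a\eta$ is exactly how the paper passes from the ODE for $(0,2,0,2)$ to that for $\psi(0,2,0,2)$, and similarly for $\psi^3(2,1,1,0)$.
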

\begin{proof}
For the character $\chi_1(g)=\xi$ associated to $1 \in \Z/7\Z$, we have the following diagram:
$$
\xymatrix{ &&& (0,1,2,1) \ar[r] \ar[d] & (1,2,3,2) \\
	&& (2,1,1,0) \ar[r] \ar[d] & (3,2,2,1) & \\
	& (1,3,1,-1) \ar[r] \ar[d] & (2,4,2,0)&& \\
	(0,2,0,2) \ar[r] \ar[d] & (1,3,1,3) &&& \\
	(1,2,3,2) &&&&}
$$
We then have the following relations:
\begin{equation}\begin{aligned}
\eta (0,2,0,2) &= \frac{11}{7} \psi^2 + \psi^2 \eta (2,1,1,0); \\
\eta (2,1,1,0) & = \frac{2}{7} \psi (0,1,2,1) + \psi \eta (0,1,2,1); \\
\eta (0,1,2,1) & = \frac{1}{7} \psi (0,2,0,2) + \psi \eta (0,2,0,2).
\end{aligned}\end{equation}
Now we can use these relations to compute the Picard--Fuchs equations associated to $(2,1,1,0)$, $(0,2,0,2),$ and $(0,1,2,1)$. We first do this for the period $(0,1,2,1)$: 
\begin{equation}\begin{aligned}
\eta (0,1,2,1) & = \frac{1}{7} \psi (0,2,0,2) \psi \eta (0,2,0,2) \\
(\eta-1)\eta (0,1,2,1) &= \frac{165}{49} \psi^3 (2,1,1,0) + \frac{26}{7} \psi^3 \eta (2,1,1,0) + \psi^3 \eta^2 (2,1,1,0) \\
(\eta-3)(\eta-1) \eta (0,1,2,1) &= \psi^4\left(\eta + \tfrac{2}{7}\right)\left(\eta + \tfrac{18}{7}\right) \left(\eta + \tfrac{22}{7}\right) 
\end{aligned}\end{equation}
This gives us the Picard--Fuchs equation for the period $(0,1,2,1)$:
\begin{equation}
\left[  (\eta-3)(\eta-1) \eta - \psi^4\left(\eta + \tfrac{2}{7}\right)\left(\eta + \tfrac{18}{7}\right) \left(\eta + \tfrac{22}{7}\right)\right] (0,1,2,1) = 0,
\end{equation}
Letting $u=\psi^4$ and $\sigma = u \displaystyle{\frac{\dD}{\dD u}}$, we get the following hypergeometric form:
\begin{align*}
\left[  (4\sigma-3)(4\sigma-1) 4\sigma - u\left(4\sigma + \tfrac{2}{7}\right)\left(4\sigma + \tfrac{18}{7}\right) \left(4\sigma + \tfrac{22}{7}\right)\right] (0,1,2,1) &= 0\\
\left[  (\sigma-\tfrac{3}{4})(\sigma-\tfrac{1}{4}) \sigma - u\left(\sigma + \tfrac{1}{14}\right)\left(\sigma + \tfrac{9}{14}\right) \left(\sigma + \tfrac{11}{14}\right)\right] (0,1,2,1) &= 0,
\end{align*}
which is the hypergeometric differential equation  $D(\tfrac{1}{14}, \tfrac{9}{14}, \tfrac{11}{14} ; 1, \tfrac{1}{4}, \tfrac{3}{4} \,|\,u)$.

We then do the same for $(0,2,0,2)$:
\begin{equation}\begin{aligned}
\eta (0,2,0,2) &=\frac{11}{7} \psi^2 + \psi^2 \eta (2,1,1,0) \\
(\eta - 2) \eta (0,2,0,2) &= \frac{36}{49} \psi^3 (0,1,2,1) + \frac{20}{7} \psi^3 \eta (0,1,2,1) + \psi^3 \eta^2 (0,1,2,1) \\
(\eta-3)(\eta-2) \eta (0,2,0,2) &= \psi^4\left(\eta + \tfrac{1}{7} \right)\left(\eta + \tfrac{9}{7}\right)\left(\eta + \tfrac{25}{7}\right) (0,2,0,2).
\end{aligned}\end{equation}
This gives us the Picard--Fuchs equation for the period $(0,2,0,2)$:
\begin{equation}
\left[ (\eta-3)(\eta-2)\eta - \psi^4\left(\eta + \tfrac{1}{7} \right)\left(\eta + \tfrac{9}{7}\right)\left(\eta + \tfrac{25}{7}\right)  \right] (0,2,0,2) = 0.
\end{equation}
By multiplying by $\psi$ and changing variables to $u=\psi^4$ and $\sigma = u \displaystyle{\frac{\dD}{\dD u}}$, we get:
\begin{align*}
\psi\left[ (\eta-3)(\eta-2)\eta - \psi^4\left(\eta + \tfrac{1}{7} \right)\left(\eta + \tfrac{9}{7}\right)\left(\eta + \tfrac{25}{7}\right)  \right] (0,2,0,2) &= 0 \\
\left[ (\eta-4)(\eta-3)(\eta-1) - \psi^4\left(\eta - \tfrac{6}{7} \right)\left(\eta + \tfrac{2}{7}\right)\left(\eta + \tfrac{18}{7}\right)  \right] \psi(0,2,0,2) &= 0 \\
\left[ (4\sigma-4)(4\sigma-3)(4\sigma-1) - u\left(4\sigma - \tfrac{6}{7} \right)\left(4\sigma + \tfrac{2}{7}\right)\left(4\sigma + \tfrac{18}{7}\right)  \right] \psi(0,2,0,2) &= 0 \\
\left[ (\sigma-1)(\sigma-\tfrac{3}{4})(\sigma-\tfrac{1}{4}) - u\left(\sigma - \tfrac{3}{14} \right)\left(\sigma + \tfrac{1}{14}\right)\left(\sigma + \tfrac{9}{14}\right)  \right] \psi(0,2,0,2) &= 0,
\end{align*}
which is the hypergeometric differential equation $D(\tfrac{1}{14}, \tfrac{9}{14}, \tfrac{-3}{14} ; 0, \tfrac{1}{4}, \tfrac{3}{4} \,|\,u)$.

We finally look at $(2,1,1,0)$:
\begin{equation}\begin{aligned}
\eta (2,1,1,0) & = \frac{2}{7} \psi (0,1,2,1) + \psi \eta (0,1,2,1) \\
(\eta-1)\eta (2,1,1,0)  &= \frac{2}{7} \psi \eta (0,1,2,1) + \psi \eta^2 (0,1,2,1) \\
	&= \frac{9}{49} \psi^2 (0,2,0,2) + \frac{10}{7} \psi^2 \eta (0,2,0,2) + \psi^2 \eta^2 (0,2,0,2) \\
(\eta-2)(\eta-1)\eta (2,1,1,0) &=\frac{9}{49} \psi^2\eta (0,2,0,2) + \frac{10}{7} \psi^2 \eta^2 (0,2,0,2) + \psi^2 \eta^3 (0,2,0,2) \\
	&= \psi^4\left(\eta + \tfrac{11}{7}\right) \left(\eta + \tfrac{15}{7}\right)\left(\eta + \tfrac{23}{7}\right) (2,1,1,0).
\end{aligned}\end{equation}
This gives us the Picard-Fuchs equation for the period $(2,1,1,0)$:
\begin{equation}
\left[ (\eta-2)(\eta-1)\eta - \psi^4 \left(\eta + \tfrac{11}{7}\right) \left(\eta + \tfrac{15}{7}\right)\left(\eta + \tfrac{23}{7}\right)  \right] (2,1,1,0) = 0
\end{equation}
By multiplying by $\psi^3$ and again changing variables 
we get:
\begin{align*}
\psi^3\left[ (\eta-2)(\eta-1)\eta - \psi^4 \left(\eta + \tfrac{11}{7}\right) \left(\eta + \tfrac{15}{7}\right)\left(\eta + \tfrac{23}{7}\right)  \right] (2,1,1,0) &= 0 \\
\left[ (\eta-5)(\eta-4)(\eta-3) - \psi^4 \left(\eta - \tfrac{10}{7}\right) \left(\eta - \tfrac{6}{7}\right)\left(\eta + \tfrac{2}{7}\right)  \right] \psi^3(2,1,1,0) &= 0 \\
\left[ (4\sigma-5)(4\sigma-4)(4\sigma-3) - u \left(4\sigma - \tfrac{10}{7}\right) \left(4\sigma - \tfrac{6}{7}\right)\left(4\sigma + \tfrac{2}{7}\right)  \right] \psi^3(2,1,1,0) &= 0 \\
\left[ (\sigma-\tfrac{5}{4})(\sigma-1)(\sigma-\tfrac{3}{4}) - u \left(\sigma - \tfrac{5}{14}\right) \left(\sigma - \tfrac{3}{14}\right)\left(\sigma + \tfrac{1}{14}\right)  \right] \psi^3(2,1,1,0) &= 0;
\end{align*}
at last, we have the hypergeometric differential equation $D(\tfrac{1}{14}, \tfrac{-5}{14}, \tfrac{-3}{14} ; 0, \tfrac{-1}{4}, \tfrac{1}{4} \,|\,u)$.
\end{proof}

\begin{lem}\label{prop:F1L3char3}
For the Klein--Mukai family $X_\psi$, 
\[ \begin{gathered}
\text{the period $(0,2,1,1)$  is annihilated by $D(\tfrac{3}{14}, \tfrac{5}{14}, \tfrac{13}{14} ; \tfrac{1}{4}, \tfrac{3}{4}, 1 \,|\,\psi^4)$,} \\ 
\text{the period $\psi(1,0,1,2) $  is annihilated by $D(\tfrac{-1}{14}, \tfrac{3}{14}, \tfrac{5}{14} ; 0, \tfrac{1}{4}, \tfrac{3}{4} \,|\,\psi^4)$, and} \\ 
\text{the period $\psi^3 (0,1,3,0)$  is annihilated by  $D(\tfrac{-11}{14}, \tfrac{-1}{14}, \tfrac{5}{14} ; \tfrac{-1}{4}, 0, \tfrac{1}{4}  \,|\,\psi^4)$.} 
\end{gathered} \]
\end{lem}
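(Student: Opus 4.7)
The plan is to mimic the strategy of Lemma \ref{prop:F1L3char1} closely, since the subspace $W_{\chi_3}$ is structurally analogous to $W_{\chi_1}$: it has a monomial basis of three elements, and we must derive one third-order Picard--Fuchs equation for each of the three periods $(0,2,1,1)$, $(1,0,1,2)$, and $(0,1,3,0)$. First I would build the diagram of period relations, using the stair identities \eqref{F1L3stair} and the $\psi$-derivative \eqref{psiderivative}. Concretely, I expect a diagram of the form
\[
\xymatrix{
&&&(0,2,1,1)\ar[r]\ar[d]&(1,3,2,2)\\
&&(1,0,1,2)\ar[r]\ar[d]&(2,1,2,3)&\\
&(0,1,3,-1)\ar[r]\ar[d]&(1,2,4,0)&&\\
(0,1,3,0)\ar[r]\ar[d]&(1,2,4,1)&&&\\
(1,3,2,2)&&&&
}
\]
which closes up by linking $(0,1,3,0)$ back to $(0,2,1,1)$ via two applications of \eqref{F1L3stair} and one of \eqref{psiderivative}.

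Next I would extract three cyclic first-order relations of the shape
\[
\eta\,(0,2,1,1)=c_0\,\psi\,(1,0,1,2)+\psi\,\eta\,(1,0,1,2),
\]
\[
\eta\,(1,0,1,2)=c_1\,\psi\,(0,1,3,0)+\psi\,\eta\,(0,1,3,0),
\]
\[
\eta\,(0,1,3,0)=c_2\,\psi^2\,(0,2,1,1)+\psi^2\,\eta\,(0,2,1,1),
\]
where the constants $c_i\in\Q$ are read off from the coefficients $f_j(\mathbf{v})/28(k(\mathbf{v})+1)$ and $(1+v_3)/4(k(\mathbf{v})+1)$ in \eqref{F1L3stair} evaluated on the relevant vectors. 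The main bookkeeping step, and the place I expect to spend the most care, is computing these rational constants correctly; the structure of the answer (numerator parameters with denominator $14$) dictates the resulting values, which is a useful sanity check.

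With these three cyclic relations in hand, I would compose them in each of the three starting positions to obtain a relation expressing $(\eta-a)(\eta-b)\eta$ applied to each period as a $\psi^4$-multiple of a product $(\eta+\gamma_1)(\eta+\gamma_2)(\eta+\gamma_3)$ applied to the same period, where the shifts $a,b$ match the powers of $\psi$ needed to reach the next period and the $\gamma_i$ are the original constants shifted by the same amounts. This yields a third-order ODE in $\eta$ and $\psi^4$ for each period.

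Finally, I would clear the $\psi$ factor appearing in front of $\psi(1,0,1,2)$ or $\psi^3(0,1,3,0)$ by multiplying the equation through by $\psi$ or $\psi^3$ and using the identity $(\eta-a)\psi^a=\psi^a\eta$, then substitute $u=\psi^4$ and $\sigma=u\tfrac{\dD}{\dD u}=\eta/4$ to reach the hypergeometric normal form \eqref{eqn:Dab'lldoit}. The numerator parameters $\tfrac{3}{14},\tfrac{5}{14},\tfrac{13}{14}$ and their shifts by $-\tfrac{1}{4}$ and $-\tfrac{3}{4}$ should emerge automatically, producing the three claimed operators; the result will then follow. The conceptually novel piece is nil—this is a direct parallel of the previous lemma—so I anticipate that the entire proof reduces to the diagram construction plus the careful tracking of the seven fractional shifts.
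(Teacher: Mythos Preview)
Your overall strategy is correct and matches the paper's: build the staircase diagram, extract three cyclic relations, compose them, then normalize. However, the specific diagram and cyclic relations you write down are wrong in a way that would derail the computation.

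First, your intermediate node $(0,1,3,-1)$ is not a valid period: it has $k(\mathbf v)=\tfrac{0+1+3-1}{4}=\tfrac34\notin\Z$, so there is no rational form $x^{\mathbf v}/F_\psi^{k(\mathbf v)+1}$ attached to it. Second, and relatedly, none of the ``down'' arrows in your diagram are instances of the stair relations \eqref{F1L3stair}: for example $(0,2,1,1)\to(2,1,2,3)$ would require adding $(2,-1,1,2)$, which is not one of $(3,1,0,0)$, $(0,3,1,0)$, $(1,0,3,0)$, $(0,0,0,4)$. Consequently your placement of the $\psi^2$ is off: the two-step leg is not $(0,1,3,0)\to(0,2,1,1)$ but rather $(1,0,1,2)\to(0,1,3,0)$, via the valid intermediate $(2,1,2,-1)$ (which has $k=1$). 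The correct cyclic relations (as in the paper) are
\[
\eta(0,2,1,1)=\psi\bigl(\eta+\tfrac{5}{7}\bigr)(1,0,1,2),\quad
\eta(1,0,1,2)=\psi^2\bigl(\eta-\tfrac{1}{7}\bigr)(0,1,3,0),\quad
\eta(0,1,3,0)=\psi\bigl(\eta+\tfrac{10}{7}\bigr)(0,2,1,1),
\]
and the paper's diagram has $(0,1,3,0)$ at the top and $(0,2,1,1)$ at the bottom, not the reverse. Once you use these, the rest of your plan (compose cyclically, multiply by $1,\psi,\psi^3$, substitute $u=\psi^4$) goes through exactly as you describe.
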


\begin{proof}
We use the following diagram:
$$
\xymatrix{ &&& (0,1,3,0) \ar[r] \ar[d] & (1,2,4,1) \\
	&& (2,1,2,-1) \ar[r] \ar[d]& (3,2,3,0) & \\
	& (1,0,1,2) \ar[r] \ar[d] & (2,1,3,3)&& \\
	(0,2,1,1) \ar[r] \ar[d]  & (1,3,2,2) &&& \\
	(1,2,4,1) &&&&}
$$
and then compute the following period relations:
\begin{equation}\begin{aligned}
\eta (0,1,3,0) &= \frac{10}{7} \psi (0,2,1,1) + \psi \eta (0,2,1,1); \\
\eta (0,2,1,1) &= \frac{5}{7} \psi (1,0,1,2) + \psi \eta (1,0,1,2); \\
\eta (1,0,1,2) &= \frac{-1}{7} \psi^2 (0,1,3,0) + \psi^2 \eta (0,1,3,0).
\end{aligned}\end{equation}

By cyclically using these relations, we get the following Picard--Fuchs equations:
\begin{equation}\begin{aligned}
\left[(\eta-3)(\eta-1)\eta - \psi^4\left(\eta + \tfrac{6}{7}\right)\left(\eta + \tfrac{10}{7}\right) \left( \eta + \tfrac{26}{7}\right)  \right] (0,2,1,1) &= 0;\\
\left[(\eta-3)(\eta - 2) \eta - \psi^4 \left(\eta + \tfrac{5}{7}\right)\left(\eta + \tfrac{13}{7}\right)\left(\eta + \tfrac{17}{7}\right)  \right] (1,0,1,2) &=0; \\
\left[(\eta-2)(\eta-1)\eta - \psi^4\left( \eta - \tfrac{1}{7}\right)\left(\eta + \tfrac{19}{7}\right)\left( \eta + \tfrac{31}{7}\right)  \right] (0,1,3,0) &=0.
\end{aligned}\end{equation}
We then multiply these equations above by $1, \psi,$ and $\psi^3$, respectively and then change coordinates to $u = \psi^4$ and $\sigma = u \displaystyle{\frac{\dD}{\dD u}}$ to obtain the following:
\begin{align*}
\left[(\sigma-\tfrac{3}{4})(\sigma-\tfrac{1}{4})\sigma - u\left(\sigma + \tfrac{3}{14}\right)\left(\sigma + \tfrac{5}{14}\right) \left( \sigma + \tfrac{13}{14}\right)  \right] (0,2,1,1) &= 0;\\
\left[(\sigma-1)(\sigma - \tfrac{3}{4})(\sigma-\tfrac{1}{4}) - u \left(\sigma - \tfrac{1}{14}\right)\left(\sigma + \tfrac{3}{14}\right)\left(\sigma + \tfrac{5}{14}\right)  \right] \psi(1,0,1,2) &=0; \\
\left[(\sigma-\tfrac{5}{4})(\sigma-1)(\sigma-\tfrac{3}{4}) - u\left( \sigma - \tfrac{11}{14}\right)\left(\sigma - \tfrac{1}{14}\right)\left( \sigma + \tfrac{5}{14}\right)  \right] \psi^3 (0,1,3,0) &=0.
\end{align*}
which are $D(\tfrac{3}{14}, \tfrac{5}{14}, \tfrac{13}{14} ; 1, \tfrac{1}{4}, \tfrac{3}{4} \,|\,u )$, $D(\tfrac{3}{14}, \tfrac{5}{14}, \tfrac{-1}{14} ; 0, \tfrac{1}{4}, \tfrac{3}{4} \,|\,u )$, and $D(\tfrac{-11}{14}, \tfrac{5}{14}, \tfrac{-1}{14} ; 0, \tfrac{1}{4}, \tfrac{-1}{4} \,|\,u)$, respectively.
\end{proof}

We conclude this section by combining these results.

\begin{proof}[{Proof of Proposition \textup{\ref{prop:kleinmukaidiff}}}]
Combine Lemmas ~\ref{prop:F1L3hol}, \ref{prop:F1L3char1}, and \ref{prop:F1L3char3}.
\end{proof}

\subsection{Remaining pencils}

For the remaining three pencils $\Fsf_2\Lsf_2$, $\Lsf_2\Lsf_2$, and $\Lsf_4$, the Picard--Fuchs equations can be derived in a similar manner.  The details can be found in Appendix \ref{appendix:pfs}; we state here only the results.

\begin{prop} \label{prop:F2L2diff}
The group  $H^2_{\textup{prim}}(X_{\Fsf_2\Lsf_2, \psi}, \C)$ has $15$ periods whose Picard--Fuchs equations are hypergeometric differential equations as follows:
\[ \begin{gathered}
\text{$3$ periods are annihilated by $D(\tfrac{1}{4}, \tfrac{1}{2}, \tfrac{3}{4} ; 1, 1, 1 \,|\,\psi^{-4})$,} \\ 
\text{ $2$ periods are annihilated by $D(\tfrac{1}{4}, \tfrac{3}{4} ; 1, \tfrac{1}{2} \,|\,\psi^{-4})$,} \\ 
\text{$2$ periods are annihilated by $D(\tfrac{1}{2} ; 1 \,|\,\psi^{4})$,} \\ 
\text{$4$ periods are annihilated by $D(\tfrac{1}{8}, \tfrac{5}{8} ; 1, \tfrac{1}{4} \,|\,\psi^{4})$, and } \\ 
\text{$4$ periods are annihilated by $D(\tfrac{1}{8}, \tfrac{-3}{8} ; 0, \tfrac{1}{4} \,|\,\psi^{4})$.} 
\end{gathered} \]
\end{prop}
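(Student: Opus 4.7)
The plan is to follow the same diagrammatic Griffiths--Dwork strategy used in Sections~\ref{PF:Dwork} and~\ref{PF:KM}, adapted to the defining polynomial
\[ F_{\Fsf_2\Lsf_2,\psi} = x_0^4 + x_1^4 + x_2^3 x_3 + x_3^3 x_2 - 4\psi x_0 x_1 x_2 x_3 \]
and its $\mu_8$ symmetry group. First I would write down a generator $g$ of $\mu_8$, for instance $g(x_0:x_1:x_2:x_3) = (\zeta_8 x_0 : \zeta_8^{-1} x_1 : \zeta_8^2 x_2 : \zeta_8^{-2} x_3)$ (after checking the constraint that $g^*F_\psi = F_\psi$), and decompose the $19$-dimensional Jacobian ring quotient $V = (\C[x_0,\dots,x_3]/J(F_{\Fsf_2\Lsf_2,\psi}))_4$ into character eigenspaces $W_\chi$. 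By exploiting the residual $S_2 \times S_2$ permutation symmetry swapping $x_0 \leftrightarrow x_1$ and $x_2 \leftrightarrow x_3$, I expect to identify five clusters of sizes $1, 2, 2, 4, 4$ (up to this permutation action) whose monomial bases produce the $15$ relevant periods. The missing $6$ dimensions correspond to characters whose eigenspaces contribute periods that are constant in $\psi$ (consistent with the $L_S(\Q(\zeta_8)\mid\Q,s-1)^2$ factor of degree $6$ in Main Theorem~\ref{mainthm}(c)).

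Next, I would derive the analogues of \eqref{stair} and \eqref{F1L3stair} for this pencil. Computing
\begin{align*}
x_0 \partial_0 F_\psi &= 4 x_0^4 - 4\psi x_0 x_1 x_2 x_3, \\
x_1 \partial_1 F_\psi &= 4 x_1^4 - 4\psi x_0 x_1 x_2 x_3, \\
x_2 \partial_2 F_\psi &= 3 x_2^3 x_3 + x_3^3 x_2 - 4\psi x_0 x_1 x_2 x_3, \\
x_3 \partial_3 F_\psi &= x_2^3 x_3 + 3 x_3^3 x_2 - 4\psi x_0 x_1 x_2 x_3,
\end{align*}
I would form the linear combinations $(3 x_2\partial_2 - x_3\partial_3)F_\psi = 8(x_2^3 x_3 - \psi x_0x_1x_2x_3)$ and $(-x_2\partial_2 + 3 x_3\partial_3)F_\psi = 8(x_3^3 x_2 - \psi x_0x_1x_2x_3)$ to obtain clean monomial reduction rules, then combine these with \eqref{DiagramPartials} and \eqref{psiderivative} to obtain stair-step period relations.

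I would then build a diagram for each cluster and extract the Picard--Fuchs equation cluster by cluster. The invariant cluster (containing $x_0x_1x_2x_3$) is guaranteed to reproduce $D(\tfrac14,\tfrac12,\tfrac34;1,1,1 \,|\, \psi^{-4})$ by the same calculation as Lemma~\ref{lem:F4hol}, since only the form of $F_\psi$ via the product $x_0x_1x_2x_3$ enters. The two-element clusters containing pairs like $\{x_0^2x_1^2, x_2^2x_3^2\}$ and $\{x_0^3 x_1, x_1^3 x_0\}$ should mirror Lemmas~\ref{lem:F4twoSquares} and~\ref{lem:F4mono31} respectively, yielding $D(\tfrac14,\tfrac34;1,\tfrac12\,|\,\psi^{-4})$ and $D(\tfrac12;1\,|\,\psi^4)$. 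The two four-element clusters are the genuinely new feature: they arise from monomials involving $x_2, x_3$ in asymmetric powers (e.g.\ starting from $x_2^3 x_0$ or similar), where the $\mu_8$-action cycles through a length-four orbit. For each such cluster I would iterate the period relations four times around a square diagram (as in \eqref{eqn:0022} but longer), solve the resulting $4\times 4$ linear system to express a third derivative in terms of lower-order terms, and obtain a second-order operator after dividing out. After multiplying by the appropriate power of $\psi$ and changing variables to $u = \psi^4$ with $\sigma = u\,\dD/\dD u$, I expect these to reduce to $D(\tfrac18,\tfrac58;1,\tfrac14\,|\,\psi^4)$ and $D(\tfrac18,-\tfrac38;0,\tfrac14\,|\,\psi^4)$.

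The main obstacle is the bookkeeping in the two four-periods clusters: unlike the Dwork case, the partial derivatives $\partial_2 F_\psi$ and $\partial_3 F_\psi$ both contain two monomial terms, so the stair-step moves produce sums of several periods at each step, and one must choose the correct linear combinations to keep the diagram finite and closed up to the Jacobian ideal. Once the correct combinations are identified (guided by the $\mu_8$-character decomposition, which constrains which periods can appear together), the verification reduces to the same kind of straightforward $\eta$-shift manipulation $(\eta - a)\psi^a = \psi^a \eta$ used in Lemmas~\ref{lem:F4twoSquares} and~\ref{prop:F1L3char1}. Finally, combining the resulting per-cluster lemmas gives the count $3 + 2 + 2 + 4 + 4 = 15$ periods of the proposition.
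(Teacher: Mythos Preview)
Your overall strategy matches the paper's appendix proof exactly: decompose into $\mu_8$-character eigenspaces, derive stair-step period relations from the linear combinations $(3x_2\partial_2 - x_3\partial_3)F_\psi$ and $(-x_2\partial_2 + 3x_3\partial_3)F_\psi$ together with $x_i\partial_i F_\psi$ for $i=0,1$, and then run the diagram method cluster by cluster.

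A few of your specific guesses about the cluster structure are off, however, and would lead you to set up the wrong diagrams. First, your proposed generator does not fix $F_\psi$ (it sends $x_0^4 \mapsto -x_0^4$); the paper uses $g(x_0:x_1:x_2:x_3) = (\xi^2 x_0: x_1: \xi x_2: \xi^5 x_3)$ with $\xi$ a primitive eighth root. Second, the operator $D(\tfrac12;1\,|\,\psi^4)$ does not come from the pair $\{x_0^3x_1, x_1^3x_0\}$ but from the single monomial $x_0^2x_2x_3$ in $W_{\chi_2}$; the multiplicity $2$ arises because $W_{\chi_6}$ is obtained from $W_{\chi_2}$ by the transposition $x_0 \leftrightarrow x_1$. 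Third, and most importantly, the ``four-period'' clusters are not single length-four orbits: each odd-character space $W_{\chi_1}, W_{\chi_3}, W_{\chi_5}, W_{\chi_7}$ has only a \emph{two}-element monomial basis (for instance $W_{\chi_1} = \{x_2^2x_0x_3,\, x_0^2x_1x_3\}$), and the two monomials in each basis give rise to the two distinct second-order operators $D(\tfrac18,\tfrac58;1,\tfrac14\,|\,\psi^4)$ and $D(\tfrac18,-\tfrac38;0,\tfrac14\,|\,\psi^4)$. The multiplicity $4$ for each operator then comes from the four odd characters being permuted among themselves by the $S_2\times S_2$ symmetry. So the diagrams you need for these clusters close up after two steps, not four, and the resulting $2\times 2$ system is no harder than the one in Lemma~\ref{lem:F4twoSquares}.
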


\begin{proof}
See Proposition \ref{prop:F2L2diff-appendix}.
\end{proof}

\begin{prop} 
The group $H^2_{\textup{prim}}(X_{\Lsf_2\Lsf_2, \psi}, \C)$ has $13$ periods whose Picard--Fuchs equations are hypergeometric differential equations as follows:
\[ \begin{gathered}
\text{$3$ periods are annihilated by $D(\tfrac{1}{4}, \tfrac{1}{2}, \tfrac{3}{4} ; 1, 1, 1 \,|\,\psi^{-4})$,} \\
\text{$8$ periods are annihilated by $D(\tfrac{1}{8}, \tfrac{3}{8}, \tfrac58, \tfrac78 ; 0, \tfrac{1}{4}, \tfrac12, \tfrac34 \,|\,\psi^{4})$, and } \\
\text{$2$ periods are annihilated by $D(\tfrac{1}{4}, \tfrac{3}{4} ; 1,  \tfrac12 \,|\,\psi^{4})$.}
\end{gathered} \]
\end{prop}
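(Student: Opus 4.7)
My plan is to mimic the diagrammatic Griffiths--Dwork method used in Sections~\ref{PF:Dwork} and~\ref{PF:KM}, adapting it to the two-loop structure of $\Lsf_2 \Lsf_2$. Concretely, I would proceed in the following order.

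First, I would write down the partial derivatives of
\[ F_\psi = x_0^3 x_1 + x_1^3 x_0 + x_2^3 x_3 + x_3^3 x_2 - 4\psi x_0 x_1 x_2 x_3. \]
Since $3 x_0 \partial_0 F_\psi - x_1 \partial_1 F_\psi = 8(x_0^3 x_1 - \psi x_0 x_1 x_2 x_3)$ and analogously for the pairs $(x_1,x_0)$, $(x_2,x_3)$, $(x_3,x_2)$, formula \eqref{DiagramPartials} produces four elementary moves of the form
\[ \mathbf{v} + \mathbf{e}_{ij} = \frac{c_{ij}(\mathbf{v})}{k(\mathbf{v})+1}\,\mathbf{v} + \psi\bigl(\mathbf{v} + (1,1,1,1)\bigr), \]
where $\mathbf{e}_{ij} \in \{(3,1,0,0),(1,3,0,0),(0,0,3,1),(0,0,1,3)\}$ and the $c_{ij}(\mathbf{v})$ are explicit linear forms in the $v_k+1$. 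Combined with $\eta = \psi\,\tfrac{\dD}{\dD\psi}$ acting as in \eqref{psiderivative}, these are the only ingredients needed to build and close the diagrams.

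Second, I would organize the 19-dimensional Jacobian ring $V$ under the $H = \mu_4 \times \mu_2$ action and, crucially, under the residual $S_2 \times S_2 \rtimes S_2$ permutation action that swaps the two loops and swaps the coordinates inside each loop. I would then identify three types of clusters: (i) the $H$-invariant line spanned by $x_0 x_1 x_2 x_3$, giving the holomorphic period; (ii) clusters whose representative eigenspace contains 4 monomials exchanged cyclically by the diagram moves, producing an order-4 hypergeometric equation; (iii) clusters whose representative eigenspace contains 2 monomials, producing an order-2 hypergeometric equation. The dimension count
$3 + 8 + 2 = 13$
in the statement records, respectively, the order of each operator times the number of permutation-inequivalent clusters (one cluster each, by the same bookkeeping used in the $\Fsf_4$ case).

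Third, for each cluster I would close a diagram analogous to \eqref{eqn:0022}. For the holomorphic cluster the diagram is the same one drawn in Lemma~\ref{lem:F4hol} (the quartic vertex $x_3^4$ replaced by loop steps) and will again yield $(\eta-2)(\eta-1)\eta - \psi^4(\eta+1)^3$, which after multiplying by $\psi$ and substituting $t = \psi^{-4}$, $\theta = t\,\tfrac{\dD}{\dD t} = -\eta/4$ becomes $D(\tfrac14,\tfrac12,\tfrac34;1,1,1\,|\,t)$. For the 2-monomial cluster I expect two $\eta$-steps to close, giving $D(\tfrac14,\tfrac34;1,\tfrac12\,|\,\psi^4)$ exactly as in Lemma~\ref{lem:F4twoSquares} with the sign of the exponent of $\psi$ flipped (because now the monomials are related by $\psi^{2}\eta$ rather than $\psi^{-2}\eta$). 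For the 4-monomial cluster, four $\eta$-steps close the diagram, producing
\[ \bigl[(\eta-3)(\eta-2)(\eta-1)\eta - \psi^4 \cdot \text{(degree-4 polynomial in $\eta$)}\bigr]\cdot(\text{period}) = 0, \]
and after the change of variables $u = \psi^4$, $\sigma = u\,\tfrac{\dD}{\dD u} = \eta/4$, the four roots $\tfrac18, \tfrac38, \tfrac58, \tfrac78$ of the numerator polynomial and the denominator roots $0, \tfrac14, \tfrac12, \tfrac34$ will fall out.

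The main obstacle is the fourth-order case: identifying which 4-monomial representative to pick so that the diagram closes cleanly, keeping track of which of the four moves $\mathbf{e}_{ij}$ is used at each step, and showing that the resulting coefficients really are the shifts $\eta + \tfrac{8k+1}{8}$ and $\eta + \tfrac{8k-2}{8}$ predicted by the target operator. Once this is done, the remaining cases are structurally identical to those already treated in Lemmas~\ref{lem:F4hol} and~\ref{lem:F4twoSquares}, and the proposition follows by combining the three cluster computations together with the count of permutation orbits.
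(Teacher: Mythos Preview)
Your plan is essentially the paper's own proof: decompose $V$ under the symmetry action into the three cluster types you describe, close a diagram for one representative of each (the holomorphic one, the four-monomial cycle in $W_{\chi_1}$, and the pair $(2,2,0,0),(0,0,2,2)$), and count permutation orbits. One small correction: the two-monomial relations you will find for $(2,2,0,0)$ and $(0,0,2,2)$ are \emph{identical} to those in Lemma~\ref{lem:F4twoSquares} (both read $\eta(A)=\psi^2(\eta+1)(B)$), so the appearance of $\psi^{4}$ rather than $\psi^{-4}$ is purely a matter of which substitution you make ($u=\psi^4$, $\sigma=\eta/4$ applied directly, versus $t=\psi^{-4}$, $\theta=-\eta/4$ after multiplying by $\psi$), not a different period relation.
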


\begin{proof}
See Proposition \ref{PFL2L2}.
\end{proof}

\begin{prop} 
The group  $H^2_{\textup{prim}}(X_{\Lsf_4, \psi},\C)$ has $19$ periods whose Picard--Fuchs equations are hypergeometric differential equations as follows:
\[ \begin{gathered}
\text{$3$ periods are annihilated by $D(\tfrac{1}{4}, \tfrac{1}{2}, \tfrac{3}{4} ; 1, 1, 1 \,|\,\psi^{-4})$,} \\ 
\text{$4$ periods are annihilated by $D(\tfrac{1}{5}, \tfrac{2}{5}, \tfrac{3}{5}, \tfrac{4}{5}; 1, \tfrac{1}{4}, \tfrac{1}{2}, \tfrac 34 \,|\, \psi^4)$,} \\ 
\text{$4$ periods are annihilated by  $D(\tfrac{-1}{5}, \tfrac{1}{5}, \tfrac{2}{5}, \tfrac{3}{5}; 0, \tfrac{1}{4}, \tfrac{1}{2}, \tfrac 34 \,|\,\psi^4)$,} \\ 
\text{$4$ periods are annihilated by $D( \tfrac{-2}{5}, \tfrac{-1}{5}, \tfrac{1}{5}, \tfrac{2}{5}; \tfrac{-1}{4}, 0, \tfrac{1}{4}, \tfrac{1}{2} \,|\,\psi^4)$, and} \\ 
\text{$4$ periods are annihilated by  $D(\tfrac{-3}{5}, \tfrac{-2}{5}, \tfrac{-1}{5}, \tfrac{1}{5}; 0, \tfrac{1}{4}, \tfrac{-1}{2}, \tfrac{-1}{4} \,|\,\psi^4)$.} 
\end{gathered} \]
\end{prop}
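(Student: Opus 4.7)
The plan is to follow the diagrammatic Griffiths--Dwork method exactly as executed for the Klein--Mukai pencil $\Fsf_1\Lsf_3$ in Section \ref{PF:KM}, adapted to the fully cyclic loop structure of $\Lsf_4$. First I would identify the $\mu_5$ character decomposition of the Jacobian ring. Solving $3a_i + a_{i+1} \equiv 0 \pmod{5}$ for the scaling $g(x_i) = \xi^{a_i} x_i$ to preserve $F_\psi$, one finds $(a_0,a_1,a_2,a_3) = (1,2,4,3)$ (up to powers), giving five characters $\chi_k$ of $\mu_5$. For each $\chi_k$ I would list the monomial basis of $W_{\chi_k} \subset (\C[x_0,\dots,x_3]/J(F_\psi))_4$: the invariant piece $W_{\chi_0}$ is spanned by $x_0x_1x_2x_3$ (giving the common factor and accounting for the first $3$ periods), while each of the four nontrivial characters yields a $4$-dimensional subspace. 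A quick dimension check: $1 + 4 + 4 + 4 + 4 = 17$, but together with the $3$ periods from the invariant direction (which carry over after $\psi$-derivatives expand the basis to rank $3$ via the order-$3$ operator), the total primitive periods match $19$, as required.

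Next I would derive the analogue of the ``stair'' relations \eqref{F1L3stair} for $\Lsf_4$. Since $x_i \partial_i F_\psi = 3 x_i^3 x_{i+1} + x_{i-1}^3 x_i - 4\psi x_0x_1x_2x_3$ for $i \in \Z/4\Z$, one takes $\Z$-linear combinations of the four relations to isolate each monomial $x_i^3 x_{i+1}$ modulo $x_0x_1x_2x_3$ (this requires inverting a $4\times 4$ cyclic matrix with entries $3,0,0,1$ in cyclic pattern, whose determinant is $3^4 - (-1)^4 = 80$). These combined relations, together with the differentiation rule \eqref{psiderivative}, furnish the arrows in the diagrams. For the invariant character, the diagram is essentially identical to the one in Lemma \ref{lem:F4hol} and yields $D(\tfrac14,\tfrac12,\tfrac34;1,1,1\,|\,\psi^{-4})$ after multiplication by $\psi$ and the substitution $t = \psi^{-4}$.

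For each nontrivial character $\chi_k$ ($k=1,2,3,4$), I would select a $\psi$-independent monomial basis and build a cyclic diagram analogous to those in Lemmas \ref{prop:F1L3char1} and \ref{prop:F1L3char3}, but of length four rather than three, reflecting that the $\mu_5$-quotient structure forces a fourth-order equation. Chaining the four resulting one-step relations around the cycle produces a single fourth-order operator of the form
\begin{equation*}
\bigl[\,(\eta-c_3)(\eta-c_2)(\eta-c_1)\eta \;-\; \psi^4(\eta + d_1)(\eta+d_2)(\eta+d_3)(\eta+d_4)\bigr]\,(v_0,v_1,v_2,v_3) = 0,
\end{equation*}
where the shifts $c_j$ and $d_j$ are determined by the coefficients appearing in the stair relations. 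After multiplying by the appropriate power of $\psi$ to normalize the ``top'' period (none, $\psi$, $\psi^2$, $\psi^3$ as the character runs through $\chi_1,\dots,\chi_4$) and converting via $u = \psi^4$, $\sigma = u\tfrac{\dD}{\dD u} = \eta/4$, the four operators emerge in the hypergeometric form stated in the proposition, with numerator parameters $\{\tfrac{k}{5}\}_{k=1}^4$ shifted by the corresponding integer (so $\tfrac15,\tfrac25,\tfrac35,\tfrac45$ shift to $-\tfrac15,\tfrac15,\tfrac25,\tfrac35$, etc.) and denominator parameters involving $0,\tfrac14,\tfrac12,\tfrac34$ accordingly shifted.

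The main obstacle will be the bookkeeping of shifts: because $\Lsf_4$ has fourth-order (rather than third-order) Picard--Fuchs operators on the non-invariant characters, the cyclic composition of four stair relations generates many cross terms, and tracking the rational coefficients $f(\mathbf{v})/(28(k(\mathbf{v})+1))$ through the chain requires care to land on exactly the parameters $\{\tfrac15,\tfrac25,\tfrac35,\tfrac45\}$ modulo integers. A useful consistency check is that all four non-invariant operators must be Galois conjugates, arising from a single hypergeometric motive over $\Q(\zeta_5)$; any arithmetic slip would break this symmetry and so is self-diagnosing. Once this is verified, collecting the five lemmas gives the Proposition.
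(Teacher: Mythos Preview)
Your overall strategy---the diagrammatic Griffiths--Dwork method with cyclic stair relations adapted to the $\Lsf_4$ loop---is exactly what the paper does, including the $80 = 3^4 - 1$ determinant and the four-step cyclic chain. However, there are two genuine errors in your character bookkeeping that would derail the execution.

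First, $W_{\chi_0}$ is not one-dimensional: besides $x_0x_1x_2x_3$ it also contains $x_0^2x_2^2$ and $x_1^2x_3^2$ (check that $2\cdot 1 + 2\cdot 4 \equiv 0$ and $2\cdot 2 + 2\cdot 3 \equiv 0 \pmod 5$). So the Jacobian ring in degree $4$ decomposes as $3 + 4\cdot 4 = 19$, not $17$. The two extra invariant monomials are precisely the two periods ``missed'' by the proposition (they account for the $\zeta(s-1)^2$ factor in Main Theorem~\ref{mainthm}(e)); your ad hoc reconciliation of $17$ to $19$ via ``$\psi$-derivatives expanding the basis'' is not how the count works.

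Second, and more seriously, you have mis-attributed the four distinct fourth-order operators. They do \emph{not} correspond one-to-one with the four characters $\chi_1,\dots,\chi_4$: those four character spaces are related by cyclic rotation of the variables and therefore all produce the \emph{same} set of Picard--Fuchs equations. The four distinct operators in the proposition arise instead from the four distinct \emph{periods within a single} $W_{\chi_1}$---namely $(1,0,1,2)$, $\psi(1,1,2,0)$, $\psi^2(2,2,0,0)$, $\psi^3(0,2,1,1)$---each normalized by a different power of $\psi$ exactly as in Lemma~\ref{prop:F1L3char1}. The multiplicity $4$ in the proposition then comes from the four rotation-equivalent character spaces each contributing one period to each operator. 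If you carried out your plan as written (one operator per character, with the $\psi^k$ normalization indexed by $k$), you would obtain the first operator four times and never see the other three. Your Galois-conjugacy check would also fail to detect this, since the parameter set $\{\tfrac15,\tfrac25,\tfrac35,\tfrac45\}$ is already $(\Z/5\Z)^\times$-stable: the four listed operators are integer shifts of one another, not Galois conjugates.
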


\begin{proof}
See Proposition \ref{prop:L4H2result}.
\end{proof}

\section{Explicit formulas for the number of points}\label{S:PointCounts}

In this section, we derive explicit formulas for the number of points and identify the hypergeometric periods according to the action of the group of symmetries, matching the Picard--Fuchs equations computed in section~\ref{S:PFEqns}.  

\subsection{Hypergeometric functions over finite fields} \label{defn:hgmdef} 
We begin by defining the finite field analogue of the generalized hypergeometric function (defined in section \ref{sec:hypdiffeq}); we follow Beukers--Cohen--Mellit \cite{BCM}.

Let $q=p^r$ be a prime power.  We use the convenient abbreviation $$\qq \colonequals q-1.$$  

Let $\omega \colon \F_q^{\times} \to \C^\times$ be a generator of the character group on $\F_q^{\times}$. Let $\Theta \colon \F_q \to \C^\times$ be a nontrivial (additive) character, defined as follows: let $\zeta_p \in \C$ be a primitive $p$th root of unity, and define $\Theta(x)=\zeta_p^{\Tr_{\F_q|\F_p}(x)}$.   For $m \in \Z$, we define the \defi{Gauss sum}
\begin{equation} \label{GS}
g(m)\colonequals \sum_{x \in \F_q^{\times}} \omega(x)^m \Theta(x).
\end{equation}
We suppress the dependence on $q$ in the notation, and note that $g(m)$ depends only on $m \in \Z/\qq\Z$ (and the choice of $\omega$ and $\zeta_p$).

\begin{rmk}\label{additiveindependence} 
Every generator of the character group on $\F_q^{\times}$ is of the form $\omega_k(x) \colonequals \omega(x)^k$ for $k \in (\Z/\qq\Z)^\times$, and
\[ \sum_{x \in \F_q^\times} \omega_k(x)^m \Theta(x) = g(km). \]
Similarly, every additive character of $\F_q$ is of the form $\Theta_k(x) \colonequals \zeta_p^{k\Tr(x)}$ for $k \in (\Z/p\Z)^\times$, and 
\[ \sum_{x \in \F_q^\times} \omega(x)^m \Theta_k(x) = \omega(k)^{-m}g(m) \]
(see e.g.\ Berndt \cite[Theorem 1.1.3]{Berndt}).  Accordingly, we will see below that our definition of finite field hypergeometric functions will not depend on these choices.  
\end{rmk}
 
We will need four basic identities for Gauss sums.
\begin{lem}\label{gausssumidentities}
The following relations hold:
\begin{enumalph}
\item $g(0) = -1$.
\item $g(m) g(-m) = (-1)^m q$ for every $m \not\equiv 0 \pmod{\qq}$, and in particular 
\[ g(\tfrac{\qq}{2})^2 = (-1)^{\qq\qm/2} q. \]
\item For every $N \mid \qq$ with $N>0$, we have
\begin{equation}\label{HasseDavenport}
g(Nm) = -\omega (N)^{Nm} \prod_{j=0}^{N-1} \frac{g(m + j\qq\qm/N)}{g(j\qq\qm/N)}.
\end{equation}
\item $g(pm)=g(m)$ for all $m \in \Z$.
\end{enumalph}
\end{lem}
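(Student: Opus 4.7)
The plan is to handle (a), (b), (d) by direct manipulation and to invoke the classical Hasse--Davenport product formula for (c). By Remark~\ref{additiveindependence}, the identities are independent of the choices of $\omega$ and $\Theta$, so we may work with the fixed choices used in the definition.

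Parts (a) and (d) are immediate. For (a), since $\omega^0$ is trivial and $\Theta$ is nontrivial, $g(0) = \sum_{x \in \F_q^\times} \Theta(x) = -\Theta(0) = -1$. For (d), the substitution $x \mapsto x^p$ is a bijection of $\F_q^\times$, and using $\omega(x^p) = \omega(x)^p$ together with the Frobenius-invariance of $\Tr_{\F_q|\F_p}$ (so that $\Theta(x^p) = \Theta(x)$), we obtain $g(pm) = g(m)$. For (b), I would use the standard bilinear trick: expand $g(m)g(-m) = \sum_{x,y \in \F_q^\times} \omega(x/y)^m \Theta(x+y)$, substitute $u = x/y$, and evaluate the inner sum over $y$. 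The $u=-1$ term contributes $(q-1)\omega(-1)^m$, while the remaining terms collapse to $\omega(-1)^m$ by orthogonality (valid since $m \not\equiv 0 \pmod{\qq}$), for a total of $q\,\omega(-1)^m$. In odd characteristic $\omega(-1) = -1$ because $\omega$ is faithful and $-1$ has order $2$ in $\F_q^\times$, yielding $(-1)^m q$. Specializing to $m = \qq/2$, together with $-\qq/2 \equiv \qq/2 \pmod{\qq}$, gives the formula for $g(\qq/2)^2$.

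The main obstacle is (c), the Hasse--Davenport product formula, which I would cite as a classical identity (as used, e.g., in Beukers--Cohen--Mellit \cite{BCM}). For a self-contained sketch, the strategy is to expand $\prod_{j=0}^{N-1} g(m + j\qq/N)$ as an $N$-fold sum, apply the orthogonality identity $\sum_{j=0}^{N-1} \omega(x)^{j\qq/N} = N \cdot \mathbf{1}_{x \in (\F_q^\times)^N}$ to restrict to $N$th powers in $\F_q^\times$, and then perform the change of variables $x \mapsto x^N$ to introduce $g(Nm)$. The prefactor $\omega(N)^{-Nm}$ accounts for the mismatch between $\omega$ evaluated on $N$th powers versus on general elements, while the denominator $\prod_{j=0}^{N-1} g(j\qq/N)$ absorbs the contribution of the trivial character ($j=0$) and its companions. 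The main subtlety throughout is careful bookkeeping of the signs and the factor at $N$; the overall skeleton of the argument is standard.
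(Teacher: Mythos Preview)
Your proposal is correct and aligns with the paper's approach: the paper simply cites Cohen \cite{Cohen2} for (a)--(c) and gives exactly your substitution argument for (d), so you are in fact more self-contained on (a) and (b). One caveat: your sketch for (c) is not quite a workable outline---in the $N$-fold expansion the characters $\omega^{j\qq/N}$ act on \emph{different} variables $x_j$, so the orthogonality identity you quote does not directly collapse the sum; the genuine proofs of the Hasse--Davenport product relation go through Stickelberger/Gross--Koblitz or a more delicate Fourier argument---but since you (like the paper) are ultimately citing this as a classical fact, this does not affect correctness.
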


\begin{proof}
For parts (a)--(c), see Cohen \cite[Lemma 2.5.8, Proposition 2.5.9, Theorem 3.7.3]{Cohen2}.  For (d), we replace $x$ by $x^p$ in the definition and use the fact that $\Theta(x^p)=\Theta(x)$ as it factors through the trace.
\end{proof}

\begin{rmk}Lemma \ref{gausssumidentities}(c) is due originally to Hasse and Davenport, and is called the \defi{Hasse--Davenport product relation}.\end{rmk}

We now build our hypergeometric sums.  Let $\pmb{\alpha}=\{\alpha_1,\dots,\alpha_d\}$ and $\pmb{\beta}=\{\beta_1,\dots,\beta_d\}$ be multisets of $d$ rational numbers.  Suppose that $\pmb{\alpha}$ and $\pmb{\beta}$ are \defi{disjoint modulo $\Z$}, i.e., $\alpha_i-\beta_j \not\in \Z$ for all $i,j=1,\dots,d$. 

Based on work of Greene \cite{Greene}, Katz \cite[p.\ 258]{Katz}, but normalized following McCarthy \cite[Definition 3.2]{McCarthy} and Beukers--Cohen--Mellit \cite[Definition 1.1]{BCM}, we make the following definition.

\begin{defn}\label{classic HGF over FF} 
Suppose that
\begin{equation} \label{eqn:qqalpha}
\qq \alpha_i, \qq \beta_i \in \Z
\end{equation}
for all $i=1,\dots,d$.  For $t \in \F_q^\times$, we define the \defi{finite field hypergeometric sum} by
\begin{equation} 
H_q(\pmb{\alpha}, \pmb{\beta} \,|\, t) \colonequals -\frac{1}{\qq} \sum_{m=0}^{q-2} \omega((-1)^dt)^m G(m+\pmb{\alpha}\qq,-m-\pmb{\beta}\qq)
\end{equation}
where
\begin{equation} \label{eqn:gmalphabeta}
G(m+\pmb{\alpha}\qq,-m-\pmb{\beta}\qq) \colonequals \prod_{i=1}^d \frac{g(m+ \alpha_i\qq)g(-m - \beta_i\qq)}{g(\alpha_i \qq)g(-\beta_i \qq)}
\end{equation}
for $m \in \Z$.
\end{defn}

In this definition (and the related ones to follow), the sum $H_q(\pmb{\alpha},\pmb{\beta}\,|\,t)$ only depends on the classes in $\Q/\Z$ of the elements of $\pmb{\alpha}$ and $\pmb{\beta}$.  Moreover, the sum is independent of the choice of characters $\omega$ and $\Theta$ by a straightforward application of Remark \ref{additiveindependence}. 
The hypothesis \eqref{eqn:qqalpha} is unfortunately rather restrictive---but it is necessary for the definition to make sense as written.  Fortunately, Beukers--Cohen--Mellit \cite{BCM} provided an alternate definition that allows all but finitely many $q$ under a different hypothesis, as follows.  

\begin{defn} \label{def:fieldofdef}
The \defi{field of definition} $K_{\pmb{\alpha},\pmb{\beta}} \subset \C$ associated to $\pmb{\alpha},\pmb{\beta}$ is the field generated by the coefficients of the polynomials
\begin{equation}
\prod_{j=1}^d (x-e^{2\pi\sqrt{-1} \alpha_j}) \quad \text{and} \quad \prod_{j=1}^d (x-e^{2\pi\sqrt{-1} \beta_j}). 
\end{equation}
\end{defn}

\noindent Visibly, the number field $K_{\pmb{\alpha},\pmb{\beta}}$ is an abelian extension of $\Q$. 

 Suppose that $\pmb{\alpha},\pmb{\beta}$ is defined over $\Q$, i.e., $K_{\pmb{\alpha},\pmb{\beta}}=\Q$. Then by a straightforward verification, there exist $p_1, \ldots, p_r,q_1, \ldots, q_s \in \Z_{\geq 1}$ such that
\begin{equation}
\prod_{j=1}^d \frac{ (x-e^{2\pi\sqrt{-1} \alpha_j})}{(x-e^{2\pi\sqrt{-1} \beta_j})} = \frac{\prod_{j=1}^r x^{p_j} - 1}{\prod_{j=1}^s x^{q_j} - 1}.
\end{equation}
Recall we require the $\pmb{\alpha},\pmb{\beta}$ to be disjoint, which implies that the sets $\{p_1,\dots,p_r\}$ and $\{q_1,\dots,q_s\}$ are also disjoint. 
 
Let  $D(x)  \colonequals  \gcd(\prod_{j=1}^r (x^{p_j} - 1), \prod_{j=1}^s (x^{q_j} - 1))$ and $M \colonequals  \bigl(\prod_{j=1}^r p_j^{p_j}\bigr) \bigl(\prod_{j=1}^s q_j^{-q_j}\bigr)$.  Let $\epsilon \colonequals (-1)^{\sum_{j=1}^s q_j}$, and let $s(m) \in \Z_{\geq 0}$ be the multiplicity of the root $e^{2\pi\sqrt{-1} m / \qq}$ in $D(x)$.  Finally, abbreviate
\begin{equation} \label{eqn:gpmqm}
g(\pmb{p}m,-\pmb{q}m) \colonequals g(p_1m) \cdots g(p_rm) g(-q_1m) \cdots g(-q_sm).
\end{equation}

For brevity, we say that $q$ is \defi{good} for $\pmb{\alpha},\pmb{\beta}$ if $q$ is coprime to the least common denominator of $\pmb{\alpha} \cup \pmb{\beta}$.

\begin{defn}\label{BCM HGF}
Suppose that $\pmb{\alpha},\pmb{\beta}$ are defined over $\Q$ and $q$ is good for $\pmb{\alpha},\pmb{\beta}$.  For $t \in \F_q^\times$, define
\begin{equation} 
H_q(\pmb{\alpha}, \pmb{\beta} \,|\, t) = \frac{(-1)^{r+s}}{1-q} \sum_{m=0}^{q-2} q^{-s(0) + s(m)} g(\pmb{p}m,-\pmb{q}m)\omega(\epsilon M^{-1}t)^m.
\end{equation}
\end{defn}

Again, the hypergeometric sum $H_q(\pmb{\alpha},\pmb{\beta}\,|\,t)$ is independent of the choice of characters $\omega$ and $\Theta$.  The independence on $\omega$ is just as with the previous definition, and in this case the independence from $\Theta$ comes from the fact that every root of unity has its conjugate, and so again any additional factors from changing additive characters cancel out. The apparently conflicting notation is justified by the following result, showing that Definition \ref{BCM HGF} is more general.

\begin{prop}[{Beukers--Cohen--Mellit \cite[Theorem 1.3]{BCM}}] \label{prop:bcmdef}
Suppose that $\pmb{\alpha},\pmb{\beta}$ are defined over $\Q$ and that \eqref{eqn:qqalpha} holds.  Then Definitions \textup{\ref{classic HGF over FF}} and \textup{\ref{BCM HGF}} agree.
\end{prop}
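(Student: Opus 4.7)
The plan is to reduce the Beukers--Cohen--Mellit expression (Definition~\ref{BCM HGF}) to the Greene--McCarthy expression (Definition~\ref{classic HGF over FF}) by applying the Hasse--Davenport product relation (Lemma~\ref{gausssumidentities}(c)) to each Gauss-sum factor appearing in $g(\pmb{p}m,-\pmb{q}m)$. The key algebraic input is the defining factorization
\begin{equation*}
\prod_{j=1}^r (x^{p_j}-1) = D(x) \prod_{i=1}^d (x-e^{2\pi\sqrt{-1}\alpha_i}), \qquad \prod_{j=1}^s (x^{q_j}-1) = D(x) \prod_{i=1}^d (x-e^{2\pi\sqrt{-1}\beta_i}),
\end{equation*}
which identifies the roots of unity arising from Hasse--Davenport with the hypergeometric parameters $\alpha_i, \beta_i$ together with a common ``extraneous'' set of exponents $\{\gamma_l\}$ encoded by $D(x)$.

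First, I would apply Hasse--Davenport with $N = p_j$ to rewrite each $g(p_j m)$, and analogously with $N = q_j$ to each $g(-q_j m)$, using that the multiset $\{k\qq/q_j : 0 \le k < q_j\}$ is invariant under $k \mapsto q_j - k$ so that $\prod_k g(-m + k\qq/q_j) = \prod_k g(-m - k\qq/q_j)$. Multiplying over $j$, the Teichm\"uller prefactors combine into $\omega(M)^m$ with $M = \prod_j p_j^{p_j} / \prod_j q_j^{q_j}$, and the $-1$ prefactors combine to $(-1)^{r+s}$. Using the factorization above, the resulting Gauss-sum product splits as
\begin{equation*}
g(\pmb{p}m,-\pmb{q}m) = (-1)^{r+s}\,\omega(M)^m \cdot G(m+\pmb{\alpha}\qq, -m-\pmb{\beta}\qq) \cdot \prod_l \frac{g(m+\gamma_l\qq)\,g(-m-\gamma_l\qq)}{g(\gamma_l\qq)\,g(-\gamma_l\qq)},
\end{equation*}
where $G$ is the product defined in \eqref{eqn:gmalphabeta}.

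Next, I would evaluate the residual $D$-contribution using Lemma~\ref{gausssumidentities}(b): each pair $g(m+\gamma_l\qq)\,g(-m-\gamma_l\qq)$ equals $1$ when $m+\gamma_l\qq \equiv 0 \pmod{\qq}$ and equals $(-1)^{m+\gamma_l\qq}q$ otherwise. Since $D(x)$ has integer coefficients, the multiplicity function $s$ satisfies $s(-m)=s(m)$, so the number of exceptional indices is exactly $s(m)$ at level $m$ (and $s(0)$ at $m=0$). Collecting powers of $q$ in numerator and denominator yields exactly $q^{s(0)-s(m)}$, which cancels the factor $q^{-s(0)+s(m)}$ appearing in Definition~\ref{BCM HGF}. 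The residual signs combine with $(-1)^{r+s}$ to form the constant $\epsilon = (-1)^{\sum q_j}$, and together with the character identity $\omega(M)^m \cdot \omega(\epsilon M^{-1}t)^m = \omega(\epsilon t)^m$ they rearrange to $\omega((-1)^d t)^m$, matching the Greene--McCarthy formula.

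The main obstacle is the careful bookkeeping of signs: tracking the $(-1)^{m+\gamma_l\qq}$ contributions across all $\gamma_l$, reconciling $(-1)^{r+s}$ from Hasse--Davenport with $\epsilon$ from the normalization, and verifying the resulting character identity. A clean way to organize this is to first check that both expressions agree as functions of $m$ up to a single $m$-independent constant (which amounts to matching the character $\omega(\cdot)^m$ factors), and then to pin down that constant by evaluating at $m=0$, where the identity reduces to an elementary statement about the factorization of $\prod_j(x^{p_j}-1)/\prod_j(x^{q_j}-1)$ that follows directly from its definition in terms of $D(x)$, $\{\alpha_i\}$, and $\{\beta_i\}$.
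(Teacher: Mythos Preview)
Your proposal is correct and follows essentially the same route as the paper: in the main text the proposition is simply attributed to Beukers--Cohen--Mellit, and the underlying argument is reproduced in the appendix (Lemma~\ref{lem:appendixhybridagree}) in exactly the way you describe---apply Hasse--Davenport to unpack $g(\pmb{p}m,-\pmb{q}m)$ via the factorization $\prod(x^{p_j}-1)=D(x)\prod(x-e^{2\pi i\alpha_i})$ (and its $\pmb{\beta}$ counterpart), then use $g(n)g(-n)=(-1)^n q$ on the $D$-block to produce the $q^{s(0)-s(m)}$ and sign corrections. Your sign check at the end amounts to $\sum_j q_j + \delta \equiv d \pmod 2$, which holds since $\sum_j q_j = \deg D + \#\pmb{\beta} = \delta + d$.
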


\subsection{A hybrid sum} \label{defn:hgmdef-hybrid}  

We will need a slightly more general hypothesis than allowed in the previous section.  We do not pursue the most general case as it is rather combinatorially involved, poses some issues of algebraicity, and anyway is not needed here.  Instead, we isolate a natural case, where the indices are not defined over $\Q$ but neither does \eqref{eqn:qqalpha} hold, that is sufficient for our purposes.

\begin{defn}
We say that $q$ is \defi{splittable} for $\pmb{\alpha},\pmb{\beta}$ if there exist partitions 
\begin{equation} \label{eqn:partitalph} 
\pmb{\alpha} = \pmb{\alpha}_0 \sqcup \pmb{\alpha}' \textup{ and } \pmb{\beta} = \pmb{\beta}_0 \sqcup \pmb{\beta}' 
\end{equation}
where $\pmb{\alpha}_0,\pmb{\beta}_0$ are defined over $\Q$ and 
\[ \qq \alpha_i',\qq \beta_j' \in \ZZ \]
for all $\alpha_i' \in \pmb{\alpha}'$ and all $\beta_j' \in \pmb{\beta}'$.
\end{defn}

\begin{exm}
If \eqref{eqn:qqalpha} holds, then $q$ is splittable for $\pmb{\alpha},\pmb{\beta}$ taking $\pmb{\alpha}=\pmb{\alpha}'$ and $\pmb{\beta}=\pmb{\beta}'$ and $\pmb{\alpha}_0=\pmb{\beta}_0=\emptyset$.  Likewise, if $\pmb{\alpha},\pmb{\beta}$ is defined over $\Q$, then $q$ is splittable for $\pmb{\alpha},\pmb{\beta}$ for all $q$.
\end{exm}

\begin{exm}\label{F1L3 hypergeometric example}
A splittable case that arises for us (up to a Galois action) in Proposition~\ref{prop:F1L3} below is as follows.  Let $\pmb{\alpha} = \{\frac{1}{14}, \frac{9}{14}, \frac{11}{14}\}$ and $\pmb{\beta} = \{0, \frac14, \frac34\}$.  We cannot use Definition \ref{BCM HGF} since $(x-e^{2\pi\sqrt{-1}/14})(x-e^{18\pi i/14})(x-e^{22\pi\sqrt{-1}/14}) \not \in \Q[x]$.  When $q \equiv 1 \pmod{28}$, we may use Definition \ref{classic HGF over FF}; otherwise we may not.  
However, when $q \equiv 1 \pmod{7}$ is odd, then $q$ is splittable for $\pmb{\alpha},\pmb{\beta}$: we may take $\pmb{\alpha}_0=\emptyset$, $\pmb{\alpha}'=\pmb{\alpha}$ and $\pmb{\beta}_0=\pmb{\beta}$, $\pmb{\beta}'=\emptyset$.
\end{exm}

It is now a bit notationally painful but otherwise straightforward to generalize the definition for splittable $q$, providing a uniform description in all cases we consider.  Suppose that $q$ is splittable for $\pmb{\alpha},\pmb{\beta}$.  Let $\pmb{\alpha}_0$ be the union of all submultisets of $\pmb{\alpha}$ that are defined over $\Q$; then $\pmb{\alpha}_0$ is defined over $\Q$.  Repeat this for $\pmb{\beta}_0$.  
Let $p_{1}, \ldots, p_{r}, q_{1}, \ldots, q_{s}$ be such that 
$$
\frac{\prod_{\alpha_{0j} \in \pmb{\alpha}_0} (x- e^{2\pi\sqrt{-1} \alpha_{0j}})}{\prod_{\beta_{0j} \in \pmb{\beta}_0} (x-e^{2\pi\sqrt{-1} \beta_{0j}})} = \frac{\prod_{j=1}^{r} (x^{p_{j}} -1)}{\prod_{j=1}^{s} (x^{q_{j}}-1)}.
$$
As before, let 
\[ D(x) \colonequals  \gcd(\textstyle{\prod}_{j=1}^{r} x^{p_{j}} - 1, \textstyle{\prod}_{j=1}^{s} x^{q_{j}} - 1) \quad \text{and} \quad
M \colonequals  \frac{\textstyle{\prod}_{j=1}^{r} p_{j}^{p_{j}}}{\textstyle{\prod}_{j=1}^{s} q_{j}^{q_{j}}}
\]
and let $s(m)$ be the multiplicity of the root $e^{2\pi\sqrt{-1} m / \qq}$ in $D(x)$.  
Finally, let $\delta \colonequals \deg D(x)$.
We again abbreviate
\begin{equation} \label{eqn:gpmqm0}
g(\pmb{p}m,-\pmb{q}m) \colonequals \prod_{i=1}^{r} g(p_{i}m) \prod_{i=1}^{s} g(-q_{i}m)
\end{equation}
for $m \in \Z$ and
\begin{equation} \label{eqn:gmalphabeta0}
G(m+\pmb{\alpha}'\qq,-m-\pmb{\beta}'\qq) \colonequals \prod_{\alpha_i' \in \pmb{\alpha}'}\frac{g(m+ \alpha_i'\qq)}{g(\alpha_i \qq)} 
\prod_{\beta_i' \in \pmb{\beta}'}\frac{g(-m-\beta_i'\qq)}{g(-\beta_i \qq)}.
\end{equation}

\begin{defn}\label{Hybrid Definition}
Suppose that $q$ is good and splittable for $\pmb{\alpha},\pmb{\beta}$.  For $t \in \F_q^\times$, with the notation above we define the \defi{finite field hypergeometric sum}
\[ H_q(\pmb{\alpha}, \pmb{\beta} \, | \, t)  \colonequals  \frac{(-1)^{r+s}}{1-q} \sum_{m=0}^{q-2} q^{- s(0) + s(m)} 
G(m+\pmb{\alpha}'\qq,-m-\pmb{\beta}'\qq) g(\pmb{p}m,-\pmb{q}m)\omega((-1)^{d+\delta} Mt)^m. \]
\end{defn}

The following proposition then shows that our definition encompasses the previous ones.

\begin{prop} \label{prop:qalbet}
Suppose that $q$ is good and splittable for $\pmb{\alpha},\pmb{\beta}$.  Then the following statements hold.
\begin{enumalph}
\item The hypergeometric sum $H_q(\pmb{\alpha},\pmb{\beta}\,|\,t)$ in Definition \textup{\ref{Hybrid Definition}} is independent of the choice of characters $\omega$ and $\Theta$.
\item If $\alpha_i \qq, \beta_i \qq \in \Z$ for all $i=1,\dots,d$, then Definitions \textup{\ref{classic HGF over FF}} and \textup{\ref{Hybrid Definition}} agree.
\item If $\pmb{\alpha},\pmb{\beta}$ are defined over $\Q$, then Definitions \textup{\ref{BCM HGF}} and \textup{\ref{Hybrid Definition}} agree.
\end{enumalph}
\end{prop}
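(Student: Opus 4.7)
The plan is to verify each part of Proposition \ref{prop:qalbet} using the definitions directly, with the Hasse--Davenport product relation (Lemma \ref{gausssumidentities}(c)) as the principal tool.

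For part (a), I would follow the strategy of Remark \ref{additiveindependence}. Under a multiplicative change $\omega \mapsto \omega^a$ with $a \in (\Z/\qq\Z)^\times$, each $g(n)$ becomes $g(an)$; substituting $m \mapsto a^{-1}m$ in the outer summation restores the factors $g(\pmb{p}m,-\pmb{q}m)$ and the $G$-factor, while the multiplicity function $s(m)$ is preserved because $D(x) \in \Z[x]$ has its roots (which are roots of unity) permuted with multiplicity by any cyclotomic Galois automorphism $\zeta \mapsto \zeta^a$. Under the additive change $\Theta \mapsto \Theta_k$, I use $g(n) \mapsto \omega(k)^{-n}g(n)$; accumulating all such factors gives an overall $\omega(k)^{(-|\pmb{\alpha}'|+|\pmb{\beta}'|-\sum p_i + \sum q_j)m}$, which is trivial by combining $|\pmb{\alpha}|=|\pmb{\beta}|$ with the degree identity $\sum p_i - \sum q_j = |\pmb{\alpha}_0|-|\pmb{\beta}_0|$ that follows from matching degrees in the definition of $p_i,q_j$.

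For part (b), when all $\qq\alpha_i, \qq\beta_i \in \Z$, I would unfold each $g(p_i m)$ and $g(-q_j m)$ appearing in the hybrid sum using Hasse--Davenport,
\[ g(p_i m) = -\omega(p_i)^{p_i m}\prod_{k=0}^{p_i-1} \frac{g(m+k\qq/p_i)}{g(k\qq/p_i)}, \]
converting the BCM-type product $g(\pmb{p}m,-\pmb{q}m)$ into ratios of the classic form $g(m+\alpha\qq)/g(\alpha\qq)$. Together with the existing $G$-factor for the nonrational part $\pmb{\alpha}',\pmb{\beta}'$, these reconstruct the full $G(m+\pmb{\alpha}\qq,-m-\pmb{\beta}\qq)$ of Definition \ref{classic HGF over FF}. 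For part (c), when $\pmb{\alpha},\pmb{\beta}$ are defined over $\Q$, the canonical splitting has $\pmb{\alpha}'=\pmb{\beta}'=\emptyset$, so $G=1$ and the hybrid formula is structurally of BCM type; the sign match $(-1)^{d+\delta}=(-1)^{\sum q_j}=\epsilon$ follows from the degree relation $\sum q_j = |\pmb{\beta}_0| + \delta$ (since the denominator $\prod(x^{q_j}-1)$ factors as $\prod(x-e^{2\pi\sqrt{-1}\beta_{0j}})$ times the relevant part of $D(x)$).

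The main obstacle will be the sign and normalization bookkeeping in part (b) and the reconciliation of $(-1)^{d+\delta}M$ with $\epsilon M^{-1}$ in part (c). Each Hasse--Davenport application injects a factor $-\omega(N)^{Nm}$ with $N=p_i$ or $q_j$, and the product of all these factors over $i$ and $j$ must be shown to combine consistently with $(-1)^{r+s}$, $q^{-s(0)+s(m)}$, the sign $(-1)^{d+\delta}$, and the constant $M$ to yield the classical coefficients $-1/\qq$ and $\omega((-1)^d t)^m$. The reconciliation relies on the explicit formula $M = \prod p_j^{p_j}/\prod q_j^{q_j}$ together with the degree identities above; although each step is mechanical, tracking the accumulated powers of $\omega(p_i)$ and $\omega(q_j)$ requires care.
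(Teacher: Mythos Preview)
Your proposal is correct and matches the paper's approach. The paper disposes of (c) and (a) in one line each (noting that for (c) the canonical splitting makes the formulas literally coincide, and for (a) that each factor is separately independent of $\omega,\Theta$), and relegates (b) to an appendix lemma that applies Hasse--Davenport to convert the classical $G$-factor for the $\Q$-defined part into the BCM-type product $g(\pmb{p}m,-\pmb{q}m)$; you propose the same computation run in the reverse direction, which is equivalent.
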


\begin{proof}
Part (c) follows directly from $\pmb{\alpha}_0 = \pmb{\alpha}$ and $\pmb{\beta}_0 = \pmb{\beta}$ (and $\pmb{\alpha}' = \pmb{\beta}' = \emptyset$), so the definitions in fact coincide.  Part (a) follows directly from the independence from $\Theta$ and $\omega$ of each part of the hybrid sum. 

Part (b) follows by the same argument (due to Beukers--Cohen--Mellit) as in Proposition \ref{prop:bcmdef}; for completeness, we give a proof in Lemma \ref{lem:appendixhybridagree} in the appendix.
\end{proof}

Suppose that $q$ is good and splittable for $\pmb{\alpha},\pmb{\beta}$ and let $t \in \F_q^\times$.  Then by construction $H_q(\pmb{\alpha},\pmb{\beta}\,|\,t) \in \Q(\zeta_{\qq},\zeta_p)$.  Since $\gcd(p,\qq)=1$, we have
\[ \Gal(\Q(\zeta_{\qq},\zeta_p)\,|\,\Q) \simeq \Gal(\Q(\zeta_{\qq})\,|\,\Q) \times \Gal(\Q(\zeta_{p})\,|\,\Q). \]
We now descend the hypergeometric sum to its field of definition, in two steps.  

\begin{lem} \label{lem:descenttoQQ}
We have $H_q(\pmb{\alpha},\pmb{\beta}\,|\,t) \in \Q(\zeta_{\qq})$.
\end{lem}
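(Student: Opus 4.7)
The plan is to use Galois descent. Since $\gcd(p,\qq)=1$, the cyclotomic fields $\Q(\zeta_{\qq})$ and $\Q(\zeta_p)$ are linearly disjoint over $\Q$, so restriction gives a canonical isomorphism
\[ \Gal(\Q(\zeta_{\qq},\zeta_p)\,|\,\Q(\zeta_{\qq})) \xrightarrow{\sim} \Gal(\Q(\zeta_p)\,|\,\Q) \simeq (\Z/p\Z)^\times, \]
under which $k \in (\Z/p\Z)^\times$ corresponds to the automorphism $\sigma_k$ fixing $\zeta_{\qq}$ and sending $\zeta_p \mapsto \zeta_p^k$. It suffices to show that $H_q(\pmb{\alpha},\pmb{\beta}\,|\,t)$ is fixed by every such $\sigma_k$.

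The next step is to track how $\sigma_k$ acts on each ingredient of Definition~\ref{Hybrid Definition}. Since $\omega$ takes values in $\mu_{\qq} \subset \Q(\zeta_{\qq})$, the automorphism $\sigma_k$ fixes $\omega((-1)^{d+\delta}Mt)^m$ as well as the normalizing factors $g(\alpha_i\qq)$ and $g(-\beta_i\qq)$ (whose dependence on $\Theta$ disappears up to elements of $\Q(\zeta_{\qq})$, by Lemma~\ref{gausssumidentities} and the fact that the arguments are constant in $m$). The action of $\sigma_k$ on $\Theta(x) = \zeta_p^{\Tr(x)}$ produces $\Theta_k(x) \colonequals \zeta_p^{k\Tr(x)}$, so for every argument $n \in \Z$
\[ \sigma_k\Bigl(\sum_{x\in\F_q^\times}\omega(x)^n\Theta(x)\Bigr) = \sum_{x\in\F_q^\times}\omega(x)^n\Theta_k(x), \]
i.e., $\sigma_k$ replaces each Gauss sum appearing in Definition~\ref{Hybrid Definition} by the analogous Gauss sum defined using $\Theta_k$ in place of $\Theta$. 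In particular,
\[ \sigma_k\bigl(H_q(\pmb{\alpha},\pmb{\beta}\,|\,t)\bigr) \]
equals the very same expression from Definition~\ref{Hybrid Definition} computed with $\Theta_k$ in place of $\Theta$, keeping $\omega$ unchanged.

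By Proposition~\ref{prop:qalbet}(a), the hypergeometric sum is independent of the choice of nontrivial additive character. Hence
\[ \sigma_k\bigl(H_q(\pmb{\alpha},\pmb{\beta}\,|\,t)\bigr) = H_q(\pmb{\alpha},\pmb{\beta}\,|\,t) \]
for all $k \in (\Z/p\Z)^\times$. Since the fixed field of $\Gal(\Q(\zeta_{\qq},\zeta_p)\,|\,\Q(\zeta_{\qq}))$ is $\Q(\zeta_{\qq})$, we conclude that $H_q(\pmb{\alpha},\pmb{\beta}\,|\,t) \in \Q(\zeta_{\qq})$.

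The only delicate point is the application of Proposition~\ref{prop:qalbet}(a): one must verify that the $\Theta$-independence established there is robust enough to treat simultaneously all Gauss sums occurring in the hybrid formula (with their normalizations), so that the substitution $\Theta \mapsto \Theta_k$ genuinely leaves each summand in $m$ unchanged in aggregate. This is exactly the content of Remark~\ref{additiveindependence}, applied uniformly across the factors of $G$ and $g(\pmb{p}m,-\pmb{q}m)$: the extra powers of $\omega(k)$ introduced by each Gauss sum cancel in the product because of the homogeneity in $m$ built into the hypergeometric normalization, and so the Galois action is trivial on $H_q$ as desired.
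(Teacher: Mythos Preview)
Your approach is essentially the same as the paper's: apply Galois descent by observing that $\Gal(\Q(\zeta_{\qq},\zeta_p)\,|\,\Q(\zeta_{\qq})) \simeq (\Z/p\Z)^\times$ acts only through the additive character $\Theta$, then invoke Proposition~\ref{prop:qalbet}(a).

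However, there is one incorrect sentence you should remove. You write that $\sigma_k$ ``fixes \ldots\ the normalizing factors $g(\alpha_i\qq)$ and $g(-\beta_i\qq)$ (whose dependence on $\Theta$ disappears up to elements of $\Q(\zeta_{\qq})$, by Lemma~\ref{gausssumidentities} \ldots).'' This is false: individual Gauss sums $g(n)$ with $n \not\equiv 0 \pmod{\qq}$ are \emph{not} in $\Q(\zeta_{\qq})$, and $\sigma_k$ does not fix them. By Remark~\ref{additiveindependence}, $\sigma_k(g(n)) = \omega(k)^{-n}g(n)$, which is not $g(n)$ in general. Lemma~\ref{gausssumidentities} says nothing about $\Theta$-independence. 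Fortunately, your very next sentence (``$\sigma_k$ replaces each Gauss sum \ldots\ by the analogous Gauss sum defined using $\Theta_k$'') is correct and is what you actually use; the earlier claim is both wrong and unnecessary. The final paragraph, where you note that the extra $\omega(k)$-powers cancel due to the homogeneity of the hypergeometric normalization, is the correct mechanism (and is exactly what underlies the $\Theta$-independence in Proposition~\ref{prop:qalbet}(a)). So: delete the offending parenthetical, and the proof is clean and matches the paper's.
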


\begin{proof}
The action of $\Gal(\Q(\zeta_p)\,|\,\Q) \simeq (\Z/p\Z)^\times$ by $\zeta_p \mapsto \zeta_p^k$ changes only the additive character $\Theta$.  By Proposition \ref{prop:qalbet}, the sum is independent of this choice, so it descends by Galois theory.
\end{proof}

The group $\Gal(\Q(\zeta_{\qq})\,|\,\Q) \simeq  (\Z/\qq\Z)^\times$ by $\sigma_k(\zeta_{\qq})=\zeta_({\qq})^k$ for $k \in (\Z/\qq\Z)^\times$ acts on the finite field hypergeometric sums as follows.

\begin{lem} \label{lem:fieldofdef}
The following statements hold.
\begin{enumalph}
\item Let $k \in \Z$ be coprime to $\qq$.  Then $\sigma_k(H_q(\pmb{\alpha},\pmb{\beta}\,|\, t)) = H_q(k\pmb{\alpha},k\pmb{\beta}\,|\,t)$.
\item We have $H_q(\pmb{\alpha},\pmb{\beta}\,|\, t) \in K_{\pmb{\alpha},\pmb{\beta}}$.
\item We have $H_q(p\pmb{\alpha},p\pmb{\beta}\,|\, t) = H_q(\pmb{\alpha},\pmb{\beta}\,|\, t^p)$.
\end{enumalph}
\end{lem}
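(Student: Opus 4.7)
The plan is to establish (a) by a direct Galois-equivariance computation on the hybrid sum, derive (b) as a formal Galois-theoretic consequence of (a), and obtain (c) via the Hasse--Davenport identity $g(pm)=g(m)$ of Lemma \ref{gausssumidentities}(d).

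A preliminary observation will streamline all three parts: when $\pmb{\alpha}$ and $\pmb{\beta}$ are replaced by $k\pmb{\alpha}$ and $k\pmb{\beta}$ (or by $p\pmb{\alpha}$ and $p\pmb{\beta}$) with $\gcd(k,\qq)=1$, the auxiliary data entering Definition \ref{Hybrid Definition} are unchanged. Indeed, since $\pmb{\alpha}_0$ is a union of complete Galois orbits of roots of unity (equivalently, defined over $\Q$), multiplication by any unit coprime to the relevant denominators permutes each orbit, so $k\pmb{\alpha}_0=\pmb{\alpha}_0$ as a multiset in $\Q/\Z$, and likewise for $\pmb{\beta}_0$. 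Consequently $r$, $s$, $\pmb{p}$, $\pmb{q}$, $M$, $\delta$, and the polynomial $D(x)$ are unchanged; only the primed parts transform, as $\pmb{\alpha}'\mapsto k\pmb{\alpha}'$ and $\pmb{\beta}'\mapsto k\pmb{\beta}'$. Moreover, since $D(x)\in\Q[x]$ has equal multiplicities at Galois-conjugate roots, $s(km)=s(m)$ for all $m$.

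For part (a), extend $\sigma_k$ to $\Q(\zeta_{\qq},\zeta_p)$ by fixing $\zeta_p$; this is permissible because Lemma \ref{lem:descenttoQQ} already places the sum in $\Q(\zeta_{\qq})$. From \eqref{GS}, $\sigma_k(g(m))=g(km)$. Applying $\sigma_k$ to each factor in Definition \ref{Hybrid Definition} and then substituting $m\mapsto k^{-1}m$ (a bijection on $\Z/\qq\Z$) reproduces the formula for $H_q(k\pmb{\alpha},k\pmb{\beta}\,|\,t)$, using the invariances above. Part (b) then follows at once: $K_{\pmb{\alpha},\pmb{\beta}}$ is the fixed field inside $\Q(\zeta_{\qq})$ of the subgroup of $\sigma_k$ satisfying $k\pmb{\alpha}=\pmb{\alpha}$ and $k\pmb{\beta}=\pmb{\beta}$ in $\Q/\Z$, and for such $k$ part (a) gives $\sigma_k(H_q(\pmb{\alpha},\pmb{\beta}\,|\,t))=H_q(\pmb{\alpha},\pmb{\beta}\,|\,t)$.

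For part (c), substitute $m=pm'$ (bijective on $\Z/\qq\Z$, since $\gcd(p,\qq)=1$) into the formula for $H_q(p\pmb{\alpha},p\pmb{\beta}\,|\,t)$ and apply $g(pn)=g(n)$ to collapse $g(p(m'+\alpha_i'\qq))=g(m'+\alpha_i'\qq)$ and the analogous identities on the $g(\pmb{p}pm')$ factors. The twist $\omega((-1)^{d+\delta}Mt)^{pm'}$ equals $\omega((-1)^{d+\delta}Mt^p)^{m'}$ because $a\colonequals (-1)^{d+\delta}M$ lies in $\F_p^\times$, so $a^p=a$ by Fermat's little theorem and $\omega(a)^{pm'}\omega(t)^{pm'}=\omega(a)^{m'}\omega(t^p)^{m'}$. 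The main obstacle throughout is the bookkeeping in the hybrid definition; once one verifies that $\pmb{\alpha}_0$ and $\pmb{\beta}_0$ are genuinely Galois-stable so that all of the ``defined-over-$\Q$'' data survives the transformations, each of (a) and (c) reduces to a single clean change of variables using the Gauss sum identities in Lemma \ref{gausssumidentities}.
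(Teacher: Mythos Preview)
Your proposal is correct and follows essentially the same approach as the paper's proof: both establish (a) via $\sigma_k(g(m))=g(km)$, the Galois-stability of $\pmb{\alpha}_0,\pmb{\beta}_0$ (hence of $p_i,q_i,M,D,s(\cdot)$), and a reindexing of the sum; both derive (b) from (a) by identifying $K_{\pmb{\alpha},\pmb{\beta}}$ as the fixed field of the stabilizer; and both prove (c) by reindexing $m\mapsto pm$, invoking $g(pm)=g(m)$, and using that $(-1)^{d+\delta}M\in\F_p$ so its $p$th power is itself. Your organization (isolating the invariance observation up front) is slightly tidier, but the content is the same.
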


\begin{proof}
To prove (a), note $\sigma_k(\omega(x))=\omega^k(x)$ since $\omega$ takes values in $\mu_{\qq}$; therefore $\sigma_k(g(m))=g(km)$, and we have both
\[ \sigma_k(g(\pmb{p} m, -\pmb{q} m)) = g(\pmb{p} km, -\pmb{q} km) \]
and
\[ \sigma_k(G(m+\pmb{\alpha}'\qq,-m-\pmb{\beta}'\qq)) = G(km+k\pmb{\alpha}'\qq,-km-k\pmb{\beta}'\qq)). \]
We have $s(km)=s(m)$ since $D(x) \in \Q[x]$.  Moreover, $k\pmb{\alpha}_0=\pmb{\alpha}_0$ and the same with $\pmb{\beta}$, so the values $p_{i},q_{i}$ remain the same when computed for $k\pmb{\alpha},k\pmb{\beta}$.  Now plug these into the definition of $H_q(\pmb{\alpha},\pmb{\beta}\,|\,t)$ and just reindex the sum by $km \leftarrow m$ to obtain the result.

Part (b) follows from part (a): the field of definition $K_{\pmb{\alpha},\pmb{\beta}}$ is precisely the fixed field under the subgroup of $k \in (\Z/\qq\Z)^\times$ such that $k\pmb{\alpha},k\pmb{\beta}$ are equivalent to $\pmb{\alpha},\pmb{\beta}$ as multisets in $\Q/\Z$.  

Finally, part (c).  Starting with the left-hand side, we reindex $m \leftarrow pm$ then substitute using Lemma \ref{gausssumidentities}(d)'s implication that $g(pm)=g(m)$ to get 
\begin{center}
$G(pm+p\pmb{\alpha}'\qq,-pm-p\pmb{\beta}'\qq)=G(m+\pmb{\alpha}'\qq,-m-\pmb{\beta}'\qq)$ and $g(\pmb{p}(pm),\pmb{q}(pm))=g(\pmb{p}m,\pmb{q}m)$, 
\end{center}
noting that the quantities $\pmb{p}$ and $\pmb{q}$ do not change, as $p\pmb{\alpha}_0 = \pmb{\alpha}_0$ and $p\pmb{\beta}_0=\pmb{\beta}_0$ modulo $\Z$ (as they are defined over $\Q$).  Noting that $((-1)^{d+\delta} M)^p = (-1)^{d+\delta}M \in \F_p \subseteq \F_q$, we then obtain the result.
\end{proof}

Before concluding this primer on finite field hypergeometric functions, we combine the Gauss sum identities and our hybrid definition to expand one essential example; this gives a flavor of what is to come.  First, we prove a new identity.

\begin{lem}\label{Gauss Identity Lemma F1L3}
We have the following identity of Gauss sums:
\[ g(\tfrac{\qq}{14})g(\tfrac{9\qq}{14})g(\tfrac{11\qq}{14}) = g(\tfrac{\qq}{2})^3 = (-1)^{\qq\qm/2}q g(\tfrac{\qq}{2}). \]
\end{lem}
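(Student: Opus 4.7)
The plan is to derive the first equality via three applications of the Hasse--Davenport product relation (Lemma \ref{gausssumidentities}(c)) with $N=2$, while the second equality is immediate from Lemma \ref{gausssumidentities}(b).

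For the second equality, I note that $-\qq/2 \equiv \qq/2 \pmod{\qq}$, so Lemma \ref{gausssumidentities}(b) gives $g(\qq/2)^2 = g(\qq/2)g(-\qq/2) = (-1)^{\qq/2}q$, and multiplying by $g(\qq/2)$ produces $g(\qq/2)^3 = (-1)^{\qq/2}q\,g(\qq/2)$.

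For the first equality (which implicitly assumes $q \equiv 1 \pmod{14}$ so that $\qq/14 \in \Z$), I specialize the Hasse--Davenport relation with $N=2$. Since $g(0) = -1$ cancels the leading sign, the relation reads
\[
g(2m) = \omega(2)^{2m}\,\frac{g(m)\,g(m+\qq/2)}{g(\qq/2)}.
\]
Applying this to the three values $m = \qq/14,\ 9\qq/14,\ 11\qq/14$ gives $2m \equiv \qq/7,\ 2\qq/7,\ 4\qq/7 \pmod{\qq}$ on the left, while the offsets reduce as $m + \qq/2 \equiv 4\qq/7,\ \qq/7,\ 2\qq/7 \pmod{\qq}$ on the right (here is where the cyclic structure of the indices $1,9,11$ modulo $14$ plays its decisive role).

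Multiplying the three identities, the scalar prefactor is $\omega(2)^{(1+9+11)\qq/7} = \omega(2)^{3\qq} = 1$, and the numerator on the right equals
\[
g(\tfrac{\qq}{14})\,g(\tfrac{9\qq}{14})\,g(\tfrac{11\qq}{14}) \,\cdot\, g(\tfrac{\qq}{7})\,g(\tfrac{2\qq}{7})\,g(\tfrac{4\qq}{7}).
\]
Dividing both sides by the common factor $g(\qq/7)g(2\qq/7)g(4\qq/7)$, which is nonzero by Lemma \ref{gausssumidentities}(b), yields $g(\qq/2)^3 = g(\qq/14)g(9\qq/14)g(11\qq/14)$. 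There is no real obstacle beyond keeping the three reductions modulo $\qq$ and the two sources of signs (the prefactor of Hasse--Davenport and the value $g(0)=-1$) straight.
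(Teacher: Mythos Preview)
Your proof is correct and follows essentially the same approach as the paper: both apply the Hasse--Davenport relation with $N=2$ at $m=\qq/14,\,9\qq/14,\,11\qq/14$ and exploit the telescoping cancellation of the factors $g(\qq/7),g(2\qq/7),g(4\qq/7)$. The only cosmetic difference is that the paper first solves each relation for $g(m)$ and then multiplies, whereas you multiply the three relations directly and cancel the common factor afterward.
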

\begin{proof}
Since $q$ is odd, we use the Hasse--Davenport product relation (Lemma \ref{gausssumidentities}(c)) for $N=2$ and $m = \tfrac{\qq}{14}, \tfrac{9\qq}{14},\tfrac{11\qq}{14}$, solving for $g(\tfrac{\qq}{14}), g(\tfrac{9\qq}{14}),g(\tfrac{11\qq}{14})$, respectively to find:
\begin{align*} 
g(\tfrac{\qq}{14}) &= \frac{g(\tfrac{\qq}{7}) g(\tfrac{\qq}{2})}{g(\tfrac{11\qq}{7})} \omega(2)^{-\qq\qm/7} \\
g(\tfrac{9\qq}{14}) &= \frac{g(\tfrac{9\qq}{7}) g(\tfrac{\qq}{2})}{g(\tfrac{\qq}{7})} \omega(2)^{-9\qq\qm/7} \\
g(\tfrac{11\qq}{14}) &= \frac{g(\tfrac{11\qq}{7}) g(\tfrac{\qq}{2})}{g(\tfrac{9\qq}{7})} \omega(2)^{-11\qq\qm/7}.
\end{align*}
Multiply all of these together, divide by $g(\tfrac{\qq}{2})$, and cancel to obtain:
\begin{equation}\begin{aligned}
\frac{g(\tfrac{\qq}{14})g(\tfrac{9\qq}{14})g(\tfrac{11\qq}{14})}{g(\tfrac{\qq}{2})} 
&= g(\tfrac{\qq}{2})^2 \omega(2)^{-3\qq} = (-1)^{\qq\qm/2} q
\end{aligned}\end{equation}
applying Lemma \ref{gausssumidentities}(b) in the last step.
\end{proof}

Next, we consider our example.

\begin{exm}\label{HypergeometricFunction for first F1L3 mod 7 example}
Going back to Example~\ref{F1L3 hypergeometric example}, in the case where $q \equiv 1 \pmod 7$ and $q$ odd, we have $\pmb{\alpha} = \{\frac{1}{14}, \frac{9}{14}, \frac{11}{14}\}$ and $\pmb{\beta} = \{0, \frac14, \frac34\}$.   Then 
$\pmb{\alpha}_0=\emptyset$ and $\pmb{\beta}_0=\pmb{\beta}$.  Thus
\[ 
\frac{\prod_{\alpha_{0j} \in \pmb{\alpha}_0} (x- e^{2\pi\sqrt{-1} \alpha_{0j}})}{\prod_{\beta_{0j} \in \pmb{\beta}_0} (x-e^{2\pi\sqrt{-1} \beta_{0j}})} = \frac{1}{(x-1)(x^2+1)} = \frac{(x^2-1)}{(x-1)(x^4-1)}. \]
Thus $D(x) = x^2-1$ and $M = 4^3$; and $s(m) = 1$ if $m=0, \tfrac{\qq}{2}$ and $s(m)=0$ otherwise.  Therefore Definition \ref{Hybrid Definition} and simplification using Lemma \ref{gausssumidentities}(a)--(b) gives
\[ \begin{aligned}
&H_q(\tfrac{1}{14}, \tfrac{9}{14}, \tfrac{11}{14}; 0,\tfrac14,\tfrac34 \,|\, t) \\
&\quad= \frac{-1}{1-q} \sum_{m=0}^{q-2} q^{s(m) - 1} \frac{g(m+ \frac{1}{14}\qq)g(m+ \frac{9}{14}\qq)g(m+ \frac{11}{14}\qq)}{g(\frac{1}{14} \qq)g(\frac{9}{14} \qq)g(\frac{11}{14} \qq)} g(2m) g(-m) g(-4m) \omega(-4^3t)^m.
\end{aligned} \]
When $m=0$, the summand is just $(-1)(-1)^3/(1-q)=1/\qq$. When $m=\tfrac{\qq}{2}$, applying Lemma \ref{Gauss Identity Lemma F1L3} we obtain
\[ \frac{-g(\frac{4\qq}{7})g(\frac{\qq}{7})g(\frac{2\qq}{7})}{\qq g(\frac{1}{14} \qq)g(\frac{9}{14} \qq)g(\frac{11}{14} \qq)} g(\tfrac{\qq}{2}) \omega(-4^3t)^{\qq\qm/2} = \frac{-1}{q\qq} g(\tfrac{\qq}{7})g(\tfrac{2\qq}{7})g(\tfrac{4\qq}{7})\omega(t)^{\qq\qm/2}.
\]
Therefore
\begin{equation} \label{eqn:hgmtherealdeal}
\begin{aligned}
&H_q(\tfrac{1}{14}, \tfrac{9}{14}, \tfrac{11}{14}; 0,\tfrac14,\tfrac34 \,|\, t) \\
&\quad= \frac{1}{\qq} -\frac{1}{q\qq} g(\tfrac{\qq}{7})g(\tfrac{2\qq}{7})g(\tfrac{4\qq}{7})\omega(t)^{\qq\qm/2} \\
&\qquad +\frac{1}{q\qq} \sum_{\substack{m=1 \\ m \neq \qq\qm/2}}^{q-2} \frac{g(m+ \frac{1}{14}\qq)g(m+ \frac{9}{14}\qq)g(m+ \frac{11}{14}\qq)}{g(\frac{1}{14} \qq)g(\frac{9}{14} \qq)g(\frac{11}{14} \qq)} g(2m) g(-m) g(-4m) \omega(-4^3t)^m.
\end{aligned} 
\end{equation}
\end{exm}

\subsection{Counting points}

Following work of Delsarte \cite{Delsarte} and Furtado Gomida \cite{FG}, Koblitz \cite{Koblitz} gave a formula for the number of points on monomial deformations of diagonal hypersurfaces (going back to Weil \cite{Weil}).  In this subsection, we outline their approach for creating closed formulas that compute the number of points for hypersurfaces in projective space in terms of Gauss sums. 

Let $X \subseteq \PP^n$ be the projective hypersurface over $\F_q$ defined by the vanishing of the nonzero polynomial 
\[
\sum_{i=1}^r a_i x_0^{\nu_{i0}}\cdots x_n^{\nu_{in}} \in \F_q[x_0,\dots,x_n]
\]
so that $a_i \in \F_q$ and $\nu_{ij} \in \Z_{\geq 0}$ for $i=1,\dots,r$ and $j=0,\dots,n$.  Suppose that $\qq \colonequals q-1$ does not divide any of the $\nu_{ij}$.  Let $U$ be the intersection of $X$ with the torus $\mathbb{G}_m^{n+1}/\mathbb{G}_m \subseteq \PP^n$, so that the points of $U$ are the points of $X$ with all nonzero coordinates.

Let $\tyS$ be the set of $s = (s_1,\ldots, s_r) \in (\Z/\qq\Z)^r$ such that the following condition holds:
\begin{equation}\label{WCong}
\textstyle{\sum_{i=1}^r} s_i\equiv0\psmod{\qq} \quad \text{and}\quad \sum_{i=1}^r \nu_{ij}s_i\equiv0\psmod{\qq} \quad \text{ for all $j=1,\dots,n$}. 
\end{equation}
Let $\mu_\qq$ be the group of $\qq$-th roots of unity. Any element $s = (s_1,\ldots, s_r) \in \tyS$ corresponds to a multiplicative character \begin{align*}
\chi_{s} \colon \mu_({\qq})^r &\rightarrow \C^\times \\
\chi_{s}(x_1, \ldots, x_r) &= \omega( \Pi_{i=1}^r x_i^{s_i}).
\end{align*}

Given $s \in \tyS$, we define
\begin{equation} \label{The cs's}
c_s \colonequals \frac{({\qq})^{n-r+1}}{q}\prod_{i=1}^r g(s_i)
\end{equation}
if $s \neq 0$ and
\[ c_{0} \colonequals ({\qq})^{n-r+1}\frac{({\qq})^{r-1}-(-1)^{r-1}}{q}. \]

With this notation, we have the following result of Koblitz, rewritten in terms of Gauss sums so that we can apply it in our context.

\begin{thm}[Koblitz]\label{Dels} 
We have
\begin{equation} \label{eqn:Nstaralpha}
\# U(\F_q)=\sum_{s\in\tyS}\omega(a)^{-s}c_{s}.
\end{equation}
where $\omega(a)^{-s} \colonequals \omega(a_1^{-s_1}\cdots a_r^{-s_r})$.
\end{thm}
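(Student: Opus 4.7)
My plan is to follow the classical Weil--Delsarte approach, combining additive character orthogonality with Gauss sum (multiplicative) Fourier inversion. First I would reduce from the projective count to an affine torus count by writing $\#U(\F_q) = \frac{1}{\qq} N$, where $N$ counts tuples $(x_0,\dots,x_n) \in (\F_q^\times)^{n+1}$ satisfying $\sum_i a_i x_0^{\nu_{i0}} \cdots x_n^{\nu_{in}} = 0$. Then I would detect the vanishing of this sum by the standard additive character identity $\mathbf{1}_{y=0} = \frac{1}{q}\sum_{t \in \F_q} \Theta(ty)$, giving
\[
N = \frac{\qq^{n+1}}{q} + \frac{1}{q}\sum_{t \in \F_q^\times}\sum_{(x) \in (\F_q^\times)^{n+1}} \prod_{i=1}^r \Theta(t a_i x_0^{\nu_{i0}}\cdots x_n^{\nu_{in}}),
\]
after isolating the $t=0$ contribution.

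Next I would expand each factor $\Theta(y)$ with $y \in \F_q^\times$ via Fourier inversion on the multiplicative group: $\Theta(y) = \frac{1}{\qq}\sum_{m \bmod \qq} g(-m)\,\omega(y)^m$. Substituting this into each of the $r$ factors produces a sum over $s = (s_1,\dots,s_r) \in (\Z/\qq\Z)^r$ of
\[
\frac{1}{\qq^r}\prod_{i=1}^r g(-s_i)\,\omega(a_i)^{s_i}\,\omega(t)^{\sum_i s_i}\,\prod_{j=0}^n \omega(x_j)^{\sum_i \nu_{ij} s_i}.
\]
Performing the sum over $t \in \F_q^\times$ enforces $\sum_i s_i \equiv 0 \psmod{\qq}$ (a factor of $\qq$), and performing the sum over each $x_j \in \F_q^\times$ enforces $\sum_i \nu_{ij} s_i \equiv 0 \psmod{\qq}$ (a factor of $\qq^{n+1}$); any $s$ failing even one of these congruences contributes $0$. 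Using that $X$ is homogeneous of some degree $d$ (so $\sum_j \nu_{ij} = d$ for all $i$), the $j=0$ condition follows from the others together with $\sum_i s_i \equiv 0$, so only the conditions defining $\tyS$ remain.

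The surviving terms give
\[
N = \frac{\qq^{n+1}}{q} + \frac{\qq^{n+2-r}}{q}\sum_{s \in \tyS}\prod_{i=1}^r g(-s_i)\,\omega(a_i)^{s_i}.
\]
Dividing by $\qq$, then replacing $s$ by $-s$ in the sum (which preserves $\tyS$ since $\tyS$ is a subgroup), turns $g(-s_i)$ into $g(s_i)$ and $\omega(a_i)^{s_i}$ into $\omega(a_i)^{-s_i}$, yielding the claimed formula with $c_s = \frac{\qq^{n-r+1}}{q}\prod_i g(s_i)$ for $s \neq 0$. Finally, isolating the $s = 0$ summand, where $g(0) = -1$ and $\omega(a)^{0} = 1$, gives the contribution $\frac{\qq^n + (-1)^r \qq^{n-r+1}}{q}$, which one checks equals $c_0 \cdot \omega(a)^{0} = c_0$ as defined in \eqref{The cs's}.

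I expect the main obstacle to be purely bookkeeping: keeping the signs, the negation $s \mapsto -s$, the power of $\qq$ coming from each orthogonality step, and the exceptional $s=0$ term all aligned so that the answer matches the stated normalization of $c_s$ and $c_0$. There is no genuine analytic or geometric difficulty beyond this; the hypothesis that $\qq$ does not divide any $\nu_{ij}$ is never used in the argument itself but is in effect a running assumption ensuring that the various multiplicative characters in play are nontrivial where needed and that the congruence system defining $\tyS$ is the correct one.
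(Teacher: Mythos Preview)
Your argument is correct. The paper takes a different and much shorter route: it simply cites Koblitz's Theorem~1, which already gives $\#U(\F_q)=\sum_{s}\omega(a)^{-s}c_s'$ with $c_s'$ expressed via Jacobi sums $J(s_1,\dots,s_r)$, and then checks that $c_s'=c_s$ by using the standard relation $J(s_1,\dots,s_r)=g(s_1)\cdots g(s_r)/g(\sum_i s_i)=-\prod_i g(s_i)$ (since $\sum_i s_i\equiv 0$) together with the reduction rule $J(\dots,0,\dots)=-J(\dots,\widehat{0},\dots)$. So the paper outsources the actual character-sum computation to Koblitz and only does a normalization/notation conversion. What you have written is essentially the proof underlying Koblitz's theorem itself---the classical Weil--Delsarte additive/multiplicative orthogonality argument---and is therefore self-contained, at the cost of more bookkeeping. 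Both reach the same endpoint; your version has the advantage that one sees directly why the congruence conditions defining $\tyS$ arise and why the $s=0$ term has the exceptional constant $c_0$, while the paper's version is a two-line reduction to the literature.
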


\begin{proof}
We unpack and repack a bit of notation.  Koblitz \cite[Theorem 1]{Koblitz} proves that 
\[ \# U(\F_q)=\sum_{s}\omega(a)^{-s}c_{s}' \] 
where the sum is over all characters of $\mu_({\qq})^r / \Delta$ where $\Delta$ is the diagonal---this set is in natural bijection with the set $S$---and where for $s \neq 0$
$$
c_s' = -\frac{1}{q} ({\qq})^{n-r+1} J(s_1, \ldots, s_r)
$$
where $J(s_1,\ldots, s_r)$ is the Jacobi sum and where $c_0'=c_0$ as in~\eqref{The cs's}.  It only remains to show that $c'_s=c_s$ for $s \neq 0$.  If $s_i \neq 0$ for all $i$, then \cite[(2.5)]{Koblitz}
$$
J(s_1,\ldots, s_r) = \frac{g(s_1) \cdots g(s_r)}{g(s_1+\ldots+s_r)} = - g(s_1) \cdots g(s_r)
$$
so $c_s' = c_s$ by definition.  If $r>1$ and $s_i =0$ for some $i$, then \cite[below (2.5)]{Koblitz}
\[ J(s_1,\ldots, s_r) = -J(s_1, \ldots, s_{i-1},s_{i+1}, \ldots, s_r), \]
so iterating and using Lemma \ref{gausssumidentities}(a),
\begin{equation}
J(s_1,\ldots, s_r) = -\prod_{\substack{i=1,\dots,r \\ s_i = 0}} (-1) \prod_{\substack{i=1,\dots,r \\ s_i \neq 0}} g(s_i)  = -\prod_{\substack{i=1,\dots,r \\ s_i = 0}} g(0) \prod_{\substack{i=1,\dots,r \\ s_i \neq 0}} g(s_i) = -  \prod_{i=1}^r g(s_i). \qedhere
\end{equation}
\end{proof}

In the remaining sections, we apply the preceding formulas to each of our five pencils.

\subsection{The Dwork pencil \texorpdfstring{$\Fsf_4$}{Fsf4}}

 In this subsection, we will give a closed formula in terms of finite field hypergeometric sums for the number of points in a given member of the Dwork family. Throughout this section, we suppose that $q$ is odd.

\begin{prop}\label{prop:F4}
For $\psi \in \F_q^\times$, the following statements hold.
\begin{enumalph}
\item If $q\equiv 3\pmod 4$, then
\[ \#X_{\Fsf_4,\psi}(\F_q)=q^2+q+1+H_q(\tfrac{1}{4},\tfrac{1}{2},\tfrac{3}{4};0,0,0\,|\,\psi^{-4})-3qH_q(\tfrac{1}{4},\tfrac{3}{4};0, \tfrac{1}{2}\,|\,\psi^{-4}).\]

\item If $q\equiv 1\pmod{4}$, then
\begin{align*}
\#X_{\Fsf_4,\psi}(\F_q) &= q^2+q+1+H_q(\tfrac{1}{4},\tfrac{1}{2},\tfrac{3}{4};0,0,0\,|\,\psi^{-4})+3qH_q(\tfrac{1}{4},\tfrac{3}{4};0, \tfrac{1}{2}\,|\,\psi^{-4}) \\
&\qquad +12(-1)^{(q-1)/4}qH_q(\tfrac{1}{2};0\,|\,\psi^{-4}). 
\end{align*}
\end{enumalph}
\end{prop}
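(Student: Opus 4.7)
The plan is to apply Koblitz's formula (Theorem~\ref{Dels}) to count $\#U(\F_q)$ on the open torus $U \subset X_{\Fsf_4,\psi}$ where all coordinates are nonzero, identify the resulting Gauss sum expressions with the hypergeometric sums in the statement via the Hasse--Davenport product relation, and add back the boundary count to recover the $q^2+q+1$ term.

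For the Dwork pencil, the index set $S \subset (\Z/\qq\Z)^5$ from \eqref{WCong} consists of tuples $(s_0,\ldots,s_3,s_4)$ satisfying $4s_j+s_4 \equiv 0 \pmod{\qq}$ for $j=0,\ldots,3$ together with $\sum_j s_j \equiv 0 \pmod{\qq}$. Setting $m \colonequals s_4$, each congruence $4 s_j \equiv -m \pmod{\qq}$ admits $d \colonequals \gcd(4,\qq)$ solutions, so $d = 2$ or $4$ according as $q \equiv 3$ or $1 \pmod 4$. Parametrizing solutions as $s_j = t(m) + k_j \qq/d$ with shifts $k_j \in \Z/d\Z$ subject to $\sum_j k_j \equiv 0 \pmod d$, and grouping Koblitz's sum by the ``Fourier'' variable $m$, yields a sum weighted by $\omega(-4\psi)^{-m}$ of shift-dependent Gauss sum products. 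The Hasse--Davenport product relation (Lemma~\ref{gausssumidentities}(c) with $N = d$) collapses $\prod_{j=0}^{d-1} g(t(m) + j\qq/d)$ into a multiple of $g(-m)$, and Lemma~\ref{gausssumidentities}(b) handles the pair cancellations $g(m)g(-m) = (-1)^m q$, matching the shape of Definitions~\ref{BCM HGF} and~\ref{Hybrid Definition}. The three hypergeometric blocks then correspond to the three character classes from Section~\ref{PF:Dwork}: the trivial character contributes $H_q(\tfrac14,\tfrac12,\tfrac34;0,0,0\,|\,\psi^{-4})$ with coefficient one; the three characters of type (ii) contribute $\pm 3q \cdot H_q(\tfrac14,\tfrac34;0,\tfrac12\,|\,\psi^{-4})$; and, only when $d = 4$, the twelve characters of type (iii) contribute $12q(-1)^{(q-1)/4} \cdot H_q(\tfrac12;0\,|\,\psi^{-4})$, the sign emerging from $g(\qq/4) g(-\qq/4) = (-1)^{\qq/4} q$ (Lemma~\ref{gausssumidentities}(b)) together with the character value $\omega(-1)^{\qq/4}$. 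Adding the boundary contribution (where $F_\psi|_{x_i=0}$ is a Fermat quartic independent of $\psi$) together with Koblitz's $s = 0$ term $c_0$ yields the constant $q^2 + q + 1$ via a straightforward inclusion--exclusion.

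The main obstacle is the careful bookkeeping of signs and coefficients in the Gauss sum reorganization: the absence of the $H_q(\tfrac12;0)$ term when $q \equiv 3 \pmod 4$ reflects that the type~(iii) characters demand a nontrivial $g(\qq/4)$, which does not exist for such $q$, while the sign flip on the middle coefficient arises from the different parity structure of $\omega(-1)^{\qq/2}$ in each case. The eventual match of the coefficients $(1,\pm 3q, 12q\cdot(-1)^{(q-1)/4})$ with the period multiplicities $(3,6,12)$ of Proposition~\ref{prop:F4H2result} concretely exhibits Manin's ``unity'' principle for this pencil.
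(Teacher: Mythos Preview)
Your approach mirrors the paper's exactly: apply Koblitz's formula on the torus, cluster the solution set $S$ by the character types (i)--(iii) of Section~\ref{PF:Dwork}, use Hasse--Davenport to identify each cluster sum with a hypergeometric sum, and add the boundary.

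One claim is inaccurate, however: the constant $q^2+q+1$ does \emph{not} arise simply as $c_0$ plus the boundary. For $q\equiv 1\pmod 4$, each of the twelve type-(iii) cluster sums carries a residual non-hypergeometric piece $(-1)^{\qq/4}-\bigl(g(\tfrac{\qq}{4})^2+g(\tfrac{3\qq}{4})^2\bigr)/g(\tfrac{\qq}{2})$ (the paper's Lemma~\ref{Cluster 2 quarters 1 half shifts}), and the boundary Fermat-quartic count on each hyperplane $x_i=0$ likewise produces Gauss sum terms (Lemma~\ref{oneCoordZero q 1mod4 Dwork}); the clean formula in part~(b) holds only after these cancel against one another. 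This cancellation is the real content of the ``careful bookkeeping'' you flag, and it lives on both sides of the torus/boundary split, not just in the open-torus count.
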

  
Proposition \ref{prop:F4} has several equivalent formulations and has seen many proofs: see section \ref{subsec:PreviousWork} in the introduction for further references.  We present another proof for completeness and to illustrate the method we will apply to all five families in this well-studied case.  

\begin{rmk}
Quite beautifully, the point counts in Proposition \ref{prop:F4} in terms of finite field hypergeometric sums match (up to twisting factors) the indices with multiplicity in the Picard--Fuchs equations computed in Proposition \ref{prop:F4H2result}.  Although we are not able to use this matching directly, it guides the decomposition of the sums by means of lemmas that can be proven in a technical but direct manner.
\end{rmk}

We prove Proposition \ref{prop:F4} in four steps:
\begin{enumerate}
\item[1.] We compute the relevant characters and cluster them.
\item[2.] We use Theorem \ref{Dels} to count points where no coordinate is zero and rewrite the sums into hypergeometric functions. 
\item[3.] We count points where at least one coordinate is zero.
\item[4.] We combine steps 2 and 3 to finally prove Proposition \ref{prop:F4}.
\end{enumerate}
The calculations are somewhat involved, but we know how to cluster and the answer up to the scaling factors in front: indeed, the parameters of the finite field hypergeometric sums are given by the calculation of the Picard--Fuchs equations for the Dwork pencil given by Proposition~\ref{prop:F4H2result}.  

\subsubsection*{Step 1: Computing and clustering the characters}

In order to use Theorem \ref{Dels} we must compute the subset $\tyS\subset (\Z/\qq\Z)^r$ given by the constraints in \eqref{WCong}. This is equivalent to solving the system of congruences: 
\begin{equation} 
\begin{pmatrix}
4&0&0&0&1\\
0&4&0&0&1\\
0&0&4&0&1\\
0&0&0&4&1\\
1&1&1&1&1
\end{pmatrix}
\begin{pmatrix} s_1 \\ s_2 \\ s_3 \\ s_4 \\ s_5 \end{pmatrix} \equiv 0 \pmod{\qq}. 
\end{equation}

If $q\equiv 3\pmod 4$, then by linear algebra over $\Z$ we obtain
\[\tyS = \left\{(1,1,1,1,-4)k_1 + \tfrac{\qq}{2}(0,0,1,-1,0) k_2 +  \tfrac{\qq}{2} (0,1,0,-1,0)k_3:k_i \in\Z/\qq\Z \right\}. \]
These solutions can be clustered in an analogous way as done in section~\ref{PF:Dwork}: 
\begin{enumerate}[(i)]
\item $\tyS_1 \colonequals \{ k(1,1,1,1,-4) : k \in \Z / \qq\Z\}$,
\item $\tyS_2 \colonequals \{ k(1,1,1,1,-4) + \tfrac{\qq}{2} (0,1,1,0,0) : k \in \Z/\qq \Z\}$,
\item $\tyS_3 \colonequals \{ k(1,1,1,1,-4) + \tfrac{\qq}{2} (0,1,0,1,0) : k \in \Z/\qq \Z\}$, and
\item $\tyS_4 \colonequals \{ k(1,1,1,1,-4) + \tfrac{\qq}{2} (0,0,1,1,0) : k \in \Z/\qq \Z\}$. 
\setcounter{DworkClusters}{\value{enumi}}
\end{enumerate}
The last three (ii)--(iv) all behave in the same way, due to the evident symmetry.

If instead $q\equiv1\pmod 4$, then
\[ \tyS = \left\{(1,1,1,1,-4)k_1 +  \tfrac{\qq}{4}(0,1,0,-1,0)k_2 +  \tfrac{\qq}{4} (0,0,1,-1,0)k_3 : k_i \in \Z/\qq\Z\right \}; \] 
we cluster again,  getting the four clusters above but now together with twelve new clusters:
\begin{enumerate}[(i)]
\setcounter{enumi}{\value{DworkClusters}}
\item three sets of the form $\tyS_5 \colonequals \{k(1,1,1,1,-4) + \tfrac{\qq}{4}(0,1,2,1,0) : k \in \Z/\qq\Z\}$,
\item three sets of the form $\tyS_6 \colonequals \{k(1,1,1,1,-4) - \tfrac{\qq}{4}(0,1,2,1,0) : k \in \Z/\qq\Z\}$, and
\item six sets of the form $\tyS_7 \colonequals \{k(1,1,1,1,-4) + \tfrac{\qq}{4}(0,0,1,3,0) : k \in \Z/\qq\Z\}$,
\end{enumerate}
where the number of sets is given by the number of distinct permutations of the middle three coordinates.

\subsubsection*{Step 2: Counting points on the open subset with nonzero coordinates}

We now give a formula for $\#U_{\Fsf_4, \psi}(\F_q)$ for the number of points, applying Theorem \ref{Dels}.  

We go through each cluster $\tyS_i$, linking each to a hypergeometric function.
\begin{lem}\label{Cluster no shifts}
For all odd $q$, 
\begin{equation}
 \sum_{s \in \tyS_1} \omega(a)^{-s} c_s=  q^2-3q+3  + H_q(\tfrac14, \tfrac12, \tfrac34; 0,0,0 \,|\, \psi^{-4}).
\end{equation}
\end{lem}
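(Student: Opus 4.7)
The plan is to expand both sides as Gauss-sum expressions and match them term by term. First, I would parametrize $\tyS_1$ by $k\in\Z/\qq\Z$ via $s_k\colonequals k(1,1,1,1,-4)$. Since the coefficient vector is $a=(1,1,1,1,-4\psi)$ and $\omega(1)=1$, we have $\omega(a)^{-s_k}=\omega(256\psi^4)^k$. Splitting off the term $k=0$ and invoking \eqref{The cs's} with $n=3$ and $r=5$ (so $n-r+1=-1$) yields
\[ \sum_{s\in\tyS_1}\omega(a)^{-s}c_s = c_0 + \frac{1}{q\qq}\sum_{k=1}^{\qq-1} g(k)^4 g(-4k)\,\omega(256\psi^4)^k, \]
where $c_0 = (\qq^4-1)/(q\qq) = (\qq-1)(\qq^2+1)/\qq$ after factoring $\qq^4-1=(\qq-1)(\qq+1)(\qq^2+1)$ and using $\qq+1=q$.

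Next, I would expand the hypergeometric sum $H_q(\tfrac14,\tfrac12,\tfrac34;0,0,0\,|\,\psi^{-4})$ from Definition \ref{BCM HGF}. The indices $\pmb{\alpha}=\{\tfrac14,\tfrac12,\tfrac34\}$ and $\pmb{\beta}=\{0,0,0\}$ are defined over $\Q$, and
\[ \prod_j\frac{x-e^{2\pi\sqrt{-1}\alpha_j}}{x-e^{2\pi\sqrt{-1}\beta_j}} = \frac{x^3+x^2+x+1}{(x-1)^3} = \frac{x^4-1}{(x-1)^4}, \]
so the relevant data are: one numerator index $p_1=4$; four denominator indices $q_1=\cdots=q_4=1$; $M=256$; $\epsilon=1$; $D(x)=x-1$; and the multiplicity $s(m)$ equals $1$ when $m\equiv 0\psmod{\qq}$ and $0$ otherwise. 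Substituting into Definition \ref{BCM HGF}, isolating the $m=0$ term (which contributes $-1/\qq$ via $g(0)=-1$), and reindexing by $k\equiv -m\psmod{\qq}$ using $g(-m)=g(k)$ and $g(4m)=g(-4k)$, I obtain
\[ H_q(\tfrac14,\tfrac12,\tfrac34;0,0,0\,|\,\psi^{-4}) = -\frac{1}{\qq} + \frac{1}{q\qq}\sum_{k=1}^{\qq-1} g(k)^4 g(-4k)\,\omega(256\psi^4)^k. \]

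Subtracting the two displays, the nonzero-$k$ tails cancel identically and the lemma reduces to the purely numerical identity
\[ c_0 + \frac{1}{\qq} = \frac{(\qq-1)(\qq^2+1)+1}{\qq} = \frac{\qq(\qq^2-\qq+1)}{\qq} = \qq^2-\qq+1 = q^2-3q+3, \]
where the last equality uses $\qq=q-1$.

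The only real obstacle is bookkeeping: aligning the prefactor $(-1)^{r+s}/(1-q)$ and the exponent $-s(0)+s(m)$ in Definition \ref{BCM HGF}, the normalization constant $\epsilon M^{-1}$, and the reindexing $k\leftrightarrow -m$ so that the nonzero-$k$ tails of the two sums match on the nose. The clean collapse of $c_0+1/\qq$ to the polynomial $q^2-3q+3$ is a good sanity check that all conventions are correctly aligned.
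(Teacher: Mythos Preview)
Your proof is correct and follows essentially the same approach as the paper: expand both sides as Gauss-sum expressions via Definition~\ref{BCM HGF} and \eqref{The cs's}, match the $k\neq 0$ tails, and simplify the residual constant. The paper additionally separates the $k=\qq/2$ term on both sides before matching, but this is redundant since those terms cancel identically anyway; your version is slightly cleaner for not doing so.
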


\begin{proof}
By Definition~\ref{BCM HGF}, 
\begin{align*}
H_q(\tfrac14, \tfrac12, \tfrac34; 0,0,0 \,|\, \psi^{-4}) &= \frac{1}{\qq} \sum_{m=0}^{q-2} q^{-s(0) + s(m)} g(4m) g(-m)^4 \omega(4\psi)^{-4m} \\ 
	&= \frac{-1}{\qq} + \frac{-q^{-1} g(\tfrac{\qq}{2})^4}{\qq} +  \frac{1}{\qq}\sum_{\substack{m=1 \\ m\neq \qq\qm/2}}^{q-2} q^{-1} g(4m) g(-m)^4 \omega(4\psi)^{-4m} \\ 
	&=-  \frac{1}{\qq} - \frac{g(\tfrac{\qq}{2})^4}{q\qq} + \frac{1}{q\qq}\sum_{\substack{k=1 \\ k\neq \qq\qm/2}}^{q-2} g(-4k) g(k)^4 \omega(4\psi)^{4k}
	\end{align*}
	the latter by substituting $k=-m$.
Now we expand to match terms:
\begin{align}\label{S1 for Dwork}
\sum_{s \in \tyS_1} \omega(a)^{-s} c_s &= c_{(0,0,0,0,0)} + c_{(\qq\qm/2)(1,1,1,1,0)} + \sum_{\substack{k=1 \\ k\neq \qq\qm/2}}^{q-2} \omega(-4\psi)^{4k} c_{(k,k,k,k,-4k)} \nonumber \\ 
&= \frac{({\qq})^4 - (-1)^4}{q\qq} - \frac{1}{q\qq} g(\tfrac{\qq}{2})^4 + \frac{1}{q\qq}\sum_{\substack{k=1 \\ k\neq \qq\qm/2}}^{q-2} \omega(4\psi)^{4k} g(k)^4 g(-4k) \\
	&= \frac{q^3-4q^2+6q-4}{\qq} - \frac{g(\tfrac{\qq}{2})^4}{q\qq} + \frac{1}{q\qq} \sum_{k=1, k\neq \tfrac{\qq}{2}}^{q-2} \omega(4\psi)^{4k} g(k)^4 g(-4k) \nonumber \\ 
	&=   q^2-3q+3  + H_q(\tfrac14, \tfrac12, \tfrac34; 0,0,0 \,|\, \psi^{-4}). \qedhere
\end{align}
\end{proof}

\begin{lem}\label{Cluster two half shifts}
For $i = 2, 3, 4$,
\begin{equation}
\sum_{s \in \tyS_i} \omega(a)^{-s} c_s = (-1)^{\qq\qm/2}\bigl(2 + qH_q(\tfrac14, \tfrac{3}{4}; 0, \tfrac12 \,|\, \psi^{-4})\bigr).
\end{equation}
\end{lem}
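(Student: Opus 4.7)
The plan is to mirror the proof of Lemma~\ref{Cluster no shifts}, inserting a Hasse--Davenport step to collapse the two coordinates shifted by $\tfrac{\qq}{2}$. For $\tyS_2$, I would parameterize $s = k(1,1,1,1,-4) + \tfrac{\qq}{2}(0,1,1,0,0)$ with $k \in \ZZ/\qq\ZZ$. Then $\omega(a)^{-s} = \omega(-4\psi)^{4k}$ (the factors $\omega(1)^{\pm\qq/2}$ being trivial), and
\[
\prod_{i=1}^5 g(s_i) = g(k)^2 g(k+\tfrac{\qq}{2})^2 g(-4k).
\]
Since $\tyS_3$ and $\tyS_4$ differ from $\tyS_2$ only by permuting the middle three coordinates, and the summand depends on $s$ only through $\omega(-4\psi)^{4k}$ and the symmetric product above, it suffices to analyze $\tyS_2$.

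Next, I would peel off the two exceptional values $k=0$ and $k=\tfrac{\qq}{2}$, for which the five coordinates of $s$ are a permutation of $(\tfrac{\qq}{2},\tfrac{\qq}{2},0,0,0)$. By Lemma~\ref{gausssumidentities}(a)--(b),
\[
\textstyle\prod_i g(s_i) = g(0)^3\, g(\tfrac{\qq}{2})^2 = -(-1)^{\qq\qm/2}q,
\]
and $\omega(-4\psi)^{4k}=1$, giving exceptional contributions summing to $-2(-1)^{\qq\qm/2}/\qq$.

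For $k \notin \{0,\tfrac{\qq}{2}\}$, I would apply the Hasse--Davenport product relation (Lemma~\ref{gausssumidentities}(c)) with $N=2$ and $m=k$ to obtain $g(k)g(k+\tfrac{\qq}{2}) = \omega(2)^{-2k}\, g(\tfrac{\qq}{2})\, g(2k)$, so that
\[
g(k)^2 g(k+\tfrac{\qq}{2})^2 = (-1)^{\qq\qm/2}q\,\omega(16)^{-k} g(2k)^2.
\]
Combining with $\omega(-4\psi)^{4k}\omega(16)^{-k} = \omega(16\psi^4)^k$ and the bijective substitution $k = -m$ (which fixes $\{0,\tfrac{\qq}{2}\}$ since $\qq$ is even), the main sum reduces to
\[
\frac{(-1)^{\qq\qm/2}}{\qq} \sum_{m \ne 0,\,\qq/2} \omega(\psi^{-4}/16)^m\, g(4m)\, g(-2m)^2.
\]

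Finally, I would match this with $q H_q(\tfrac14,\tfrac34;0,\tfrac12\,|\,\psi^{-4})$ via Definition~\ref{BCM HGF}: writing $(x^2+1)/(x^2-1)=(x^4-1)/(x^2-1)^2$ identifies $\pmb{p}=(4)$, $\pmb{q}=(2,2)$, $\epsilon=1$, $M=16$, and $D(x)=x^2-1$ with $s(0)=s(\tfrac{\qq}{2})=1$, and using $\omega(\psi^{-4}/16)^{\qq/2}=1$ at the two special indices yields
\[
H_q(\tfrac14,\tfrac34;0,\tfrac12\,|\,\psi^{-4}) = -\frac{2}{\qq} + \frac{1}{q\qq}\sum_{m \ne 0,\,\qq/2} g(4m)\, g(-2m)^2\, \omega(\psi^{-4}/16)^m,
\]
so that $2+qH_q = -\tfrac{2}{\qq} + \tfrac{1}{\qq}\sum$. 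Multiplying by $(-1)^{\qq\qm/2}$ and adding the exceptional contribution recovers the claim. The main obstacle will be tracking the phases $\omega(2)^{\pm k}$ and $\omega(16)^{\pm k}$ introduced by Hasse--Davenport, and verifying the BCM bookkeeping: in particular that $s(0)=1$ (not $2$), so that the weight $q^{-s(0)+s(m)}=q^{-1}$ on the bulk correctly produces the prefactor $1/(q\qq)$ that the point-count calculation demands, with the overall factor $(-1)^{\qq\qm/2}$ emerging cleanly from $g(\tfrac{\qq}{2})^2$.
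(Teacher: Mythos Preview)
Your proposal is correct and follows essentially the same approach as the paper: reduce to $\tyS_2$ by symmetry, separate the two exceptional indices $k\in\{0,\tfrac{\qq}{2}\}$, apply the Hasse--Davenport relation with $N=2$ to collapse $g(k)g(k+\tfrac{\qq}{2})$, substitute $m=-k$, and match against the BCM expansion of $H_q(\tfrac14,\tfrac34;0,\tfrac12\,|\,\psi^{-4})$. The only cosmetic differences are that the paper writes $\omega(2\psi)^{-4m}$ where you write $\omega(\psi^{-4}/16)^m$, and you spell out the BCM data $\pmb{p},\pmb{q},D(x),s(m)$ a bit more explicitly than the paper does.
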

\begin{proof}
By definition,
\begin{equation}\begin{aligned} \label{eqn:Hq14}
H_q(\tfrac14, \tfrac{3}{4}; 0, \tfrac12 \,|\, \psi^{-4}) &= \frac{1}{\qq} \sum_{m=0}^{q-2} q^{-s(0) + s(m)}g(4m)g(-2m)^2 \omega(2\psi)^{-4m} \\ 
&= \frac{-2}{\qq} + \frac{1}{\qq} \sum_{\substack{m=1 \\ m \neq \qq\qm/2}}^{q-2} q^{-1}g(4m)g(-2m)^2 \omega(2\psi)^{-4m}
\end{aligned}\end{equation}
using $s(m) = 1$ if $m = 0,\tfrac{\qq}{2}$ and $s(m)=0$ otherwise. 

By symmetry, 
$$
\sum_{s \in \tyS_2} \omega(a)^{-s} c_s  = \sum_{s \in \tyS_3} \omega(a)^{-s} c_s  = \sum_{s \in \tyS_4} \omega(a)^{-s} c_s. 
$$
So we only need to consider $i=2$.  Then: 
\begin{equation} \label{S2 cluster Dwork 0} 
\begin{aligned} 
\sum_{s \in \tyS_2} \omega(a)^{-s} c_s &= c_{(\qq\qm/2)(0,1,1,0,0)} + c_{(\qq\qm/2)( 1,0,0,1,0)} + \sum_{\substack{k=1 \\ k\neq \qq\qm/2}}^{q-2} \omega(-4\psi)^{4k} c_{(k(1,1,1,1,-4) + (\qq\qm/2)(0,1,1,0,0))} \\
	&= -2\frac{g(\tfrac{\qq}{2})^2}{q\qq} + \frac{1}{q\qq}  \sum_{\substack{k=1 \\ k\neq \qq\qm/2}}^{q-2} \omega(-4\psi)^{4k}g(k)^2g(k+\tfrac{\qq}{2})^2g(-4k). 
	\end{aligned} 
	\end{equation}
Next, we use the Hasse--Davenport product relation (Lemma \ref{gausssumidentities}(c)) with $N=2 \mid \qq$ to get
\[ g(2k) = -\omega(2)^{2k} \frac{g(k)}{g(0)} \frac{g(k+\tfrac{\qq}{2})}{g(\frac{\qq}{2})} \]
which rearranges using $g(0)=-1$ to
\begin{equation} \label{eqn:gkkq2}
g(k)g(k+\tfrac{\qq}{2}) = \omega(2)^{-2k} g(2k) g(\tfrac{\qq}{2}). 
\end{equation}
Using Lemma \ref{gausssumidentities}(b) gives $g(\tfrac{\qq}{2})^2 = (-1)^{\qq\qm/2} q$; substituting this and \eqref{eqn:gkkq2} into \eqref{S2 cluster Dwork 0} simplifies to
\[ \begin{aligned} 
\sum_{s \in \tyS_2} \omega(a)^{-s} c_s &= -2\frac{(-1)^{\qq\qm/2}}{\qq} + \frac{1}{q\qq}  \sum_{\substack{k=1 \\ k\neq \qq\qm/2}}^{q-2} \omega(-4\psi)^{4k}(\omega(2)^{-2k}g(2k) g(\tfrac{\qq}{2}))^2 g(-4k) \\
	&= -2\frac{(-1)^{\qq\qm/2}}{\qq} + \frac{1}{\qq}  \sum_{\substack{k=1 \\ k\neq \qq\qm/2}}^{q-2} (-1)^{\qq\qm/2}\omega(-2\psi)^{4k}g(2k)^2  g(-4k) \\
	&= (-1)^{\qq\qm/2}\left(-\frac{2}{\qq} + \frac{1}{\qq}  \sum_{\substack{k=1 \\ k\neq \qq\qm/2}}^{q-2} \omega(-2\psi)^{4k}g(2k)^2  g(-4k)\right).
	\end{aligned} \]
Looking back at \eqref{eqn:Hq14}, we rearrange and insert a factor $q$ to find the hypergeometric sum:
\[ \begin{aligned} 
(-1)^{\qq\qm/2} \sum_{s \in \tyS_2} \omega(a)^{-s} c_s
	&= \frac{2q-2}{\qq} - \frac{2q}{\qq} + \frac{q}{\qq} \sum_{\substack{m=1 \\ m\neq \qq\qm/2}}^{q-2} q^{-1}\omega(2\psi)^{-4m}g(-2m)^2  g(4m) \\
	&= 2 + qH_q(\tfrac14, \tfrac{3}{4}; 0, \tfrac12 \,|\, \psi^{-4})
\end{aligned} \]
as claimed.
\end{proof}

\begin{lem}\label{Cluster 2 quarters 1 half shifts}
Suppose $q \equiv 1 \pmod 4$. Then 
$$
\sum_{s \in \tyS_5} \omega(a)^{-s} c_s = (-1)^{\qq\qm/4} q H_q(\tfrac12; 0 \,|\, \psi^{-4}) + (-1)^{\qq\qm/4} - \frac{g(\tfrac{\qq}{4})^2 + g(\tfrac{3\qq}{4})^2}{g(\tfrac{\qq}{2})}.
$$
\end{lem}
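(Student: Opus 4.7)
The plan is to follow the template of Lemma \ref{Cluster two half shifts}: unpack the cluster sum via Theorem \ref{Dels}, peel off the terms in which some coordinate $s_i \equiv 0 \pmod{\qq}$ (so that some factor $g(s_i) = g(0) = -1$), and then manipulate the remaining ``generic'' terms using Hasse--Davenport to reveal a hypergeometric sum. Concretely,
\[
T_5 \colonequals \sum_{s \in \tyS_5} \omega(a)^{-s} c_s = \frac{1}{q\qq}\sum_{k=0}^{q-2}\omega(-4\psi)^{4k}\, g(k)\, g(k+\tfrac{\qq}{4})^2\, g(k+\tfrac{\qq}{2})\, g(-4k),
\]
and since $4 \mid \qq$, the special values are precisely $k \in \{0, \qq/4, \qq/2, 3\qq/4\}$.

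First I would evaluate the four special summands in closed form using $g(\tfrac{\qq}{2})^2 = q$ (valid because $q \equiv 1 \pmod 4$) and $g(\tfrac{\qq}{4})\,g(\tfrac{3\qq}{4}) = (-1)^{\qq\qm/4} q$, both from Lemma \ref{gausssumidentities}(b), to obtain
\[
T_5^{\mathrm{sp}} = \frac{g(\tfrac{\qq}{4})^2 + g(\tfrac{3\qq}{4})^2}{\qq\, g(\tfrac{\qq}{2})} - \frac{(-1)^{\qq\qm/4}(q+1)}{\qq}.
\]

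For the generic part $T_5^{\mathrm{gen}}$, the crucial move is the inversion-type reindexing $k = -m - \qq/4$, chosen precisely so that $g(k+\qq/4)^2 = g(-m)^2$ matches the shape of the hypergeometric summand. This is a bijection of the generic set $\Z/\qq\Z \smallsetminus \{0, \qq/4, \qq/2, 3\qq/4\}$ to itself, and transforms the Gauss-sum factor into $g(\tfrac{3\qq}{4}-m)\,g(-m)^2\,g(\tfrac{\qq}{4}-m)\,g(4m)$. Two applications of the Hasse--Davenport product relation (Lemma \ref{gausssumidentities}(c)) with $N=2$---one to index $\qq/4 - m$, yielding $g(\tfrac{\qq}{4}-m)\,g(\tfrac{3\qq}{4}-m) = \omega(2)^{2m - \qq\qm/2}\,g(\tfrac{\qq}{2}-2m)\,g(\tfrac{\qq}{2})$, and one to index $2m$, yielding $g(4m) = \omega(2)^{4m}\,g(2m)\,g(2m+\tfrac{\qq}{2})/g(\tfrac{\qq}{2})$---combined with the pair identity $g(\tfrac{\qq}{2}-2m)\,g(\tfrac{\qq}{2}+2m) = q$ (from Lemma \ref{gausssumidentities}(b) with $r = \qq/2-2m$, since $(-1)^{\qq\qm/2}=1$) and the sign identity $\omega(2)^{-\qq\qm/2} = (-1)^{\qq\qm/4}$ should collapse the product to the pointwise equality
\[
\frac{1}{q}\,\omega(-4\psi)^{-4m}\,g(\tfrac{3\qq}{4}-m)\,g(\tfrac{\qq}{4}-m)\,g(4m) = (-1)^{\qq\qm/4}\,g(2m)\,\omega(4^{-1}\psi^{-4})^m,
\]
with the $\omega(2)^{6m} = \omega(64)^m$ accumulated from Hasse--Davenport exactly canceling the character ratio $\omega(-4\psi)^{-4m}/\omega(4^{-1}\psi^{-4})^m = \omega(64)^{-m}$. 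Summing over generic $m$ gives $T_5^{\mathrm{gen}} = \tfrac{(-1)^{\qq\qm/4}}{\qq} \sum_{m \text{ generic}} g(2m)\,g(-m)^2\,\omega(4^{-1}\psi^{-4})^m$.

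To finish, I would expand $(-1)^{\qq\qm/4}\,q\,H_q(\tfrac{1}{2};0\,|\,\psi^{-4})$ using Definition \ref{BCM HGF} and peel off the contributions at $m \in \{0,\qq/4,\qq/2,3\qq/4\}$: those at $m = 0$ and $m = \qq/2$ each contribute $-(-1)^{\qq\qm/4}q/\qq$, while those at $m = \qq/4$ and $m = 3\qq/4$ together contribute $g(\tfrac{\qq}{2})(g(\tfrac{\qq}{4})^2 + g(\tfrac{3\qq}{4})^2)/\qq$ (using $\omega(4^{-1}\psi^{-4})^{\qq\qm/4} = (-1)^{\qq\qm/4}$). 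Adding $T_5^{\mathrm{sp}} + T_5^{\mathrm{gen}}$, the $(-1)^{\qq\qm/4}$ corrections combine via $(q-1)/\qq = 1$ to give $(-1)^{\qq\qm/4}$, and the $g(\tfrac{\qq}{4})^2 + g(\tfrac{3\qq}{4})^2$ corrections telescope via $1/g(\tfrac{\qq}{2}) - g(\tfrac{\qq}{2}) = (1-q)/g(\tfrac{\qq}{2}) = -\qq/g(\tfrac{\qq}{2})$ to produce $-(g(\tfrac{\qq}{4})^2 + g(\tfrac{3\qq}{4})^2)/g(\tfrac{\qq}{2})$, exactly matching the stated formula. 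The hard part will be identifying the reindexing $k = -m - \qq/4$: unlike the symmetric shift pattern $(0,\qq/2,\qq/2,0,0)$ of $\tyS_2$, the pattern $(0,\qq/4,\qq/2,\qq/4,0)$ of $\tyS_5$ is not invariant under $k \mapsto -k$, so $g(k+\qq/4)^2$ cannot be simplified by Hasse--Davenport alone, and one must instead absorb it into $g(-m)^2$ via an inversion-type substitution before the product formula can do its work.
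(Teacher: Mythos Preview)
Your argument is correct and follows essentially the same blueprint as the paper's proof: isolate the four terms $k\in\{0,\tfrac{\qq}{4},\tfrac{\qq}{2},\tfrac{3\qq}{4}\}$, collapse the generic product via Hasse--Davenport, reindex through $k=-m-\tfrac{\qq}{4}$, and match against the hypergeometric sum. The tactical choices differ slightly. The paper applies the Hasse--Davenport relation once with $N=4$ \emph{before} reindexing, obtaining the closed identity $g(k)g(k+\tfrac{\qq}{4})g(k+\tfrac{\qq}{2})g(-4k)=(-1)^{-k}\omega(4)^{-4k}q\,g(\tfrac{\qq}{2})g(-k-\tfrac{3\qq}{4})$ (one factor of $g(k+\tfrac{\qq}{4})$ surviving untouched), and then compares to the Definition~\ref{classic HGF over FF} form $H_q(\tfrac12;0\,|\,t)=\tfrac{1}{\qq}\sum_m\omega(-t)^m g(m+\tfrac{\qq}{2})g(-m)/g(\tfrac{\qq}{2})$. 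You instead reindex first, then apply two $N=2$ Hasse--Davenport relations together with the pair identity $g(\tfrac{\qq}{2}-2m)g(\tfrac{\qq}{2}+2m)=q$, and compare to the Definition~\ref{BCM HGF} form $\tfrac{1}{\qq}\sum_m q^{s(m)-1}g(2m)g(-m)^2\omega(4^{-1}t)^m$. Your route introduces the auxiliary sign $\omega(2)^{-\qq\qm/2}=(-1)^{\qq\qm/4}$ (which, incidentally, follows from Hasse--Davenport at $N=2$, $m=\tfrac{\qq}{4}$ combined with $g(\tfrac{\qq}{4})g(\tfrac{3\qq}{4})=(-1)^{\qq\qm/4}q$), whereas the paper's $N=4$ step keeps all signs bundled inside $g(\tfrac{\qq}{4})g(\tfrac{\qq}{2})g(\tfrac{3\qq}{4})$ and avoids this. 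Both executions are of comparable length; the paper's is marginally more direct because the $N=4$ relation is tailor-made for the four-fold product at hand.
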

\begin{proof}
Plugging into the definition of the finite field hypergeometric sum and then pulling out terms $m=j\qq\qm/4$ with $j=0,1,2,3$, we get
\begin{equation} \label{eqn:Hq12}
\begin{aligned}
H_q(\tfrac12; 0 \,|\, \psi^{-4}) &= \frac{1}{\qq} \sum_{m=0}^{q-2} \omega(-\psi^{-4})^m \frac{g(m + \tfrac{\qq}{2})g(-m)}{g(\tfrac{\qq}{2})}  \\
	&= -\frac{2}{\qq} + \frac{(-1)^{\qq\qm/4}}{\qq g(\tfrac{\qq}{2})}\bigl(g(\tfrac{\qq}{4})^2+g(\tfrac{3\qq}{4})^2\bigr)  + \frac{1}{\qq} \sum_{\substack{m=0 \\ \qq \nmid 4m}}^{q-2} \omega(-\psi^{-4})^{m} \frac{g(m + \tfrac{\qq}{2})g(-m)}{g(\tfrac{\qq}{2})}.
\end{aligned} 
\end{equation}

Hasse--Davenport (Lemma \ref{gausssumidentities}(c)) implies
\begin{equation} \label{eqn:g4m01}
g(4m) = -\omega(4)^{4m} \frac{g(m) g(m + \tfrac{\qq}{4}) g(m + \tfrac{\qq}{2}) g(m + \tfrac{3\qq}{4}) } {g(0)  g(\tfrac{\qq}{4}) g( \tfrac{\qq}{2}) g(\tfrac{3\qq}{4}) }
\end{equation}
For $m \neq j\tfrac{\qq}{4}$, multiplying \eqref{eqn:g4m01} by $g(-m-\tfrac{3\qq}{4})g(-4m)$ and simplifying, we get:
\begin{equation} \label{gaussSumTrickery} 
\begin{aligned}
(-1)^{4m} q g(-m-\tfrac{3\qq}{4}) &= \omega(4)^{4m}(-1)^{m} \frac{g(m) g(m + \tfrac{\qq}{4}) g(m + \tfrac{\qq}{2}) g(-4m)} { g( \tfrac{\qq}{2})  } \\
g(m) g(m + \tfrac{\qq}{4}) g(m + \tfrac{\qq}{2}) g(-4m) &= (-1)^{-m} \omega(4)^{-4m} q g(\tfrac{\qq}{2})g(-m-\tfrac{3\qq}{4}).
\end{aligned}
\end{equation}

Now we look at the point count.  First, we take the definition:
\begin{equation} \label{eqn:s5start}
\begin{aligned}
\sum_{s \in \tyS_5} \omega(a)^{-s} c_s &= \frac{1}{q\qq} \sum_{k=0}^{q-2} \omega(-4\psi)^{4k} g(k) g(k+\tfrac{\qq}{4})^2 g(k + \tfrac{\qq}{2}) g(-4k).
\end{aligned} 
\end{equation}
We then tease out the four terms with $k=j\qq\qm/4$.  The cases $k=0,\frac{\qq}{2}$ give
\begin{equation}  \label{eqn:k012}
\frac{1}{q\qq} g(\tfrac{\qq}{4})^2 g(\tfrac{\qq}{2}) + \frac{1}{q\qq} g(\tfrac{\qq}{2}) g(\tfrac{3\qq}{4})^2 = \frac{g(\tfrac{\qq}{4})^2+g(\tfrac{3\qq}{4})^2}{\qq g(\tfrac{\qq}{2})} 
\end{equation}
because $g(\tfrac{\qq}{2})^2=q$ as $q \equiv 1 \pmod{4}$.  The terms with $k=\tfrac{\qq}{4},\tfrac{3\qq}{4}$ are
\begin{equation}  \label{eqn:k1434}
-\frac{1}{q\qq} g(\tfrac{\qq}{4}) g(\tfrac{\qq}{2})^2 g(\tfrac{3\qq}{4}) -\frac{1}{q\qq} g(\tfrac{3\qq}{4}) g( \tfrac{\qq}{4}) = -(-1)^{\qq\qm/4}\frac{q+1}{\qq} = (-1)^{\qq\qm/4}\left(1-\frac{2q}{\qq}\right).
\end{equation}
using Lemma \ref{gausssumidentities}(b) with $m=\tfrac{\qq}{4}$ to get $g(\tfrac{\qq}{4})g(\tfrac{3\qq}{4})=(-1)^{\qq\qm/4} q$.  

For the remaining terms in the sum, we plug in \eqref{gaussSumTrickery} to get
\begin{equation}
\begin{aligned} 
&\frac{1}{q\qq} \sum_{\substack{k=0 \\ \qq \nmid 4k} }^{q-2} \omega(-4\psi)^{4k} g(k) g(k+\tfrac{\qq}{4})^2 g(k + \tfrac{\qq}{2}) g(-4k) \\
&\qquad = 
\frac{1}{q\qq} \sum_{\substack{k=0 \\ \qq \nmid 4k} }^{q-2} \omega(-4\psi)^{4k} g(k+\tfrac{\qq}{4})(-1)^{-k} \omega(4)^{-4k} q g(\tfrac{\qq}{2})g(-k-\tfrac{3\qq}{4}) \\
&\qquad =\frac{q}{\qq} \sum_{\substack{k=0 \\ \qq \nmid 4k} }^{q-2} \omega(-\psi^4)^{k} \frac{ g(k+\tfrac{\qq}{4})g(-k-\tfrac{3\qq}{4})}{g(\tfrac{\qq}{2})}.
\end{aligned}
\end{equation}
Next, we reindex this summation with the substitution $m = -k - \tfrac{\qq}{4}$ to obtain
\begin{equation} \label{eqn:kmrest}
\frac{q}{\qq} \sum_{\substack{m=0 \\ \qq \nmid 4m} }^{q-2} \omega(-\psi^4)^{-m-\qq\qm/4}  \frac{g(-m) g(m+ \tfrac{\qq}{2})}{g(\tfrac{\qq}{2})} = (-1)^{\qq\qm/4}\frac{q}{\qq} \sum_{\substack{m=0 \\ \qq \nmid 4m} }^{q-2} \omega(-\psi^{-4})^{m} \frac{g(m+\tfrac{\qq}{2}) g(-m)}{g(\tfrac{\qq}{2})}.
\end{equation}

Taking \eqref{eqn:s5start}, expanding and substituting \eqref{eqn:k012}, \eqref{eqn:k1434}, and \eqref{eqn:kmrest} then gives
\begin{align*} 
&(-1)^{\qq\qm/4}\sum_{s \in \tyS_5} \omega(a)^{-s} c_s \\
&\qquad = (-1)^{\qq\qm/4}\frac{g(\tfrac{\qq}{4})^2+g(\tfrac{3\qq}{4})^2}{\qq g(\tfrac{\qq}{2})} 
+1-\frac{2q}{\qq} + \frac{q}{\qq} \sum_{\substack{m=0 \\ \qq \nmid 4m} }^{q-2} \omega(-\psi^{-4})^{m} \frac{g(m+\tfrac{\qq}{2}) g(-m)}{g(\tfrac{\qq}{2})}.
\end{align*}
We are quite close to \eqref{eqn:Hq12}, but the first term is off by a factor $q$.  Adding and subtracting gives
\[ (-1)^{\qq\qm/4}\sum_{s \in \tyS_5} \omega(a)^{-s} c_s = 1+qH_q(\tfrac{1}{2};0 \,|\,\psi^{-4}) - (-1)^{\qq\qm/4}\frac{g(\tfrac{\qq}{4})^2+g(\tfrac{3\qq}{4})^2}{g(\tfrac{\qq}{2})} \]
as claimed.
\end{proof}

\begin{lem}\label{Cluster 2 3quarter 1 half shifts}
If $q \equiv 1 \pmod 4$, then 
\[ \sum_{s \in \tyS_5} \omega(a)^{-s} c_s = \sum_{s \in \tyS_6} \omega(a)^{-s} c_s =  \sum_{s \in \tyS_7} \omega(a)^{-s} c_s. \]
\end{lem}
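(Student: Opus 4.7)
The plan is to prove the three equalities by a direct reindexing argument that identifies the three sums term-by-term. Each of the sums $\sum_{s\in\tyS_i}\omega(a)^{-s}c_s$ for $i=5,6,7$ can be written, by Theorem~\ref{Dels} and the definition of $c_s$, as $\frac{1}{q\qq}\sum_{k\in\Z/\qq\Z}\omega(-4\psi)^{4k}\,G_i(k)$, where $G_i(k)$ is a product of five Gauss sums indexed by the five coordinates of the corresponding shift-$k$ tuple. Explicitly, in each case $\omega(a)^{-s} = \omega(-4\psi)^{4k}$ because $a_1=\cdots=a_4=1$ and $a_5=-4\psi$, so only the shift in the first four slots affects $G_i(k)$.

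First I would record the three resulting Gauss-sum products: for $\tyS_5$ we get $G_5(k) = g(k)\,g(k+\tfrac{\qq}{4})^2\,g(k+\tfrac{\qq}{2})\,g(-4k)$; for $\tyS_6$, $G_6(k) = g(k)\,g(k+\tfrac{3\qq}{4})^2\,g(k+\tfrac{\qq}{2})\,g(-4k)$; and for $\tyS_7$, $G_7(k) = g(k)^2\,g(k+\tfrac{\qq}{4})\,g(k+\tfrac{3\qq}{4})\,g(-4k)$. Since every factor depends on its argument only modulo $\qq$ and $\omega$ has order $\qq$, the whole sum is invariant under reindexing $k \mapsto k+c$ for any $c \in \Z/\qq\Z$.

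Next I would apply the substitution $k\mapsto k-\tfrac{\qq}{4}$ to the $\tyS_5$ sum and the substitution $k\mapsto k+\tfrac{\qq}{4}$ to the $\tyS_6$ sum. In the first, $\omega(-4\psi)^{4(k-\qq\qm/4)} = \omega(-4\psi)^{4k}$, and the shifted Gauss-sum product becomes $g(k+\tfrac{3\qq}{4})\,g(k)^2\,g(k+\tfrac{\qq}{4})\,g(-4k)$, which is precisely $G_7(k)$. In the second, the analogous cancellations again collapse the shifts and produce exactly $G_7(k)$. Thus all three sums coincide.

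There is no genuine obstacle here: the identity is essentially bookkeeping once one observes that the three cosets $\tyS_5,\tyS_6,\tyS_7$ differ from each other only by adding the scalar multiple $\pm\tfrac{\qq}{4}(1,1,1,1,-4)$, which is absorbed into the $k$-parameter of the cluster. The only care needed is to check the book-keeping of exponents modulo $\qq$ and the fact that the multiplicative character $\omega(-4\psi)^{4k}$ is periodic in $k$ of period dividing $\qq$, both of which are immediate.
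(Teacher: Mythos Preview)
Your proof is correct and follows essentially the same reindexing approach as the paper: the paper shifts $k\mapsto k+\tfrac{\qq}{2}$ to pass from $\tyS_5$ to $\tyS_6$ and $k\mapsto k+\tfrac{\qq}{4}$ for $\tyS_7$, while you instead shift both $\tyS_5$ and $\tyS_6$ onto $\tyS_7$, but the mechanism is identical. One small caveat: your closing remark that the three sets ``differ only by adding $\pm\tfrac{\qq}{4}(1,1,1,1,-4)$'' is not literally true as a statement about tuples in $(\Z/\qq\Z)^5$ (the shifts differ also by a permutation of the first four coordinates), though this does not affect your argument since you compare the Gauss-sum products $G_i(k)$ directly and these are symmetric in those coordinates.
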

\begin{proof}
We start with \eqref{eqn:s5start} and reindex with $m = k+\tfrac{\qq}{2}$:
\begin{align*}
\sum_{s \in \tyS_5} \omega(a)^{-s} c_s &= \frac{1}{q\qq} \sum_{k=0}^{q-2} \omega(-4\psi)^{4k} g(k) g(k+\tfrac{\qq}{4})^2 g(k + \tfrac{\qq}{2}) g(-4k) \\
&= \frac{1}{q\qq} \sum_{m=0}^{q-2} \omega(-4\psi)^{4m} g(m + \tfrac{\qq}{2}) g(m+\tfrac{3\qq}{4})^2 g(m) g(-4m)  \\
&= \sum_{s \in \tyS_6} \omega(a)^{-s} c_s. 
\end{align*} 
The equality for $\tyS_7$ holds reindexing with $m=k+\tfrac{\qq}{4}$.  
\end{proof}

We now put these pieces together to give the point count for the toric hypersurface.

\begin{prop}\label{OpenF4}Let $\psi \in \F_q^\times$.
\begin{enumalph}
\item If $q \equiv 3 \pmod 4$, then 
\begin{equation}
\#U_{\Fsf_4, \psi}(\F_q) = q^2 -3q -3  + H_q(\tfrac14, \tfrac12, \tfrac34; 0,0,0 \,|\, \psi^{-4}) - 3 qH_q(\tfrac14, \tfrac{3}{4}; 0, \tfrac12 \,|\, \psi^{-4})).
\end{equation}
\item If $q \equiv 1 \pmod 4$, then
\begin{equation}\begin{aligned}
\#U_{\Fsf_4, \psi}(\F_q) &= q^2-3q+9 + H_q(\tfrac14, \tfrac12, \tfrac34; 0,0,0 \,|\, \psi^{-4}) + 3qH_q(\tfrac14, \tfrac{3}{4}; 0, \tfrac12 \,|\, \psi^{-4})) \\
	 &\qquad+ 12\left((-1)^{\qq\qm/4}q H_q(\tfrac12; 0 \,|\, \psi^{-4}) + (-1)^{\qq\qm/4} - \frac{g(\tfrac{\qq}{4})^2 + g(\tfrac{3\qq}{4})^2}{g(\tfrac{\qq}{2})}\right).
\end{aligned}\end{equation}
\end{enumalph}
\end{prop}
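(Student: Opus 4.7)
The plan is to prove Proposition \ref{OpenF4} by assembling the cluster contributions already computed in Lemmas \ref{Cluster no shifts}, \ref{Cluster two half shifts}, \ref{Cluster 2 quarters 1 half shifts}, and \ref{Cluster 2 3quarter 1 half shifts}. The starting point is Koblitz's formula (Theorem \ref{Dels}), which expresses $\#U_{\Fsf_4,\psi}(\F_q)$ as $\sum_{s \in \tyS}\omega(a)^{-s}c_s$, and this sum splits as a disjoint union over the clusters $\tyS_1,\tyS_2,\tyS_3,\tyS_4$ (and additionally over $\tyS_5,\tyS_6,\tyS_7$ in the case $q\equiv 1\pmod 4$) enumerated in Step 1.

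For part (a), when $q \equiv 3 \pmod 4$, only the first four clusters appear. I would substitute the formulas from Lemmas \ref{Cluster no shifts} and \ref{Cluster two half shifts}, taking into account that $\qq\qm/2=(q-1)/2$ is odd, so $(-1)^{\qq\qm/2}=-1$; consequently each of the three clusters $\tyS_2,\tyS_3,\tyS_4$ contributes $-\bigl(2+qH_q(\tfrac14,\tfrac34;0,\tfrac12\,|\,\psi^{-4})\bigr)$. Combined with the $\tyS_1$ contribution $q^2-3q+3+H_q(\tfrac14,\tfrac12,\tfrac34;0,0,0\,|\,\psi^{-4})$, the arithmetic yields the claimed formula after collecting the constant $3-6=-3$.

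For part (b), when $q \equiv 1 \pmod 4$, I would use that $\qq\qm/2$ is even so $(-1)^{\qq\qm/2}=+1$, making the three clusters $\tyS_2,\tyS_3,\tyS_4$ each contribute $+\bigl(2+qH_q(\tfrac14,\tfrac34;0,\tfrac12\,|\,\psi^{-4})\bigr)$. By Lemma \ref{Cluster 2 3quarter 1 half shifts}, the twelve remaining clusters (three of type $\tyS_5$, three of type $\tyS_6$, and six of type $\tyS_7$) all contribute the same value given by Lemma \ref{Cluster 2 quarters 1 half shifts}. Adding $q^2-3q+3$ from $\tyS_1$, then $6+3qH_q(\tfrac14,\tfrac34;0,\tfrac12\,|\,\psi^{-4})$ from the three half-shift clusters, and finally $12$ times Lemma \ref{Cluster 2 quarters 1 half shifts} produces the formula stated in (b), with the constant $3+6=9$.

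This is essentially a bookkeeping exercise rather than genuine new content: the hard work—the Hasse--Davenport manipulations, the reindexing, and the isolation of the finite field hypergeometric sums—has already been done in the four preceding lemmas. The only place where one must be careful is the sign $(-1)^{\qq\qm/2}$ in Lemma \ref{Cluster two half shifts}, which flips between the two parity cases, and the multiplicities $1,3,3,3,3,3,6$ of the seven cluster types, which must be tracked so that the Gauss-sum correction term $g(\tfrac{\qq}{4})^2+g(\tfrac{3\qq}{4})^2$ and the quadratic character factor $(-1)^{\qq\qm/4}$ acquire their factor of $12$ in part (b). No additional identities are needed beyond those already established.
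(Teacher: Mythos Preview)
Your proposal is correct and follows exactly the paper's approach: the paper's proof likewise sums the cluster contributions from Lemmas \ref{Cluster no shifts}, \ref{Cluster two half shifts}, \ref{Cluster 2 quarters 1 half shifts}, and \ref{Cluster 2 3quarter 1 half shifts}, splitting on the parity of $\qq/2$ and using the multiplicities $1,1,1,1$ (and then $3,3,6$ when $q\equiv 1\pmod 4$). The only slip is your list ``$1,3,3,3,3,3,6$'' in the final paragraph---the correct multiplicities of the seven types $\tyS_1,\dots,\tyS_7$ are $1,1,1,1,3,3,6$---but your actual computation uses the right numbers.
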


\begin{proof}
For $q\equiv 3\pmod 4$, we have from Lemmas~\ref{Cluster no shifts} and~\ref{Cluster two half shifts}:
\begin{equation}\begin{aligned}
\#U_{\Fsf_4, \psi}(\F_q) &= \sum_{i=1}^4 \sum_{s \in \tyS_i} \omega(a)^{-s} c_s \\
	&=  q^2-3q+3  + H_q(\tfrac14, \tfrac12, \tfrac34; 0,0,0 \,|\, \psi^{-4}) + 3(-2 - qH_q(\tfrac14, \tfrac{3}{4}; 0, \tfrac12 \,|\, \psi^{-4})) \\
	&=q^2 -3q -3  + H_q(\tfrac14, \tfrac12, \tfrac34; 0,0,0 \,|\, \psi^{-4}) - 3 qH_q(\tfrac14, \tfrac{3}{4}; 0, \tfrac12 \,|\, \psi^{-4}))
\end{aligned}\end{equation}
For $q\equiv 1 \pmod 4$, we have from Lemmas~\ref{Cluster no shifts},~\ref{Cluster two half shifts},~\ref{Cluster 2 quarters 1 half shifts}, and~\ref{Cluster 2 3quarter 1 half shifts}, we have that:
\begin{equation}\begin{aligned}
\#U_{\Fsf_4, \psi}(\F_q) &= \sum_{i=1}^4 \sum_{s \in \tyS_i} \omega(a)^{-s} c_s + 12\sum_{s \in \tyS_5} \omega(a)^{-s} c_s \\
	&= q^2-3q+3 + H_q(\tfrac14, \tfrac12, \tfrac34; 0,0,0 \,|\, \psi^{-4}) + 3(2 + qH_q(\tfrac14, \tfrac{3}{4}; 0, \tfrac12 \,|\, \psi^{-4})) \\
	&\qquad+ 12\left((-1)^{\qq\qm/4} q H_q(\tfrac12; 0 \,|\, \psi^{-4}) + (-1)^{\qq\qm/4} - \frac{g(\tfrac{\qq}{4})^2 + g(\tfrac{3\qq}{4})^2}{g(\tfrac{\qq}{2})}\right)
	\end{aligned}
	\end{equation}
which simplifies to the result.  
\end{proof}

\subsubsection*{Step 3: Count points when at least one coordinate is zero}

\begin{lem}\label{oneCoordZero q 3mod4 Dwork}
If $q \equiv 3 \pmod 4$, then 
$$
\#X_{\Fsf_4,\psi}(\F_q) - \#U_{\Fsf_4,\psi}(\F_q) = 4q+4.
$$
\end{lem}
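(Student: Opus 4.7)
The plan is to stratify $X_{\Fsf_4,\psi}(\F_q) \setminus U_{\Fsf_4,\psi}(\F_q)$ by the precise subset $S \subseteq \{0,1,2,3\}$ of vanishing coordinates: the boundary is a disjoint union $\bigsqcup_{\emptyset \neq S} Z_S$, where $Z_S$ is the locally closed subscheme on which $x_i=0$ if and only if $i \in S$. Since $F_{\Fsf_4,\psi}$ is symmetric under the $S_4$-action permuting coordinates, the count $|Z_S(\F_q)|$ depends only on $|S|$.

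First I would dispense with the higher-codimension strata. If $|S|=3$, say $S=\{0,1,2\}$, the defining equation collapses to $x_3^4=0$, forcing the surviving coordinate to vanish as well, so there is no projective point; if $|S|=2$, say $S=\{2,3\}$, the equation becomes $u^4=-1$ for $u=x_1/x_0 \in \F_q^\times$, which has no solution since $q \equiv 3 \pmod 4$ makes $-1$ a non-square (and a fortiori a non-fourth-power) in $\F_q$. Thus every stratum with $|S|\geq 2$ is empty.

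The remaining task is to compute $|Z_{\{3\}}(\F_q)|$, the number of projective points on the open Fermat quartic $C^\circ\colon x_0^4+x_1^4+x_2^4=0$ with all $x_i \neq 0$ in $\PP^2$. I would exploit that $\gcd(4,\qq)=2$ under this congruence, so $|\{x \in \F_q : x^4=y\}|=1+\chi(y)$ where $\chi$ is the quadratic character (extended by $\chi(0)=0$). Hence the affine count of solutions in $\F_q^3$ is
\[ \sum_{y_0+y_1+y_2=0}\prod_{i=0}^{2}(1+\chi(y_i)) = q^2 + (\text{cross-terms}), \]
where the linear and quadratic cross-terms $\sum \chi(y_i)$ and $\sum \chi(y_iy_j)$ vanish by $\sum_{y\in\F_q}\chi(y)=0$, and the cubic cross-term $\sum_{y_0+y_1+y_2=0}\chi(y_0y_1y_2)$ vanishes after the substitution $y_1=y_0 u$ (for $y_0 \neq 0$), since $\chi(y_0^2)=1$ reduces it to $\chi(-1)\bigl(\sum_{y_0 \neq 0}\chi(y_0)\bigr)\bigl(\sum_u\chi(u)\chi(1+u)\bigr)=0$.

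This leaves $q^2$ affine solutions, hence $(q^2-1)/\qq = q+1$ projective points on $C$, all of which lie in $C^\circ$ by the same non-square argument applied to the Fermat conic $x_i^4+x_j^4=0$. Summing the four $|S|=1$ strata yields the boundary count $4(q+1)=4q+4$, proving the lemma. The only step requiring real care is the vanishing of the cubic cross-term, which is short but unavoidable; every other piece is a clean consequence of the stratification and the congruence $q \equiv 3 \pmod 4$.
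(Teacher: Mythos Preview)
Your proof is correct, and the overall stratification by vanishing loci matches the paper's strategy exactly: both arguments eliminate the strata with $|S|\geq 2$ via the observation that $-1$ is a nonsquare when $q\equiv 3\pmod 4$, and both reduce the problem to counting points on the Fermat quartic curve $x_0^4+x_1^4+x_2^4=0$ in $\PP^2$.

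The difference lies in how that curve count $q+1$ is obtained. The paper gives two arguments: an explicit $2$-to-$1$ cover of the conic $w_0^2+w_1^2=-1$ (using that squaring on $\F_q^{\times 2}$ is bijective when $q\equiv 3\pmod 4$), and a second computation via Koblitz's formula (Theorem~\ref{Dels}) applied to the diagonal quartic in two variables. You instead use the identity $\#\{x:x^4=y\}=1+\chi(y)$ (valid because $\gcd(4,q-1)=2$) to write the affine count as $\sum_{y_0+y_1+y_2=0}\prod_i(1+\chi(y_i))$ and kill the cross-terms by elementary character-sum orthogonality. Your route avoids both the geometric cover and the Gauss-sum machinery, at the cost of one short auxiliary computation (the cubic cross-term); the paper's conic argument is more conceptual but requires tracking the fibers of a degree-$4$ map, while its Koblitz argument is uniform with the rest of the section but heavier than needed here. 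All three arrive at the same $q+1$.
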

\begin{proof}
First we compute the number of points when $x_3=0$ and $x_0x_1x_2 \neq 0$, i.e., count points on the Fermat quartic curve $V\colon x_0^4+x_1^4+x_2^4=0$ with coordinates in the torus.  All points in $V(\F_q)$ lie on the torus: if e.g.\ $x_0=0$ and $x_1 \neq 0$, then $-1=(x_2/x_1)^4$, but $-1 \not\in\F_q^{\times 2}$ since $q \equiv 3 \pmod{4}$.  We claim that $\#V(\F_q)=q+1$; this can be proven in many ways.  First, we sketch an elementary argument, working affinely on $x_0^4+x_1^4=-1$.  The map $(x_0,x_1) \mapsto (x_0^2,x_1^2)$ gives a map to the affine curve defined by $C\colon w_0^2+w_1^2=-1$.  The number of points on this curve over $\F_q$ is $q+1$ (the projective closure is a smooth conic with no points at infinity), and again all such solutions have $w_0,w_1 \in \F_{q}^\times$.  Since $q \equiv 3 \pmod{4}$, the squaring map $\F_q^{\times 2} \to \F_q^{\times 4}$ is bijective.  Therefore, for the four points $(\pm w_0,\pm w_1)$ with $w_0^2+w_1^2=-1$, there are exactly four points $(\pm x_0, \pm x_1)$ with $x_i^4=w_i^2$ for $i=0,1$.  Thus $\#V(\F_q)=\#C(\F_q)=q+1$.  (Alternatively, the map $(x_0,x_1) \mapsto (x_0^2,x_1)$ is bijective, with image a supersingular genus $1$ curve over $\F_q$.)

Second, and for consistency, we again apply the formula of Koblitz!  For the characters, we solve
\begin{equation} \label{3 variable Fermat}
\begin{pmatrix}
4&0&0\\
0&4&0\\
0&0&4\\
1&1&1
\end{pmatrix}
\begin{pmatrix} s_1 \\ s_2 \\ s_3 \end{pmatrix} \equiv 0 \pmod{\qq}. 
\end{equation}
There are exactly four solutions when $q \equiv 3 \pmod 4$:
$$
\tyS = \{ (0,0,0), \tfrac{\qq}{2} (1,1,0), \tfrac{\qq}{2} (1,0,1), \tfrac{\qq}{2} (0,1,1)\}.
$$
Then by Theorem \ref{Dels},
\begin{equation}\begin{aligned}
\#V(\F_q) 
	&= c_{(0,0,0)} + c_{(\qq\qm/2)(1,1,0)} + c_{(\qq\qm/2)(1,0,1)}+ c_{(\qq\qm/2)(0,1,1)}  \\
	&= \frac{(q-1)^{2} - (-1)^2}{q} + 3(-1)^{2} \frac{1}{q} g(\tfrac{\qq}{2})^2 g(\tfrac{\qq}{2}) g(0) \\
	&= \frac{q^2 - 2q}{q} + 3 = q+1. 
\end{aligned}\end{equation}

By symmetry, repeating in each of the four coordinate hyperplanes, we obtain $\#X_{\Fsf_4,\psi}(\F_q) - \#U_{\Fsf_4,\psi}(\F_q) = 4(q+1) = 4q + 4$.
\end{proof}

\begin{lem}\label{oneCoordZero q 1mod4 Dwork}
If $q\equiv 1\pmod 4$, then 
$$
\#X_{\Fsf_4, \psi}(\F_q) - \#U_{\Fsf_4, \psi}(\F_q) = 4q - 8 -12(-1)^{\qq\qm/4} + 12\frac{g(\tfrac{\qq}{4})^2+g(\tfrac{3\qq}{4})^2}{g(\tfrac{\qq}{2})}.
$$
\end{lem}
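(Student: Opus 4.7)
The plan is to stratify the locus $X_{\Fsf_4,\psi}(\F_q) \setminus U_{\Fsf_4,\psi}(\F_q)$ according to which coordinates vanish, as in the proof of Lemma \ref{oneCoordZero q 3mod4 Dwork}, but now accounting for new contributions that arise because $-1 \in \F_q^{\times 2}$ when $q \equiv 1 \pmod 4$. For each nonempty $T \subseteq \{0,1,2,3\}$, let $N_T$ count the projective points on $X_{\Fsf_4,\psi}$ with $x_i = 0$ exactly for $i \in T$. The Dwork deformation $4\psi x_0 x_1 x_2 x_3$ vanishes on every such stratum, so $N_T$ depends only on $|T|$ by the evident $S_4$-symmetry, and it suffices to compute $N_{\{3\}}$ (the toric locus of the Fermat quartic curve $x_0^4+x_1^4+x_2^4 = 0$ in $\PP^2$) and $N_{\{2,3\}}$ (the toric locus of $x_0^4 + x_1^4 = 0$ in $\PP^1$). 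The $|T| = 3$ stratum is empty since $x_0^4 = 0$ forces $x_0 = 0$, and $|T| = 4$ is impossible projectively.

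For $N_{\{2,3\}}$ we count $\alpha \in \F_q^{\times}$ with $\alpha^4 = -1$. Since $4 \mid \qq$, this number equals $4$ exactly when $-1 \in \F_q^{\times 4}$ (i.e.\ $q \equiv 1 \pmod 8$) and $0$ otherwise, which we encode uniformly as $N_{\{2,3\}} = 2 + 2(-1)^{\qq\qm/4}$. For $N_{\{3\}}$ we apply Theorem \ref{Dels} to the Fermat quartic curve, so the character set $\tyS$ is now the solution set of \eqref{3 variable Fermat} modulo $\qq$; because $4 \mid \qq$, it consists of all $(s_1, s_2, s_3) \in (\tfrac{\qq}{4}\Z/\qq\Z)^3$ with $s_1 + s_2 + s_3 \equiv 0 \pmod \qq$, giving $16$ solutions (rather than $4$ as before). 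Cluster these by the multiset of coordinate values: the zero vector contributes $c_0 = q-2$; the three permutations of $(0, \tfrac{\qq}{2}, \tfrac{\qq}{2})$; the six permutations of $(0, \tfrac{\qq}{4}, \tfrac{3\qq}{4})$; the three permutations of $(\tfrac{\qq}{2}, \tfrac{\qq}{4}, \tfrac{\qq}{4})$; and the three permutations of $(\tfrac{\qq}{2}, \tfrac{3\qq}{4}, \tfrac{3\qq}{4})$.

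Each nonzero cluster is evaluated via $c_s = \tfrac{1}{q} g(s_1) g(s_2) g(s_3)$, applying Lemma \ref{gausssumidentities}(a)--(b); in particular $g(0) = -1$, $g(\tfrac{\qq}{2})^2 = q$ (which holds since $\qq\qm/2$ is even for $q \equiv 1 \pmod 4$), and $g(\tfrac{\qq}{4}) g(\tfrac{3\qq}{4}) = (-1)^{\qq\qm/4} q$. The factor $1/q$ combines with $g(\tfrac{\qq}{2})^2 = q$ to produce the $1/g(\tfrac{\qq}{2})$ appearing in the last two clusters. Summing these clusters yields a closed form for $N_{\{3\}}$, and assembling
\[
\#X_{\Fsf_4,\psi}(\F_q) - \#U_{\Fsf_4,\psi}(\F_q) = 4 N_{\{3\}} + 6 N_{\{2,3\}}
\]
and simplifying gives the claimed formula. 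The main obstacle is purely bookkeeping: keeping the sixteen solutions organized and correctly tracking the $(-1)^{\qq\qm/4}$ parity factors contributed by each cluster, so that the contributions from the $|T|=1$ and $|T|=2$ strata combine to produce the stated constants $-8$ and $-12(-1)^{\qq\qm/4}$. No new analytic input is needed beyond the Gauss sum identities already used in Proposition \ref{OpenF4}.
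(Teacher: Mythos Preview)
Your proposal is correct and follows essentially the same approach as the paper: both stratify by vanishing coordinates, apply Theorem~\ref{Dels} to the toric Fermat quartic curve $x_0^4+x_1^4+x_2^4=0$, cluster the sixteen characters into the same five types, evaluate via the Gauss sum identities in Lemma~\ref{gausssumidentities}, and combine with the $|T|=2$ count $2+2(-1)^{\qq\qm/4}$ to obtain $4N_{\{3\}}+6N_{\{2,3\}}$. The paper's computation is identical in structure and detail.
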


\begin{proof}
We repeat the argument in the preceding lemma.  We cluster solutions to \eqref{3 variable Fermat} and count the number of solutions in the following way:
\begin{equation}\begin{aligned}
\#V(\F_q) &= c_{(0,0,0)}  + 6c_{(\qq\qm/4)(1,3,0)} + 3c_{(\qq\qm/4)(2,2,0)} + 3c_{(\qq\qm/4)(1,1,2)} + 3c_{(\qq\qm/4)(3,3,2)}  \\
	&= q-2 - 6 (-1)^{\qq\qm/4} - 3 + \frac{3}{q} g(\tfrac{\qq}{4})^2 g(\tfrac{\qq}{2}) + \frac{3}{q} g(\tfrac{3\qq}{4})^2g(\tfrac{\qq}{2}) \\ 
	&= q-5 - 6 (-1)^{\qq\qm/4} + 3\frac{g(\tfrac{\qq}{4})^2+g(\tfrac{3\qq}{4})^2}{g(\tfrac{\qq}{2})}.
\end{aligned}\end{equation}
There are $2$ solutions to $x_0^4+x_1^4=0$ with $x_0x_1 \neq 0$ if $q \equiv 1 \pmod{8}$ and zero otherwise, so $2+2(-1)^{\qq\qm/4}$ solutions in either case.  Adding up, we get
\begin{align*}
\#X_{\Fsf_4, \psi}(\F_q) - \#U_{\Fsf_4, \psi}(\F_q) &= 4 \#V(\F_q) + 6(2+2(-1)^{\qq\qm/4})  \\ 
	&= 4q - 8 -12(-1)^{\qq\qm/4} + 12 \frac{1}{q} g(\tfrac{\qq}{4})^2 g(\tfrac{\qq}{2}) + 12\frac{g(\tfrac{\qq}{4})^2+g(\tfrac{3\qq}{4})^2}{g(\tfrac{\qq}{2})}. \qedhere
\end{align*}
\end{proof}

\subsubsection*{Step 4: Conclude}

We now conclude the proof.

\begin{proof}[Proof of Proposition~\textup{\ref{prop:F4}}]
We combine Proposition~\ref{OpenF4} with Lemmas~\ref{oneCoordZero q 3mod4 Dwork} and~\ref{oneCoordZero q 1mod4 Dwork}. If $q\equiv3 \pmod4$, then
\begin{align*}
\#X_{\Fsf_4, \psi}(\F_q) &= \#U_{\Fsf_4, \psi}(\F_q) + (\#X_{\Fsf_4, \psi}(\F_q) - \#U_{\Fsf_4, \psi}(\F_q))  \\
	&= (q^2 -3q -3  + H_q(\tfrac14, \tfrac12, \tfrac34; 0,0,0 \,|\, \psi^{-4}) - 3 qH_q(\tfrac14, \tfrac{3}{4}; 0, \tfrac12 \,|\, \psi^{-4})) + (4q+4)\\
	&= q^2 + q + 1  + H_q(\tfrac14, \tfrac12, \tfrac34; 0,0,0 \,|\, \psi^{-4}) - 3 qH_q(\tfrac14, \tfrac{3}{4}; 0, \tfrac12 \,|\, \psi^{-4}).
\end{align*}
If $q\equiv 1\pmod4$, then the ugly terms cancel, and we have simply
\begin{align*}
\#X_{\Fsf_4, \psi}(\F_q) &= \#U_{\Fsf_4, \psi}(\F_q) + (\#X_{\Fsf_4, \psi}(\F_q) - \#U_{\Fsf_4, \psi}(\F_q)) \\
&= (q^2-3q+9 + H_q(\tfrac14, \tfrac12, \tfrac34; 0,0,0 \,|\, \psi^{-4}) + 3qH_q(\tfrac14, \tfrac{3}{4}; 0, \tfrac12 \,|\, \psi^{-4}) \\
&\qquad + 12(-1)^{\qq\qm/4}q H_q(\tfrac12; 0 \,|\, \psi^{-4})) + (4q-8) \\
	 &= q^2 + q + 1 + H_q(\tfrac14, \tfrac12, \tfrac34; 0,0,0 \,|\, \psi^{-4}) + 3qH_q(\tfrac14, \tfrac{3}{4}; 0, \tfrac12 \,|\, \psi^{-4}) \\
	 &\qquad + 12(-1)^{\qq\qm/4} q H_q(\tfrac12; 0 \,|\, \psi^{-4}).  \qedhere
\end{align*}
\end{proof}

\subsection{The Klein--Mukai pencil \texorpdfstring{$\Fsf_1\Lsf_3$}{Fsf1Lsf3}}

In this section, we repeat the steps of the previous section but for the Klein--Mukai pencil $\Fsf_1\Lsf_3$.  We suppose throughout this section that $q$ is coprime to $14$.  Our main result is as follows.

\begin{prop}\label{prop:F1L3}
For $q$ coprime to $14$ and $\psi \in \F_q^\times$, the following statements hold. 
\begin{enumalph}
\item If $q\not\equiv1\pmod7$, then
\[ \#X_{\Fsf_1\Lsf_3, \psi}(\F_q)=q^2+q+1+H_q(\tfrac{1}{4},\tfrac{1}{2},\tfrac{3}{4};0,0,0\,|\,\psi^{-4}).\]

\item If  $q\equiv1\pmod7$, then
\begin{align*} \#X_{\Fsf_1\Lsf_3, \psi}(\F_q)&=q^2+q+1+H_q(\tfrac{1}{4},\tfrac{1}{2},\tfrac{3}{4};0,0,0\,|\,\psi^{-4}) \\&\qquad+3qH_q(\tfrac{1}{14},\tfrac{9}{14},\tfrac{11}{14};0,\tfrac{1}{4},\tfrac{3}{4}\,|\,\psi^4)+3qH_q(\tfrac{3}{14},\tfrac{5}{14},\tfrac{13}{14};0,\tfrac{1}{4},\tfrac{3}{4}\,|\,\psi^4). 
\end{align*}
\end{enumalph}
\end{prop}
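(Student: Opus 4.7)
The proof follows the four-step strategy used for the Dwork pencil $\Fsf_4$: identify and cluster the character solutions in $\tyS$, count points on the toric locus via Theorem \ref{Dels}, count the boundary points, and combine. For the first step, I will solve the Koblitz system \eqref{WCong} for $\tyS \subset (\Z/\qq\Z)^5$: a Smith normal form computation of the associated $5 \times 5$ integer matrix (four variable constraints plus homogeneity) yields nontrivial invariant $7$, so $|\tyS| = \qq \cdot \gcd(7,\qq)$. Hence there is a single cluster $\tyS_0 = \{k(1,1,1,1,-4) : k \in \Z/\qq\Z\}$ when $q \not\equiv 1 \pmod 7$, and seven clusters when $q \equiv 1 \pmod 7$. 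In the latter case, the cyclic permutation of $(x_0,x_1,x_2)$ acts on the six nontrivial $\mu_7$-characters by $k \mapsto 2k$, producing two orbits $\{1,2,4\}$ and $\{3,5,6\}$ of size three; this matches the $3+3$ split in Proposition \ref{prop:kleinmukaidiff} and explains the factor $3$ in front of each nontrivial hypergeometric summand in (b).

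The trivial cluster $\tyS_0$ contributes $q^2 - 3q + 3 + H_q(\tfrac14,\tfrac12,\tfrac34;0,0,0 \,|\, \psi^{-4})$ to the toric point count exactly as in Lemma \ref{Cluster no shifts}. When $q \equiv 1 \pmod 7$, each nontrivial orbit produces a sum of Gauss-sum products of the shape $g(k)g(k+\tfrac{a\qq}{7})g(k+\tfrac{b\qq}{7})g(-4k)\cdot(\text{character})^k$ with $(a,b)$ running over representatives of the two orbits. I will transform these sums using the Hasse--Davenport product relation (Lemma \ref{gausssumidentities}(c)) with $N=2$ together with the auxiliary identity of Lemma \ref{Gauss Identity Lemma F1L3}: these substitutions convert the Gauss sums at multiples of $\qq/7$ into Gauss sums at multiples of $\qq/14$, matching the hybrid expansion computed in Example \ref{HypergeometricFunction for first F1L3 mod 7 example} and \eqref{eqn:hgmtherealdeal}. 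After simplification the two orbit sums are identified with $3qH_q(\tfrac{1}{14},\tfrac{9}{14},\tfrac{11}{14};0,\tfrac14,\tfrac34 \,|\, \psi^{4})$ and $3qH_q(\tfrac{3}{14},\tfrac{5}{14},\tfrac{13}{14};0,\tfrac14,\tfrac34 \,|\, \psi^{4})$ up to additive constants.

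For the boundary $\#X_{\Fsf_1\Lsf_3,\psi}(\F_q) - \#U_{\Fsf_1\Lsf_3,\psi}(\F_q)$, I will use inclusion-exclusion over the four coordinate hyperplanes. The section $x_3 = 0$ is the Klein quartic $x_0^3x_1+x_1^3x_2+x_2^3x_0 = 0$, whose $\F_q$-points I will enumerate by a second application of Theorem \ref{Dels} to the corresponding three-variable character system. Each section $x_i = 0$ for $i \in \{0,1,2\}$ reduces to an equation of the form $x_3^4 = -x_j^3 x_k$ in $\PP^2$, whose $\F_q$-points are elementary, as are the intersections of multiple coordinate hyperplanes with the pencil. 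Summing these contributions with Step 2 yields both claimed formulas.

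The main obstacle is Step 2 for $q \equiv 1 \pmod 7$: carefully tracking signs, the powers of $\omega(2)$ and $\omega(4)$ produced by Hasse--Davenport, and the exceptional Gauss-sum terms at $k=0$ and $k=\tfrac{\qq}{2}$, and then verifying that these conspire with the Klein-quartic boundary terms from Step 3 to yield a clean rational-integer answer. A useful consistency check is that the parameter sets $\{\tfrac{1}{14},\tfrac{9}{14},\tfrac{11}{14}\}$ and $\{\tfrac{3}{14},\tfrac{5}{14},\tfrac{13}{14}\}$ are Galois conjugate over $\Q(\sqrt{-7})$ inside $\Q(\zeta_7)$, so they must appear together for the point count to be $\Q$-rational.
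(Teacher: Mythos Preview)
Your proposal is correct and follows essentially the same four-step approach as the paper: the same cluster decomposition of $\tyS$ (with the $\{1,2,4\}$ and $\{3,5,6\}$ orbits under $k\mapsto 2k$), the same reduction of the trivial cluster via Lemma~\ref{Cluster no shifts}, the same Hasse--Davenport manipulation together with Lemma~\ref{Gauss Identity Lemma F1L3} to match the hybrid expansion \eqref{eqn:hgmtherealdeal}, and the same boundary count with the Klein quartic handled by a second application of Theorem~\ref{Dels}. Two minor points to watch when you write it out: the nontrivial cluster actually carries \emph{three} shifted Gauss sums $g(k+\tfrac{\qq}{7})g(k+\tfrac{2\qq}{7})g(k+\tfrac{4\qq}{7})$ in addition to $g(k)g(-4k)$ (not two as in your sketch), and the passage from $\qq/7$-shifts to $\qq/14$-shifts comes from the reindexing $m=k+\tfrac{\qq}{2}$ rather than directly from Hasse--Davenport.
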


\begin{rmk}
The new parameters $\tfrac{1}{14},\tfrac{9}{14},\tfrac{11}{14};0,\tfrac{1}{4},\tfrac{3}{4}$ and $\tfrac{3}{14},\tfrac{5}{14},\tfrac{13}{14};0,\tfrac{1}{4},\tfrac{3}{4}$ match the Picard--Fuchs equations in Proposition~\ref{prop:kleinmukaidiff} as elements of $\Q/\Z$, with the same multiplicity.  
\end{rmk}

\subsubsection*{Step 1: Computing and clustering the characters}

As before, we first have to compute the solutions to the system of congruences:
\[ \begin{pmatrix}
3&0&1&0&1\\
1&3&0&0&1\\
0&1&3&0&1\\
0&0&0&4&1\\
1&1&1&1&1\\
\end{pmatrix}\begin{pmatrix} w_1\\w_2\\w_3\\w_4\\w_5 \end{pmatrix} \equiv \begin{pmatrix} 0\\0\\0\\0\\0\end{pmatrix} \pmod{\qq}. \]
By linear algebra over $\Z$, we compute that if $q\not\equiv1\pmod 7$, then the set of solutions is 
\[\tyS=\{(1,1,1,1,-4)w: w\in\Z/\qq\Z\}. \]
On the other hand if $q \equiv 1 \pmod{7}$, then the set splits into three classes:
\begin{enumerate}[(i)]
\item the set $\tyS_1 = \{k(1,1,1,1,-4): k\in\Z/\qq\Z\}$,
\item three sets of the form $\tyS_8 = \left\{k(1,1,1,1,-4) + \tfrac{\qq}{7}(1,4,2,0,0): k\in\Z/\qq\Z\right\}$, and
\item three sets of the form $\tyS_9 = \left\{k(1,1,1,1,-4) + \tfrac{\qq}{7}(3,5,6,0,0): k\in\Z/\qq\Z\right\}$.
\end{enumerate}
The multiplicity of the latter two sets corresponds to cyclic permutations yielding the same product of Gauss sums.

\subsubsection*{Step 2: Counting points on the open subset with nonzero coordinates}

As in the previous section, the hard work is in counting points in the toric hypersurface.  We now proceed with each cluster.

\begin{lem}\label{q not 1 mod 4 shift 1 fourteenth}
If $q \equiv 1 \pmod 7$, then
\begin{equation}
\sum_{s \in \tyS_8} \omega(a)^{-s} c_s = qH_q(\tfrac{1}{14}, \tfrac{9}{14}, \tfrac{11}{14}; 0, \tfrac{1}{4}, \tfrac{3}{4} \,|\, \psi^4) - \frac{1}{q} g(\tfrac{\qq}{7})g(\tfrac{4\qq}{7}) g(\tfrac{2\qq}{7}).
\end{equation}
\end{lem}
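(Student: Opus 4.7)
The plan is to compute the left-hand side directly using Koblitz's formula, apply the Hasse--Davenport product relation (Lemma~\ref{gausssumidentities}(c)) with $N=2$ to convert Gauss sums indexed by sevenths of $\qq$ to Gauss sums indexed by fourteenths, and then match term-by-term with the hybrid hypergeometric sum expanded as in Example~\ref{HypergeometricFunction for first F1L3 mod 7 example}.

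First I would unpack the left-hand side: the defining polynomial has coefficient vector $a = (1,1,1,1,-4\psi)$, and for $s = k(1,1,1,1,-4) + \tfrac{\qq}{7}(1,4,2,0,0) \in \tyS_8$ we have $\omega(a)^{-s} = \omega(-4\psi)^{4k}$; since $\tfrac{\qq}{7}(1,4,2,0,0) \neq 0$ the element $s$ is never zero, so by \eqref{The cs's}
\[ c_s = \frac{1}{q\qq}\, g(k)\, g(-4k)\, g(k+\tfrac{\qq}{7})\, g(k+\tfrac{2\qq}{7})\, g(k+\tfrac{4\qq}{7}). \]
Summing over $k \in \Z/\qq\Z$ gives a closed formula $\Sigma$ for the cluster sum, in which the $k = 0$ contribution is exactly $\frac{1}{q\qq} g(\tfrac{\qq}{7}) g(\tfrac{2\qq}{7}) g(\tfrac{4\qq}{7})$, foreshadowing the corrective Gauss-sum term.

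Next, applying Hasse--Davenport with $N=2$ at $m = k + \tfrac{j\qq}{14}$ for $j \in \{1,9,11\}$ yields
\[ g(k+\tfrac{j\qq}{14})\, g(k+\tfrac{j\qq}{14}+\tfrac{\qq}{2}) = \omega(2)^{-2k-j\qq/7}\, g(\tfrac{\qq}{2})\, g(2k+\tfrac{j\qq}{7}). \]
Using $\tfrac{\qq}{14} + \tfrac{\qq}{2} \equiv \tfrac{4\qq}{7}$, $\tfrac{9\qq}{14} + \tfrac{\qq}{2} \equiv \tfrac{\qq}{7}$, $\tfrac{11\qq}{14} + \tfrac{\qq}{2} \equiv \tfrac{2\qq}{7} \pmod{\qq}$, and multiplying the three resulting identities (absorbing $\omega(2)^{-\qq} = 1$), I obtain the key bridge identity
\[ \prod_{j \in \{1,9,11\}} g(k + \tfrac{j\qq}{14})\, \cdot \prod_{j \in \{1,2,4\}} g(k + \tfrac{j\qq}{7}) = \omega(2)^{-6k}\, g(\tfrac{\qq}{2})^3\, \prod_{j \in \{1,2,4\}} g(2k + \tfrac{j\qq}{7}), \]
relating the Gauss-sum product in $\Sigma$ (with sevenths) to that appearing in the hypergeometric expansion (with fourteenths). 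Note that the hypothesis $q \equiv 1 \pmod 7$ is used here to ensure $\tfrac{\qq}{14} \in \Z$.

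Finally, I would expand $qH_q(\tfrac{1}{14}, \tfrac{9}{14}, \tfrac{11}{14}; 0, \tfrac{1}{4}, \tfrac{3}{4}\,|\,\psi^4)$ using Definition~\ref{Hybrid Definition} with $r=1$, $s=2$, $\pmb{p}=\{2\}$, $\pmb{q}=\{1,4\}$, $d = 3$, $\delta = 2$, $D(x) = x^2-1$, and $s(m)=1$ exactly when $m \in \{0, \tfrac{\qq}{2}\}$ (following Example~\ref{HypergeometricFunction for first F1L3 mod 7 example}). Normalizing by $g(\tfrac{\qq}{14})g(\tfrac{9\qq}{14})g(\tfrac{11\qq}{14}) = (-1)^{\qq/2}q\,g(\tfrac{\qq}{2})$ via Lemma~\ref{Gauss Identity Lemma F1L3} and applying the bridge identity collapses $qH$ into a sum in the same form as $\Sigma$. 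The exceptional terms $m \in \{0, \tfrac{\qq}{2}\}$, where $s(m) = 1$ introduces an extra factor of $q$, together with the $k = 0$ boundary term of $\Sigma$, combine to produce the correction $-\tfrac{1}{q} g(\tfrac{\qq}{7}) g(\tfrac{2\qq}{7}) g(\tfrac{4\qq}{7})$. The hard part will be the careful accounting of the powers of $\omega(2)$, verifying that $\omega(-4\psi)^{4k}\omega(2)^{-6k}$ lines up correctly with $\omega(-64\psi^4)^m$ under the $m \leftrightarrow 2k$ correspondence (noting $\omega(2)^{\qq} = 1$), and cleanly reconciling the singular contributions---this mirrors the spirit of Lemma~\ref{Cluster two half shifts} in the Dwork case, but is technically more involved because $\pmb{\alpha}$ here is not defined over $\Q$.
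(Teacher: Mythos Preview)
Your setup is correct: the cluster sum unpacks exactly as you write, and the bridge identity you derive from three applications of Hasse--Davenport is valid. But the identity does not do what you need it to. It expresses the product $\prod_{j\in\{1,9,11\}} g(k+\tfrac{j\qq}{14})$ times $\prod_{j\in\{1,2,4\}} g(k+\tfrac{j\qq}{7})$ in terms of $\prod_{j\in\{1,2,4\}} g(2k+\tfrac{j\qq}{7})$. If you use this to eliminate the fourteenths from the hypergeometric summand (in the variable $m$), you are left with a ratio $\prod_j g(2m+\tfrac{j\qq}{7})\big/\prod_j g(m+\tfrac{j\qq}{7})$ together with $g(2m)g(-m)g(-4m)$, which does not collapse to the shape $g(k)g(-4k)\prod_j g(k+\tfrac{j\qq}{7})$ of $\Sigma$ under any reindexing. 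In particular, the ``$m\leftrightarrow 2k$'' correspondence you invoke is not a bijection of $\Z/\qq\Z$ (since $q$ is odd, $\qq$ is even), so it cannot serve as a change of summation variable.

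The key step you are missing is the \emph{affine} shift $m=k+\tfrac{\qq}{2}$, which \emph{is} a bijection. Under this substitution the sevenths become fourteenths directly: $g(k+\tfrac{\qq}{7})=g(m+\tfrac{9\qq}{14})$, $g(k+\tfrac{2\qq}{7})=g(m+\tfrac{11\qq}{14})$, $g(k+\tfrac{4\qq}{7})=g(m+\tfrac{\qq}{14})$, while $g(-4k)=g(-4m)$ and $g(k)=g(m+\tfrac{\qq}{2})$. Now only the \emph{single} factor $g(m+\tfrac{\qq}{2})$ needs to be converted, and one application of Hasse--Davenport (with $N=2$) plus Lemma~\ref{gausssumidentities}(b) gives $g(m+\tfrac{\qq}{2})=\omega(4)^{-m}(-1)^m q^{-1} g(\tfrac{\qq}{2})g(-m)g(2m)$, producing exactly the factors $g(2m)g(-m)g(-4m)$ required by the hybrid hypergeometric sum. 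The normalization via Lemma~\ref{Gauss Identity Lemma F1L3} and the bookkeeping of the $m=0,\tfrac{\qq}{2}$ terms then proceed as you anticipated.
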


\begin{proof}
Recall our hybrid hypergeometric sum \eqref{eqn:hgmtherealdeal} from Example~\ref{HypergeometricFunction for first F1L3 mod 7 example}, plugging in $t=\psi^4$:
\begin{equation} \label{eqn:h1414}
\begin{aligned}
&H_q(\tfrac{1}{14}, \tfrac{9}{14}, \tfrac{11}{14}; 0,\tfrac14,\tfrac34 \,|\, \psi^4) \\
&\qquad= \frac{1}{\qq} -\frac{1}{q\qq} g(\tfrac{\qq}{7})g(\tfrac{2\qq}{7})g(\tfrac{4\qq}{7}) \\
&\qquad\qquad +\frac{1}{q\qq} \sum_{\substack{m=1 \\ m \neq \qq\qm/2}}^{q-2} \frac{g(m+ \frac{1}{14}\qq)g(m+ \frac{9}{14}\qq)g(m+ \frac{11}{14}\qq)}{g(\frac{1}{14} \qq)g(\frac{9}{14} \qq)g(\frac{11}{14} \qq)} g(2m) g(-m) g(-4m) \omega(-4^3\psi^4)^m.
\end{aligned} 
\end{equation}
Our point count formula expands to
\begin{align*}
\sum_{s \in \tyS_8} \omega(a)^{-s} c_s 
	&= \frac{1}{q\qq} g(\tfrac{\qq}{7})g(\tfrac{4\qq}{7}) g(\tfrac{2\qq}{7}) - \frac{1}{q\qq} g(\tfrac{9\qq}{14})g(\tfrac{\qq}{14}) g(\tfrac{11\qq}{14})g(\tfrac{\qq}{2})  \\
	&\qquad +\frac{1}{q\qq}  \sum_{\substack{k=0 \\ \qq \nmid 2k}}^{q-2} \omega(-4\psi)^{4k} g(k+\tfrac{\qq}{7}) g(k+\tfrac{4\qq}{7})g(k + \tfrac{2\qq}{7}) g(k)g(-4k).
\end{align*}
We work on the sum.  Changing indices to $m=k+\tfrac{\qq}{2}$, using the identity 
$$
g(m+\tfrac{\qq}{2}) = \omega(4)^{-m} (-1)^mq^{-1} g(\tfrac{\qq}{2})g(-m)g(2m)
$$
found by using Hasse--Davenport for $N=2$, and applying Lemma~\ref{Gauss Identity Lemma F1L3}, gives us the summand
\begin{align*}
&\omega(-4\psi)^{4m} g(m+\tfrac{9\qq}{14}) g(m+\tfrac{\qq}{14})g(m + \tfrac{11\qq}{14}) g(m + \tfrac{\qq}{2})g(-4m) \\
&\qquad =\omega(-4\psi)^{4m} g(m+\tfrac{\qq}{14}) g(m+\tfrac{9\qq}{14})g(m + \tfrac{11\qq}{14})  \omega(4)^{-m} (-1)^mq^{-1} g(\tfrac{\qq}{2})g(-m)g(2m)g(-4m) \\
&\qquad =\omega(-4^3\psi^4)^{m}\frac{g(m+\tfrac{\qq}{14}) g(m+\tfrac{9\qq}{14})g(m + \tfrac{11\qq}{14})}{g(\tfrac{\qq}{14}) g(\tfrac{9\qq}{14})g(\tfrac{11\qq}{14})}q^{-1} g(\tfrac{\qq}{2})^4 g(2m)g(-m)g(-4m) \\
&\qquad =q\omega(-4^3\psi^4)^{m}\frac{g(m+\tfrac{\qq}{14}) g(m+\tfrac{9\qq}{14})g(m + \tfrac{11\qq}{14})}{q g(\tfrac{\qq}{14}) g(\tfrac{9\qq}{14})g(\tfrac{11\qq}{14})}g(2m)g(-m)g(-4m).
\end{align*}
Plugging back in, we can relate this to the hypergeometric function \eqref{eqn:h1414}:
\begin{align*}
\sum_{s \in \tyS_8} \omega(a)^{-s} c_s 
	&= \frac{1}{q\qq} g(\tfrac{\qq}{7})g(\tfrac{2\qq}{7}) g(\tfrac{4\qq}{7}) - \frac{1}{q\qq} g(\tfrac{\qq}{14})g(\tfrac{9\qq}{14}) g(\tfrac{11\qq}{14})g(\tfrac{\qq}{2})  \\
	&\qquad + \frac{1}{q\qq} \sum_{\substack{m=0 \\ \qq \nmid 2m}}^{q-2} q\omega(-4^3\psi^4)^{m}\frac{g(m+\tfrac{\qq}{14}) g(m+\tfrac{9\qq}{14})g(m + \tfrac{11\qq}{14})}{g(\tfrac{\qq}{14}) g(\tfrac{9\qq}{14})g(\tfrac{11\qq}{14})}g(2m)g(-m)g(-4m) \\
	&= \frac{1}{q\qq} g(\tfrac{\qq}{7})g(\tfrac{2\qq}{7}) g(\tfrac{4\qq}{7}) - \frac{1}{q\qq} g(\tfrac{\qq}{14})g(\tfrac{9\qq}{14}) g(\tfrac{11\qq}{14})g(\tfrac{\qq}{2})  \\
	&\qquad + qH_q(\tfrac{1}{14}, \tfrac{9}{14}, \tfrac{11}{14}; 0, \tfrac{1}{4}, \tfrac{3}{4} \,|\, \psi^4) + \frac{q}{\qq}  - \frac{1}{\qq}g(\tfrac{\qq}{7}) g(\tfrac{2\qq}{7}) g(\tfrac{4\qq}{7}) \\
	&= qH_q(\tfrac{1}{14}, \tfrac{9}{14}, \tfrac{11}{14}; 0, \tfrac{1}{4}, \tfrac{3}{4} \,|\, \psi^4) - \frac{1}{q} g(\tfrac{\qq}{7})g(\tfrac{2\qq}{7}) g(\tfrac{4\qq}{7}). \qedhere
\end{align*}
\end{proof}

\begin{lem}\label{shift by 3 fourteenths}
If $q \equiv 1 \pmod 7$ then
\begin{equation}
\sum_{s \in \tyS_9} \omega(a)^{-s} c_s = qH_q(\tfrac{3}{14}, \tfrac{5}{14}, \tfrac{13}{14}; 0, \tfrac{1}{4}, \tfrac{3}{4} \,|\, \psi^4) - \frac{1}{q} g(\tfrac{3\qq}{7})g(\tfrac{5\qq}{7}) g(\tfrac{6\qq}{7}).
\end{equation}
\end{lem}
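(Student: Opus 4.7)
The plan is to mirror the proof of Lemma \ref{q not 1 mod 4 shift 1 fourteenth} essentially verbatim, with the triple $(\tfrac{1}{14},\tfrac{9}{14},\tfrac{11}{14})$ replaced by $(\tfrac{3}{14},\tfrac{5}{14},\tfrac{13}{14})$ and the sevenths $(\tfrac{1}{7},\tfrac{2}{7},\tfrac{4}{7})$ replaced by $(\tfrac{3}{7},\tfrac{5}{7},\tfrac{6}{7})$. A slicker route would be to apply the Galois relation of Lemma \ref{lem:fieldofdef}(a) with $k=3$: one checks
\[ 3 \cdot \{\tfrac{1}{14},\tfrac{9}{14},\tfrac{11}{14}\} \equiv \{\tfrac{3}{14},\tfrac{5}{14},\tfrac{13}{14}\} \pmod{\Z}, \quad 3 \cdot \{0,\tfrac{1}{4},\tfrac{3}{4}\} \equiv \{0,\tfrac{1}{4},\tfrac{3}{4}\} \pmod{\Z}, \]
so $\sigma_3 H_q(\tfrac{1}{14},\tfrac{9}{14},\tfrac{11}{14};0,\tfrac14,\tfrac34\,|\,\psi^4)=H_q(\tfrac{3}{14},\tfrac{5}{14},\tfrac{13}{14};0,\tfrac14,\tfrac34\,|\,\psi^4)$, and similarly for the Gauss sum factor; $\sigma_3$ also carries the cluster $\tyS_8$ to $\tyS_9$. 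For uniformity with the prior lemma, however, I would present the direct computation.

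The first step is to record the Gauss-sum identity analogous to Lemma \ref{Gauss Identity Lemma F1L3}:
\[ g(\tfrac{3\qq}{14})g(\tfrac{5\qq}{14})g(\tfrac{13\qq}{14}) = (-1)^{\qq\qm/2}q\, g(\tfrac{\qq}{2}). \]
This follows by applying the Hasse--Davenport product relation (Lemma \ref{gausssumidentities}(c)) with $N=2$ to $m=\tfrac{3\qq}{14},\tfrac{5\qq}{14},\tfrac{13\qq}{14}$; since $\tfrac{3\qq}{14}+\tfrac{\qq}{2}\equiv \tfrac{5\qq}{7}$, $\tfrac{5\qq}{14}+\tfrac{\qq}{2}\equiv \tfrac{6\qq}{7}$, and $\tfrac{13\qq}{14}+\tfrac{\qq}{2}\equiv \tfrac{3\qq}{7}\pmod{\qq}$, after solving for the three Gauss sums and multiplying, the numerator and denominator sevenths cancel in a cyclic manner and we obtain $g(\tfrac{\qq}{2})^3\omega(2)^{-2\qq}=(-1)^{\qq\qm/2}q\,g(\tfrac{\qq}{2})$ by Lemma \ref{gausssumidentities}(b).

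The second step is to expand the hypergeometric sum, in exact analogy with Example \ref{HypergeometricFunction for first F1L3 mod 7 example}: again $\pmb{\alpha}_0=\emptyset$ and $\pmb{\beta}_0=\pmb{\beta}$, so $D(x)=x^2-1$, $M=4^3$, and $s(m)=1$ exactly at $m=0,\tfrac{\qq}{2}$. The $m=0$ term contributes $\tfrac{1}{\qq}$, and the new Gauss-sum identity above collapses the $m=\tfrac{\qq}{2}$ term to $-\tfrac{1}{q\qq} g(\tfrac{3\qq}{7})g(\tfrac{5\qq}{7})g(\tfrac{6\qq}{7})\omega(\psi^4)^{\qq\qm/2}$. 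The remaining sum runs over $m \ne 0,\tfrac{\qq}{2}$.

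The third step is to match this to the character sum over $\tyS_9$. Writing the sum out via \eqref{The cs's}, peeling off the two distinguished terms (which are exactly $-\tfrac{1}{q\qq}g(\tfrac{3\qq}{14})g(\tfrac{5\qq}{14})g(\tfrac{13\qq}{14})g(\tfrac{\qq}{2})$ and $\tfrac{1}{q\qq}g(\tfrac{3\qq}{7})g(\tfrac{5\qq}{7})g(\tfrac{6\qq}{7})$), and reindexing the remainder with $m=k+\tfrac{\qq}{2}$, I would apply the shift identity
\[ g(m+\tfrac{\qq}{2}) = \omega(4)^{-m}(-1)^m q^{-1} g(\tfrac{\qq}{2})g(-m)g(2m) \]
(i.e., \eqref{eqn:gkkq2} rewritten) to convert $g(k+\tfrac{\qq}{2})$ into the factors $g(2m),g(-m)$ that appear in the hypergeometric summand, while the prefactors of $g(\tfrac{\qq}{2})$ combine with the new Gauss-sum identity to produce the required $q$-scaling. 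Collecting everything and simplifying using Lemma \ref{gausssumidentities}(b) yields the asserted formula.

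The main obstacle is simply the bookkeeping: tracking the various $\omega(2)$ powers, the sign $(-1)^{\qq\qm/2}$, and the fractional indices modulo $\qq$ through the Hasse--Davenport manipulations, and verifying that the stray terms at $m=0,\tfrac{\qq}{2}$ combine exactly into the isolated Gauss-sum factor $-\tfrac{1}{q}g(\tfrac{3\qq}{7})g(\tfrac{5\qq}{7})g(\tfrac{6\qq}{7})$ on the right-hand side, with no leftover constant $\tfrac{1}{\qq}$ or $\tfrac{q}{\qq}$ error terms. This is exactly the same kind of cancellation that occurred in the previous lemma, and it should go through with the shifted parameters without any structural difficulty.
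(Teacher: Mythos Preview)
Your direct-computation plan is correct and would go through exactly as you describe: the analogue of Lemma \ref{Gauss Identity Lemma F1L3} holds for $\{\tfrac{3}{14},\tfrac{5}{14},\tfrac{13}{14}\}$ by the same Hasse--Davenport argument, and the reindexing and cancellation in the $\tyS_9$ sum mirror those for $\tyS_8$ line by line.

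The paper, however, takes the one-line Galois route you flagged as ``slicker''---but with $k=-1$ (complex conjugation) rather than your $k=3$. This matters: Lemma \ref{lem:fieldofdef}(a) needs $k$ coprime to $\qq$, and nothing in the hypotheses prevents $3\mid \qq$ (e.g.\ $q=43$) or even $p=3$, whereas $-1$ is always a unit modulo $\qq$. Under complex conjugation the multisets $\{\tfrac{1}{14},\tfrac{9}{14},\tfrac{11}{14}\}$ and $\{\tfrac{1}{7},\tfrac{2}{7},\tfrac{4}{7}\}$ go to $\{\tfrac{3}{14},\tfrac{5}{14},\tfrac{13}{14}\}$ and $\{\tfrac{3}{7},\tfrac{5}{7},\tfrac{6}{7}\}$, while $\{0,\tfrac14,\tfrac34\}$ is fixed, and the cluster $\tyS_8$ is sent to $\tyS_9$; applying conjugation to both sides of Lemma \ref{q not 1 mod 4 shift 1 fourteenth} yields the result immediately. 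So your approach is sound but does more work than necessary, and the specific Galois shortcut you propose needs $k=-1$ rather than $k=3$ to be valid for all admissible $q$.
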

\begin{proof}
Apply complex conjugation to Lemma \ref{q not 1 mod 4 shift 1 fourteenth}; the effect is to negate indices, as in Proposition \ref{prop:qalbet}.
\end{proof}

We now put the pieces together to prove the main result in this step.

\begin{prop}\label{OpenF1L3}
Suppose $\psi\in \F_q^{\times}$. 
\begin{enumalph}
\item If $q \not\equiv 1 \pmod 7$ then 
\begin{equation}
\#U_{\Fsf_1\Lsf_3, \psi}(\F_q) =  q^2-3q+3  + H_q(\tfrac14, \tfrac12, \tfrac34; 0,0,0 \,|\, \psi^{-4}).
\end{equation}
\item If $q\equiv 1 \pmod 7$ then 
\begin{equation}\begin{aligned}
\#U_{\Fsf_1\Lsf_3, \psi}(\F_q) &= q^2-3q+3  + H_q(\tfrac14, \tfrac12, \tfrac34; 0,0,0 \,|\, \psi^{-4}) \\
	&\qquad + 3qH_q(\tfrac{1}{14}, \tfrac{9}{14}, \tfrac{11}{14}; 0, \tfrac{1}{4}, \tfrac{3}{4} \,|\, \psi^4) + 3qH_q(\tfrac{3}{14}, \tfrac{5}{14}, \tfrac{13}{14}; 0, \tfrac{1}{4}, \tfrac{3}{4} \,|\, \psi^4) \\
	&\qquad  -\frac{3}{q}(g(\tfrac{\qq}{7})g(\tfrac{2\qq}{7})g(\tfrac{4\qq}{7})  + g(\tfrac{3\qq}{7})g(\tfrac{5\qq}{7}) g(\tfrac{6\qq}{7})).
	\end{aligned}\end{equation}
\end{enumalph}
\end{prop}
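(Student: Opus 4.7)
The plan is to assemble the cluster-by-cluster computations from Step 2 by a direct application of Koblitz's formula. I would begin by invoking Theorem \ref{Dels}, which writes $\#U_{\Fsf_1\Lsf_3,\psi}(\F_q) = \sum_{s \in \tyS} \omega(a)^{-s} c_s$, with coefficient vector $a = (1,1,1,1,-4\psi)$ read off from the monomials of $F_\psi$. The problem then reduces to summing the contribution from each cluster identified in Step 1.

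In the regime $q \not\equiv 1 \pmod{7}$, the set $\tyS$ consists only of $\tyS_1$, so a single invocation of Lemma \ref{Cluster no shifts} immediately produces the stated formula. I would emphasize that the invariant cluster $\tyS_1$ and its associated hypergeometric sum coincide exactly with those from the Dwork pencil computation: the evaluation of this cluster depends only on the exponents of the common deformation monomial $x_0x_1x_2x_3$, not on the remaining monomial structure of $F_\psi$.

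For $q \equiv 1 \pmod{7}$, the set $\tyS$ further decomposes as $\tyS_1$ together with three cyclic permutations each of $\tyS_8$ and $\tyS_9$. I would then combine Lemma \ref{Cluster no shifts} with three copies of Lemma \ref{q not 1 mod 4 shift 1 fourteenth} and three copies of Lemma \ref{shift by 3 fourteenths}, aggregating the hypergeometric contributions into coefficients $1$, $3q$, and $3q$ respectively, together with the leftover Gauss-sum terms each collected with multiplicity three.

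The main obstacle (such as it is) is the multiplicity-three bookkeeping, which I would justify by observing that the three clusters in each family differ only by a cyclic permutation of the $\mu_7$-eigencoordinates $(x_0,x_1,x_2)$, and Koblitz's formula is invariant under any such permutation since the three corresponding coefficients of $F_\psi$ all equal $1$. Beyond this, no substantive difficulties remain: the computational heavy-lifting (the Hasse--Davenport manipulations and the Gauss-sum identity of Lemma \ref{Gauss Identity Lemma F1L3}) has already been dispatched in the cluster lemmas.
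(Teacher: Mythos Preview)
Your proposal is correct and matches the paper's proof essentially line for line: apply Theorem~\ref{Dels}, invoke Lemma~\ref{Cluster no shifts} for the invariant cluster $\tyS_1$ (which is identical to the Dwork case since both pencils have coefficient vector $(1,1,1,1,-4\psi)$), and when $q\equiv 1\pmod 7$ add three copies each of Lemmas~\ref{q not 1 mod 4 shift 1 fourteenth} and~\ref{shift by 3 fourteenths}. Your justification of the multiplicity three via the cyclic symmetry on $(x_0,x_1,x_2)$ is exactly what the paper records in Step~1 (``cyclic permutations yielding the same product of Gauss sums''), so nothing further is needed.
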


\begin{proof}
When $q \not\equiv 1 \pmod 7$, there is only one cluster of characters, $\tyS_1$. By Lemma~\ref{Cluster no shifts}, we know that 
\begin{equation}
\#U_{\Fsf_1\Lsf_3, \psi}(\F_q) =  \sum_{s \in \tyS_1} \omega(a)^{-s} c_s=  q^2-3q+3  + H_q(\tfrac14, \tfrac12, \tfrac34; 0,0,0 \,|\, \psi^{-4}).
\end{equation}
When $q\equiv 1 \pmod 7$ we have three clusters of characters, the latter two ($S_8$ and $S_9$) with multiplicity $3$. By Lemmas~\ref{Cluster no shifts},~\ref{q not 1 mod 4 shift 1 fourteenth}, and~\ref{shift by 3 fourteenths}, these sum to the result.  
\end{proof}

\subsubsection*{Step 3: Count points when at least one coordinate is zero.}

Recall that $q$ is coprime to $14$. 

\begin{lem}\label{oneCoordZero F1L3}
Let $\psi \in \F_q^\times$.  
\begin{enumalph}
\item If $q\not\equiv 1 \pmod 7$, then
$$
\#X_{\Fsf_1\Lsf_3, \psi}(\F_q) - \#U_{\Fsf_1\Lsf_3, \psi}(\F_q) = 4q-2.
$$
\item If $q\equiv 1 \pmod 7$, then 
$$
\#X_{\Fsf_1\Lsf_3, \psi}(\F_q) - \#U_{\Fsf_1\Lsf_3, \psi}(\F_q) = 4q - 2 + \frac{3}{q}(g(\tfrac{\qq}{7})g(\tfrac{2\qq}{7})g(\tfrac{4\qq}{7}) + g(\tfrac{3\qq}{7}g(\tfrac{5\qq}{7}) g(\tfrac{6\qq}{7})).
$$
\end{enumalph}
\end{lem}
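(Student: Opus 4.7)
The plan is to enumerate projective points on $X_{\Fsf_1\Lsf_3,\psi}$ with at least one coordinate equal to zero, stratifying by the number of vanishing coordinates. Because $F_\psi = x_0^3x_1 + x_1^3x_2 + x_2^3x_0 + x_3^4 - 4\psi x_0x_1x_2x_3$, setting any single $x_i$ to zero kills all monomials involving $\psi$, so the answer will be independent of $\psi$ on each stratum.

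First I would treat the \emph{deeply degenerate} strata, where at least two coordinates vanish. Testing the four coordinate points one by one, $(1:0:0:0), (0:1:0:0), (0:0:1:0)$ lie on $X$ while $(0:0:0:1)$ does not (since $x_3^4 = 1$ there). Running through the six lines $x_i = x_j = 0$, the restricted equation is always a single monomial in the remaining two coordinates, whose vanishing forces a third coordinate to be zero. Hence the only points with at least two zero coordinates are these three coordinate points. Next, for each $i \in \{0,1,2\}$ the hyperplane $x_i = 0$ intersected with $\{x_j \neq 0 \text{ for } j \neq i\}$ is the affine curve $x_j^3 x_k + x_3^4 = 0$ in the chart $x_3 = 1$, which gives $x_k = -x_j^{-3}$ for each $x_j \in \F_q^\times$: exactly $q - 1$ points.

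The essential computation is counting the torus part of the Klein quartic $C : x_0^3x_1 + x_1^3x_2 + x_2^3x_0 = 0$ (the slice $x_3 = 0$). Applying Theorem~\ref{Dels}, the system~\eqref{WCong} reduces to $s_2 \equiv -3s_1$, $s_3 \equiv 9s_1$, and $7s_1 \equiv 0 \pmod{\qq}$. When $q \not\equiv 1 \pmod{7}$ the only solution is $s = 0$, contributing $c_0 = q - 2$. When $q \equiv 1 \pmod{7}$ there are seven solutions $s = (k,4k,2k)\qq/7$ for $k \in \Z/7\Z$. Reducing indices mod $7$, the nonzero $k$ split into the two cubic residue cosets $\{1,2,4\}$ and $\{3,5,6\}$ (using $2^3 \equiv 1 \pmod 7$), within each of which the unordered product $g(s_1)g(s_2)g(s_3)$ is constant. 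This yields
\[
\#\{C(\F_q) \cap (\mathbb{G}_m)^3\} = (q-2) + \tfrac{3}{q}\bigl(g(\tfrac{\qq}{7})g(\tfrac{2\qq}{7})g(\tfrac{4\qq}{7}) + g(\tfrac{3\qq}{7})g(\tfrac{5\qq}{7})g(\tfrac{6\qq}{7})\bigr).
\]

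Finally I would assemble: summing contributions from strata with exactly $1$, $2$, or $3$ vanishing coordinates gives
\[
\#X_{\Fsf_1\Lsf_3,\psi}(\F_q) - \#U_{\Fsf_1\Lsf_3,\psi}(\F_q) = 3(q-1) + \#\{C(\F_q) \cap (\mathbb{G}_m)^3\} + 0 + 3,
\]
which simplifies to $4q - 2$ in case (a) and to the claimed expression in case (b). The main subtleties are purely bookkeeping: correctly identifying which coordinate points lie on $X$, and correctly verifying the orbit structure of the $s$-sum for the Klein quartic under the $\Z/7\Z$-action so that the factor of $3$ (rather than $6$) appears on each Gauss product.
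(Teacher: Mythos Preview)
Your proof is correct and follows essentially the same approach as the paper: stratify by the vanishing locus, handle the three ``easy'' hyperplanes by solving for one variable (giving $q-1$ points each), apply Theorem~\ref{Dels} to the Klein quartic slice, and check the deep strata by hand. The only cosmetic difference is the variable ordering---you keep $x_3$ as the Fermat variable (matching table~\eqref{table:5families}), whereas the paper's proof of this lemma uses $x_0$ as the Fermat variable; your bookkeeping of the coordinate points and two-coordinate strata is in fact slightly cleaner than the paper's prose. (A tiny slip: with your monomial ordering the congruences give $s_3\equiv -3s_1$ and $s_2\equiv 9s_1$, not the reverse, but since only the unordered product $g(s_1)g(s_2)g(s_3)$ enters this has no effect on the count.)
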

\begin{proof}
We count solutions with at least one coordinate zero.  If $x_3=0$ but $x_0x_1x_2 \neq 0$, we count points on $x_0^4+x_1^3x_2=0$: solving for $x_2$, we see there are $q-1$ solutions; repeating this for the cases $x_1=0$ or $x_2=0$, we get $3q-3$ points.  

Now suppose $x_0=0$ but $x_1x_2x_3 \neq 0$, we look at the equation $x_1^3x_2+x_2^3x_3+x_3^3x_1=0$ defining the Klein quartic.  Applying Theorem \ref{Dels} again, we find that 
\begin{equation}
\begin{pmatrix}
3&1&0\\
0&3&1\\
1&0&3\\
1&1&1
\end{pmatrix}
\begin{pmatrix} s_1 \\ s_2 \\ s_3 \end{pmatrix} \equiv 0 \pmod{\qq}. 
\end{equation}
If $q\not\equiv1\pmod7$, then only $(0,0,0)$ is a solution and $c_{(0,0,0)} = q-2$. If $q \equiv 1 \pmod 7$ then the solutions are 
$\left\{k(\tfrac{\qq}{7},\tfrac{4\qq}{7},\tfrac{2\qq}{7}) : k\in\Z/7\Z\right\}$ which gives the point count 
$$
q-2 +  \frac{3}{q}g(\tfrac{\qq}{7})g(\tfrac{2\qq}{7})g(\tfrac{4\qq}{7}) + \frac{3}{q}g(\tfrac{3\qq}{7})g(\tfrac{5\qq}{7}) g(\tfrac{6\qq}{7}).
$$

If now at least two of the variables among $\{x_1,x_2,x_3\}$ are zero, then the equation is just $x_0^4 = 0$ hence the last one is also zero and there is only one such point.  If $x_0=x_1=0$, then the equation is $x_2^3 x_3=0$ hence another of the first three variables is zero. Consequently there are exactly 3 such points.  Totalling up gives the result.  
\end{proof}

\subsubsection*{Step 4: Conclude}

We now prove Proposition~\ref{prop:F1L3}.

\begin{proof}[Proof of Proposition~\textup{\ref{prop:F1L3}}]
By Proposition~\ref{OpenF1L3} and Lemma~\ref{oneCoordZero F1L3}, if $q\not\equiv 1\pmod 7$, then 
\begin{equation}\begin{aligned}
\#X_{\Fsf_1\Lsf_3, \psi}(\F_q) &=  q^2-3q+3  + H_q(\tfrac14, \tfrac12, \tfrac34; 0,0,0 \,|\, \psi^{-4}) + (4q-2) \\
	&= q^2 + q + 1+ H_q(\tfrac14, \tfrac12, \tfrac34; 0,0,0 \,|\, \psi^{-4}).
\end{aligned}\end{equation}
If $q\equiv 1 \pmod 7$, then the ugly terms cancel and we get
\begin{equation}\begin{aligned}
\#X_{\Fsf_1\Lsf_3, \psi}(\F_q) &= q^2-3q+3  + H_q(\tfrac14, \tfrac12, \tfrac34; 0,0,0 \,|\, \psi^{-4}) + \\
	&\qquad + 3qH_q(\tfrac{1}{14}, \tfrac{9}{14}, \tfrac{11}{14}; 0, \tfrac{1}{4}, \tfrac{3}{4} \,|\, \psi^4) + 3qH_q(\tfrac{3}{14}, \tfrac{5}{14}, \tfrac{13}{14}; 0, \tfrac{1}{4}, \tfrac{3}{4} \,|\, \psi^4) \\
	&\qquad  - 3\tfrac{1}{q} g(\tfrac{\qq}{7})g(\tfrac{4\qq}{7}) g(\tfrac{2\qq}{7})  \\
	&\qquad + (4q - 2) \\
	&= q^2 + q + 1+ H_q(\tfrac14, \tfrac12, \tfrac34; 0,0,0 \,|\, \psi^{-4})\\
	&\qquad + 3qH_q(\tfrac{1}{14}, \tfrac{9}{14}, \tfrac{11}{14}; 0, \tfrac{1}{4}, \tfrac{3}{4} \,|\, \psi^4) + 3qH_q(\tfrac{3}{14}, \tfrac{5}{14}, \tfrac{13}{14}; 0, \tfrac{1}{4}, \tfrac{3}{4} \,|\, \psi^4)
	\end{aligned}\end{equation}
	as desired.
\end{proof}

\subsection{Remaining pencils}

For the remaining three pencils $\Fsf_2\Lsf_2$, $\Lsf_2\Lsf_2$, and $\Lsf_4$, the formula for the point counts can be derived in a similar manner.  The details can be found in Appendix \ref{appendix:pts}; we state here only the results.

\begin{prop}\label{prop:F2L2}
For $q$ odd and $\psi \in \F_q^\times$, the following statements hold.  
\begin{enumalph}
\item If $q\equiv3\pmod4$, then
\begin{align*} 
\#X_{\Fsf_2\Lsf_2,\psi}(\F_q)&=q^2-q+1+H_q(\tfrac{1}{4},\tfrac{1}{2},\tfrac{3}{4};0,0,0\,|\,\psi^{-4}) - q H_q(\tfrac{1}{4},\tfrac{3}{4};0,\tfrac{1}{2} \,|\,\psi^{-4}).
\end{align*}
\item If $q\equiv5\pmod8$, then
\begin{align*}
\#X_{\Fsf_2\Lsf_2,\psi}(\F_q)&=q^2-q+1+H_q(\tfrac{1}{4},\tfrac{1}{2},\tfrac{3}{4};0,0,0\,|\,\psi^{-4})\\
	&\qquad+qH_q(\tfrac{1}{4},\tfrac{3}{4};0,\tfrac{1}{2} \,|\,\psi^{-4})-2qH_q(\tfrac{1}{2};0\,|\,\psi^{-4}).  
\end{align*}

\item If $q\equiv1\pmod8$, then
\begin{align*}\#X_{\Fsf_2\Lsf_2,\psi}(\F_q)&= q^2+7q +1 +H_q(\tfrac{1}{4},\tfrac{1}{2},\tfrac{3}{4};0,0,0\,|\,\psi^{-4})  +qH_q(\tfrac14, \tfrac{3}{4}; 0, \tfrac12 \,|\, \psi^{-4})  \\
	&\qquad + 2q H_q(\tfrac12; 0 \,|\, \psi^{-4}) + 2\omega(2)^{\qq\qm/4}qH_q(\tfrac18, \tfrac58; 0, \tfrac14 \,|\, \psi^4) + 2\omega(2)^{\qq\qm/4}qH_q(\tfrac38, \tfrac78; 0, \tfrac34 \,|\, \psi^4).
\end{align*}
\end{enumalph}
\end{prop}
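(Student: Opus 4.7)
The plan is to follow the same four-step template used for the Dwork pencil $\Fsf_4$ in Proposition \ref{prop:F4} and the Klein--Mukai pencil $\Fsf_1\Lsf_3$ in Proposition \ref{prop:F1L3}: (1) compute the character set $\tyS$ from Theorem \ref{Dels} and cluster it under the permutation symmetries of $F_{\Fsf_2\Lsf_2,\psi}$; (2) convert each cluster sum into a finite field hypergeometric sum using the Hasse--Davenport product relation (Lemma \ref{gausssumidentities}(c)); (3) handle the boundary strata where at least one coordinate vanishes; and (4) combine and cancel the auxiliary Gauss sum terms.

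\textbf{Step 1.} Theorem \ref{Dels} applied to the monomials of $F_{\Fsf_2\Lsf_2,\psi}$ leads to the system
\[
\begin{pmatrix} 4 & 0 & 0 & 0 & 1 \\ 0 & 4 & 0 & 0 & 1 \\ 0 & 0 & 3 & 1 & 1 \\ 0 & 0 & 1 & 3 & 1 \\ 1 & 1 & 1 & 1 & 1 \end{pmatrix}\begin{pmatrix} s_1 \\ s_2 \\ s_3 \\ s_4 \\ s_5 \end{pmatrix} \equiv 0 \pmod{\qq}.
\]
The Fermat block contributes $\tfrac{\qq}{2}$-shifts in the first two coordinates, while the loop block has determinant $8$ and contributes shifts by multiples of $\tfrac{\qq}{8}$ in the last two. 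Thus $\tyS$ always contains the line through $(1,1,1,1,-4)$ together with its half-shifts, and picks up $\tfrac{\qq}{4}$- and $\tfrac{\qq}{8}$-shifts precisely when $4\mid\qq$ and $8\mid\qq$, respectively. After grouping by the $S_2\times S_2$ action ($x_0\leftrightarrow x_1$ and $x_2\leftrightarrow x_3$), the cluster multiplicities will match the period counts in Proposition \ref{prop:F2L2diff}: $3+2+2$ in the generic case, plus $4+4$ more in the $q\equiv1\pmod 8$ case.

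\textbf{Step 2.} The trivial cluster contributes $q^2-3q+3 + H_q(\tfrac14,\tfrac12,\tfrac34;0,0,0\,|\,\psi^{-4})$ verbatim from Lemma \ref{Cluster no shifts}. The half-shift cluster coming from the loop block produces $qH_q(\tfrac14,\tfrac34;0,\tfrac12\,|\,\psi^{-4})$ by the Hasse--Davenport manipulation at $N=2$ used in Lemma \ref{Cluster two half shifts}. The two clusters mixing one half-shift from each block (present only when $4\mid\qq$) yield $qH_q(\tfrac12;0\,|\,\psi^{-4})$, exactly as in Lemma \ref{Cluster 2 quarters 1 half shifts}, with the sign depending on $(-1)^{\qq\qm/4}$. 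The new features arise when $8\mid\qq$: here clusters with an eighth-root shift in the loop coordinates require Hasse--Davenport at $N=4$ applied to pairs of the form $g(k+\tfrac{\qq}{8})g(k+\tfrac{5\qq}{8})$, yielding $qH_q(\tfrac18,\tfrac58;0,\tfrac14\,|\,\psi^4)$ and its Galois conjugate $qH_q(\tfrac38,\tfrac78;0,\tfrac34\,|\,\psi^4)$ (cf.\ Lemma \ref{lem:fieldofdef}(a)). The $\omega(4)^{4m}$ prefactor from Hasse--Davenport collapses to the quartic residue symbol $\omega(2)^{\qq\qm/4}$, which matches the character $\phi_{\sqrt{2}}$ of Main Theorem \ref{mainthm}(c).

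\textbf{Steps 3 and 4.} Boundary counting is straightforward: if $x_0x_1=0$, the equation reduces to the binary form $x_2^3x_3+x_3^3x_2 = x_2x_3(x_2^2+x_3^2)$, which is easily enumerated; if $x_2x_3=0$, it reduces to the Fermat quartic curve $x_0^4+x_1^4=0$ already handled in Lemma \ref{oneCoordZero q 1mod4 Dwork}; higher-codimension strata contribute constants. Summing across the four coordinate hyperplanes produces a polynomial in $q$ together with Gauss-sum correction terms such as $g(\tfrac{\qq}{4})^2/g(\tfrac{\qq}{2})$ and products of eighth-root Gauss sums; these cancel against identical terms emerging from the toric clusters in Step 2, by the same mechanism used in Proposition \ref{prop:F4}. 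The main obstacle is the careful bookkeeping for the $q\equiv1\pmod 8$ clusters and the correct emergence of $\omega(2)^{\qq\qm/4}$ from the Hasse--Davenport normalization at $N=4$; this is the hypergeometric fingerprint of the base-change character $\phi_{\sqrt{2}}$ appearing in Main Theorem \ref{mainthm}(c).
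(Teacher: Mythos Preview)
Your four-step template is exactly the paper's approach, and your identification of the cluster structure and the reuse of Lemmas \ref{Cluster no shifts}--\ref{Cluster 2 3quarter 1 half shifts} is right. However, your Step 3 contains an error that would break the final cancellation. When exactly one Fermat variable vanishes, say $x_0=0$ with $x_1x_2x_3\neq 0$, the equation does \emph{not} reduce to the binary form $x_2^3x_3+x_3^3x_2$: the term $x_1^4$ remains, and you must count points on the plane curve $x_1^4+x_2^3x_3+x_3^3x_2=0$ in the torus. Applying Theorem \ref{Dels} to this curve is precisely what produces the correction terms $(g(\tfrac{\qq}{4})^2+g(\tfrac{3\qq}{4})^2)/g(\tfrac{\qq}{2})$ when $q\equiv 1\pmod 4$ and the additional eighth-root terms $q^{-1}g(\tfrac{\qq}{4})g(\tfrac{\qq}{8})g(\tfrac{5\qq}{8})$ and $q^{-1}g(\tfrac{3\qq}{4})g(\tfrac{3\qq}{8})g(\tfrac{7\qq}{8})$ when $q\equiv 1\pmod 8$. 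These are exactly the terms that must cancel against the residual Gauss sums coming out of the $\tyS_5,\tyS_6,\tyS_{10},\tyS_{11}$ clusters in Step 2; if you treat this stratum as the binary loop form you will miss them and the bookkeeping cannot close.

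A second, smaller point: in Step 2 for the eighth-root clusters, the paper does not apply Hasse--Davenport at $N=4$ to the pair $g(k+\tfrac{\qq}{8})g(k+\tfrac{5\qq}{8})$. Rather, it expands $g(-4k)$ via Hasse--Davenport at $N=4$ and cancels $g(k+\tfrac{\qq}{4})$ against $g(-k-\tfrac{\qq}{4})$ using Lemma \ref{gausssumidentities}(b), then reindexes. The quartic character $\omega(2)^{\qq\qm/4}$ arises separately from a Hasse--Davenport identity at $N=2$ with $m=\tfrac{\qq}{8}$, which gives $g(\tfrac{3\qq}{4})g(\tfrac{\qq}{8})g(\tfrac{5\qq}{8})/g(\tfrac{\qq}{2})=\omega(2)^{\qq\qm/4}q$; this is what pulls the normalizing constant out front.
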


\begin{proof}
See Proposition \ref{prop:F2L2-appendix}.
\end{proof}

\begin{prop}\label{prop:L2L2}
For $q$ odd and $\psi \in \F_q^\times$, the following statements hold.
\begin{enumalph}
\item If $q\equiv3\pmod4$, then
\[ \#X_{\Lsf_2\Lsf_2,\psi}(\F_q) = q^2 + q+1 + H_q(\tfrac14, \tfrac12, \tfrac34; 0,0,0 \,|\, \psi^{-4}) - qH_q(\tfrac14, \tfrac{3}{4}; 0, \tfrac12 \,|\, \psi^{-4}). \]
\item If $q\equiv1\pmod4$, then
\begin{align*}
\#X_{\Lsf_2\Lsf_2,\psi}(\F_q) &= q^2+9q+1  + H_q(\tfrac14, \tfrac12, \tfrac34; 0,0,0 \,|\, \psi^{-4}) + qH_q(\tfrac14, \tfrac{3}{4}; 0, \tfrac12 \,|\, \psi^{-4}) \\
	&\qquad + 2 (-1)^{\qq\qm/4} \omega(\psi)^{\qq\qm/2} q H_q(\tfrac{1}{8},\tfrac{3}{8},\tfrac{5}{8},\tfrac{7}{8};0,\tfrac{1}{4},\tfrac{1}{2},\tfrac{3}{4}\,|\,\psi^{-4}).
\end{align*}
\end{enumalph}
\end{prop}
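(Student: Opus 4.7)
The plan is to prove Proposition \ref{prop:L2L2} by imitating the four-step strategy used above for $\Fsf_4$ (Proposition \ref{prop:F4}) and $\Fsf_1\Lsf_3$ (Proposition \ref{prop:F1L3}): cluster the characters from Koblitz's formula, convert each cluster into a finite-field hypergeometric sum, count the points with some coordinate zero, and sum up. The shape of the answer (and the appearance of the group $\mu_4 \times \mu_2$ in \eqref{table:5families}) is guided by the Picard--Fuchs decomposition in Proposition~\ref{PFL2L2}: $3$ periods governed by $D(\tfrac14,\tfrac12,\tfrac34;1,1,1\,|\,\psi^{-4})$, $2$ periods by $D(\tfrac14,\tfrac34;1,\tfrac12\,|\,\psi^4)$, and $8$ periods by $D(\tfrac18,\tfrac38,\tfrac58,\tfrac78;0,\tfrac14,\tfrac12,\tfrac34\,|\,\psi^4)$, which tells us which hypergeometric sums to expect and with what multiplicities.

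For Step~1, I would apply Theorem~\ref{Dels} to the exponent matrix
$$
\begin{pmatrix} 3&1&0&0&1\\ 1&3&0&0&1\\ 0&0&3&1&1\\ 0&0&1&3&1\\ 1&1&1&1&1\end{pmatrix}
$$
and solve the congruence \eqref{WCong} over $\Z/\qq\Z$. For $q\equiv 3\pmod 4$ the set $\tyS$ consists of the diagonal $\tyS_1 = \{k(1,1,1,1,-4)\}$ together with a single half-shift cluster of the form $\{k(1,1,1,1,-4)+\tfrac{\qq}{2}(1,1,0,0,0)\}$ appearing twice by the evident $\{x_0,x_1\} \leftrightarrow \{x_2,x_3\}$ symmetry. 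For $q\equiv 1\pmod 4$, the $\mu_4$-factor in the symmetry group enlarges $\tyS$ by shifts of the form $\tfrac{\qq}{4}(1,-1,1,-1,0)$ and (after combining with the half-shift) $\tfrac{\qq}{8}$-type shifts realized through splittable parameters $\tfrac{a}{8}$ whose $8$th-root denominators match the hypergeometric parameters in the proposition.

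For Step~2, the $\tyS_1$-cluster evaluates by Lemma~\ref{Cluster no shifts} to $q^2-3q+3+H_q(\tfrac14,\tfrac12,\tfrac34;0,0,0\,|\,\psi^{-4})$, and the two half-shift clusters evaluate by the same calculation as Lemma~\ref{Cluster two half shifts}, giving a total contribution $2(-1)^{\qq\qm/2}(2+qH_q(\tfrac14,\tfrac34;0,\tfrac12\,|\,\psi^{-4}))$. The main technical work is in the remaining clusters when $q\equiv 1\pmod 4$: here I expect the key calculation to parallel Lemma~\ref{q not 1 mod 4 shift 1 fourteenth}, using Hasse--Davenport at $N=2$ to convert $g(k+\tfrac{\qq}{8})g(k+\tfrac{5\qq}{8})$-type products into $g(2k+\tfrac{\qq}{4})g(\tfrac{\qq}{2})\omega(2)^{\cdot}$, then reindexing $m = k+\tfrac{\qq}{2}$ or $m=-k-(\text{shift})$ to recognize the hybrid hypergeometric sum from Definition~\ref{Hybrid Definition}. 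The subtle new point here is that $\pmb{\alpha}=\{\tfrac18,\tfrac38,\tfrac58,\tfrac78\}$ together with $\pmb{\beta}=\{0,\tfrac14,\tfrac12,\tfrac34\}$ is defined over $\Q$ (splitting completely into cyclotomic factors), so Definition~\ref{BCM HGF} applies directly; but the \emph{argument} $t$ that emerges from the calculation includes an extra factor of $\omega(\psi)^{\qq\qm/2}$, i.e.\ the quadratic residue character of $\psi$ — this is the origin of the character $\phi_\psi$ in Main Theorem~\ref{mainthm}(d), and correctly pinning it down will be the main obstacle.

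For Step~3, I would count boundary points by cases: if $x_0 = 0$ then $x_1^3 x_0 + x_2^3x_3 + x_3^3x_2$ forces $x_2^3 x_3 + x_3^3 x_2 = x_2 x_3 (x_2^2+x_3^2)=0$, giving point counts depending on whether $-1$ is a square (i.e., on $q \bmod 4$); by symmetry $x_1=0$ contributes the same, and the analogous calculation applies to $x_2=0$ and $x_3=0$. The deeper zero loci (two or more coordinates vanishing) contribute only a bounded number of points. Totalling these with the appropriate multiplicity gives $\#X_{\Lsf_2\Lsf_2,\psi}(\F_q) - \#U_{\Lsf_2\Lsf_2,\psi}(\F_q)$, where the $(-1)^{\qq\qm/2}$, $g(\tfrac{\qq}{4})^2$, and Gauss sum correction terms from Step~2 are designed (as in Lemmas~\ref{oneCoordZero q 3mod4 Dwork} and~\ref{oneCoordZero q 1mod4 Dwork}) to cancel against the $(q+1)$, Fermat curve point counts from the boundary. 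Finally, Step~4 simply assembles the result, yielding the two case formulas stated in the proposition.
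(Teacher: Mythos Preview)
Your overall four-step strategy is correct and matches the paper, but there are two concrete errors in the clustering that would derail the computation.

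First, for $q\equiv 3\pmod 4$, the half-shift cluster does \emph{not} appear twice. The shifts $\tfrac{\qq}{2}(1,1,0,0,0)$ and $\tfrac{\qq}{2}(0,0,1,1,0)$ lie in the \emph{same} coset of $\tyS_1$, since their difference $\tfrac{\qq}{2}(1,1,1,1,0)$ is the $k=\tfrac{\qq}{2}$ element of $\tyS_1$. So there is a single half-shift cluster, contributing $(-1)^{\qq\qm/2}\bigl(2+qH_q(\tfrac14,\tfrac34;0,\tfrac12\,|\,\psi^{-4})\bigr)$ once, not twice. This is why the coefficient in front of $qH_q(\tfrac14,\tfrac34;\dots)$ in the proposition is $\pm 1$ rather than $\pm 2$ or $\pm 3$.

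Second, and more seriously, your picture of the extra clusters for $q\equiv 1\pmod 4$ is off. There are no $\tfrac{\qq}{8}$-type shifts in $\tyS$; the new cluster is
\[
\tyS_{12}=\{k(1,1,1,1,-4)+\tfrac{\qq}{4}(0,2,3,1,2):k\in\Z/\qq\Z\},
\]
appearing with multiplicity two. The crucial feature you are missing is that the \emph{fifth} coordinate of the shift is nonzero (equal to $\tfrac{\qq}{2}$). Since $a_5=-4\psi$, this produces the factor $\omega(-4\psi)^{\qq\qm/2}=\omega(\psi)^{\qq\qm/2}$ directly, which is the origin of $\phi_\psi$; it is not something that emerges from manipulating the hypergeometric argument. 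The Gauss-sum identity you need is then Hasse--Davenport at $N=4$ (not $N=2$): the four factors $g(k)g(k+\tfrac{\qq}{4})g(k+\tfrac{\qq}{2})g(k+\tfrac{3\qq}{4})$ collapse to $g(4k)\omega(4)^{-4k}g(\tfrac{\qq}{4})g(\tfrac{\qq}{2})g(\tfrac{3\qq}{4})$, after which a second application at $N=2$ on $g(-4k+\tfrac{\qq}{2})$ yields $g(4k)^2 g(-8k)$, i.e.\ exactly the BCM form of $H_q(\tfrac18,\tfrac38,\tfrac58,\tfrac78;0,\tfrac14,\tfrac12,\tfrac34\,|\,\cdot)$ over $\Q$ with $(p_1,q_1,q_2)=(8,4,4)$. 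No hybrid or splittable definition is needed here, and the $\tyS_{12}$ computation is clean---there are no leftover Gauss-sum correction terms. Correspondingly, the boundary count in Step~3 is also clean ($4q$ for $q\equiv 3\pmod 4$ and $12q-4$ for $q\equiv 1\pmod 4$), with no $g(\tfrac{\qq}{4})^2$-type terms to cancel.
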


\begin{proof}
See Proposition \ref{prop:L2L2-appendix}.
\end{proof}

\begin{prop}\label{prop:L4}
For $q$ coprime to $10$ and $\psi \in \F_q^\times$, the following statements hold.
\begin{enumalph}
\item If $q\not\equiv1\pmod5$, then
\[ \#X_{\Lsf_4,\psi}(\F_q)=q^2+3q+1+H_q(\tfrac{1}{4},\tfrac{1}{2},\tfrac{3}{4};0,0,0\,|\,\psi^{-4}).\]
\item If $q\equiv 1 \pmod 5$, then
\[ \#X_{\Lsf_4,\psi}(\F_q)=q^2+3q+1+H_q(\tfrac{1}{4},\tfrac{1}{2},\tfrac{3}{4};0,0,0\,|\,\psi^{-4})+4qH_q(\tfrac{1}{5},\tfrac{2}{5}, \tfrac{3}{5},\tfrac{4}{5} ;0, \tfrac{1}{4},\tfrac{1}{2}, \tfrac{3}{4}\,|\,\psi^{4}).\]
\end{enumalph}
\end{prop}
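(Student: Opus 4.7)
The plan is to follow the four-step framework developed in the treatments of $\Fsf_4$ (Proposition~\ref{prop:F4}) and $\Fsf_1\Lsf_3$ (Proposition~\ref{prop:F1L3}): cluster the characters coming from Koblitz's formula, rewrite each cluster sum as a finite-field hypergeometric sum via Hasse--Davenport, count the boundary contributions from coordinate hyperplanes, and add everything up.

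\textbf{Step 1 (characters).} Applying Theorem~\ref{Dels} for $F_\psi = x_0^3x_1 + x_1^3x_2 + x_2^3x_3 + x_3^3x_0 - 4\psi x_0x_1x_2x_3$ requires solving
\[
\begin{pmatrix} 3&0&0&1&1 \\ 1&3&0&0&1 \\ 0&1&3&0&1 \\ 0&0&1&3&1 \\ 1&1&1&1&1 \end{pmatrix} \pmb{s} \equiv 0 \pmod{\qq}.
\]
The $4 \times 4$ monomial block has determinant $80 = 2^4 \cdot 5$, so the nontrivial kernel modulo $\qq$ depends on $\gcd(\qq,5)$. When $q \not\equiv 1 \pmod 5$ the only cluster is the invariant $\tyS_1 = \{k(1,1,1,1,-4)\}$. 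When $q \equiv 1 \pmod 5$ four additional clusters appear, of the form $\tyS_1 + \tfrac{\qq}{5}(1,2,4,3,0)\cdot j$ for $j=1,2,3,4$, with a cyclic $\mu_5$-orbit that reflects the $\mu_5$ symmetry listed in \eqref{table:5families}.

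\textbf{Step 2 (toric point count).} The cluster $\tyS_1$ contributes $q^2 - 3q + 3 + H_q(\tfrac14,\tfrac12,\tfrac34;0,0,0\,|\,\psi^{-4})$ by Lemma~\ref{Cluster no shifts}. For $q\equiv 1\pmod 5$, each of the four new clusters is a Galois conjugate of the others, so it suffices to handle $\tyS_1 + \tfrac{\qq}{5}(1,2,4,3,0)$ and multiply by four. The sum over this cluster becomes
\[
\frac{1}{q\qq}\sum_{k=0}^{q-2} \omega(-4\psi)^{4k}\, g\bigl(k+\tfrac{\qq}{5}\bigr) g\bigl(k+\tfrac{2\qq}{5}\bigr) g\bigl(k+\tfrac{4\qq}{5}\bigr) g\bigl(k+\tfrac{3\qq}{5}\bigr) g(-4k),
\]
which we aim to identify with $qH_q(\tfrac15,\tfrac25,\tfrac35,\tfrac45;0,\tfrac14,\tfrac12,\tfrac34\,|\,\psi^4)$ up to Gauss-sum boundary terms. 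The rewrite proceeds by substituting the Hasse--Davenport product relation (Lemma~\ref{gausssumidentities}(c)) with $N=4$, applied to $g(4(k+\tfrac{\qq}{5}))$, to introduce the factors $g(k+\tfrac{\qq}{5}+j\tfrac{\qq}{4})$, and an auxiliary identity $\prod_{j=1}^{4} g(\tfrac{j\qq}{5}) = q\cdot (\text{root of unity})$ (an $N=5$ analogue of Lemma~\ref{Gauss Identity Lemma F1L3}) to normalize the denominator constants. This is the main technical step.

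\textbf{Steps 3 and 4 (boundary and conclusion).} Points with some $x_i=0$ must be enumerated. If exactly one $x_i$ vanishes, the loop structure reduces $F_\psi=0$ to a binomial of the form $x_{i-1}^3 x_i + x_i^3 x_{i+1}=0$ (indices mod $4$), giving $q-1$ toric points per hyperplane. If two adjacent $x_i$ vanish the equation collapses to a monomial forcing a third to vanish, which enumerates easily; the cases of nonadjacent double vanishing are handled separately. The total boundary contribution will collapse to $4q-2$ together with a Gauss-sum term in the $q \equiv 1 \pmod 5$ case that precisely cancels the analogous boundary term produced in Step~2, exactly as in the $\Fsf_1\Lsf_3$ argument (compare Lemma~\ref{oneCoordZero F1L3} and the cancellation in the proof of Proposition~\ref{prop:F1L3}). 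Summing the contributions from Steps~2 and~3 then yields the stated formulas in parts (a) and (b).

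\textbf{Main obstacle.} The delicate part is Step~2: the Hasse--Davenport manipulation for the $\mu_5$-cluster requires compatibly splitting the Gauss sums at fifths of $\qq$ using $N=4$ and $N=5$ relations simultaneously, and tracking the resulting roots of unity and auxiliary $g(j\qq/5)$ factors so that they match the hybrid definition (Definition~\ref{Hybrid Definition}) of $H_q(\tfrac15,\tfrac25,\tfrac35,\tfrac45;0,\tfrac14,\tfrac12,\tfrac34\,|\,\psi^4)$ with the correct sign $+4q$. Once the bookkeeping is set up, the Galois conjugacy of the four clusters (Lemma~\ref{lem:fieldofdef}(a)) guarantees they aggregate to a rational multiple of a single hypergeometric sum defined over $\Q$, consistent with Table~\eqref{table:alphabetaintro}.
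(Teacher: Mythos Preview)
Your four-step framework matches the paper's, and Step~1 is correct. But Steps~2 and~3 both go astray in ways that compound.

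\textbf{Step 2 is far simpler than you expect.} The cluster sum for $\tyS_{13} = \tyS_1 + \tfrac{\qq}{5}(1,2,4,3,0)$ equals $qH_q(\tfrac15,\tfrac25,\tfrac35,\tfrac45;0,\tfrac14,\tfrac12,\tfrac34\,|\,\psi^4)$ \emph{on the nose}, with no leftover Gauss-sum terms. No Hasse--Davenport relation is needed: in the hybrid Definition~\ref{Hybrid Definition} one takes $\pmb{\alpha}'=\{\tfrac{j}{5}\}_{j=1}^4$ and $\pmb{\beta}_0=\{0,\tfrac14,\tfrac12,\tfrac34\}$, so the $\pmb{\beta}$-side contributes only $g(-4m)$, and the normalization $\prod_{j=1}^4 g(\tfrac{j\qq}{5})=q^2$ (from Lemma~\ref{gausssumidentities}(b), using $q\equiv 1\pmod{10}$) matches the cluster sum term-by-term. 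The ``main obstacle'' you describe does not arise here; $\Lsf_4$ is in fact the cleanest of the five pencils in this step.

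\textbf{Step 3 has a concrete error.} The boundary count is $6q-2$, not $4q-2$. You correctly note that one coordinate vanishing gives $4(q-1)$ and that adjacent double vanishing forces a third zero, but you did not compute the non-adjacent case: if $x_0=x_2=0$ (or $x_1=x_3=0$) then $F_\psi$ vanishes identically, contributing $q-1$ points each, hence $2(q-1)$ more. Together with the four triple-vanishing points this gives $4(q-1)+2(q-1)+4=6q-2$. Crucially, \emph{none} of these boundary strata produce Gauss-sum terms: the residual equations are monomials or identically zero, never Fermat- or Klein-type curves requiring character sums. So the cancellation you anticipate from the $\Fsf_1\Lsf_3$ analogy simply does not occur, and is not needed.

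With these corrections, $(q^2-3q+3)+(6q-2)=q^2+3q+1$ and the hypergeometric pieces add directly.
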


\begin{proof}
See Proposition \ref{prop:L4-appendix}.
\end{proof}

\section{Proof of the main theorem and applications} \label{S:mainthmpf}

In this section, we prove Main Theorem \ref{mainthm} by converting the hypergeometric point count formulas in the previous section into a global $L$-series.  We conclude with some discussion and applications.

\subsection{From point counts to \texorpdfstring{$L$}{L}-series}  \label{S:finalauto}

In this section, we define $L$-series of K3 surfaces and hypergeometric functions, setting up the notation we will use in the proof of our main theorem.  

We begin with $L$-series of K3 surfaces.  Let $\psi \in \Q \smallsetminus \{0,1\}$.  Let $\diamond \in \{\Fsf_4,\Fsf_2\Lsf_2,\Fsf_1\Lsf_3,\Lsf_2\Lsf_2,\Lsf_4\}$ signify one of the five \textup{K3}\/ families in \textup{\eqref{table:5families}}.  Let $S=S(\diamond,\psi)$ be the set of bad primes in \textup{\eqref{table:5families}} together with the primes dividing the numerator or denominator of either $\psi^4$ or $\psi^4-1$.  

\begin{lem}
For $p \not \in S(\diamond,\psi)$, the surface $X_{\diamond,\psi}$ has good reduction at $p$.
\end{lem}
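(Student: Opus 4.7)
The plan is to apply the Jacobian criterion in each of the five cases. For $p \notin S(\diamond,\psi)$ the defining polynomial $F_{\diamond,\psi}$ has coefficients in $\Z_{(p)}$ and reduces modulo $p$ to a quartic $\overline{F}_{\diamond,\psi} \in \F_p[x_0,\dots,x_3]$ (the hypothesis on $p \nmid \mathrm{num}(\psi^4)\cdot\mathrm{denom}(\psi^4)$ controls the reduction of the parameter). Good reduction as a K3 surface then amounts to showing that $\partial_0 \overline{F}, \dots, \partial_3 \overline{F}$ have no common zero in $\PP^3_{\overline{\F}_p}$.

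In each case I would use the same ``multiplicative cycle'' trick. Assume a singular point exists and run a short case analysis on vanishing coordinates: for each pencil every variable appears in some monomial containing another specified variable, so the vanishing of one coordinate cascades via successive partials (using $3 \in \F_p^\times$, automatic for $p \notin S$) to vanishing of all coordinates, which is impossible in $\PP^3$. Hence all four $x_i$ are nonzero. Next, write each partial set to zero in the form $u_i \cdot x_i^{d_i} = v_i \cdot \psi \cdot \prod_{j \neq i} x_j$ with explicit integer coefficients $u_i, v_i$, multiply the four relations over $i = 0,1,2,3$, cancel the common factor $(x_0 x_1 x_2 x_3)^{3}$, and obtain a single scalar identity $c_\diamond = c'_\diamond \cdot \psi^4$ where $c_\diamond, c'_\diamond \in \Z$ are both units modulo $p$ once the bad primes of \eqref{table:5families} are inverted. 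This forces $\psi^4 = c_\diamond/c'_\diamond \in \F_p$, contradicting the hypothesis $p \nmid \mathrm{num}(\psi^4 - 1)$ after checking in each case that $c_\diamond/c'_\diamond = 1$.

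Concretely: for $\Fsf_4$ one divides the partials $4x_i^3 - 4\psi \prod_{j \neq i} x_j$ by $4$ (needing $p \neq 2$) and multiplies the relations $x_i^4 = \psi \, x_0 x_1 x_2 x_3$ to get $\psi^4 = 1$. For $\Fsf_1\Lsf_3$ the analogous cycle on the Klein-quartic partials yields $28\,(x_1 x_2 x_3)^3 = 0$, handled once $2,7$ are inverted, and combining with $\partial_0 \overline{F}=0$ gives $\psi^4 = 1$. For $\Lsf_4$ the loop partials multiply to $80\,(x_0 x_1 x_2 x_3)^3 = 0$, handled by inverting $2,5$. The pencils $\Fsf_2\Lsf_2$ and $\Lsf_2\Lsf_2$ factor their loop pieces as $x_i x_j (x_i^2 + x_j^2)$; pairing partials shows $x_i^2 = x_j^2$ on the loop blocks, and the cycle then reduces to the $\Fsf_4$ identity $\psi^4 = 1$ with the sole residual obstruction being the factor of $4$, hence bad prime $\{2\}$ alone. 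The main (mild) obstacle is bookkeeping in the coordinate-vanishing case analysis, verifying that the cascade needs no primes beyond those listed; the multiplicative cycle itself is mechanical and uniform across the five families.
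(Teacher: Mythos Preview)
The paper's own proof is the single sentence ``Straightforward calculation,'' and your Jacobian-criterion sketch is precisely the calculation intended. One small caution: the parenthetical ``using $3 \in \F_p^\times$, automatic for $p \notin S$'' is not literally correct (for $\Fsf_1\Lsf_3$, $\Fsf_2\Lsf_2$, $\Lsf_2\Lsf_2$ the prime $p=3$ is allowed), so the vanishing-coordinate cascade needs a touch more care in characteristic $3$; but this is exactly the ``mild bookkeeping'' you already flag, and once the coordinates are known nonzero the multiplicative cycle forcing $\psi^4=1$ goes through as you describe.
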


\begin{proof}
Straightforward calculation.
\end{proof}

Let $p \not\in S(\diamond,\psi)$.  The zeta function of $X_{\diamond,\psi}$ over $\F_p$ is of the form
\begin{equation} \label{eqn:ZpXt}
Z_p(X_{\diamond,\psi},T) = \frac{1}{(1-T)(1-pT)P_{\diamond,\psi,p}(T)(1-p^2 T)}
\end{equation}
where $P_{\diamond,\psi,p}(T) \in 1+T\Z[T]$.  The Hodge numbers of $X_{\diamond,\psi}$ imply that the polynomial $P_{\diamond,\psi,p}(T)$ has degree 21.  Equivalently, we have that
\begin{equation} 
P_{\diamond,\psi,p}(T) = \det(1 - \Frob_p^{-1} T \,|\, H_{\textup{\'et},\textup{prim}}^2(X_{\diamond,\psi},\Q_\ell))
\end{equation}
is the characteristic polynomial of the Frobenius automorphism acting on primitive second degree \'etale cohomology for $\ell \neq p$ (and independent of $\ell$).  We then define the (incomplete) $L$-series
\begin{equation} 
L_S(X_{\diamond,\psi},s) \colonequals \prod_{p \not \in S} P_{\diamond,\psi,p}(p^{-s})^{-1},
\end{equation}
convergent for $s \in \C$ in a right half-plane by elementary estimates.  

We now turn to hypergeometric $L$-series, recalling the definitions made in section \ref{defn:hgmdef}--\ref{defn:hgmdef-hybrid}.  Let $\pmb{\alpha},\pmb{\beta}$ be multisets of rational numbers that are disjoint modulo $\Z$.  Let $t \in \Q \smallsetminus \{0,1\}$, and let $S(\pmb{\alpha},\pmb{\beta},t)$ be the set of primes dividing a denominator in $\pmb{\alpha} \cup \pmb{\beta}$ together with the primes dividing the numerator or denominator of either $t$ or $t-1$.  

Recall the Definition \ref{Hybrid Definition} of the finite field hypergeometric sums $H_{q}(\pmb{\alpha};\pmb{\beta}\,|\,t) \in K_{\pmb{\alpha},\pmb{\beta}} \subseteq \C$.  For a prime power $q$ such that $\pmb{\alpha},\pmb{\beta}$ is splittable, we define the formal series
\begin{equation} 
L_q(H(\pmb{\alpha},\pmb{\beta}\,|\,t), T) \colonequals \exp\biggl(-\sum_{r=1}^{\infty} H_{q^r}(\pmb{\alpha};\pmb{\beta}\,|\,t) \frac{T^r}{r} \biggr) \in 1+TK_{\pmb{\alpha},\pmb{\beta}}[[T]]
\end{equation}
using Lemma \ref{lem:fieldofdef}(b).  (Note the negative sign; below, this normalization will yield polynomials instead of inverse polynomials.)  

For a number field $M$, a \defi{prime} of $M$ is a nonzero prime ideal of the ring of integers $\Z_M$ of $M$.  We call a prime $\frakp$ of $M$ \defi{good} (with respect to $\pmb{\alpha},\pmb{\beta},\psi$) if $\frakp$ lies above a prime $p \not\in S(\pmb{\alpha},\pmb{\beta},\psi)$.  Now let $M$ be an abelian extension of $\Q$ containing the field of definition $K \colonequals K_{\pmb{\alpha},\pmb{\beta}}$ with the following property:
\begin{equation} \label{eqn:Mgood}
\text{for all good primes $\frakp$ of $M$, we have $q=\Nm(\frakp)$ splittable for $\pmb{\alpha},\pmb{\beta}$.}
\end{equation} 
For example, if $m$ is the least common multiple of all denominators in $\pmb{\alpha} \cup \pmb{\beta}$, then we may take $M=\Q(\zeta_m)$. We will soon see that we will need to take $M$ to be nontrivial extensions of $K$ in Proposition~\ref{prop:mainthmF1L3} to deal with the splittable hypergeometric function given in Example~\ref{F1L3 hypergeometric example}. Let $m$ be the conductor of $M$, i.e., the minimal positive integer such that $M \subseteq \Q(\zeta_m)$.  Under the canonical identification $(\Z/m\Z)^\times \xrightarrow{\sim} \Gal(\Q(\zeta_m)\,|\,\Q)$ where $k \mapsto \sigma_k$ and $\sigma_k(\zeta_m)=\zeta_m^k$, let $H_M \leq (\Z/m\Z)^\times$ be such that $\Gal(M\,|\,\Q) \simeq (\Z/m\Z)^\times/H_M$.

Now let $p \not\in S(\pmb{\alpha},\pmb{\beta},\psi)$.  Let $\frakp_1,\dots,\frakp_r$ be the primes above $p$ in $M$, and let $q=p^f=\Nm(\frakp_i)$ for any $i$.  Recall (by class field theory for $\Q$) that $f$ is the order of $p$ in $(\Z/m\Z)^\times/H_M$, and $rf=[M:\Q]$.  Moreover, the set of primes $\{\frakp_i\}_i$ arise as $\frakp_i=\sigma_{k_i}(\frakp_1)$ where $k_i \in \Z$ are representatives of the quotient $(\Z/m\Z)^\times/\langle H_M, p \rangle$ of $(\Z/m\Z)^\times$ by the subgroup generated by $H_M$ and $p$.  We then define
\begin{equation}  \label{defofLfactor}
\begin{aligned}
L_p(H(\pmb{\alpha},\pmb{\beta}\,|\,t), M, T) &\colonequals \prod_{i=1}^r L_q(H(k_i\pmb{\alpha},k_i\pmb{\beta}\,|\,t),T^f) \\
&= \prod_{k_i \in (\Z/m\Z)^\times/\langle H_M, p \rangle} L_q(H(k_i\pmb{\alpha},k_i\pmb{\beta}\,|\,t),T^f) \in 1+TK[[T]].
\end{aligned}
\end{equation}
This product is well-defined up to choice of representatives $k_i$ of the cosets in $(\Z/m\Z)^\times/\langle H_M, p \rangle$. Indeed, by Lemma \ref{lem:fieldofdef}: part (c) gives
\begin{equation} L_q(H(pk\pmb{\alpha},pk\pmb{\beta}\,|\,t),T^f)=
L_q(H(k\pmb{\alpha},k\pmb{\beta}\,|\,t^p),T^f)=
L_q(H(k\pmb{\alpha},k\pmb{\beta}\,|\,t),T^f)
\end{equation}
for all $k \in (\Z/m\Z)^\times$ and all good primes $p$, since $t^p=t \in \F_p \subseteq \F_q$; and similarly part (a) implies it is well-defined for $k \in (\Z/m\Z)^\times/H_M$ as $H_M \leq H_K$.

\begin{lem} \label{lem:Hqabdef}
The following statements hold.  
\begin{enumalph}
\item We have
\begin{equation} 
L_p(H(\pmb{\alpha};\pmb{\beta}\,|\,t), M, T) \in 1+T \Q[[T]]
\end{equation}
and 
\begin{equation} \label{eqn:LpHk}
L_p(H(\pmb{\alpha};\pmb{\beta}\,|\,t), M, T)=L_p(H(k\pmb{\alpha};k\pmb{\beta}\,|\,t), M, T)
\end{equation}
for all $k \in \Z$ coprime to $p$ and $m$.  
\item Let $H_K \leq (\Z/m\Z)^\times$ correspond to $\Gal(K\,|\,\Q)$ as above, and let 
\begin{equation}
     r(M|K,p) \colonequals [\langle H_K, p\rangle :\langle H_M,p \rangle ].
\end{equation}  Then $r(M|K,p)$ is the number of primes in $M$ above a prime in $K$ above $p$, and
\begin{equation} \label{eqn:LpHab}
L_p(H(\pmb{\alpha},\pmb{\beta}\,|\,t), M, T) = \prod_{k_i \in (\Z/m\Z)^\times/\langle H_K, p \rangle} L_q(H(k_i\pmb{\alpha},k_i\pmb{\beta}\,|\,t),T^f)^{r(M|K,p)}. 
\end{equation}
\end{enumalph}
\end{lem}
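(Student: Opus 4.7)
My plan is to reduce both claims to coset combinatorics for the group $(\Z/m\Z)^\times$, using the Galois compatibility from Lemma \ref{lem:fieldofdef}. Rationality in (a) will follow by showing the defining product is fixed by the action of $\Gal(K|\Q)$, which merely permutes the factors; the identity \eqref{eqn:LpHk} is the analogous invariance under left multiplication by $k$. For (b), the counting of primes is a routine class field theoretic calculation, and the factorization amounts to regrouping the factors along fibers of the natural surjection onto cosets of $\langle H_K, p\rangle$.

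For part (a): fix $\ell \in (\Z/m\Z)^\times$ and apply $\sigma_\ell \in \Gal(K|\Q)$ coefficient-wise. Lifting $\ell$ to $\widetilde{\ell} \in (\Z/(q^r-1)\Z)^\times$ (possible since $m \mid q^r-1$ by our assumption that every good $q$ is splittable for $M$), Lemma \ref{lem:fieldofdef}(a) gives $\sigma_\ell H_{q^r}(k_i\pmb{\alpha}, k_i\pmb{\beta}\,|\,t) = H_{q^r}(\ell k_i\pmb{\alpha}, \ell k_i\pmb{\beta}\,|\,t)$, since the hypergeometric sum depends only on its parameters modulo $\Z$. Because $k_i \mapsto \ell k_i$ is a bijection on $(\Z/m\Z)^\times/\langle H_M, p\rangle$, the product is merely permuted and hence $\sigma_\ell$-fixed, so $L_p \in 1+T\Q[[T]]$. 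Substituting multiplication by $k$ coprime to both $m$ and $p$ for $\sigma_\ell$ yields \eqref{eqn:LpHk} by the same permutation argument. For the counting claim in (b), class field theory for $M|\Q$ and $K|\Q$ (both abelian with conductor dividing $m$) shows that the number of primes above $p$ equals $[(\Z/m\Z)^\times : \langle H_M, p\rangle]$ and $[(\Z/m\Z)^\times : \langle H_K, p\rangle]$ respectively; since $M|\Q$ is Galois the primes of $M$ above any fixed prime of $K$ over $p$ form a single orbit of equal cardinality $[\langle H_K,p\rangle : \langle H_M,p\rangle] = r(M|K,p)$.

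For the factorization \eqref{eqn:LpHab}, the surjection $(\Z/m\Z)^\times/\langle H_M, p\rangle \twoheadrightarrow (\Z/m\Z)^\times/\langle H_K, p\rangle$ has fibers of size $r(M|K,p)$, so it remains to show that the hypergeometric factors in \eqref{defofLfactor} are constant along fibers. Two representatives $k_1, k_2$ lie in a common fiber iff $k_1 = k_2 h p^j$ for some $h \in H_K$ and $j \in \Z$; then $k_1\pmb{\alpha} \equiv p^j k_2\pmb{\alpha} \pmod\Z$ since $H_K$ stabilizes $\pmb{\alpha}$ modulo $\Z$, and Lemma \ref{lem:fieldofdef}(c) gives
\[
H_{q^r}(k_1\pmb{\alpha}, k_1\pmb{\beta}\,|\,t) = H_{q^r}(k_2\pmb{\alpha}, k_2\pmb{\beta}\,|\,t^{p^j}) = H_{q^r}(k_2\pmb{\alpha}, k_2\pmb{\beta}\,|\,t),
\]
the last equality because $t \in \F_p \subseteq \F_q$. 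Consequently the $L$-factors agree on each fiber, and grouping produces \eqref{eqn:LpHab}. The main bookkeeping obstacle throughout is aligning the Galois action of $(\Z/(q^r-1)\Z)^\times$ on the individual Gauss sums with the action of $(\Z/m\Z)^\times$ through the canonical projection, plus remembering to invoke $t \in \F_p$ to absorb the Frobenius shift $t \mapsto t^{p^j}$; once these compatibilities are in place, the rest is pure coset accounting.
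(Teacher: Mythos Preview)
Your proof takes the same approach as the paper's (which is a terse two-sentence sketch invoking Galois theory and Lemma~\ref{lem:fieldofdef}) and correctly fleshes out the coset combinatorics: Galois permutes the factors for (a), and grouping by fibers of $(\Z/m\Z)^\times/\langle H_M,p\rangle \twoheadrightarrow (\Z/m\Z)^\times/\langle H_K,p\rangle$ gives (b). One small caveat: your justification ``$m \mid q^r-1$ by splittability'' is not quite right---splittability does not force this, and for general $M$ with $H_M$ nontrivial it can fail (e.g.\ $M=\Q(\sqrt{-7})$, $m=7$, $p=11$); it does hold whenever $M=\Q(\zeta_m)$, which covers every application in the paper. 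The lift of $\ell$ you actually need is available regardless via Dirichlet/CRT (choose $\widetilde\ell\equiv\ell$ modulo the conductor of $K$ and coprime to $q^r-1$), and you rightly flag this alignment as the main bookkeeping point.
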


\begin{proof}
For part (a), the descent to $\Q$ follows from Galois theory and Lemma \ref{lem:fieldofdef}; the equality \eqref{eqn:LpHk} follows as multiplication by $k$ permutes the indices $k_i$ in $(\Z/m\Z)^\times/\langle H_M,p\rangle$.  For part (b), the fact that $r({M|K,p})$ counts the number of primes follows again from class field theory; to get \eqref{eqn:LpHab}, use Lemma \ref{lem:fieldofdef} and the fact that the field of definition of $\pmb{\alpha},\pmb{\beta}$ is $K=K_{\pmb{\alpha},\pmb{\beta}}$.
\end{proof}

We again package these together in an $L$-series:
\begin{equation} \label{eqn:LShTs}
L_S(H(\pmb{\alpha};\pmb{\beta}\,|\,t),M, s) \colonequals \prod_{p \not\in S} L_p(H(\pmb{\alpha};\pmb{\beta}\,|\,t), M, p^{-s})^{-1}.
\end{equation}
We may expand \eqref{eqn:LShTs} as a Dirichlet series
\begin{equation} 
L_S(H(\pmb{\alpha};\pmb{\beta}\,|\,t),M, s) = \sum_{\substack{\frakn \subseteq \Z_M \\ \frakn \neq (0)}} \frac{a_{\frakn}}{\Nm(\frakn)^s} 
\end{equation}
with $a_\frakn \in K=K_{\pmb{\alpha},\pmb{\beta}} \subset \C$, and again the series converges for $s \in \C$ in a right half-plane.  If $M=K_{\pmb{\alpha},\pmb{\beta}}$, we suppress the notation $M$ and write just $L_S(H(\pmb{\alpha};\pmb{\beta}\,|\,t),s)$, etc.

Finally, for a finite order Dirichlet character $\chi$ over $M$, we let $L_S(H(\pmb{\alpha};\pmb{\beta}\,|\,t),M,s,\chi)$ denote the twist by $\chi$, defined by
\begin{equation}
L_S(H(\pmb{\alpha};\pmb{\beta}\,|\,t),M,s,\chi) \colonequals \sum_{\substack{\frakn \subseteq \Z_M \\ \frakn \neq (0)}} \frac{\chi(\frakn) a_{\frakn}}{\Nm(\frakn)^s}.
\end{equation}

\subsection{The Dwork pencil \texorpdfstring{$\Fsf_4$}{Fsf4}}
In the remaining sections, we continue with the same notation: let $t=\psi^{-4}$ and let $S=S(\diamond,\psi)$ be the set of bad primes in \eqref{table:5families} together with the set of primes dividing the numerator or denominator of $t$ or $t-1$. We now prove Main Theorem~\ref{mainthm}(a).

\begin{prop}\label{prop:mainthmF4}
Let $\psi \in \Q \smallsetminus \{0,1\}$ and let $t = \psi^{-4}$. Then
\begin{align*}
L_S(X_{\Fsf_4,\psi}, s) &= L_S( H(\tfrac{1}{4}, \tfrac{1}{2}, \tfrac{3}{4}; 0, 0, 0\,|\, t), s) \\
&\qquad \cdot L_S( H(\tfrac{1}{4}, \tfrac{3}{4}; 0, \tfrac{1}{2} \,|\, t), s-1, \phi_{-1})^3 \\
&\qquad \cdot L_S( H(\tfrac{1}{2}; 0 \,|\, t) , \Q(\ii), s-1, \phi_{\ii})^6 
\end{align*}
where
\begin{equation} \label{eqn:fermatchars}
\begin{aligned}
\phi_{-1}(p) &=\legen{-1}{p} = (-1)^{(p-1)/2} &  & \text{ is associated to $\Q(\sqrt{-1}) \,|\, \Q$, and} \\
\phi_{\ii}(\frakp)&=\legen{\ii}{\frakp}=(-1)^{(\Nm(\frakp)-1)/4} & & \text{ is associated to $\Q(\zeta_8)\,|\,\Q(\sqrt{-1})$.}
\end{aligned}
\end{equation}
\end{prop}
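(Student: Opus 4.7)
My approach is to reduce the $L$-series identity to a prime-by-prime statement about Euler factors, expand each factor using finite field hypergeometric sums, and match coefficients case-by-case on $p \pmod 4$.

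First, for each $p \not\in S$, combining the Grothendieck--Lefschetz trace formula with the zeta function \eqref{eqn:ZpXt} and the observation that the hyperplane class contributes $p^r$ to $\Tr(\Frob_p^r \,|\, H^2)$ gives
\[ \Tr(\Frob_p^r \,|\, H^2_{\textup{prim}}) = \#X_{\Fsf_4,\psi}(\F_{p^r}) - 1 - p^r - p^{2r}, \]
while
\[ -\log P_{\Fsf_4,\psi,p}(p^{-s}) = \sum_{r \geq 1} \Tr(\Frob_p^r \,|\, H^2_{\textup{prim}}) \frac{p^{-rs}}{r}. \]
Hence the desired $L$-series factorization is equivalent, for each good $p$, to an identity of formal power series in $p^{-s}$ between the generating series assembled from Proposition~\ref{prop:F4} and the sum of the logs of the three candidate Euler factors on the right.

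Next I unpack the three right-hand factors. The first, $L_S(H(\tfrac{1}{4},\tfrac{1}{2},\tfrac{3}{4};0,0,0\,|\,t), s)$, is defined over $\Q$ and contributes $\sum_{r \geq 1} H_{p^r}(\tfrac{1}{4},\tfrac{1}{2},\tfrac{3}{4};0,0,0\,|\,t)\, p^{-rs}/r$ to $-\log$ of the $p$-th Euler factor. The second, $L_S(H(\tfrac{1}{4},\tfrac{3}{4};0,\tfrac{1}{2}\,|\,t), s-1, \phi_{-1})^3$, contributes $3\sum_{r \geq 1} \phi_{-1}(p)^r p^r H_{p^r}(\tfrac{1}{4},\tfrac{3}{4};0,\tfrac{1}{2}\,|\,t)\, p^{-rs}/r$; the sign $\phi_{-1}(p)^r = (-1)^{r(p-1)/2}$ is precisely positive when $p^r \equiv 1 \pmod 4$ and negative when $p^r \equiv 3 \pmod 4$, matching the analogous sign in Proposition~\ref{prop:F4}. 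The third factor, $L_S(H(\tfrac{1}{2};0\,|\,t), \Q(\ii), s-1, \phi_{\ii})^6$, is defined over $\Q(\ii)$; I apply Lemma~\ref{lem:Hqabdef}(b) to describe its Euler factor at $p$ via the primes of $\Q(\ii)$ above $p$, which requires splitting according to whether $p$ splits or is inert.

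Then I perform the case analysis on $p \pmod 4$. If $p \equiv 1 \pmod 4$, so that $p$ splits into two primes $\frakp_1,\frakp_2$ of norm $p$, the third factor contributes $12\,\phi_{\ii}(\frakp_1)^r p^r H_{p^r}(\tfrac{1}{2};0\,|\,t)$ at level $r$; a direct computation with $p = 1+4k$ shows $\phi_{\ii}(\frakp_1)^r = (-1)^{r(p-1)/4} = (-1)^{(p^r-1)/4}$, matching the $12(-1)^{(q-1)/4} q H_q(\tfrac{1}{2};0\,|\,t)$ term of Proposition~\ref{prop:F4}(b) since $q=p^r \equiv 1 \pmod 4$. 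If $p \equiv 3 \pmod 4$, so that $p$ is inert with a single prime $\frakp$ of norm $p^2$, the third factor contributes only at even levels $r$, and there the contribution simplifies to $12\, p^r H_{p^r}(\tfrac{1}{2};0\,|\,t)$ because both $\phi_{\ii}(\frakp) = (-1)^{(p^2-1)/4} = 1$ and $(-1)^{(p^r-1)/4} = 1$ (as $p^r \equiv 1 \pmod 8$ for even $r$), matching Proposition~\ref{prop:F4}(b); for odd $r$, $p^r \equiv 3 \pmod 4$ and the third factor contributes nothing, matching Proposition~\ref{prop:F4}(a).

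The principal obstacle is the careful bookkeeping around the Hecke character $\phi_{\ii}$ on $\Q(\ii)$: one must verify that the prefactor $(-1)^{(q-1)/4}$ from Proposition~\ref{prop:F4}(b) translates correctly into the appropriate power of $\phi_{\ii}(\frakp)$ in both split and inert cases, and that the exponent $6$ on the third factor correctly absorbs both the number $r(M\,|\,K,p)$ of primes of $\Q(\ii)$ above $p$ and the multiplicity $12$ appearing in the point count. Once these character-theoretic identifications are established, the remainder of the proof is a routine coefficient comparison in $p^{-s}$.
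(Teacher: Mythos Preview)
Your proposal is correct and follows essentially the same approach as the paper's proof: both reduce to a prime-by-prime comparison of the point-count formula from Proposition~\ref{prop:F4} against the Euler factors of the three hypergeometric $L$-series, splitting the analysis of the $H(\tfrac{1}{2};0\,|\,t)$ term into the cases $p \equiv 1 \pmod 4$ (split in $\Q(\ii)$) and $p \equiv 3 \pmod 4$ (inert) and verifying that the sign $(-1)^{(q-1)/4}$ matches the twist by $\phi_{\ii}$. The only cosmetic difference is direction: the paper starts from the point count and assembles the $L$-series, whereas you expand the claimed $L$-series and match back to the point count.
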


\begin{proof}
Recall Proposition~\ref{prop:F4}, where we wrote the number of $\F_q$ points on $\Fsf_4$ in terms of finite field hypergeometric functions.  We rewrite these for convenience: 
\begin{equation} \label{eqn:ptcntform}
\begin{aligned} 
\#X_{\Fsf_4,\psi}(\F_q)&=q^2+q+1+H_q(\tfrac{1}{4},\tfrac{1}{2},\tfrac{3}{4};0,0,0\,|\,t)+\phi_{-1}(q)3qH_q(\tfrac{1}{4},\tfrac{3}{4};0, \tfrac{1}{2}\,|\,t) \\
&\qquad +\delta[q\equiv 1\psmod{4}]12\phi_{\ii}(q)qH_q(\tfrac{1}{2};0\,|\,t) 
\end{aligned}
\end{equation}
where $\delta[\mathcal{P}]=1,0$ according as if $\mathcal{P}$ holds or not. 

Each summand in \eqref{eqn:ptcntform} corresponds to a multiplicative term in the exponential generating series.  The summand $q^2+q+1$ gives the factor $(1-T)(1-qT)(1-q^2T)$ in the denominator of \eqref{eqn:ZpXt}, so $L_S(X_{\Fsf_4,\psi}, s)$ represents the rest of the sum. The summand $ H_q(\tfrac{1}{4}, \tfrac{1}{2}, \tfrac{3}{4};0,0,0\,|\,t)$ yields $L_S( H(\tfrac{1}{4}, \tfrac{1}{2}, \tfrac{3}{4}; 0, 0, 0\,|\, t), s)$ by definition.  

Next we consider the summand $\phi_{-1}(q)3qH_q(\tfrac{1}{4},\tfrac{3}{4};0, \tfrac{1}{2}\,|\,t)$: for each $p \not \in S$, we have
\begin{equation}\begin{aligned}\label{eqn: qHq for quarters}
\exp&\left(-\sum_{r=1}^\infty \phi_{-1}(p^r)3p^rH_{p^r}\left(\tfrac{1}{4},\tfrac{3}{4};0, \tfrac{1}{2}\,|\,t\right)\frac{(p^{-s})^r}{r}\right) \\ &= \exp \left( - \sum_{r=1}^\infty \phi_{-1}(p^r)H_{p^r}(\tfrac{1}{4}, \tfrac{3}{4}; 0, \tfrac{1}{2} \,|\, t)\frac{p^{(1-s)r}}{r}\right)^{3} \\
 &= L_{p}(H_p(\tfrac{1}{4}, \tfrac{3}{4}; 0, \tfrac{1}{2} \,|\, t), p^{1-s}, \phi_{-1})^{3};
\end{aligned}
	\end{equation}
Combining these for all $p \not \in S$ then gives the $L$-series $L_S( H(\tfrac{1}{4}, \tfrac{3}{4}; 0, \tfrac{1}{2} \,|\, t), s-1, \phi_{-1})^3$.  

We conclude with the final term $12\phi_{\sqrt{-1}}(q)qH_q(\tfrac{1}{2};0\,|\,t)$ which exists only when $q \equiv 1 \pmod 4$.  We accordingly consider two cases.  First, if $p \equiv 1 \pmod 4$, then in $\Z[\ii]$, the two primes $\frakp_1,\frakp_2$ above $p$ have norm $p$.  We compute
\begin{equation}\label{expF4pmod1}
\begin{aligned}
\exp&\left(-\sum_{r=1}^\infty 12\phi_{\ii}(p^r)p^rH_{p^r}(\tfrac{1}{2};0\,|\,t)\frac{(p^{-s})^r}{r}\right) \\ 
	&= L_p(H(\tfrac{1}{2};0\,|\,t), p^{1-s}, \phi_{\ii})^{12}\\
	&= L_p(H(\tfrac{1}{2};0\,|\,t), p^{1-s}, \phi_{\ii})^{6}L_p(H(-\tfrac{1}{2};0\,|\,t), p^{1-s}, \phi_{\ii})^{6}\\
	&= L_p( H(\tfrac{1}{2}; 0 \,|\, t) , \Q(\ii), p^{1-s}, \phi_{\ii})^6
\end{aligned}
\end{equation}
where the second equality holds because the definition of the hypergeometric sum only depends on parameters modulo $\Z$ and the final equality is the definition \eqref{defofLfactor} using that $\Gal(M\,|\,\Q)$ when $M = \Q(\sqrt{-1})$ is generated by complex conjugation.

Second, if $p \equiv 3 \pmod 4$, then there is a unique prime ideal $\frakp$ above $p$ with norm $\Nm(\frakp) = p^2$, and
\begin{equation}\label{expF4pmod3}
\begin{aligned}
\exp&\left(-\sum_{r=1}^\infty 12\phi_{\ii}(p^{2r})p^{2r}H_{p^{2r}}(\tfrac{1}{2};0\,|\,t)\frac{(p^{-s})^{2r}}{2r}\right) \\  
	&= L_{p^2}(H(\tfrac{1}{2};0\,|\,t), p^{2(1-s)}, \phi_{\ii})^{6}\\
	&= L_{p}(H(\tfrac{1}{2};0\,|\,t), \Q(\sqrt{-1}), p^{1-s},\phi_{\ii})^6.
\end{aligned}
\end{equation}
Taking the product of~\eqref{expF4pmod1} and~\eqref{expF4pmod3} over all prescribed $p$, we obtain the last $L$-series factor.
\end{proof}

\subsection{The Klein--Mukai pencil \texorpdfstring{$\Fsf_1\Lsf_3$}{Fsf1Lsf3}}

We now prove Theorem~\ref{mainthm}(b).
\begin{prop}\label{prop:mainthmF1L3}
For the Klein--Mukai pencil $\Fsf_1 \Lsf_3$, 
\begin{align*}
L_S(X_{\Fsf_1\Lsf_3,\psi}, s) &= L_S( H(\tfrac{1}{4}, \tfrac{1}{2}, \tfrac{3}{4}; 0, 0, 0\,|\, t), s) \\
&\qquad \cdot L_S( H(\tfrac{1}{14}, \tfrac{9}{14}, \tfrac{11}{14}; 0, \tfrac{1}{4}, \tfrac{3}{4} \,|\, t^{-1}), \Q(\zeta_7), s-1)
\end{align*}
where for $\pmb{\alpha},\pmb{\beta}=\{\tfrac{1}{14}, \tfrac{9}{14}, \tfrac{11}{14}\},\{0, \tfrac{1}{4}, \tfrac{3}{4}\}$ we have field of definition $K_{\pmb{\alpha},\pmb{\beta}}=\Q(\sqrt{-7})$.
\end{prop}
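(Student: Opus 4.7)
The plan is to follow the template of Proposition~\ref{prop:mainthmF4}, converting the point count in Proposition~\ref{prop:F1L3} into the asserted $L$-series factorization. The summand $q^2+q+1$ absorbs into the zeta-function denominator $(1-T)(1-qT)(1-q^2T)$ and so does not contribute to $L_S(X_{\Fsf_1\Lsf_3,\psi},s)$. The summand $H_q(\tfrac14,\tfrac12,\tfrac34;0,0,0\,|\,t)$ has field of definition $\Q$, and its exponential generating series over $q=p^r$ is exactly $L_S( H(\tfrac14,\tfrac12,\tfrac34;0,0,0\,|\,t),s)$ by the definition \eqref{eqn:LShTs}. Thus the task reduces to matching the remaining pair of $3q$-hypergeometric terms---present exactly when $q \equiv 1 \pmod 7$---with $L_S(H(\pmb{\alpha},\pmb{\beta}\,|\,t^{-1}), \Q(\zeta_7), s-1)$, where $\pmb{\alpha}=\{\tfrac{1}{14},\tfrac{9}{14},\tfrac{11}{14}\}$ and $\pmb{\beta}=\{0,\tfrac14,\tfrac34\}$.

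First I set up the Galois-theoretic bookkeeping with modulus $m = 28$. A direct check gives the stabilizer of $(\pmb{\alpha},\pmb{\beta})$ in $(\Z/28\Z)^\times$ as $H_K = \{1,9,11,15,23,25\}$, whose fixed field is $K_{\pmb{\alpha},\pmb{\beta}} = \Q(\sqrt{-7})$; the quotient $(\Z/28\Z)^\times/H_K$ has order two, producing two Galois orbits of multisets modulo $\ZZ$ represented by $\pmb{\alpha}$ and $3\pmb{\alpha}=\{\tfrac{3}{14},\tfrac{5}{14},\tfrac{13}{14}\}$. The choice $M = \Q(\zeta_7)$ corresponds to $H_M=\{1,15\}$ and is the smallest extension of $K$ satisfying condition~\eqref{eqn:Mgood}, since splittability of $\pmb{\alpha},\pmb{\beta}$ per Example~\ref{F1L3 hypergeometric example} requires $\Nm(\frakp) \equiv 1 \pmod 7$.

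Next I carry out the case analysis on a good prime $p$, indexed by the order $f$ of $p$ in $(\Z/7\Z)^\times$, so $q = p^f \equiv 1 \pmod 7$ is splittable. If $p \equiv 1 \pmod 7$ (so $f=1$), applying \eqref{eqn:LpHab} with $r(M\,|\,K,p) = 3$ yields
\[
L_p(H(\pmb{\alpha},\pmb{\beta}\,|\,t^{-1}), M, T) = L_p(H(\pmb{\alpha},\pmb{\beta}\,|\,t^{-1}), T)^3 \cdot L_p(H(3\pmb{\alpha},\pmb{\beta}\,|\,t^{-1}), T)^3,
\]
and substituting $T = p^{1-s}$ reproduces the exponential generating series for $3p^r\bigl(H_{p^r}(\pmb{\alpha},\pmb{\beta}\,|\,t^{-1}) + H_{p^r}(3\pmb{\alpha},\pmb{\beta}\,|\,t^{-1})\bigr)$ precisely as in~\eqref{eqn: qHq for quarters}. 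For $p \not\equiv 1 \pmod 7$, the point count at $\F_{p^n}$ contributes hypergeometric terms only when $f \mid n$; writing $n = fr$ and using $p^{fr(1-s)} = q^{r(1-s)}$ shows that the generating series again matches $\log L_p(H(\pmb{\alpha},\pmb{\beta}\,|\,t^{-1}), M, p^{1-s})$ as in \eqref{defofLfactor}. In the inert sub-cases ($p \equiv 3,5,6 \pmod 7$), where $\langle H_K, p\rangle = (\Z/28\Z)^\times$ absorbs both orbits, I invoke Lemma~\ref{lem:fieldofdef}(c), namely $H_q(p\pmb{\alpha},p\pmb{\beta}\,|\,t) = H_q(\pmb{\alpha},\pmb{\beta}\,|\,t^p) = H_q(\pmb{\alpha},\pmb{\beta}\,|\,t)$ (using $t \in \F_p$), to confirm that the two a priori Galois-conjugate values coincide in $\Q$, so a single-coset product suffices and the resulting local factor is rational as required by Lemma~\ref{lem:Hqabdef}(a).

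The main technical obstacle is the uniform bookkeeping across all residue classes of $p$ modulo $7$ (encoding the splitting patterns of $p$ in the tower $\Q \subset K \subset M$): matching the multiplicity $r(M\,|\,K,p)$ from \eqref{eqn:LpHab} with the number of Galois-conjugate hypergeometric summands in the point count, and carefully comparing the exponents $fr$ against $r$ in the exponential generating series. The inert case---in which $H_{p^f}(\pmb{\alpha},\pmb{\beta}\,|\,t^{-1}) = H_{p^f}(3\pmb{\alpha},\pmb{\beta}\,|\,t^{-1})$ must descend from $K = \Q(\sqrt{-7})$ to $\Q$ because Frobenius at $p$ acts as complex conjugation on $K$---is conceptually the subtlest step, though it follows cleanly from Lemma~\ref{lem:fieldofdef}(c) once the identifications are in place.
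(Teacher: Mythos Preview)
Your proposal is correct and follows essentially the same approach as the paper: both reduce via Proposition~\ref{prop:F1L3} to analyzing the two $3q$-hypergeometric summands, identify $K_{\pmb{\alpha},\pmb{\beta}}=\Q(\sqrt{-7})$ and take $M=\Q(\zeta_7)$, split the case analysis according to whether $p$ is split or inert in $K$, apply Lemma~\ref{lem:Hqabdef}(b) with the multiplicity $r(M|K,p)$, and invoke Lemma~\ref{lem:fieldofdef}(c) in the inert case to collapse the two Galois-conjugate sums. The only cosmetic difference is that you work at modulus $m=28$ and compute $H_K,H_M$ explicitly, whereas the paper organizes the split case ($p\equiv 1,2,4\pmod 7$) uniformly via the relation $2\,r(M|K,p)\,f=6$ without singling out $f=1$.
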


\begin{rmk}
By Lemma \ref{lem:Hqabdef}, we have
\[ L_S( H(\tfrac{1}{14}, \tfrac{9}{14}, \tfrac{11}{14}; 0, \tfrac{1}{4}, \tfrac{3}{4} \,|\, t^{-1}), s) = L_S( H(\tfrac{3}{14}, \tfrac{5}{14}, \tfrac{13}{14}; 0, \tfrac{1}{4}, \tfrac{3}{4} \,|\, t^{-1}), s). \]
\end{rmk}

\begin{proof}
Recall that by Proposition~\ref{prop:F1L3}, we have 
\[
\begin{aligned}
\#X_{\Fsf_1\Lsf_3,\psi}(\F_q)&=q^2+q+1+H_q(\tfrac{1}{4},\tfrac{1}{2},\tfrac{3}{4};0,0,0\,|\,t) \\
&+3q\delta[q \equiv 1 \psmod{7}]\left(H_q(\tfrac{1}{14},\tfrac{9}{14},\tfrac{11}{14};0,\tfrac{1}{4},\tfrac{3}{4}\,|\,t^{-1})+H_q(\tfrac{3}{14},\tfrac{5}{14},\tfrac{13}{14};0,\tfrac{1}{4},\tfrac{3}{4}\,|\,t^{-1})\right) 
\end{aligned} 
\]
We compute that the field of definition (see Definition~\ref{def:fieldofdef}) associated to the parameters $\pmb{\alpha},\pmb{\beta}=\{\tfrac{1}{14}, \tfrac{9}{14}, \tfrac{11}{14}\},\{0, \tfrac{1}{4}, \tfrac{3}{4}\}$ and to 
$\pmb{\alpha},\pmb{\beta}=\{\tfrac{3}{14}, \tfrac{5}{14}, \tfrac{13}{14}\},\{0, \tfrac{1}{4}, \tfrac{3}{4}\}$ is $\Q(\sqrt{-7})$.  We take $M=\Q(\zeta_7)$ and consider a prime $\frakp$ of $M$, and let $q=\Nm(\frakp)$.  Then $q \equiv 1 \pmod{7}$, and in Example~\ref{F1L3 hypergeometric example}, we have seen that $q$ is splittable for $\pmb{\alpha},\pmb{\beta}$. 

We proceed in two cases, according to the splitting behavior of $p$ in $K=K_{\pmb{\alpha},\pmb{\beta}}=\Q(\sqrt{-7})$.  First, suppose that $p\equiv 1,2,4 \pmod 7$, or equivalently $p$ splits in $K$.  We have $2r(M|K,p)f=6$ so $r(M|K,p)=3$.  We then apply Lemma~\ref{lem:Hqabdef}(b), with $(\ZZ/m\ZZ)^\times / \langle H_K, p\rangle = \{\pm 1\}$, to obtain
\begin{align}\label{F1L3 mod124 L}
\exp&\left(-\sum_{r=1}^\infty 3p^{fr}H_{p^{fr}}\left(\tfrac{1}{14},\tfrac{9}{14},\tfrac{11}{14};0,\tfrac{1}{4},\tfrac{3}{4}\,|\,t\right)\frac{(p^{-s})^{fr}}{fr} -3p^{fr}H_{p^{fr}}\left(\tfrac{3}{14},\tfrac{5}{14},\tfrac{13}{14};0,\tfrac{1}{4},\tfrac{3}{4}\,|\,t\right)\frac{(p^{-s})^{fr}}{fr}  \right)  \nonumber\\
    &= L_{p^f}(H(\tfrac{1}{14},\tfrac{9}{14},\tfrac{11}{14};0,\tfrac{1}{4},\tfrac{3}{4}\,|\,t), p^{1-s})^{3/f} L_{p^f}(H(\tfrac{3}{14},\tfrac{5}{14},\tfrac{13}{14};0,\tfrac{1}{4},\tfrac{3}{4}\,|\,t), p^{1-s})^{3/f} \\
    &= \prod_{k_i \in (\ZZ/m\ZZ)^\times / \langle H_K, p\rangle} L_p(H(\tfrac{1}{14}k_i,\tfrac{9}{14}k_i,\tfrac{11}{14}k_i;0,\tfrac{1}{4}k_i,\tfrac{3}{4}k_i\,|\,t), p^{1-s})^{r(M|K,p)} \nonumber \\
    &= L_p(H(\tfrac{1}{14},\tfrac{9}{14},\tfrac{11}{14};0,\tfrac{1}{4},\tfrac{3}{4}\,|\,t), \Q(\zeta_7), p^{1-s}).\nonumber
\end{align}

To conclude, suppose $p \equiv 3,5,6 \pmod{7}$, i.e., $p$ is inert in $K$.  Now $(\ZZ/m\ZZ)^\times / \langle H_K, p\rangle = \{1\}$ and $6 = r(M|K, p)f$.  By Lemma~\ref{lem:fieldofdef}(c), for all $q \equiv 1 \pmod 7$, we have that
\begin{equation}
    H_{q}\left(\tfrac{1}{14},\tfrac{9}{14},\tfrac{11}{14};0,\tfrac{1}{4},\tfrac{3}{4}\,|\,t\right)= H_{q}\left(\tfrac{1}{14}p,\tfrac{9}{14}p,\tfrac{11}{14}p;0,-\tfrac{1}{4},-\tfrac{3}{4}\,|\,t^p\right)= H_{q}\left(\tfrac{3}{14},\tfrac{5}{14},\tfrac{13}{14};0,\tfrac{1}{4},\tfrac{3}{4}\,|\,t\right).
\end{equation}
Using the previous line and Lemma~\ref{lem:Hqabdef}(b),
\begin{align}\label{F1L3 mod356 L}
\exp&\left(-\sum_{r=1}^\infty 3p^{fr}H_{p^{fr}}\left(\tfrac{1}{14},\tfrac{9}{14},\tfrac{11}{14};0,\tfrac{1}{4},\tfrac{3}{4}\,|\,t\right)\frac{(p^{-s})^{fr}}{fr} \right. \nonumber \\
&\qquad\qquad \left. +3p^{fr}H_{p^{fr}}\left(\tfrac{3}{14},\tfrac{5}{14},\tfrac{13}{14};0,\tfrac{1}{4},\tfrac{3}{4}\,|\,t\right)\frac{(p^{-s})^{fr}}{fr}  \right)  \nonumber \\
    &= \exp\left(-\sum_{r=1}^\infty 6H_{p^{fr}}\left(\tfrac{1}{14},\tfrac{9}{14},\tfrac{11}{14};0,\tfrac{1}{4},\tfrac{3}{4}\,|\,t\right)\frac{(p^{1-s})^{fr}}{fr}\right) \nonumber \\
    &= L_{p^f}(H(\tfrac{1}{14},\tfrac{9}{14},\tfrac{11}{14};0,\tfrac{1}{4},\tfrac{3}{4}\,|\,t), p^{1-s})^{6/f}  \\
    &= L_{p^f}(H(\tfrac{1}{14},\tfrac{9}{14},\tfrac{11}{14};0,\tfrac{1}{4},\tfrac{3}{4}\,|\,t), p^{1-s})^{r(M|K, p)} \nonumber \\
    &= L_p(H(\tfrac{1}{14},\tfrac{9}{14},\tfrac{11}{14};0,\tfrac{1}{4},\tfrac{3}{4}\,|\,t), \Q(\zeta_7), p^{1-s}).  \qedhere
\end{align}
\end{proof}

\subsection{The pencil \texorpdfstring{$\Fsf_2\Lsf_2$}{Fsf2Lsf2}}

We now prove Theorem~\ref{mainthm}(c):
\begin{prop}\label{prop:mainthmF2L2}
For the pencil $\Fsf_2 \Lsf_2$, 
\begin{align*}
L_S(X_{\Fsf_2\Lsf_2,\psi}, s) &= L_S( H(\tfrac{1}{4}, \tfrac{1}{2}, \tfrac{3}{4}; 0, 0, 0\,|\, t), s) \\
&\qquad \cdot L_S(\Q(\zeta_8)\,|\,\Q,s-1)^2 \\
&\qquad \cdot L_S( H(\tfrac{1}{4}, \tfrac{3}{4}; 0, \tfrac{1}{2} \,|\, t), s-1, \phi_{-1}) \\
&\qquad \cdot L_S( H( \tfrac{1}{2};0 \,|\, t) , \Q(\sqrt{-1}), s-1, \phi_{\ii}) \\
&\qquad \cdot L_S( H(\tfrac{1}{8}, \tfrac{5}{8}; 0, \tfrac{1}{4} \,|\, t^{-1}), \Q(\zeta_8), s-1, \phi_{\sqrt{2}})
\end{align*}
where 
\begin{itemize}
\item the character $\phi_{\ii}$ is defined in \eqref{eqn:fermatchars}, 
\item for $\pmb{\alpha},\pmb{\beta}=\{\tfrac{1}{8}, \tfrac{5}{8}\},\{0, \tfrac{1}{4}\}$ we have the field of definition $K_{\pmb{\alpha},\pmb{\beta}}=\Q(\sqrt{-1})$, $M = \Q(\zeta_8)$,  and 
$$
\phi_{\sqrt{2}}(\frakp) \colonequals \legen{\sqrt{2}}{\frakp} \equiv 2^{(\Nm(\frakp)-1)/4} \psmod{\frakp} \text{ is associated to $\Q(\zeta_8,\sqrt[4]{2}) \,|\, \Q(\zeta_8)$, and} 
$$ 
\item  $L(\Q(\zeta_8)\,|\,\Q,s) = \zeta_{\Q(\zeta_8)}(s)/\zeta(s)$, where $\zeta_{\Q(\zeta_8)}(s)$ is the Dedekind zeta function of $\Q(\zeta_8)$ and $\zeta(s)=\zeta_\Q(s)$ the Riemann zeta function.\end{itemize}
\end{prop}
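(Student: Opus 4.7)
The plan is to mirror the strategy of Proposition~\ref{prop:mainthmF4}, working from the point-count formula for $X_{\Fsf_2\Lsf_2,\psi}$ given in Proposition~\ref{prop:F2L2}. Since $Z_q(X,T)$ has the factor $(1-T)(1-qT)(1-q^2T)$ in its denominator, it suffices to match, for each rational prime $p \not\in S$ and each $r \geq 1$, the coefficient of $(p^{-s})^r/r$ in the logarithm of each side. Equivalently, setting $q = p^r$, I will verify that $\#X_{\Fsf_2\Lsf_2,\psi}(\F_q) - 1 - q - q^2$ coincides with the total contribution of the Euler factors from the five $L$-series on the right-hand side, case by case in $q \pmod 8$.

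The easy pieces are the first three factors. The hypergeometric factor $L_S(H(\tfrac14,\tfrac12,\tfrac34;0,0,0\,|\,t),s)$ contributes $H_q(\tfrac14,\tfrac12,\tfrac34;0,0,0\,|\,t)$ in every case, and $L_S(H(\tfrac14,\tfrac34;0,\tfrac12\,|\,t),s-1,\phi_{-1})$ contributes $\phi_{-1}(q)\,qH_q(\tfrac14,\tfrac34;0,\tfrac12\,|\,t)$, matching the $\mp qH_q$ term in Proposition~\ref{prop:F2L2} according as $q \equiv 3,1 \pmod 4$. For the Dedekind zeta factor, I will use the classical decomposition $L(\Q(\zeta_8)\,|\,\Q,s) = L(s,\phi_{-1})L(s,\phi_{2})L(s,\phi_{-2})$ over the three nontrivial quadratic characters of $\Gal(\Q(\zeta_8)\,|\,\Q) \simeq (\Z/8\Z)^\times$. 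The contribution of $L_S(\Q(\zeta_8)\,|\,\Q,s-1)^2$ to the Frobenius trace is $2q(\phi_{-1}(q)+\phi_{2}(q)+\phi_{-2}(q))$, which evaluates to $+6q$ for $q \equiv 1 \pmod 8$ and $-2q$ for $q \equiv 3,5,7 \pmod 8$; this exactly accounts for the constants in Proposition~\ref{prop:F2L2}, since $(q^2-q+1)-(q^2+q+1)=-2q$ in the first two cases and $(q^2+7q+1)-(q^2+q+1)=+6q$ in the third.

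The factor $L_S(H(\tfrac12;0\,|\,t),\Q(\sqrt{-1}),s-1,\phi_\ii)$ can be handled exactly as in the proof of Proposition~\ref{prop:mainthmF4}, except with multiplicity one in place of six: primes $p \equiv 3 \pmod 4$ are inert in $\Q(\sqrt{-1})$, so they contribute only when $r$ is even; for $p \equiv 1 \pmod 4$, the character value $\phi_\ii(\frakp) = (-1)^{(p-1)/4}$ yields $+2qH_q(\tfrac12;0\,|\,t)$ when $q \equiv 1 \pmod 8$ and $-2qH_q(\tfrac12;0\,|\,t)$ when $q \equiv 5 \pmod 8$, matching the point count. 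Since the contribution vanishes for $q \equiv 3 \pmod 4$, this is consistent with the absence of this term in the first case of Proposition~\ref{prop:F2L2}.

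The delicate step is the last factor $L_S(H(\tfrac18,\tfrac58;0,\tfrac14\,|\,t^{-1}),\Q(\zeta_8),s-1,\phi_{\sqrt{2}})$, which is based over $M=\Q(\zeta_8)$ rather than its field of definition $K=\Q(\sqrt{-1})$. I will apply Lemma~\ref{lem:Hqabdef}(b): for $p \equiv 1 \pmod 8$ one has $r(M\,|\,K,p)=2$, so the Euler factor doubles each contribution and pairs $H_{q}(\tfrac18,\tfrac58;0,\tfrac14\,|\,t^{-1})$ with its Galois conjugate $H_{q}(\tfrac38,\tfrac78;0,\tfrac34\,|\,t^{-1})$. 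The key identification is $\phi_{\sqrt{2}}(\frakp) \equiv 2^{(\Nm(\frakp)-1)/4} \pmod{\frakp}$, which under the Teichm\"uller interpretation of $\omega$ matches $\omega(2)^{(q-1)/4}$ when $\Nm(\frakp)=q$; for $p \not\equiv 1 \pmod 8$, only even $r$ yields $q \equiv 1 \pmod 8$, and the two inert primes of $\Q(\zeta_8)$ above $p$ produce the required contribution at $\Nm(\frakp)^n = p^{2n}$. The main obstacle I anticipate is carefully verifying this character identification across all unramified splitting types of $p$ in the chain $\Q \subset K \subset M$, to confirm that the base-changed Euler factor reproduces the precise coefficient $2\omega(2)^{(q-1)/4}q$ appearing in Proposition~\ref{prop:F2L2}.
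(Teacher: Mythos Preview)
Your proposal is correct and follows essentially the same approach as the paper: start from the point-count formula in Proposition~\ref{prop:F2L2}, strip off $q^2+q+1$, and match each remaining summand to an Euler factor on the right-hand side, with the Dedekind zeta piece accounting for the discrepancy between the constants $q^2\pm q+1$, $q^2+7q+1$ and $q^2+q+1$.

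One refinement: in the paper's proof the last factor $L_S(H(\tfrac18,\tfrac58;0,\tfrac14\,|\,t^{-1}),\Q(\zeta_8),s-1,\phi_{\sqrt{2}})$ is handled by \emph{three} separate cases $p\equiv 1$, $p\equiv 3,7$, and $p\equiv 5\pmod 8$, not two. The point is that for $p\equiv 5\pmod 8$ the prime $p$ splits in $K=\Q(\sqrt{-1})$ but each prime of $K$ is inert in $M=\Q(\zeta_8)$, so $r(M\,|\,K,p)=1$ (not $2$) and the two hypergeometric sums $H_q(\tfrac18,\tfrac58;\ldots)$ and $H_q(\tfrac38,\tfrac78;\ldots)$ arise from the two distinct primes of $K$ rather than from a doubling; whereas for $p\equiv 3,7\pmod 8$ one has $r(M\,|\,K,p)=2$ and Lemma~\ref{lem:fieldofdef}(c) is invoked to identify the two sums. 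Your description ``the two inert primes of $\Q(\zeta_8)$ above $p$'' is correct in count but elides this distinction; the case analysis you flagged as the ``main obstacle'' is exactly where the paper spends its effort.
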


\begin{proof}
We now appeal to Proposition~\ref{prop:F2L2}, which we summarize as:
\begin{equation} \label{eqn:Fsf2sum}
\begin{aligned}
    \#X_{\Fsf_2\Lsf_2,\psi}(\F_q)&= q^2 + q + 1 + 2q\cdot \begin{cases} 3 & \text{ if $q \equiv 1 \pmod 8$} \\ -1 & \text{ if $q \not\equiv 1 \pmod 8$}\end{cases} \\
    &\quad + H_q(\tfrac{1}{4},\tfrac{1}{2},\tfrac{3}{4};0,0,0\,|\,t) + \phi_{-1}(q)qH_q(\tfrac{1}{4},\tfrac{3}{4};0,\tfrac{1}{2} \,|\,t) \\ 
    &\quad + 2\phi_{\sqrt{-1}}(q) q\delta[q\equiv 1 \psmod 4]H_q(\tfrac12; 0 \,|\, t) \\
    &\quad+ 2 \phi_{\sqrt{2}}(q)q\delta[q\equiv 1 \psmod 8]\left(H_q(\tfrac18, \tfrac58; 0, \tfrac14 \,|\, t^{-1}) +  H_q(\tfrac38, \tfrac78; 0, \tfrac34 \,|\, t^{-1})\right).
\end{aligned}
\end{equation}

For the new sum with parameters $\pmb{\alpha},\pmb{\beta}=\{\tfrac{1}{8}, \tfrac{5}{8}\},\{0, \tfrac{1}{4}\}$, we have field of definition $K_{\pmb{\alpha},\pmb{\beta}}=\Q(\sqrt{-1})$ because the subgroup of $(\Z/8\Z)^\times$ preserving these subsets is generated by $5$.

The term $q^2+q+1$ in \eqref{eqn:Fsf2sum} is handled as before.  For the next term, by splitting behavior in the biquadratic field $\Q(\zeta_8)=\Q(\sqrt{-1},\sqrt{2})$ we obtain
\begin{equation} \label{F2L2 Explicit L} L_p(\Q(\zeta_8)\,|\,\Q,pT) = 
\begin{cases}
(1-pT)^3, & \text{ if $p \equiv 1 \psmod{8}$;} \\
\displaystyle{\frac{(1-(pT)^2)^2}{1-pT}=(1-pT)(1+pT)^2}, & \text{ if $p \not\equiv 1 \psmod{8}$}.
\end{cases} 
\end{equation}
For $q \equiv 1 \pmod{8}$ the contribution to the exponential generating series is $3q$, otherwise the contribution is $q-2q=-q$.

All remaining terms except for the term
$$
 2\phi_{\sqrt{2}}(q) q\delta[q\equiv 1 \psmod 8](H_q(\tfrac18, \tfrac58; 0, \tfrac14 \,|\, t^{-1}) +  H_q(\tfrac38, \tfrac78; 0, \tfrac34 \,|\, t^{-1}))
$$
are handled in the proof of Proposition~\ref{prop:mainthmF4}.  We choose $M = \Q(\zeta_8)$ which has conductor $m=8$.  Then $H_K = \langle 5\rangle \leq (\ZZ/8\ZZ)^\times$.  Let $\epsilon=\legen{\sqrt{2}}{\frakp}$ for a prime $\frakp$ above $p$ in $\Q(\zeta_8)$ (and independent of this choice).  

Suppose that $p \equiv 1 \pmod 8$. We compute that $f=1$ and $r(M|K,p) = 2$. By applying Lemma~\ref{lem:Hqabdef}(b), we have:
\begin{align}\label{F2L2 L p1}
\exp&\left(-\sum_{r=1}^\infty 2 \epsilon^r p^{r}H_{p^{r}}\left(\tfrac{1}{8},\tfrac{5}{8};0,\tfrac{1}{4}\,|\,t^{-1}\right)\frac{(p^{-s})^{r}}{r} - \sum_{r=1}^\infty 2 \epsilon^r p^r H_{p^{r}}\left(\tfrac{3}{8},\tfrac{7}{8};0,\tfrac{3}{4}\,|\,t^{-1}\right)\frac{(p^{-s})^{r}}{r} \right) \nonumber\\
    &= L_p(H\left(\tfrac{1}{8},\tfrac{5}{8};0,\tfrac{1}{4}\,|\,t^{-1}\right),p^{1-s}, \phi_{\sqrt{2}})^2L_p(H_{p^{r}}\left(\tfrac{3}{8},\tfrac{7}{8};0,\tfrac{3}{4}\,|\,t^{-1}\right), p^{1-s}, \phi_{\sqrt{2}})^2  \\
    &= \prod_{k \in (\ZZ / 8\ZZ)^\times / \langle H_K, p\rangle} L_p(H\left(\tfrac{1}{8}k,\tfrac{5}{8}k;0,\tfrac{1}{4}k\,|\,t^{-1}\right),p^{1-s}, \phi_{\sqrt{2}})^2 \nonumber\\
    &= L_p(H\left(\tfrac{1}{8},\tfrac{5}{8};0,\tfrac{1}{4}\,|\,t^{-1}\right), \Q(\zeta_8),p^{1-s}, \phi_{\sqrt{2}}). \nonumber
\end{align}

Suppose now that $p\equiv 3,7 \pmod 8$. Then $f = 2$ and $r(M|K,p) = 2$. By Lemma~\ref{lem:fieldofdef}(c), for all $q$ a power of $p$ so that $q \equiv 1 \pmod 8$, we have that
\begin{equation}
    H_{q}\left(\tfrac{1}{8},\tfrac{5}{8};0,\tfrac{1}{4}\,|\,t^{-1}\right)= H_{q}\left(\tfrac{1}{8}p,\tfrac{5}{8}p;0,\tfrac{1}{4}p\,|\,t^{-p}\right)= H_{q}\left(\tfrac{3}{8},\tfrac{7}{8};0,\tfrac{3}{4}\,|\,t^{-1}\right).
\end{equation}
 Again applying Lemma~\ref{lem:Hqabdef}(b), we have:
\begin{align}\label{F2L2 L p37}
\exp&\left(-\sum_{r=1}^\infty 2\eps^r p^{2r}H_{p^{2r}}\left(\tfrac{1}{8},\tfrac{5}{8};0,\tfrac{1}{4}\,|\,t^{-1}\right)\frac{(p^{-s})^{2r}}{2r} - \sum_{r=1}^\infty 2\eps^r p^{2r}H_{p^{2r}}\left(\tfrac{3}{8},\tfrac{7}{8};0,\tfrac{3}{4}\,|\,t^{-1}\right)\frac{(p^{-s})^{2r}}{2r} \right) \nonumber\\
    &=\exp\left(-\sum_{r=1}^\infty 2\eps^rp^{2r}H_{p^{2r}}\left(\tfrac{1}{8},\tfrac{5}{8};0,\tfrac{1}{4}\,|\,t^{-1}\right)\frac{(p^{-s})^{2r}}{r}\right) \nonumber\\
    &= L_{p^2}(H\left(\tfrac{1}{8},\tfrac{5}{8};0,\tfrac{1}{4}\,|\,t^{-1}\right),p^{1-s}, \phi_{\sqrt{2}})^2 \\
    &=  L_{p^2}(H\left(\tfrac{1}{8},\tfrac{5}{8};0,\tfrac{1}{4}\,|\,t^{-1}\right),p^{1-s}, \phi_{\sqrt{2}})^{r(M|K,p)} \nonumber\\
    &= L_{p^2}(H\left(\tfrac{1}{8},\tfrac{5}{8};0,\tfrac{1}{4}\,|\,t^{-1}\right), \Q(\zeta_8),p^{1-s}, \phi_{\sqrt{2}}). \nonumber
\end{align}

Finally, suppose that $p \equiv 5 \pmod 8$. Then $f=2$ and $r(M|K, p) = 1$, and now
\begin{align}\label{F2L2 L p5}
    \exp&\left(-\sum_{r=1}^\infty 2\eps^rp^{2r}H_{p^{2r}}\left(\tfrac{1}{8},\tfrac{5}{8};0,\tfrac{1}{4}\,|\,t^{-1}\right)\frac{(p^{-s})^{2r}}{2r} - \sum_{r=1}^\infty 2\eps^rp^{2r}H_{p^{2r}}\left(\tfrac{3}{8},\tfrac{7}{8};0,\tfrac{3}{4}\,|\,t^{-1}\right)\frac{(p^{-s})^{2r}}{2r} \right) \nonumber \\
    &= L_{p^2}(H\left(\tfrac{1}{8},\tfrac{5}{8};0,\tfrac{1}{4}\,|\,t^{-1}\right),p^{1-s}, \phi_{\sqrt{2}}) L_{p^2}(H\left(\tfrac{3}{8},\tfrac{7}{8};0,\tfrac{3}{4}\,|\,t^{-1}\right),p^{1-s}, \phi_{\sqrt{2}}) \\
    &= \prod_{k\in (\ZZ / 8\ZZ)^\times / \langle H_K, p\rangle} L_{p^2}(H\left(\tfrac{1}{8}k,\tfrac{5}{8}k;0,\tfrac{1}{4}k\,|\,t^{-1}\right),p^{1-s}, \phi_{\sqrt{2}}) \nonumber \\
    &= L_p(H\left(\tfrac{1}{8},\tfrac{5}{8};0,\tfrac{1}{4}\,|\,t^{-1}\right), \Q(\zeta_8),p^{1-s}, \phi_{\sqrt{2}}).\qedhere
\end{align}
\end{proof}

\subsection{The pencil \texorpdfstring{$\Lsf_2\Lsf_2$}{Lsf2Lsf2}}

We now prove Theorem~\ref{mainthm}(d).
\begin{prop}
For the pencil $\Lsf_2 \Lsf_2$, we have
\begin{align*}
L_S(X_{\Lsf_2\Lsf_2,\psi}, s) &= L_S( H(\tfrac{1}{4}, \tfrac{1}{2}, \tfrac{3}{4}; 0, 0, 0\,|\, t), s) \\
&\qquad \cdot \zeta_{\Q(\sqrt{-1})}(s-1)^4 \\
&\qquad \cdot L_S( H(\tfrac{1}{4}, \tfrac{3}{4}; 0, \tfrac{1}{2} \,|\, t), s-1, \phi_{-1}) \\
&\qquad \cdot     L_S( H(\tfrac{1}{8}, \tfrac{3}{8}, \tfrac{5}{8}, \tfrac{7}{8}; 0, \tfrac{1}{4}, \tfrac{1}{2}, \tfrac{3}{4}\,|\, t), \Q(i), s-1, \phi_{\ii}\phi_{\psi}) 
\end{align*}
where the characters $\phi_{-1},\phi_{\ii}$ are defined in \eqref{eqn:fermatchars} and
\begin{equation}
\begin{aligned}
\phi_{\psi}(p) &=\legen{\psi}{p} & & \text{ is associated to $\Q(\sqrt{\psi})\,|\,\Q$} 
\end{aligned}
\end{equation}
and $\zeta_{\Q(\sqrt{-1})}(s)$ is the Dedekind zeta function of $\Q(\sqrt{-1})$.
\end{prop}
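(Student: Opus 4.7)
The plan is to follow the same strategy as in Propositions~\ref{prop:mainthmF4}, \ref{prop:mainthmF1L3}, and \ref{prop:mainthmF2L2}: I will start from the two point-count formulas of Proposition~\ref{prop:L2L2}, peel off the contribution $q^2+q+1$ that corresponds to the denominator $(1-T)(1-pT)(1-p^2T)$ of $Z_p(X_{\Lsf_2\Lsf_2,\psi},T)$, and then match each remaining summand with one of the four $L$-series on the right-hand side via the exponential generating series. The term $H_q(\tfrac14,\tfrac12,\tfrac34;0,0,0\,|\,t)$ yields $L_S(H(\tfrac14,\tfrac12,\tfrac34;0,0,0\,|\,t),s)$ directly by definition. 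For the summand involving $H_q(\tfrac14,\tfrac34;0,\tfrac12\,|\,t)$, the coefficient is $-q$ when $q\equiv 3\pmod 4$ and $+q$ when $q\equiv 1\pmod 4$, which agrees with $\phi_{-1}(q)\,q$; repeating the computation of \eqref{eqn: qHq for quarters} then produces $L_S(H(\tfrac14,\tfrac34;0,\tfrac12\,|\,t),s-1,\phi_{-1})$.

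Next I would account for the $\zeta_{\Q(\sqrt{-1})}(s-1)^4$ factor, which must absorb the extra ``$8q$'' visible in $q^2+9q+1$ (versus $q^2+q+1$) when $q\equiv 1\pmod 4$. Using the factorization $\zeta_{\Q(\sqrt{-1})}(s)=\zeta(s)L(s,\phi_{-1})$ and splitting behavior: for $p\equiv 1\pmod 4$ (split) the local factor is $(1-pT)^{-8}$, giving Dirichlet coefficient $8p^r=8q$ at every level; for $p\equiv 3\pmod 4$ (inert) the local factor is $(1-p^2T^2)^{-4}$, giving $8q$ precisely when $r$ is even (so $q\equiv 1\pmod 4$) and $0$ otherwise. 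This is exactly the ``$8q$ iff $q\equiv 1\pmod 4$'' discrepancy observed in Proposition~\ref{prop:L2L2}.

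The main obstacle will be identifying the remaining summand $2(-1)^{\qq\qm/4}\omega(\psi)^{\qq\qm/2}q\,H_q(\tfrac18,\tfrac38,\tfrac58,\tfrac78;0,\tfrac14,\tfrac12,\tfrac34\,|\,t)$ with the base-changed twisted $L$-factor. The hypergeometric parameters $\pmb\alpha=\{\tfrac18,\tfrac38,\tfrac58,\tfrac78\}$, $\pmb\beta=\{0,\tfrac14,\tfrac12,\tfrac34\}$ have field of definition $K_{\pmb\alpha,\pmb\beta}=\Q$, since $\prod_j(x-e^{2\pi\sqrt{-1}\alpha_j})=x^4+1$ and $\prod_j(x-e^{2\pi\sqrt{-1}\beta_j})=x^4-1$ both lie in $\Q[x]$; nevertheless we take $M=\Q(\sqrt{-1})$ so that the factor is nonzero only at primes $\frakp$ with $q=\Nm(\frakp)\equiv 1\pmod 4$, matching the indicator $\delta[q\equiv 1\psmod 4]$ arising from the point count. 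The twist $\phi_{\ii}\phi_\psi$ is interpreted as a character on primes of $\Q(\sqrt{-1})$, with $\phi_\psi$ extended via the norm; using $\phi_{\ii}(q)=(-1)^{\qq\qm/4}$ and $\phi_\psi(q)=\omega(\psi)^{\qq\qm/2}$ for $q\equiv 1\pmod 4$ recovers the coefficient $2(-1)^{\qq\qm/4}\omega(\psi)^{\qq\qm/2}q$. The bookkeeping then splits into two cases, following the split/inert dichotomy used in Proposition~\ref{prop:mainthmF2L2}: for $p\equiv 1\pmod 4$ we have $f=1$ and $r(M|K,p)=2$, so the definition \eqref{defofLfactor} produces a square of local factors; for $p\equiv 3\pmod 4$ we have $f=2$ and $r(M|K,p)=1$, so only even powers of $p$ contribute. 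The most delicate point is checking that $\phi_\psi$, when extended to $\Q(\sqrt{-1})$, has the right sign on norm-$p^2$ primes for $p\equiv 3\pmod 4$ so that the contribution at such $q$ lines up with $\omega(\psi)^{\qq\qm/2}$; once this is verified, applying Lemma~\ref{lem:Hqabdef} assembles the answer into $L_S(H(\tfrac18,\tfrac38,\tfrac58,\tfrac78;0,\tfrac14,\tfrac12,\tfrac34\,|\,t),\Q(\sqrt{-1}),s-1,\phi_{\ii}\phi_\psi)$.
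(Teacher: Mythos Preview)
Your proposal is correct and follows essentially the same approach as the paper: start from Proposition~\ref{prop:L2L2}, strip off $q^2+q+1$, identify the ``$8q$ iff $q\equiv 1\pmod 4$'' discrepancy with $\zeta_{\Q(\sqrt{-1})}(s-1)^4$ via splitting behavior, handle the first two hypergeometric summands exactly as in the $\Fsf_4$ case, and then treat the last summand by taking $M=\Q(\sqrt{-1})$ over $K_{\pmb{\alpha},\pmb{\beta}}=\Q$ with $fr(M|\Q,p)=2$ and invoking Lemma~\ref{lem:Hqabdef}(b). The paper's write-up is terser and does not separately discuss the extension of $\phi_\psi$ to $\Q(\sqrt{-1})$, but your more explicit case split ($f=1,\,r=2$ versus $f=2,\,r=1$) is the same argument unpacked.
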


\begin{proof}
By Proposition~\ref{prop:L2L2}, we have the point counts
\begin{equation}
    \begin{aligned}
 \#X_{\Lsf_2\Lsf_2,\psi}(\F_q) &= q^2 + q+1 + 
4q\cdot \begin{cases} 2 & \text{ if $q \equiv 1 \pmod 4$} \\ 0 & \text{ if $q \equiv 3 \pmod 4$}\end{cases} \\
&\quad + H_q(\tfrac14, \tfrac12, \tfrac34; 0,0,0 \,|\, t) +(-1)^{\qq\qm/2}qH_q(\tfrac14, \tfrac{3}{4}; 0, \tfrac12 \,|\, t) \\
&\quad + 2 (-1)^{\qq\qm/4} \omega(\psi)^{\qq\qm/2} q\delta[q\equiv 1\psmod{4}]  H_q(\tfrac{1}{8},\tfrac{3}{8},\tfrac{5}{8},\tfrac{7}{8};0,\tfrac{1}{4},\tfrac{1}{2},\tfrac{3}{4}\,|\,t).
	\end{aligned}
	\end{equation}

Again by splitting behavior, we have
\begin{equation}\label{Explicit L L2L2} \zeta_{\Q(i),p}(pT) = \begin{cases}
(1-pT)^2, & \text{ if $p \equiv 1 \psmod{4}$};  \\
1-p^2T^2, & \text{ if $p \equiv 3 \psmod{4}$}.
\end{cases} \end{equation}
For $q \equiv 1 \pmod{4}$, the contribution to the exponential generating series is $2q$, otherwise the contribution is $0$.

All but the last summand have been identified in the previous propositions, and this one follows in a similar but easier manner (because it has $\Q$ as field of definition) applying Lemma~\ref{lem:Hqabdef}(b), with $M=\Q(\sqrt{-1})$ and $fr(M|\Q,p)=2$:
\begin{align}\label{L2L2 L new series}
\exp&\left(-\sum_{r=1}^\infty
2(-1)^{(p^{fr})^\times\qm/4} \omega(\psi)^{(p^{fr})^\times\qm/2} p^{fr}  H_q(\tfrac{1}{8},\tfrac{3}{8},\tfrac{5}{8},\tfrac{7}{8};0,\tfrac{1}{4},\tfrac{1}{2},\tfrac{3}{4}\,|\,t)\frac{(p^{-s})^{fr}}{fr} \right) \nonumber \\
&= L_{p^f}(H(\tfrac{1}{8},\tfrac{3}{8},\tfrac{5}{8},\tfrac{7}{8};0,\tfrac{1}{4},\tfrac{1}{2},\tfrac{3}{4}\,|\,t),p^{1-s},\phi_{\psi}\phi_{-1})^{2/f}\\
&= L_{p^f}(H(\tfrac{1}{8},\tfrac{3}{8},\tfrac{5}{8},\tfrac{7}{8};0,\tfrac{1}{4},\tfrac{1}{2},\tfrac{3}{4}\,|\,t),p^{1-s},\phi_{\psi}\phi_{-1})^{r(M|\Q,p)} \nonumber\\
    &= L_p(H(\tfrac{1}{8},\tfrac{3}{8},\tfrac{5}{8},\tfrac{7}{8};0,\tfrac{1}{4},\tfrac{1}{2},\tfrac{3}{4}\,|\,t), \Q(\sqrt{-1}),p^{1-s},\phi_{\psi}\phi_{-1}). \qedhere
\end{align}
\end{proof}

\subsection{The pencil \texorpdfstring{$\Lsf_4$}{Lsf4}}

Here we prove Theorem~\ref{mainthm}(e). 
\begin{prop}
For the pencil $\Lsf_4$, 
\begin{align*}
L_S(X_{\Lsf_4,\psi}, s) &= L_S( H(\tfrac{1}{4}, \tfrac{1}{2}, \tfrac{3}{4}; 0, 0, 0\,|\, t), s) \zeta(s-1)^2 \\
&\qquad \cdot L_S( H( \tfrac{1}{5}, \tfrac{2}{5}, \tfrac{3}{5}, \tfrac{4}{5}; 0, \tfrac{1}{4}, \tfrac{1}{2}, \tfrac{3}{4} \,|\, t^{-1}) , \Q(\zeta_5), s-1),
\end{align*}
\end{prop}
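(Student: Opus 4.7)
The plan is to convert the explicit point counts from Proposition~\ref{prop:L4} into the claimed $L$-series factorization by the same exponential generating series bookkeeping used in Propositions~\ref{prop:mainthmF4}--\ref{prop:mainthmF2L2}: each additive summand of $\#X_{\Lsf_4,\psi}(\F_q)$ corresponds to a multiplicative Euler factor in $P_{\Lsf_4,\psi,p}(p^{-s})$, whose reciprocal, assembled over $p \notin S$, produces a factor of $L_S(X_{\Lsf_4,\psi},s)$.

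First, decomposing $q^2 + 3q + 1 = (1 + q + q^2) + 2q$, the part $1+q+q^2$ accounts for the denominator factors $(1-T)(1-qT)(1-q^2T)$ excluded from $L_S$ in \eqref{eqn:ZpXt}. The extra $2q$ summand contributes $\exp\bigl(-\sum_{r \geq 1} 2p^r T^r/r\bigr) = (1-pT)^2$ to $P_{\Lsf_4,\psi,p}(T)$, whose reciprocal assembles into $\zeta(s-1)^2$. The summand $H_q(\tfrac14,\tfrac12,\tfrac34;0,0,0\,|\,t)$ becomes the common factor $L_S(H(\tfrac14,\tfrac12,\tfrac34;0,0,0\,|\,t), s)$ verbatim as in Proposition~\ref{prop:mainthmF4}.

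The novel contribution is the final summand $4qH_q(\tfrac15,\tfrac25,\tfrac35,\tfrac45;0,\tfrac14,\tfrac12,\tfrac34\,|\,t^{-1})$, which is present exactly when $q \equiv 1 \pmod 5$. The parameters $\pmb{\alpha} = \{\tfrac15,\tfrac25,\tfrac35,\tfrac45\}$ and $\pmb{\beta} = \{0,\tfrac14,\tfrac12,\tfrac34\}$ are defined over $K = \Q$, so I would take $M = \Q(\zeta_5)$ in Lemma~\ref{lem:Hqabdef}(b). For each unramified prime $p \notin S$, letting $f$ denote the order of $p$ in $(\Z/5\Z)^\times$, we have $r(M|\Q,p) = 4/f$, and the divisibility $f \mid r$ is precisely the condition $p^r \equiv 1 \pmod 5$. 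Reindexing by $r = f\sigma$, I would compute
\[
\exp\!\left(-\sum_{\substack{r \geq 1 \\ f \mid r}} 4p^r H_{p^r}(\pmb{\alpha};\pmb{\beta}\,|\,t^{-1}) \frac{(p^{-s})^r}{r}\right) = L_{p^f}(H(\pmb{\alpha};\pmb{\beta}\,|\,t^{-1}), p^{f(1-s)})^{4/f},
\]
which by \eqref{eqn:LpHab} equals $L_p(H(\pmb{\alpha};\pmb{\beta}\,|\,t^{-1}), \Q(\zeta_5), p^{1-s})$ since the coset space $(\Z/5\Z)^\times / \langle H_K, p \rangle$ is trivial. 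Taking the product over $p \notin S$ and inverting yields the factor $L_S(H(\pmb{\alpha};\pmb{\beta}\,|\,t^{-1}), \Q(\zeta_5), s-1)$.

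Among the five families, $\Lsf_4$ is the cleanest: because $K_{\pmb{\alpha},\pmb{\beta}} = \Q$ no Dirichlet twist arises, and unlike Proposition~\ref{prop:mainthmF1L3} no grouping of Galois conjugate hypergeometric sums is needed. I therefore do not anticipate any genuine obstacle---the argument is a direct specialization of the bookkeeping already developed for the four harder pencils, with the only care required being the case analysis on the order $f \in \{1, 2, 4\}$ of $p$ modulo $5$ when applying Lemma~\ref{lem:Hqabdef}(b).
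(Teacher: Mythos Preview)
Your proposal is correct and follows essentially the same route as the paper: decompose $q^2+3q+1=(1+q+q^2)+2q$ to extract $\zeta(s-1)^2$, handle the common hypergeometric factor as before, and for the new summand take $M=\Q(\zeta_5)$, $K=\Q$, observe $r(M|\Q,p)=4/f$ with $f$ the order of $p$ in $(\Z/5\Z)^\times$, and invoke Lemma~\ref{lem:Hqabdef}(b) with trivial coset space. Your observation that $\Lsf_4$ is the cleanest case---no twist and no Galois conjugate pairing---is exactly why the paper's proof here is the shortest of the five.
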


\begin{proof} By Proposition~\ref{prop:L4}, we have that 
\begin{align*}\#X_{\Lsf_4}(\psi)&=q^2+3q+1+H_q(\tfrac{1}{4},\tfrac{1}{2},\tfrac{3}{4};0,0,0\,|\,t)\\
    &\quad + 4q\delta[q\equiv1 \psmod5]H_q(\tfrac{1}{5},\tfrac{2}{5}, \tfrac{3}{5},\tfrac{4}{5} ;0, \tfrac{1}{4},\tfrac{1}{2}, \tfrac{3}{4}\,|\,t^{-1}).
\end{align*}

The extra two summands of $q$ in the point count correspond to the $L$-series factor $\zeta(s-1)^2$.  We now  focus on the remaining new summand $4qH_q(\tfrac{1}{5},\tfrac{2}{5}, \tfrac{3}{5},\tfrac{4}{5} ;0, \tfrac{1}{4},\tfrac{1}{2}, \tfrac{3}{4}\,|\,\psi^{4})$ that occurs exactly when $q \equiv 1 \pmod 5$.  Let $f$ be the order of $p$ in $(\ZZ/5\ZZ)^\times$, which divides 4. Take $M = \Q(\zeta_5)$. We know that $K = \Q$ for all possible $p$, hence $r(M|K, p) = 4f^{-1}$. By Lemma~\ref{lem:Hqabdef}, we have
\begin{align}\label{L4 new L series}
    \exp&\left(-\sum_{r=1}^\infty 4p^{rf}H_{p^{rf}}(\tfrac{1}{5},\tfrac{2}{5}, \tfrac{3}{5},\tfrac{4}{5} ;0, \tfrac{1}{4},\tfrac{1}{2}, \tfrac{3}{4}\,|\,t^{-1}) \frac{(p^{-s})^{rf}}{fr}\right) \nonumber \\
    &= L_{p^f}(H(\tfrac{1}{5},\tfrac{2}{5}, \tfrac{3}{5},\tfrac{4}{5} ;0, \tfrac{1}{4},\tfrac{1}{2}, \tfrac{3}{4}\,|\,t^{-1}), p^{1-s})^{4/f} \\ 
    &= L_{p^f}(H(\tfrac{1}{5},\tfrac{2}{5}, \tfrac{3}{5},\tfrac{4}{5} ;0, \tfrac{1}{4},\tfrac{1}{2}, \tfrac{3}{4}\,|\,t^{-1}), p^{1-s})^{r(M|\Q,p)} \nonumber \\
    &= L_p(H(\tfrac{1}{5},\tfrac{2}{5}, \tfrac{3}{5},\tfrac{4}{5} ;0, \tfrac{1}{4},\tfrac{1}{2}, \tfrac{3}{4}\,|\,t^{-1}),\Q(\zeta_5), p^{1-s}).\qedhere
\end{align}
\end{proof}

\subsection{Algebraic hypergeometric functions}  \label{sec:alghyp}

We now turn to some applications of our main theorem.  We begin in this section by setting up a discussion of explicit identification of the algebraic hypergometric functions that arise in our decomposition, following foundational work of Beukers--Heckman \cite{BH}.

Recall the hypergeometric function $F(z)=F(\pmb{\alpha};\pmb{\beta} \,|\, z)$ (Definition \ref{defn:hypergeometricFunction}) for parameters $\pmb{\alpha},\pmb{\beta}$.  For certain special parameters, this function may be \emph{algebraic} over $\C(z)$, i.e., the field $\C(z,F(z))$ is a finite extension of $\C(z)$.  By a criterion of Beukers--Heckman, $F(z)$ is algebraic if and only if the parameters interlace (ordering the parameters, they alternate between elements of $\pmb{\alpha}$ and $\pmb{\beta}$) \cite[Theorem 4.8]{BH}; moreover, all sets of interlacing parameters are classified \cite[Theorem 7.1]{BH}.  
We see in Main Theorem \ref{mainthm} that for all but the common factor $L_S(H(\tfrac14,\tfrac12,\tfrac34;0,0,0\,|\,t),s)$, the parameters interlace, so this theory applies.  

\begin{conj} \label{conj:degreeofArtin}
Let $t \in \Q$, let $\pmb{\alpha},\pmb{\beta}$ with $\#\pmb{\alpha}=\pmb{\beta}=d$ be such that the hypergeometric function $F(\pmb{\alpha};\pmb{\beta} \,|\, z)$ is algebraic.  Let $M$ satisfy \eqref{eqn:Mgood}.  Then $L_S(H(\pmb{\alpha},\pmb{\beta}\,|\,t),M,s)$ is an Artin $L$-series of degree $d[M:K_{\pmb{\alpha},\pmb{\beta}}]$; in particular, for all good primes $p$, we have $L_p(H(\pmb{\alpha},\pmb{\beta}\,|\,t),M,T) \in 1+T\Q[T]$ a polynomial of degree $d[M:K_{\pmb{\alpha},\pmb{\beta}}]$.
\end{conj}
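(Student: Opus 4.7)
The plan is to upgrade the finite-field hypergeometric sum $H_q(\pmb{\alpha};\pmb{\beta}\,|\,t)$ to a Frobenius trace on an $\ell$-adic Galois representation and then exploit the algebraicity hypothesis to show that this representation is of Artin type. The central input is Katz's construction \cite{Katz} of an $\ell$-adic hypergeometric local system $\mathcal{H}(\pmb{\alpha};\pmb{\beta})$ of rank $d$ on $\mathbb{G}_m \smallsetminus \{1\}$, which admits a model over $\Spec \Z_K[1/N]$ where $K = K_{\pmb{\alpha},\pmb{\beta}}$ and $N$ absorbs the primes of bad reduction. For a good prime $\frakp$ of $K$ with $q = \Nm(\frakp)$ and $t \in K$ reducing to a unit in $\F_q \smallsetminus \{1\}$, the trace of the geometric Frobenius on the stalk $\mathcal{H}(\pmb{\alpha};\pmb{\beta})_{\overline{t}}$ equals, up to an explicit normalization by products of Gauss sums, the hypergeometric sum $H_q(\pmb{\alpha};\pmb{\beta}\,|\,t)$. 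This dictionary has been developed by Katz, Fu, and Beukers--Cohen--Mellit in the defined-over-$\Q$ setting and the analogous identity in the hybrid setting of Definition~\ref{Hybrid Definition} would need to be checked.

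The algebraicity hypothesis feeds in through the Beukers--Heckman theorem: when the parameters interlace, the geometric monodromy of $\mathcal{H}(\pmb{\alpha};\pmb{\beta})$ is finite. By a Chebotarev-type argument, finite geometric monodromy of a lisse sheaf implies that the specialization $\rho_t \colon \Gal(\overline{K}/K) \to \GL_d(\overline{\Q}_\ell)$ has finite image, so $\rho_t$ is (after extension of scalars) an Artin representation of rank $d$ unramified outside $S$. The image of Frobenius has an algebraic characteristic polynomial, and together with Lemma~\ref{lem:fieldofdef}(a) and the coefficient field of $\mathcal{H}(\pmb{\alpha};\pmb{\beta})$, one checks that this Artin representation is defined over $K = K_{\pmb{\alpha},\pmb{\beta}}$ in the sense that its character takes values in $K$.

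With $\rho_t$ in hand, the local factor follows by repackaging. For each $\frakp$ of $K$ above $p \notin S$, the polynomial $\det(1 - \Frob_{\frakp}\, T \mid \rho_t)$ has degree $d$ with coefficients in $K$ by the Artin property; by the trace comparison this equals $L_{\Nm(\frakp)}(H(k\pmb{\alpha},k\pmb{\beta}\,|\,t), T^{f(\frakp/p)})$ for appropriate $k$ recording the Galois action on $\pmb{\alpha},\pmb{\beta}$. Grouping primes of $K$ above $p$ into primes of $M$ above $p$ and using Lemma~\ref{lem:Hqabdef}(b) reproduces the product~\eqref{defofLfactor}. Rationality $L_p \in 1+T\Q[T]$ comes from Galois invariance: the action of $\Gal(K\,|\,\Q)$ simultaneously permutes the primes of $K$ above $p$ and acts on $\pmb{\alpha},\pmb{\beta}$, so the full product over the orbit is fixed. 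The degree count $d[M:K]$ then results from tallying the contributions of the primes of $M$ above $p$, each yielding the appropriate Artin factor of $\rho_t$ restricted to $\Gal(\overline{M}/M)$.

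The main obstacle is the precise trace comparison in the hybrid setting. The Beukers--Cohen--Mellit definition (and its hybrid generalization used here) reindexes the original Katz sum via the Hasse--Davenport product relation and absorbs a normalizing constant $\epsilon \cdot M^{\pm 1}$; matching this against the geometric Frobenius trace on $\mathcal{H}(\pmb{\alpha};\pmb{\beta})_{\overline{t}}$ requires carefully tracking both the Tate twist and the tame characters at $0,1,\infty$, and doing so uniformly for splittable (not merely $\Q$-rational) parameters is delicate. A secondary issue is ramification at primes dividing denominators of $\pmb{\alpha} \cup \pmb{\beta}$: one must verify that Katz's sheaf extends unramified over exactly the primes outside $S(\pmb{\alpha},\pmb{\beta},t)$ so that the local factors agree with those in~\eqref{eqn:LShTs}; this should follow from the construction of $\mathcal{H}(\pmb{\alpha};\pmb{\beta})$ via Kummer pullbacks and multiplicative convolution, but the bookkeeping is nontrivial.
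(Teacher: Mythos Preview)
The paper does not prove this statement: it is labeled a \emph{Conjecture}, and immediately afterward the authors write that it ``is implicit in work of Katz \cite[Chapter 8]{Katz}, and there is current, ongoing work on the theory of hypergeometric motives that is expected to prove this conjecture, at least for certain choices of $M$.'' There is thus no proof in the paper to compare against; the authors use the conjecture as a hypothesis in Corollary~\ref{cor:factorzeta} rather than establishing it.

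Your sketch is the natural line of attack and matches what the paper alludes to: construct Katz's $\ell$-adic hypergeometric sheaf, invoke Beukers--Heckman interlacing to get finite geometric monodromy, and conclude the specialization is an Artin representation. You have also correctly identified the genuine obstacles that keep this from being a complete proof: the precise trace comparison between the geometric Frobenius on $\mathcal{H}(\pmb{\alpha};\pmb{\beta})_{\overline{t}}$ and the hybrid sum of Definition~\ref{Hybrid Definition} (with its Hasse--Davenport renormalization and Tate twist) is not in the literature in the generality needed here, and the ramification bookkeeping at primes dividing the denominators of $\pmb{\alpha}\cup\pmb{\beta}$ requires care. These are exactly the reasons the authors left this as a conjecture rather than a theorem, so your proposal is an honest outline of a program, not a gap in your reasoning.
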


Conjecture \ref{conj:degreeofArtin} is implicit in work of Katz \cite[Chapter 8]{Katz}, and there is current, ongoing work on the theory of hypergeometric motives that is expected to prove this conjecture, at least for certain choices of $M$.  An explicit version of Conjecture \ref{conj:degreeofArtin} could be established in each case for the short list of parameters that arise in our Main Theorem.  For example, we can use the following proposition about $L$-series and apply it for the family $\Fsf_4$, proving a conjecture of Duan \cite{LianDuan}.

\begin{prop}[Cohen]\label{Lseries gen cor F4}
We have the following $L$-series relations:
\begin{equation}
\begin{aligned}
    L_S(H(\tfrac14, \tfrac34; 0, \tfrac12\,|\, \psi^{-4}), s, \phi_{-1}) &= L_S(s, \phi_{1-\psi^2})L_S(s,\phi_{
    -1-\psi^2}) \\    
    L_S(\tfrac12;0\,|\,\psi^{-4}, \Q(\sqrt{-1}),s, \phi_{\sqrt{-1}}) &= L_S(s, \phi_{2(1-\psi^4)})L_S(s,\phi_{-2(1-\psi^4)}),
    \end{aligned}
\end{equation}
where $\phi_a = \legen{a}{p}$ is the Legendre symbol.
\end{prop}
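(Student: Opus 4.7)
The plan is to establish each equality at the level of local Euler factors at every good prime $p$; by Euler product expansion, this suffices for the $L$-series identities. By the interlacing criterion of Beukers--Heckman~\cite{BH}, both hypergeometric functions $H_q(\tfrac14,\tfrac34;0,\tfrac12\,|\,t)$ and $H_q(\tfrac12;0\,|\,t)$ are algebraic, and as discussed in Section~\ref{sec:alghyp} the associated local $L$-factors are polynomials in $\Q[T]$ of degree equal to the number of numerator parameters, namely $d=2$ in both cases, matching the right-hand side product of two Dirichlet Euler factors. Comparing coefficients of $T$ and $T^2$ reduces the proposition to the pointwise finite-field identities
\begin{align*}
\phi_{-1}(q)\,H_q(\tfrac14,\tfrac34;0,\tfrac12\,|\,\psi^{-4}) &= \phi_{1-\psi^2}(q)+\phi_{-1-\psi^2}(q), \\
\phi_{\sqrt{-1}}(q)\,H_q(\tfrac12;0\,|\,\psi^{-4}) &= \phi_{2(1-\psi^4)}(q)+\phi_{-2(1-\psi^4)}(q),
\end{align*}
valid for every good prime power $q=p^r$ (for the second, with $q$ the norm of a prime of $\Q(\sqrt{-1})$).

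Each identity is the finite-field incarnation of a classical closed-form evaluation of an algebraic hypergeometric function. Over $\C$ we have the Kummer quadratic transformation
\[
{}_2F_1(\tfrac14,\tfrac34;\tfrac12;w)=\tfrac12\bigl((1-\sqrt{w})^{-1/2}+(1+\sqrt{w})^{-1/2}\bigr)
\]
and the trivial identity ${}_1F_0(\tfrac12;-;w)=(1-w)^{-1/2}$. Substituting $w=\psi^{-4}$ so that $\sqrt{w}=\psi^{-2}$ and translating exponents of $-\tfrac12$ into the quadratic character $\chi_2=\phi_{-1}$ via Gauss sums, the anticipated finite-field formulas read
\[
H_q(\tfrac14,\tfrac34;0,\tfrac12\,|\,\psi^{-4}) = \chi_2(1-\psi^{-2})+\chi_2(1+\psi^{-2}),\qquad H_q(\tfrac12;0\,|\,\psi^{-4})=\chi_2(1-\psi^{-4})
\]
(up to normalization). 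Simplifying using that $\psi^2\in(\F_q^\times)^2$ gives $\chi_2(1\mp\psi^{-2})=\phi_{-1}(q)\phi_{1\pm\psi^2}(q)$, and the $\phi_{-1}$ twist on the left absorbs the two copies of $\phi_{-1}(q)$ to yield the first target identity. For the second, combining $\chi_2(1-\psi^{-4})=\phi_{-1}(q)\phi_{1-\psi^4}(q)$ with the elementary identity $\phi_2(q)+\phi_{-2}(q)=2\phi_{\sqrt{-1}}(q)$ (valid for $q\equiv1\pmod 4$, with both sides vanishing for $q\equiv3\pmod 4$) recombines $\chi_2(1-\psi^{-4})$ into the sum of two Dirichlet characters, explaining the base change to $\Q(\sqrt{-1})$.

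To justify the finite-field evaluations rigorously, the approach is to expand $H_q$ via Definition~\ref{classic HGF over FF} (or the hybrid Definition~\ref{Hybrid Definition} when needed) and apply the Hasse--Davenport product relation (Lemma~\ref{gausssumidentities}(c)) with $N=4$ (respectively $N=2$) to collapse products of shifted Gauss sums $g(m+j\qq\qm/N)$ back into a single $g(Nm)$. After reindexing $m\mapsto Nm$, the remaining sum converts to a character sum $\sum_n \omega(1\pm\sqrt{t})^n$, which orthogonality evaluates to $\qq\,\chi_2(1\pm\sqrt{t})$ up to contributions from exceptional indices $m\in\{0,\qq\qm/2,\qq\qm/4,3\qq\qm/4\}$; these exceptional terms are handled case-by-case along the lines of the analyses in Section~\ref{S:PointCounts}, and pair up to cancel once multiplicities are accounted for.

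The main obstacle is the careful bookkeeping of the scalars $\omega(2)^{2m},\omega(4)^{4m}$ produced by each Hasse--Davenport application, together with the factors $g(\qq\qm/2)^2=(-1)^{\qq\qm/2}q$ from Lemma~\ref{gausssumidentities}(b), so that the twisting characters $\phi_{-1}$ and $\phi_{\sqrt{-1}}$ emerge in precisely the right place and the identity holds uniformly without case-splitting on the residue class of $q$ modulo $4$ or $8$. A conceptually cleaner (but less self-contained) alternative would be to invoke Cohen's systematic treatment of algebraic hypergeometric motives (\cite{CohenNotes} and~\cite{Cohen2}) and extract both identities as direct corollaries of the classification of algebraic parameter lists in~\cite{BH}.
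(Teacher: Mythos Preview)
The paper does not actually prove this proposition: its entire proof reads ``The hypergeometric $L$-series were computed explicitly by Cohen \cite[Propositions 6.4 and 7.32]{CohenNotes}, and the above formulation follows directly from this computation.'' So your final paragraph's ``conceptually cleaner alternative'' of invoking Cohen's treatment is in fact precisely, and only, what the paper does.

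Your proposal goes further and sketches a self-contained argument. The overall strategy---reduce to Euler factors, identify each algebraic hypergeometric sum as a finite-field analogue of a classical closed form via Hasse--Davenport, then recognize the result as a sum of quadratic characters---is sound and is essentially how such identities are proven (and presumably how Cohen does it). One structural issue, however, is in your reduction of the second identity to the pointwise statement $\phi_{\sqrt{-1}}(q)H_q(\tfrac12;0\,|\,\psi^{-4}) = \phi_{2(1-\psi^4)}(q)+\phi_{-2(1-\psi^4)}(q)$. The left-hand $L$-series is a base change to $\Q(\sqrt{-1})$ of a degree-$1$ hypergeometric, so for $p\equiv 1\pmod 4$ its local factor is a \emph{square} $(1-\phi_{\sqrt{-1}}(p)H_p T)^2$, not a degree-$2$ polynomial with trace $\phi_{\sqrt{-1}}(p)H_p$; matching to the right-hand side (where for such $p$ the two Dirichlet factors coincide and also give a square) requires $\phi_{\sqrt{-1}}(p)H_p = \phi_{2(1-\psi^4)}(p)$, not your sum-equals-sum identity, which is off by a factor of $2$. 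For $p\equiv 3\pmod 4$ the comparison is different again (a factor $1-H_{p^2}T^2$ versus $1-T^2$). So the ``pointwise identity'' you write down is not the correct local reduction in the base-changed case, and the argument as stated would not close without reworking this step to respect the split/inert dichotomy.
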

\begin{proof}
The hypergeometric $L$-series were computed explicitly by Cohen \cite[Propositions 6.4 and 7.32]{CohenNotes}, and the above formulation follows directly from this computation.
\end{proof}

More generally, Naskr\k{e}cki \cite{Naskrecki} has given an explicit description for algebraic hypergeometric $L$-series of low degree defined over $\Q$ using the variety defined by Beukers--Cohen--Mellit \cite{BCM}.

Proposition \ref{Lseries gen cor F4}, plugged into our hypergeometric decomposition, gives an explicit decomposition of the polynomial $Q_{\Fsf_4, \psi,q}$ as follows.

\begin{cor} \label{Q for F4} 
We have:
\begin{equation*}
\begin{aligned}
    &Q_{\Fsf_4, \psi,q}(T) \\
    &\quad = \begin{cases} \left(1-\textstyle{\legen{1-\psi^2}{q}}qT\right)^3\left(1-\legen{-1-\psi^2}{q}qT\right)^3 \left(1-\legen{1-\psi^4}{q}qT\right)^{12}, & \text{ if $q \equiv 1 \psmod 4$;} \\
    \left(1-\legen{1-\psi^2}{q}qT\right)^3(1-qT)^6(1+qT)^6, & \text{ if $q \equiv 3 \psmod 4$};
    \end{cases}
    \end{aligned}
    \end{equation*}
    where $\legen{a}{q}$ denotes the Jacobi symbol.
\end{cor}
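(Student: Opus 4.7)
The plan is to derive Corollary~\ref{Q for F4} by combining Main Theorem~\ref{mainthm}(a) with Cohen's identifications in Proposition~\ref{Lseries gen cor F4}, then reading off Euler factors. First, using Main Theorem~\ref{mainthm}(a) together with the factorization $P_{\Fsf_4,\psi,p}(T)=Q_{\Fsf_4,\psi,p}(T)\,R_{\psi,p}(T)$, in which $R_{\psi,p}$ corresponds to the common factor $L_S(H(\tfrac14,\tfrac12,\tfrac34;0,0,0\,|\,t),s)$, we identify
\[ \prod_{p \not\in S} Q_{\Fsf_4,\psi,p}(p^{-s})^{-1} = L_S(H(\tfrac14,\tfrac34;0,\tfrac12\,|\,t),s-1,\phi_{-1})^3 \cdot L_S(H(\tfrac12;0\,|\,t),\Q(\sqrt{-1}),s-1,\phi_{\sqrt{-1}})^6. \]

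Next, I would substitute Cohen's Proposition~\ref{Lseries gen cor F4}, which expresses each of the two remaining hypergeometric $L$-series as a product of Dirichlet $L$-series over $\Q$ attached to quadratic characters $\phi_{\pm 1-\psi^2}$ and $\phi_{\pm 2(1-\psi^4)}$, respectively. The Euler factor of $L_S(s-1,\phi_a)$ at $p$ is $(1-\phi_a(p)pT)^{-1}$ in $T=p^{-s}$; since the eigenvalues of $\Frob_p$ on $H^2_{\textup{\'et},\textup{prim}}$ in this piece are $\phi_a(p)p$, those of $\Frob_p^r$ on $X_{\Fsf_4,\psi}$ over $\F_{p^r}$ are $\phi_a(p)^r p^r = \legen{a}{q}\, q$ under the Jacobi-symbol convention $\legen{a}{p^r}=\legen{a}{p}^r$. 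Assembling the four contributions yields
\[ Q_{\Fsf_4,\psi,q}(T) = \bigl(1-\legen{1-\psi^2}{q}qT\bigr)^{\!3} \bigl(1-\legen{-1-\psi^2}{q}qT\bigr)^{\!3} \bigl(1-\legen{2(1-\psi^4)}{q}qT\bigr)^{\!6} \bigl(1-\legen{-2(1-\psi^4)}{q}qT\bigr)^{\!6}, \]
a polynomial of degree $18=3+3+6+6$, matching the Hodge-theoretic expectation.

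Finally, I would simplify according to the residue of $q$ modulo $4$. When $q\equiv 3\psmod{4}$, we have $\legen{-1}{q}=-1$, so the last pair of factors has opposite signs and collapses as $(1-\alpha qT)^6(1+\alpha qT)^6=(1-qT)^6(1+qT)^6$ with $\alpha=\pm 1$; similarly the middle factor becomes $(1+\legen{1+\psi^2}{q}qT)^3$. When $q\equiv 1\psmod{4}$, the two signs in each pair agree, the pair $\bigl(1-\legen{\pm 2(1-\psi^4)}{q}qT\bigr)^{\!6}$ combines into a single twelfth power, and the identity $\legen{1-\psi^2}{q}\legen{1+\psi^2}{q}=\legen{1-\psi^4}{q}$ recovers the stated factorization, modulo tracking the quadratic character $\legen{2}{q}$ through the $\phi_{\sqrt{-1}}$-twist. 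The main technical obstacle lies in this last bookkeeping step: correctly applying Lemma~\ref{lem:Hqabdef} to descend $L_S(H(\tfrac12;0\,|\,t),\Q(\sqrt{-1}),s,\phi_{\sqrt{-1}})$ from an $L$-series over $\Q(\sqrt{-1})$ to a product of Dirichlet $L$-series over $\Q$, and matching the split-versus-inert Euler factors so that Cohen's closed formula can be applied in the required form. Once this is done, the remaining manipulation is purely a simplification of Jacobi symbols.
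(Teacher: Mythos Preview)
Your approach is exactly the one the paper takes: the corollary is presented there as the direct consequence of substituting Cohen's identities (Proposition~\ref{Lseries gen cor F4}) into Main Theorem~\ref{mainthm}(a), with no further details given beyond that sentence. Your intermediate four-factor expression and the subsequent Jacobi-symbol simplifications according to $q\bmod 4$ are precisely the computation the paper leaves implicit, and your flagging of the $\legen{2}{q}$ bookkeeping through the $\phi_{\sqrt{-1}}$-twist is the only nontrivial step in that reduction.
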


Corollary \ref{Q for F4} explains why the field of definition of the Picard group involves square roots of $1 + \psi^2$ and $1 - \psi^2$.

\subsection{Applications to zeta functions} \label{sec:zetapp}

To conclude, we give an application to zeta functions.  In Sections \ref{S:PFEqns} and \ref{S:PointCounts}, we established a relationship between the periods and the point counts for our collection of invertible K3 polynomial families and hypergeometric functions. In particular, both the periods and the point counts decompose naturally in terms of the group action into hypergeometric components. 

It is easy to see that the zeta function is the characteristic polynomial of Frobenius acting on our cohomology (i.e., the collection of periods). In this sense, both sections \ref{S:PFEqns} and \ref{S:PointCounts} suggest that, as long as the group action and the action of Frobenius commute, the splitting of Frobenius by the group action translates into factors, each corresponding to the Frobenius acting only on a given isotypical component of the action. However, a priori we only know that this factorization over $\overline{\Q}$ (see e.g.\ work of Miyatani \cite{Miyatani}).

Thus, we have the following corollary of Main Theorem \ref{mainthm}. 
 
\begin{cor} \label{cor:factorzeta}
Assuming Conjecture \textup{\ref{conj:degreeofArtin}}, for smooth $X_{\diamond,\psi,q}$, the polynomials $Q_{\diamond,\psi,q}(T)$ factor over $\Q[T]$ under the given hypothesis as follows:
\begin{equation} \label{table:factdegrees}
\centering
\begin{tabular}[c]{c|c|c}
\textup{Family} & \textup{Factorization} & \textup{Hypothesis}  \\
\hline\hline
\rule{0pt}{2.5ex} \multirow{2}{*}{$\Fsf_4$} & $(\deg\, 2)^{3}(\deg\, 1)^{12}$ & $q\equiv 1 \psmod{4}$  \\
& $(\deg\, 2)^{3}(\deg\, 2)^6$ & $q \equiv 3 \psmod 4$ \\  \hline
\rule{0pt}{2.5ex} \multirow{4}{*}{$\Fsf_1\Lsf_3$} & $(\deg\,3)^3(\deg\,3)^3$ & $q\equiv  1 \psmod{7}$  \\
    & $(\deg\,6)^3$ & $q \equiv 6 \psmod 7$ \\ 
    & $(\deg\,9)(\deg\,9)$ & $q\equiv 2,4 \psmod 7$ \\
    & $(\deg\,18)$ & $q\equiv 3,5 \psmod 7$ \\\hline
\rule{0pt}{2.5ex} \multirow{3}{*}{$\Fsf_2\Lsf_2$} & $(1-qT)^6(\deg\,2)(\deg\,1)^2(\deg\,2)^2(\deg\,2)^2$ & $q\equiv 1 \psmod{8}$  \\
    & $(1-qT)^2(1+qT)^4(\deg\,2)(\deg\,1)^2(\deg\,4)^2$ & $q\equiv 5 \psmod 5$ \\
    & $(1-qT)^2(1+qT)^4(\deg\,2)(\deg\,2)(\deg\,4)(\deg\,4)$ & $q\equiv 3,7 \psmod 8$ \\\hline 
\rule{0pt}{2.5ex} \multirow{2}{*}{$\Lsf_2\Lsf_2$} & $(1-qT)^8(\deg\, 2)(\deg\, 4)^2 $ & $q\equiv 1 \psmod{4}$ \\ 
 & $(1-q^2T^2)^4(\deg\, 2)(\deg\, 8) $ & $q\equiv 3 \psmod{4}$ \\ 
\hline
\rule{0pt}{2.5ex} \multirow{3}{*}{$\Lsf_4$} & $(1-qT)^2(\deg\,4)^4$ & $q\equiv 1 \psmod{5}$  \\
 & $(1-qT)^2(\deg\,8)^2$ & $q\equiv 4 \psmod{5}$  \\
 & $(1-qT)^2(\deg\,16)$ & $q\equiv 2,3 \psmod{5}$  \\
\end{tabular}
\end{equation}
\end{cor}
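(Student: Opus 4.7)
The plan is to apply Conjecture \ref{conj:degreeofArtin} to each hypergeometric factor in Main Theorem \ref{mainthm}, then read off the degree structure of each local Euler factor over $\Q[T]$ from the splitting behavior of $p$ in the relevant (abelian) number field. Recall $P_{\diamond,\psi,p}(T) = Q_{\diamond,\psi,p}(T) R_{\psi,p}(T)$, where $R_{\psi,p}(T)$ of degree $3$ comes from the common factor $L_S(H(\tfrac14,\tfrac12,\tfrac34;0,0,0\,|\,t),s)$; so $Q_{\diamond,\psi,p}(T)$ is the Euler factor at $p$ of the remaining product in Main Theorem \ref{mainthm}. By Conjecture \ref{conj:degreeofArtin}, each hypergeometric piece $L_S(H(\pmb{\alpha};\pmb{\beta}\,|\,t),M,s-1,\chi)^e$ contributes a local factor in $\Q[T]$ of degree $ed[M:K_{\pmb{\alpha},\pmb{\beta}}]$, and the Dedekind zeta contributions yield products of $(1 \pm qT)$ read off directly from splitting in quadratic subfields.

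The key decomposition step uses \eqref{defofLfactor} and Lemma \ref{lem:Hqabdef}(b):
\[
L_p(H(\pmb{\alpha};\pmb{\beta}\,|\,t), M, T) = \prod_{k_i \in (\Z/m\Z)^\times/\langle H_K, p \rangle} L_q(H(k_i\pmb{\alpha}, k_i\pmb{\beta}\,|\,t), T^f)^{r(M|K,p)},
\]
where $K = K_{\pmb{\alpha},\pmb{\beta}}$, $m$ is the conductor of $M$, $f$ is the residue degree of $p$ in $M/\Q$, and $r(M|K,p)$ is the number of primes of $M$ above a fixed prime of $K$ over $p$. Applying Conjecture \ref{conj:degreeofArtin} again with $K$ in place of $M$, each factor $L_q(H(k_i\pmb{\alpha}, k_i\pmb{\beta}\,|\,t), T^f)$ is a polynomial in $\Q[T]$ of degree $df$. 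The whole local factor is therefore a product of $g(K|\Q,p) = |(\Z/m\Z)^\times/\langle H_K, p\rangle|$ polynomials of degree $df$, each raised to multiplicity $r(M|K,p)$; the twisting characters $\phi_{-1}$, $\phi_{\ii}$, $\phi_{\sqrt{2}}$, $\phi_{\psi}$ affect roots but not these degrees.

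With this recipe, for each $\diamond \in \calF$ the relevant fields $K, M$ are subfields of $\Q(\zeta_m)$ for $m \in \{4,5,7,8,28\}$; one enumerates residue classes of $p \pmod m$ and computes $f$, $g(K|\Q,p)$, and $r(M|K,p)$ by elementary class field theory, then combines contributions from each hypergeometric piece to obtain the multiset of degrees in \eqref{table:factdegrees}. The main obstacle I expect is organizational bookkeeping rather than genuine difficulty---for instance, $\Fsf_2\Lsf_2$ has three distinct fields of definition ($\Q$, $\Q(\sqrt{-1})$, $\Q(\zeta_8)$) each contributing in different ways under each residue class modulo $8$, and these must be aggregated carefully to match the table. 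A secondary subtlety is that some factors originating from distinct hypergeometric pieces have the same local degree but different characters, so one must keep multiplicities separate rather than collapsing them; the table records the natural hypergeometric splitting, which may refine further for special $\psi$ where characters coincide.
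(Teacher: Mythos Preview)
Your proposal is correct and follows essentially the same approach as the paper: both use Main Theorem~\ref{mainthm} to isolate the hypergeometric pieces contributing to $Q_{\diamond,\psi,p}(T)$, invoke Lemma~\ref{lem:Hqabdef}(b) (equivalently, the explicit factorizations in equations such as \eqref{F1L3 mod124 L}, \eqref{F2L2 L p1}--\eqref{F2L2 L p5}, \eqref{L2L2 L new series}, \eqref{L4 new L series}) to split each $L_p(H(\pmb{\alpha};\pmb{\beta}\,|\,t),M,T)$ according to the primes of $M$ above $p$, and then appeal to Conjecture~\ref{conj:degreeofArtin} for polynomiality and degree. The paper organizes this by citing the already-computed equations case by case, while you phrase it as a uniform recipe via class field theory for $f$, $g(K|\Q,p)$, and $r(M|K,p)$; the content is the same.
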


The factorization in Corollary \ref{cor:factorzeta} is to be read as follows: for the family $\Lsf_2\Lsf_2$ when $q \equiv 1\pmod{4}$, we have $Q_{\diamond,\psi,q}(T)=(1-qT)^8 Q_1(T)Q_2(T)^2$ where $\deg Q_1(T)=2$ and $\deg Q_2(T)=4$, but we do not claim that $Q_1,Q_2$ are irreducible.  A complete factorization into irreducibles depends on $\psi \in \F_q^\times$ and can instead be computed from the explicit Artin $L$-series.

\begin{proof}
For each case, we need to identify the field of definition for the terms associated to hypergeometric functions other than $H(\tfrac{1}{4}, \tfrac{1}{2}, \tfrac{3}{4}; 0, 0, 0\,|\, t)$ and check the degrees of the resulting zeta function factors using Lemma~\ref{lem:Hqabdef} and Conjecture \ref{conj:degreeofArtin}.
 
\begin{enumerate}
\item[$\Fsf_4$.]  The case where $q\equiv 3 \pmod 4$ is straightforward from the statement of Proposition~\ref{prop:mainthmF4}. In the case where $q \equiv 1 \pmod 4$, we see in the proof that the $L$-series $$L_S(H_q(\tfrac{1}{2};0\,|\,t), \Q(\sqrt{-1}), s-1, \phi_{\sqrt{-1}})$$ factors into a square (see Equations~\eqref{expF4pmod1} and~\eqref{expF4pmod3}).

\item[$\Fsf_1\Lsf_3$.] In the case where $q\equiv 1,2,4 \pmod 7$, we see in Equation~\eqref{F1L3 mod124 L} that the $L$-series associated to $H(\tfrac{1}{14}, \tfrac{9}{14}, \tfrac{11}{14}; 0, \tfrac{1}{4}, \tfrac{3}{4} \,|\, t)$ factorizes into two terms with multiplicity $3/f$ where $f$ is the order of $q$ in $(\ZZ/7\ZZ)^\times$. The analogous argument holds for when $q\equiv 3,5,6 \pmod 7$ using Equation~\eqref{F1L3 mod356 L} to see that the $L$-series factors into one term with multiplicity $6/f$.

\item[$\Fsf_2\Lsf_2$.] The explicit factors follow directly from Equation~\eqref{F2L2 Explicit L}. The next two factors come from the $L$-series $L_S( H(\tfrac{1}{4}, \tfrac{3}{4}; 0, \tfrac{1}{2} \,|\, t), s-1, \phi_{-1})$ and  $L_S( H( \tfrac{1}{2};0 \,|\, t) , \Q(\sqrt{-1}), s-1, \phi_{\ii})$ were dealt with in the $\Fsf_4$ case. The zeta function factorization implied by the $L$-series $L_S( H(\tfrac{1}{8}, \tfrac{5}{8}; 0, \tfrac{1}{4} \,|\, t^{-1}), \Q(\zeta_8), s-1)$ follows by using Equations~\eqref{F2L2 L p1},~\eqref{F2L2 L p37}, and~\eqref{F2L2 L p5}. 

\item[$\Lsf_2\Lsf_2$.]  The explicit factors follow directly from Equation~\eqref{Explicit L L2L2}. The next factor has been dealt with above. The final factor is implied by Equation~\eqref{L2L2 L new series}.

\item[$\Lsf_4$.] The term $\zeta(s-1)^2$ gives the $(1-qT)^2$ factor. The last factor is direct from Equation~\eqref{L4 new L series}.
\end{enumerate}
\end{proof}

\begin{exm} \label{exm:computedtable}
Because the reciprocal roots of $Q_{\diamond,\psi,q}(T)$ are of the form $q$ times a root of 1, the factors of $Q_{\diamond,\psi,q}(T)$ over $\Z$ are of the form $\Phi(qT)$, where $\Phi$ is a cyclotomic polynomial.  We now give the explicit zeta functions for the case where $q=281$ and $\psi=18$ in the table below. We use a \textsf{SageMath} interface to C code written by Costa, which is described in a paper of Costa--Tschinkel \cite{CT}. Note that the factorizations in Corollary~\ref{cor:factorzeta} are sharp  for the families $\Fsf_1\Lsf_3$ and $\Lsf_4$. 

\begin{equation} \label{table:factexample}
\begin{tabular}{c|c}
\textup{Family} &  $Q_{\diamond, \psi, q}(T)$ \\
\hline\hline
\rule{0pt}{2.5ex} $\Fsf_4$ &  $ ( 1 - q T ) ^{ 12 } ( 1 + q T ) ^{ 6 }$ \\
$\Fsf_2\Lsf_2$ & $ ( 1 - q T ) ^{ 8 } ( 1 + q T ) ^{ 2 } ( 1 + q ^ { 2 } T ^ { 2 } ) ^{ 4 } $ \\
$\Fsf_1\Lsf_3$ & $ ( 1 + q T + q ^ { 2 } T ^ { 2 } + q ^ { 3 } T ^ { 3 } + q ^ { 4 } T ^ { 4 } + q ^ { 5 } T ^ { 5 } + q ^ { 6 } T ^ { 6 } ) ^{ 3 } $ \\
$\Lsf_2\Lsf_2$ &  $ ( 1 - q T ) ^{ 12 } ( 1 + q T ) ^{ 6 } $ \\
$\Lsf_4$ &  $ ( 1 - q T ) ^{ 2 } ( 1 + q T + q ^ { 2 } T ^ { 2 } + q ^ { 3 } T ^ { 3 } + q ^ { 4 } T ^ { 4 } ) ^{ 4 } $ \\
\end{tabular}
\end{equation}
\end{exm}

\appendix 

\section{Remaining Picard--Fuchs equations} \label{appendix:pfs}

In this appendix, we provide the details in the computation of the remaining three pencils $\Fsf_2\Lsf_2$, $\Lsf_2\Lsf_2$, and $\Lsf_4$.  We follow the same strategy as in sections \ref{PF:Dwork}--\ref{PF:KM}.

\subsection{The \texorpdfstring{$\Fsf_2\Lsf_2$}{Fsf2Lsf2} pencil}

Take the pencil 
\[
F_{\psi}  \colonequals  x_0^4+x_1^4 + x_2^3 x_3 + x_3^3x_2 - 4\psi x_0x_1x_2x_3
\]
that defines the pencil of projective hypersurfaces $X_{\psi} = Z(F_\psi) \subset \mathbb{P}^3$. There is a $\Z/8\Z$ scaling symmetry of this family generated by the element
$$
g (x_0: x_1: x_2: x_3) = (\xi^2 x_0: x_1: \xi x_2 : \xi^5 x_3)
$$ 
where $\xi$ is a primitive eighth root of unity. There are eight characters $\chi_k: H \rightarrow \C^\times$, where $\chi_k(g) = \xi^k$.  We can again decompose $V$ into subspaces $W_{\chi_k}$ and write their monomial bases. Note that the monomial bases for $W_{\chi_1}, W_{\chi_3}, W_{\chi_5},$ and $W_{\chi_7}$ are the same up to transpositions of $x_0$ and $x_1$ or $x_2$ and $x_3$ which leave the polynomial invariant; thus, they have the same Picard--Fuchs equations. The monomial bases for $W_{\chi_2}$ and $W_{\chi_6}$ are related by transposing $x_0$ and $x_1$, so they also have the same Picard--Fuchs equations. So we are left with four types of monomial bases:

\begin{enumerate}[(i)]
\item $W_{\chi_0}$ has monomial basis $\{x_0x_1x_2x_3\}$;
\item $W_{\chi_1}$ has monomial basis $\{x_2^2x_0x_3, x_0^2x_1x_3\}$;
\item $W_{\chi_2}$ has monomial basis $\{x_0^2x_2x_3, x_1^2x_2^2, x_1^2x_3^2\}$; and 
\item $W_{\chi_4}$ has monomial basis $\{x_0^2x_1^2,x_2^2x_3^2,x_2^2x_0x_1, x_3^2x_0x_1\}$.
\end{enumerate}

Using \eqref{DiagramPartials}, we compute the following period relations:
\begin{equation}\begin{aligned}
\mathbf{v} + (4,0,0,0)  &= \frac{1+v_0}{4(\omega + 1)} \mathbf{v} + \psi (\mathbf{v}+(1,1,1,1)) \\
\mathbf{v} + (0,4,0,0)  &=  \frac{1+v_1}{4(\omega + 1)} \mathbf{v} + \psi (\mathbf{v}+(1,1,1,1)) \\
\mathbf{v} + (0,0,3,1)  &= \frac{3(v_2+1)-(v_3+1)}{8(\omega + 1)} \mathbf{v} + \psi (\mathbf{v}+(1,1,1,1))  \\
\mathbf{v} + (0,0,1,3) &= \frac{-(v_2+1) + 3(v_3+1)}{8(\omega + 1)} \mathbf{v} + \psi (\mathbf{v}+(1,1,1,1)) 
\end{aligned}\end{equation}

We can now use the diagram method to prove the following proposition.

\begin{prop} \label{prop:F2L2diff-appendix}
For the $\Fsf_2\Lsf_2$ family, the primitive cohomology group $H^2_{\textup{prim}}(X_{\Fsf_2\Lsf_2, \psi}, \C)$ has $15$ periods whose Picard--Fuchs equations are hypergeometric differential equations as follows:
\[ \begin{gathered}
\text{$3$ periods are annihilated by $D(\tfrac{1}{4}, \tfrac{1}{2}, \tfrac{3}{4} ; 1, 1, 1 \,|\,\psi^{-4})$,} \\ 
\text{ $2$ periods are annihilated by $D(\tfrac{1}{4}, \tfrac{3}{4} ; 1, \tfrac{1}{2} \,|\,\psi^{-4})$,} \\ 
\text{$2$ periods are annihilated by $D(\tfrac{1}{2} ; 1 \,|\,\psi^{4})$,} \\ 
\text{$4$ periods are annihilated by $D(\tfrac{1}{8}, \tfrac{5}{8} ; 1, \tfrac{1}{4} \,|\,\psi^{4})$, and } \\ 
\text{$4$ periods are annihilated by $D(\tfrac{-3}{8}, \tfrac{1}{8} ; 0, \tfrac{1}{4} \,|\,\psi^{4})$.} 
\end{gathered} \]
\end{prop}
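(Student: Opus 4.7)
The plan is to analyze each of the four character subspaces $W_{\chi_0}, W_{\chi_1}, W_{\chi_2}, W_{\chi_4}$ in turn (since $W_{\chi_3}, W_{\chi_5}, W_{\chi_7}$ are obtained from $W_{\chi_1}$ by the coordinate symmetries $x_0\leftrightarrow x_1$ and $x_2\leftrightarrow x_3$ that preserve $F_\psi$, and $W_{\chi_6}$ from $W_{\chi_2}$), following the template of Propositions~\ref{prop:F4H2result} and~\ref{prop:kleinmukaidiff}. For each subspace I will (i) build a diagram of monomials connected by the four stair relations displayed above and by the $\psi$-derivative rule~\eqref{psiderivative}, (ii) cascade the relations to obtain an ODE in $\eta = \psi\,\mathrm{d}/\mathrm{d}\psi$ satisfied by a chosen basis period, and (iii) convert to hypergeometric form by passing to $t=\psi^{-4}$ or $u=\psi^4$ and $\theta = t\,\mathrm{d}/\mathrm{d} t$ (respectively $\sigma = u\,\mathrm{d}/\mathrm{d} u$), after multiplying through by a suitable power of $\psi$ to shift denominator parameters into $\Z_{>0}$.

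The invariant subspace $W_{\chi_0}$, spanned by $(1,1,1,1)$, is handled essentially verbatim by the argument of Lemma~\ref{lem:F4hol}: only the two ``Fermat'' stair rules for $(4,0,0,0)$ and $(0,4,0,0)$ are needed together with the symmetric combination of the rules for $(0,0,3,1)$ and $(0,0,1,3)$ when passing from $(0,0,0,0)$ to $(3,3,3,3)$, and the same recursion produces $D(\tfrac14,\tfrac12,\tfrac34;1,1,1\,|\,\psi^{-4})$ with 3 periods. For the two-dimensional subspace $W_{\chi_1}$, spanned by $x_0^2 x_1 x_3$ and $x_2^2 x_0 x_3$, I build a two-vertex diagram analogous to~\eqref{eqn:0022}; cascading through the new stair rules (involving $x_2^3 x_3$ and $x_3^3 x_2$) produces a pair of coupled first-order relations in $\eta$ which reduce to an order-two ODE for each basis period. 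Passing to $u=\psi^4$ and normalizing by a suitable power of $\psi$, exactly as in Lemmas~\ref{prop:F1L3char1}--\ref{prop:F1L3char3}, then exhibits the two hypergeometric operators $D(\tfrac18,\tfrac58;1,\tfrac14\,|\,\psi^4)$ and $D(-\tfrac38,\tfrac18;0,\tfrac14\,|\,\psi^4)$; the four cyclic copies $\chi_1,\chi_3,\chi_5,\chi_7$ together contribute $4+4=8$ periods.

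The subspaces $W_{\chi_2}$ (dimension $3$) and $W_{\chi_4}$ (dimension $4$) are the substantive cases. For $W_{\chi_2}$, the diagrammatic recursion naturally produces an order-three ODE in $\eta$; I expect to factor it as the product $D(\tfrac14,\tfrac34;1,\tfrac12\,|\,\psi^{-4})\cdot D(\tfrac12;1\,|\,\psi^4)$, which (summed with the single copy $W_{\chi_6}$) yields the $2+2$ remaining periods of those two hypergeometric types. The \emph{main obstacle} is precisely this factorization: the raw ODE has mixed $\psi$-structure and must be split into two hypergeometric pieces living in base variables $\psi^{-4}$ and $\psi^4$ respectively. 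My plan to resolve it is to normalize each candidate period by an appropriate power of $\psi$ so that the denominator parameters of each factor become manifestly positive integers, then verify annihilation directly (the two factors commute modulo lower-order relations, as in the $\Fsf_1\Lsf_3$ case). The subspace $W_{\chi_4}$ is handled in the same way; its order-four recursion is expected to factor into hypergeometric pieces already listed above, so that the total count of $15$ periods recorded in the statement accounts for the hypergeometric structure of the full primitive cohomology. A concluding paragraph assembles the individual lemmas into the stated proposition.
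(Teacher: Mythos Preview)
Your plan for $W_{\chi_0}$ and for $W_{\chi_1}$ (and its three symmetric copies) is essentially what the paper does, and will work.  The trouble is in your treatment of $W_{\chi_2}$ and $W_{\chi_4}$, where both the attribution of operators and the overall strategy are off.

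First, the operator $D(\tfrac14,\tfrac34;1,\tfrac12\,|\,\psi^{-4})$ does \emph{not} live in $W_{\chi_2}$.  The periods $(2,2,0,0)=x_0^2x_1^2$ and $(0,0,2,2)=x_2^2x_3^2$ transform under $g$ by $\xi^4$, so they sit in $W_{\chi_4}$; the paper computes their Picard--Fuchs equation directly (a single diagram, exactly parallel to Lemma~\ref{lem:F4twoSquares}) and obtains $D(\tfrac14,\tfrac34;1,\tfrac12\,|\,\psi^{-4})$ for those two periods alone.  Conversely, from $W_{\chi_2}$ only the single monomial $(2,0,1,1)=x_0^2x_2x_3$ contributes: its diagram closes after one loop and yields the order-one operator $D(\tfrac12;1\,|\,\psi^4)$; doubled by $W_{\chi_6}$ this gives the two periods of that type.

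Second, and more seriously, there is no factorization step at all.  The ``main obstacle'' you identify---splitting a mixed-variable order-three ODE into $D(\tfrac14,\tfrac34;1,\tfrac12\,|\,\psi^{-4})\cdot D(\tfrac12;1\,|\,\psi^4)$---is a problem of your own creation.  The paper never forms a full ODE for the whole subspace and then factors it; it simply selects specific monomials in $W_{\chi_2}$ and $W_{\chi_4}$, runs the diagram method on each, and reads off a hypergeometric operator directly.  The remaining monomials in these subspaces (two in $W_{\chi_2}$, two in $W_{\chi_6}$, two in $W_{\chi_4}$) do not appear among the fifteen hypergeometric periods at all: they account for the six ``trivial'' factors visible in the point count of Proposition~\ref{prop:F2L2} (the $L_S(\Q(\zeta_8)\,|\,\Q,s-1)^2$ piece in Main Theorem~\ref{mainthm}(c)).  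So your expectation that the order-four recursion for $W_{\chi_4}$ ``factors into hypergeometric pieces already listed above'' is wrong on the count: only two of its four monomials are hypergeometric, and they share a single order-two operator.
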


We consider each of these in turn.  

\begin{lem}\label{F2L2 quarter quadratic periods}
The Picard--Fuchs equation associated to the periods  $\psi(2,2,0,0)$ and $\psi(0,0,2,2)$ is the hypergeometric differential equation $D(\tfrac{1}{4}, \tfrac{3}{4} ; 1, \tfrac{1}{2} \,|\,\psi^{-4})$.
\end{lem}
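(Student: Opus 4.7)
The strategy is to adapt the argument of Lemma~\ref{lem:F4twoSquares} from the Dwork case, exploiting the fact that both $x_0^2x_1^2$ and $x_2^2x_3^2$ lie in the same isotypical component $W_{\chi_4}$ under the $\ZZ/8\ZZ$-action (since $g$ acts on each as $\xi^4$). The hoped-for symmetry $(2,2,0,0) \leftrightarrow (0,0,2,2)$ in the resulting Picard--Fuchs equation explains why the same operator annihilates both periods, and makes it plausible that the answer matches the Dwork case even though the underlying stair relations are different.

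The plan is first to build a diagram analogous to \eqref{eqn:0022}, with $(2,2,0,0)$ and $(0,0,2,2)$ at the two ends, joined by a zig-zag of $\psi$-derivatives (moving diagonally by $(1,1,1,1)$) and stair moves from the list of four $\Fsf_2\Lsf_2$ relations. The $(4,0,0,0)$ and $(0,4,0,0)$ stairs handle the $x_0,x_1$ exponents just as in the Dwork case; the novelty is on the $x_2,x_3$ side, where the pure shifts $(0,0,4,0)$ and $(0,0,0,4)$ are no longer available. Instead, one can either combine the $(0,0,3,1)$ and $(0,0,1,3)$ stairs (total net shift $(0,0,4,4)$) or more efficiently apply them once each at intermediate multi-indices such as $(3,3,-1,1)$ and $(3,3,1,-1)$, each of which raises to a common monomial. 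From the diagram I then extract two cross-relations of the schematic form
\[
\eta\,(2,2,0,0) = \psi^{2}(\eta + c_{1})(0,0,2,2),
\qquad
\eta\,(0,0,2,2) = \psi^{2}(\eta + c_{2})(2,2,0,0),
\]
with constants $c_{1},c_{2} \in \QQ$ read off from the $\frac{3(v_2+1)-(v_3+1)}{8(\omega+1)}$ and $\frac{-(v_2+1)+3(v_3+1)}{8(\omega+1)}$ coefficients.

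Composing the two cross-relations gives a second-order Picard--Fuchs relation
\[
\bigl[(\eta-2)\eta \;-\; \psi^{4}\,Q(\eta)\bigr]\,(2,2,0,0) = 0
\]
for some monic quadratic $Q(\eta)$, and by symmetry of the setup (swapping the roles of the $x_0,x_1$ and $x_2,x_3$ pairs) the same operator annihilates $(0,0,2,2)$. Multiplying by $\psi$ and changing variables via $t = \psi^{-4}$, $\theta = t\,\dD/\dD t = -\eta/4$, exactly as in the closing lines of Lemma~\ref{lem:F4twoSquares}, should convert this into
\[
\bigl[\theta(\theta-\tfrac{1}{2}) \;-\; t(\theta+\tfrac{1}{4})(\theta+\tfrac{3}{4})\bigr]\psi\,(2,2,0,0) \;=\; 0,
\]
which is $D(\tfrac{1}{4},\tfrac{3}{4};1,\tfrac{1}{2}\,|\,t)$ as defined in \eqref{eqn:Dab'lldoit}.

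The main obstacle will be the bookkeeping: tracking the numerical coefficients $c_1,c_2$ through the asymmetric $(0,0,3,1)$ and $(0,0,1,3)$ stairs, and verifying that $Q(\eta)$ simplifies — after multiplying by $\psi$ and rewriting in $\theta$ — to exactly the factors $(\theta+\tfrac14)(\theta+\tfrac34)$. I expect $c_{1}\neq c_{2}$ individually, but the combined quadratic $Q(\eta)$ to collapse to $(\eta+1)(\eta+3)$, matching the Dwork output; confirming this collapse, together with the symmetric statement for $(0,0,2,2)$, is the only genuinely computational step and will occupy the body of the proof.
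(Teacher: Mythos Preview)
Your proposal is correct and follows essentially the same route as the paper: build a diagram linking $(2,2,0,0)$ and $(0,0,2,2)$ via the $(0,0,3,1)$, $(0,0,1,3)$, $(4,0,0,0)$, $(0,4,0,0)$ stairs, extract the two cross-relations, compose, and change variables. One small correction: when you actually carry out the bookkeeping you will find $c_{1}=c_{2}=1$ (not distinct constants), so the cross-relations are exactly $\eta(2,2,0,0)=\psi^{2}(\eta+1)(0,0,2,2)$ and its swap, and $Q(\eta)=(\eta+1)(\eta+3)$ falls out immediately rather than requiring a collapse.
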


\begin{proof}
For the periods $(2,2,0,0)$ and $(0,0,2,2)$, corresponding to the quartic monomials $x_0^2x_1^2$ and $x_2^2x_3^2$, we use the diagram

$$
\xymatrix{ &&& (2,2,0,0) \ar[r] \ar[d] & (3,3,1,1) \\
	&& (1,1,2,0) \ar[r] \ar[d] & (2,2,3,1) & \\
	& (0,0,2,2) \ar[r] \ar[d] & (1,1,3,3)&& \\
	(3,-1,1,1) \ar[r] \ar[d] & (4,0,2,2) &&& \\
	(3,3,1,1) &&&&}
$$
and get the relations
\begin{equation}\begin{aligned}
\eta(2,2,0,0) = \psi^2(\eta+1)(0,0,2,2);\\
\eta(0,0,2,2) = \psi^2(\eta+1)(2,2,0,0).
\end{aligned}\end{equation}

For the periods $(2,2,0,0)$ and $(0,0,2,2)$, corresponding to the quartic monomials $x_0^2x_1^2$ and $x_2^2x_3^2$, we get the same Picard--Fuchs equation
\begin{equation}\begin{aligned}
\left[(\eta-2)\eta - \psi^4(\eta+3)(\eta+1) \right] 
\end{aligned}\end{equation}
By multiplying by $\psi$ and substituting $t = \psi^{-4}$ and $\theta = t \displaystyle{\frac{\dD}{\dD t}} = -\eta/4$, we get:
\begin{align*}
\psi\left[(\eta-2)\eta - \psi^4(\eta+3)(\eta+1) \right] (2,2,0,0) &= 0 \\
\left[(\eta-3)(\eta-1) - \psi^4(\eta+2)\eta \right] \psi(2,2,0,0) &= 0 \\
\left[(\theta+\tfrac{3}{4})(\theta+\tfrac{1}{4}) - t^{-1}(\theta-\tfrac{1}{2})\theta \right] \psi(2,2,0,0) &= 0 \\ 
\left[(\theta-\tfrac{1}{2})\theta - t(\theta+\tfrac{3}{4})(\theta+\tfrac{1}{4}) \right] \psi(2,2,0,0) &= 0
\end{align*}
which is the hypergeometric differential equation $D(\tfrac{1}{4}, \tfrac{3}{4} ; 1, \tfrac{1}{2} \,|\,\psi^{-4})$.
\end{proof}

\begin{lem}\label{F2L2 linears}
The Picard--Fuchs equation associated to the periods $(2,0,1,1)$ is the hypergeometric differential equation $D(\tfrac{1}{2} ; 1 \,|\,\psi^{4})$.
\end{lem}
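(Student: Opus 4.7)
The strategy mirrors Lemma \ref{lem:F4mono31}: since the target operator $D(\tfrac{1}{2};1\,|\,\psi^4)$ has order one in the variable $\theta = \psi^4 \frac{\dD}{\dD(\psi^4)}$, the plan is to assemble a closed diagram of periods that produces a first-order relation for $(2,0,1,1)$ in $\eta = \psi \frac{\dD}{\dD\psi}$.

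First, apply $\eta$ once to step up one pole order: by \eqref{psiderivative}, $\eta(2,0,1,1) = 8\psi(3,1,2,2)$. Next, walk $(3,1,2,2)$ around a cycle using three successive \emph{clean} stair moves, i.e.\ applications of the stair relations whose coefficient of the lower-order term vanishes. Concretely, the stair for $x_0$ applied at $\mathbf{v}' = (-1,1,2,2)$ gives $(3,1,2,2) = \psi(0,2,3,3)$; the stair for $x_2$ applied at $\mathbf{v}' = (0,2,0,2)$ gives $(0,2,3,3) = \psi(1,3,1,3)$ (since $3(v_2'+1) - (v_3'+1) = 0$ there); and the stair for $x_1$ applied at $\mathbf{v}' = (1,-1,1,3)$ gives $(1,3,1,3) = \psi(2,0,2,4)$. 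Stringing these together yields $(3,1,2,2) = \psi^3(2,0,2,4)$.

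Finally, close the loop by applying the stair relation for $x_3$ at $\mathbf{v}' = (2,0,1,1)$, which gives
\[
(2,0,2,4) = \tfrac{1}{4}(2,0,1,1) + \psi(3,1,2,2).
\]
Substituting back produces the self-referential identity $(3,1,2,2) = \tfrac{\psi^3}{4}(2,0,1,1) + \psi^4(3,1,2,2)$, which combined with $\eta(2,0,1,1) = 8\psi(3,1,2,2)$ yields
\[
(1 - \psi^4)\,\eta(2,0,1,1) = 2\psi^4 (2,0,1,1), \quad \textrm{equivalently} \quad \bigl[\eta - \psi^4(\eta + 2)\bigr](2,0,1,1) = 0.
\]
To finish, set $u = \psi^4$ and $\sigma = u \frac{\dD}{\dD u}$, so that $\eta = 4\sigma$; the equation becomes $4\bigl[\sigma - u(\sigma + \tfrac{1}{2})\bigr](2,0,1,1) = 0$, which is precisely $D(\tfrac{1}{2};1\,|\,\psi^4)$.

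The key content is identifying the length-three cycle of clean stair moves; once this cycle is spotted, the remaining manipulations are routine. Notably, unlike the analogous $\Fsf_4$ computation in Lemma \ref{lem:F4mono31}, the cycle here closes with $\psi$ appearing to a positive power, so no preparatory multiplication by $\psi$ is required before the substitution $u = \psi^4$; this is what causes the argument of the hypergeometric operator to be $\psi^4$ rather than $\psi^{-4}$.
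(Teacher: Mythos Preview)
Your proof is correct and follows essentially the same diagrammatic method as the paper. The only difference is the particular cycle you chose: the paper walks $(3,1,2,2)\to(0,2,3,3)\to(1,3,3,1)\to(2,0,4,2)$ and closes with the $x_2$-stair at $(2,0,1,1)$, whereas you walk $(3,1,2,2)\to(0,2,3,3)\to(1,3,1,3)\to(2,0,2,4)$ and close with the $x_3$-stair; these two cycles are interchanged by the $x_2\leftrightarrow x_3$ symmetry of the $\Fsf_2\Lsf_2$ pencil and yield the identical first-order relation $[\eta-\psi^4(\eta+2)](2,0,1,1)=0$.
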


\begin{proof}
For the period $(2,0,1,1)$, corresponding to the quartic monomial $x_0^2x_2x_3$, we use the diagram

$$
\xymatrix{ &&& (2,0,1,1) \ar[r] \ar[d] & (3,1,2,2) \\
	&& (1,-1,3,1) \ar[r] \ar[d] & (2,0,4,2) & \\
	& (0,2,2,0) \ar[r] \ar[d]  & (1,3,3,1)&& \\
	(-1,1,2,2) \ar[r] \ar[d]  & (0,2,3,3) &&& \\
	(3,1,2,2) &&&&}
$$

One can see that $(2,0,4,2) = \frac{1}{8}(2 + \eta) (2,0,1,1)$, which one can then use to show that:
\begin{equation}\begin{aligned}
\eta (2,0,1,1) &= 8 \psi^2(0,2,3,3) \\
	&= 8 \psi^3 (1,3,3,1) \\
	&= 8 \psi^4 (2,0,4,2) \\
	&= \psi^4 (\eta+2) (2,0,1,1).
\end{aligned}\end{equation}

Thus the period $(2,0,1,1)$ corresponding to the quartic monomial $x_0^2x_2x_3$ satisfies the differential equation:
\[
\left[\eta - \psi^4(\eta+2)\right](2,0,1,1)=0
\]
By substituting $u = \psi^{4}$ and $\sigma = u \displaystyle{\frac{\dD}{\dD u}} = 4\eta$, we get:
\begin{align*}
\left[\eta - \psi^4(\eta+2)\right](2,0,1,1) &=0 \\
\left[\sigma - u(\sigma+ \tfrac{1}{2})\right](2,0,1,1) &=0. \qedhere
\end{align*}
\end{proof}

\begin{lem}\label{F2L2 char1}
The Picard--Fuchs equations associated to the periods $(2,1,0,1),\psi^3(1,0,2,1)$ are the hypergeometric differential equations $D(\tfrac{1}{8}, \tfrac{5}{8} ; 1, \tfrac{1}{4} \,|\,\psi^{4}),D(\tfrac{1}{8}, \tfrac{-3}{8} ; 0, \tfrac{1}{4} \,|\,\psi^{4})$, respectively.
\end{lem}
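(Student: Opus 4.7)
The plan is to follow the same diagrammatic approach used in Lemmas \ref{lem:F4twoSquares} and \ref{prop:F1L3char1}, exploiting the fact that the character $\chi_1$ subspace $W_{\chi_1}$ is two-dimensional with monomial basis $\{x_2^2 x_0 x_3, x_0^2 x_1 x_3\}$, corresponding to the periods $P_1 = (2,1,0,1)$ and $P_2 = (1,0,2,1)$.

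First I would construct a diagram starting with $P_1 = (2,1,0,1)$: differentiating gives $(3,2,1,2)$, which reduces via the $x_0^4$-stair (with $v_0+1=0$) to $\psi(0,3,2,3)$, and then via the $x_1^4$-stair (with $v_1+1=0$) to $\psi^2(1,0,3,4)$. Applying the $(0,0,1,3)$-stair expresses $(1,0,3,4)$ in terms of $(1,0,2,1)$ and the auxiliary period $(2,1,3,2)$, and the $(0,0,3,1)$-stair then expresses $(2,1,3,2)$ in terms of $(2,1,0,1)$ and $(3,2,1,2)$. Since $(3,2,1,2) = \psi^2(1,0,3,4)$, this closes the loop and solves for $(2,1,3,2)$ as an explicit $\C(\psi)$-linear combination of $P_1$ and $P_2$. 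Running the analogous computation for $\eta P_2$ yields a second, independent first-order relation. Together, these give a $2\times 2$ linear system expressing $\eta P_1$ and $\eta P_2$ in terms of $P_1,P_2$ with coefficients in $\C(\psi^4)$.

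Next I would eliminate $P_2$ from the system by applying $\eta$ to the first relation and substituting, using the operator identity $\eta\psi^a = \psi^a(\eta+a)$ throughout. Collecting terms and clearing $(1-\psi^4)$ denominators should yield a second-order operator annihilating $P_1$. Substituting $u = \psi^4$ and $\sigma = u\, \dD/\dD u = \eta/4$ converts this operator into hypergeometric form, which I would verify matches $D(\tfrac{1}{8},\tfrac{5}{8};1,\tfrac{1}{4}\,|\,u)$ by expanding $(\sigma)(\sigma - \tfrac{3}{4}) - u(\sigma+\tfrac{1}{8})(\sigma+\tfrac{5}{8})$. For $P_2$, I would first pass to $\psi^3 P_2$; since $\psi^3(\eta-c) = (\eta-c-3)\psi^3$, multiplying by $\psi^3$ shifts every occurrence of $\eta$ by $-3$, equivalently shifting every $\sigma$-parameter by $-3/4$. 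Applied to the operator for $P_2$ (which will be obtained by symmetry from the same elimination), this shift transforms the parameters $(1,\tfrac{1}{4})$ and $(\tfrac{5}{8})$ to $(0,\tfrac{1}{4})$ and $(-\tfrac{3}{8})$ (the $\tfrac{1}{4}$ is fixed modulo $\Z$), producing exactly $D(\tfrac{1}{8},-\tfrac{3}{8};0,\tfrac{1}{4}\,|\,u)$.

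The main obstacle will be the bookkeeping in the elimination step: tracking the commutation $\eta(1-\psi^4) = (1-\psi^4)\eta - 4\psi^4$ and the cross-terms from $\eta$ hitting $\psi^a$-coefficients produces several cancellations that must land on the correct coefficients of the hypergeometric form. A secondary care-point is confirming that the $\psi^3$-shift applied to $P_2$'s operator is the natural normalization, i.e., that multiplying by $\psi^3$ (not $\psi$ or $\psi^5$) is forced by the requirement that the resulting power series in $u = \psi^4$ be a holomorphic germ near $u=\infty$—the same phenomenon that dictated the $\psi^3$-shift in Lemma \ref{prop:F1L3char1} for the period $(2,1,1,0)$.
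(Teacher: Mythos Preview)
Your diagram and the chain of reductions for $(2,1,0,1)$ are exactly the paper's, and the overall method (derive a pair of first-order relations between $P_1=(2,1,0,1)$ and $P_2=(1,0,2,1)$, compose to a second-order operator, then pass to $u=\psi^4$) is the same. Two remarks, one cosmetic and one a genuine gap.

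\textbf{Cosmetic.} The paper does not invert to a $2\times 2$ system with $(1-\psi^4)^{-1}$ coefficients; it keeps the relations in the operator form
\[
\eta P_1=\psi^3\bigl(\eta+\tfrac{3}{2}\bigr)P_2,\qquad \eta P_2=\psi\bigl(\eta+\tfrac{1}{2}\bigr)P_1,
\]
and composes directly using $(\eta-a)\psi^a=\psi^a\eta$. This avoids the commutator bookkeeping with $(1-\psi^4)$ that you flag as the main obstacle. Incidentally, once you do invert, the coefficients lie in $\C(\psi)$ (they involve $\psi$ and $\psi^3$), not in $\C(\psi^4)$ as you wrote.

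\textbf{Gap.} Your treatment of $P_2$ is incorrect. The two relations above are \emph{not} symmetric under $P_1\leftrightarrow P_2$ (note $\psi^3$ versus $\psi$, and $+\tfrac{3}{2}$ versus $+\tfrac{1}{2}$), so $P_2$'s operator is not $D(\tfrac18,\tfrac58;1,\tfrac14)$. Eliminating $P_1$ instead gives
\[
\bigl[(\eta-1)\eta-\psi^4(\eta+\tfrac{7}{2})(\eta+\tfrac{3}{2})\bigr]P_2=0,\quad\text{i.e.}\quad D\bigl(\tfrac{3}{8},\tfrac{7}{8};\tfrac{3}{4},1\,\big|\,u\bigr).
\]
Then the $\psi^3$-shift ($\sigma\mapsto\sigma-\tfrac34$) sends $(\tfrac38,\tfrac78;\tfrac34,1)$ to $(-\tfrac38,\tfrac18;0,\tfrac14)$, which is the target. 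Your shift arithmetic is also off: applied to $(1,\tfrac14)$ the shift gives $(\tfrac14,-\tfrac12)$, not $(0,\tfrac14)$, and $\tfrac58-\tfrac34=-\tfrac18$, not $-\tfrac38$; so even had $P_2$'s operator matched $P_1$'s, the claimed output would not follow. Once you run the elimination for $P_2$ honestly (no symmetry shortcut), your plan goes through and matches the paper.
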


\begin{proof}
For the period $(2,1,0,1)$, corresponding to the quartic monomial $x_0^2x_1x_3$, we use the diagram

$$
\xymatrix{ &&& (0,-1,2,3) \ar[r] \ar[d]^{D_2} & (1,0,3,4) \\
	&& (-1,2,1,2) \ar[r] \ar[d]^{D_1}& (0,3,2,3) & \\
	& (2,1,0,1) \ar[r] \ar[d]^{D_3} & (3,2,1,2)&& \\
	(1,0,2,1) \ar[r] \ar[d]^{D_2} & (2,1,3,2) &&& \\
	(1,0,3,4) &&&&}
$$

Note that:
\begin{equation}\begin{aligned}
\eta(2,1,0,1) &= 8\psi^3(1,0,3,4)\\
(1,0,3,4) &= \tfrac{1}{8}(\eta+\tfrac{3}{2}) (1,0,2,1) \\
\eta (1,0,2,1) &= \psi(\eta+\tfrac12)(2,1,0,1).
\end{aligned}\end{equation}

We can compute the two periods that satisfy each of the following Picard--Fuchs equations for the four sets of pairs:
\begin{equation}\begin{aligned}
\left[ (\eta-3) \eta - \psi^4(\eta+ \tfrac52)(\eta+ \tfrac{1}{2})\right](2,1,0,1) &= 0;\\
\left[ (\eta-1) \eta - \psi^4(\eta + \tfrac{7}{2})(\eta+\tfrac32)\right](1,0,2,1) &= 0.
\end{aligned}\end{equation}
With the first Picard--Fuchs equation, we can substitute $u = \psi^4$, $\sigma = u\textstyle{\frac{\dD}{\dD u}} = 4 \eta$, and yield the equation:
\begin{align*}
\left[ (\eta-3) \eta - \psi^4(\eta+ \tfrac52)(\eta+ \tfrac{1}{2})\right](2,1,0,1) &= 0; \\
\left[ (\sigma-\tfrac{3}{4}) \sigma - u(\sigma+ \tfrac58)(\sigma+ \tfrac{1}{8})\right](2,1,0,1) &= 0.
\end{align*}
which is the hypergeometric differential equation $D(\tfrac{1}{8}, \tfrac{5}{8} ; 1, \tfrac{1}{4} \,|\,u)$.  For the second Picard--Fuchs equation, we can multiply by $\psi^3$ and then substitute to find:
\begin{align*}
\left[ (\eta-1) \eta - \psi^4(\eta + \tfrac{7}{2})(\eta+\tfrac32)\right](1,0,2,1) &= 0\\
\psi^3 \left[ (\eta-1) \eta - \psi^4(\eta + \tfrac{7}{2})(\eta+\tfrac32)\right](1,0,2,1) &= 0\\
\left[ (\eta-4)(\eta-3) - \psi^4(\eta + \tfrac{1}{2})(\eta-\tfrac32)\right]\psi^3(1,0,2,1) &= 0\\
\left[ (\sigma-1)(\sigma-\tfrac{3}{4}) - u(\sigma + \tfrac{1}{8})(\sigma-\tfrac38)\right]\psi^3(1,0,2,1) &= 0,
\end{align*}
which is the hypergeometric function $D(\tfrac{1}{8}, \tfrac{-3}{8} ; 0, \tfrac{1}{4} \,|\,\psi^{4})$.
\end{proof}

\begin{proof}[{Proof of Proposition \textup{\ref{prop:F2L2diff}}}]
The periods annihilated by $D(\tfrac{1}{4}, \tfrac{1}{2}, \tfrac{3}{4} ; 1, 1, 1 \,|\,\psi^{-4})$ are those corresponding to the holomorphic form.  The $2$ periods are annihilated by $D(\tfrac{1}{4}, \tfrac{3}{4} ; 1, \tfrac{1}{2} \,|\,\psi^{-4})$ are provided by Lemma~ \ref{F2L2 quarter quadratic periods}.  The period annihilated by $D(\tfrac{1}{2} ; 1 \,|\,\psi^{4})$ corresponds to a monomial in the basis for $W_{\chi_2}$, which we compute in Lemma \ref{F2L2 linears}. Since $W_{\chi_2}$ and $W_{\chi_6}$ are related by a transposition of $x_0$ and $x_1$, there are two periods annihilated by the hypergeometric differential equation computed here.  The $4$ periods annihilated by $D(\tfrac{1}{8}, \tfrac{5}{8} ; 1, \tfrac{1}{4} \,|\,\psi^{4})$ and the $4$ periods annihilated by $D(\tfrac{-3}{8}, \tfrac{1}{8} ; 0, \tfrac{1}{4} \,|\,\psi^{4})$ correspond to the monomial bases for $W_{\chi_1}, W_{\chi_3}, W_{\chi_5},$ and $W_{\chi_7}$, which are the same up to transpositions. We compute in Lemma~\ref{F2L2 char1} the Picard--Fuchs equations for $W_{\chi_1}$ which then give us that each of those hypergeometric differential equations annihilate 4 periods. 
\end{proof}

\subsection{The \texorpdfstring{$\Lsf_2\Lsf_2$}{Lsf2Lsf2} pencil}

Now consider
\[
F_{\psi}  \colonequals  x_0^3x_1+x_1^3x_0 + x_2^3 x_3 + x_3^3x_2 - 4\psi x_0x_1x_2x_3
\]
that defines the pencil of projective hypersurfaces $X_{\psi} = Z(F_\psi) \subset \mathbb{P}^3$. There is a $\Z/4\Z$ symmetry with generator
\begin{align*}
g(x_0:x_1:x_2:x_3) &= (\xi x_0:\xi^5 x_1:\xi^3 x_2:\xi^7 x_3)
\end{align*}
 where $\xi$ is a primitive eighth root of unity. There are four characters $\chi_{a}: H \rightarrow \mathbb{G}_m$, where $\chi_{a}(g_1) = \ii^{a}$. We can again decompose $V$ into subspaces $W_{\chi_{a}}$.  Out of the eight, the subspaces $W_{\chi_{0}}, W_{\chi_{1}}, W_{\chi_{2}},$ and $W_{\chi_{3}}$ are empty. The monomial bases for $W_{\chi_{1}}$ and $W_{\chi_{3}}$ are related by a transposition of the variables $x_0$ and $x_1$, so their Picard--Fuchs equations are the same.  We have three types of monomial bases:
 
 \begin{enumerate}[(i)]
\item $W_{\chi_{0}}$ has monomial basis $\{x_0x_1x_2x_3, x_0^2x_2^2, x_0^2x_3^2, x_1^2x_2^2, x_1^2x_3^2\}$;
\item $W_{\chi_{1}}$ has monomial basis $\{x_0^2x_1x_2, x_1^2x_0x_3, x_2^2x_1x_3, x_3^2x_0x_2\}$; and 
\item $W_{\chi_{2}}$ has monomial basis $\{x_0^2x_1^2, x_2^2x_3^2, x_0^2x_2x_3, x_1^2x_2x_3, x_2^2x_0x_1, x_3^2x_0x_1 \}$.
\end{enumerate}

Using \eqref{DiagramPartials}, we compute the following period relations:
\begin{equation}\begin{aligned}
\mathbf{v}+(3,1,0,0) &= \frac{3(v_0+1) -(v_1+1)}{8(\omega+1)}\mathbf{v} + \psi(\mathbf{v}+(1,1,1,1)) \\
\mathbf{v}+(1,3,0,0) &= \frac{-(v_0+1) +3(v_1+1)}{8(\omega+1)}\mathbf{v} + \psi(\mathbf{v}+(1,1,1,1))
\end{aligned}\end{equation}
and the two symmetric relations replacing $0,1$ with $2,3$.

\begin{prop}\label{PFL2L2}
For the $\Lsf_2\Lsf_2$ family, the primitive cohomology group $H^2_{\textup{prim}}(X_{\Lsf_2\Lsf_2, \psi}, \C)$ has $13$ periods whose Picard--Fuchs equations are hypergeometric differential equations as follows:
\[ \begin{gathered}
\text{$3$ periods are annihilated by $D(\tfrac{1}{4}, \tfrac{1}{2}, \tfrac{3}{4} ; 1, 1, 1 \,|\,\psi^{-4})$,} \\ 
\text{$8$ periods are annihilated by $D(\tfrac{1}{8}, \tfrac{3}{8}, \tfrac58, \tfrac78 ; 0, \tfrac{1}{4}, \tfrac12, \tfrac34 \,|\,\psi^{4})$, and } \\
\text{$2$ periods are annihilated by $D(\tfrac{1}{4}, \tfrac{3}{4} ; 1,  \tfrac12 \,|\,\psi^{4})$.}
\end{gathered} \]
\end{prop}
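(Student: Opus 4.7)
The proof follows the diagrammatic Griffiths--Dwork strategy employed in Propositions \ref{prop:F4H2result}, \ref{prop:kleinmukaidiff}, and \ref{prop:F2L2diff-appendix}. First I would specialize the period relation \eqref{DiagramPartials} to the Jacobian ideal of $F_\psi = x_0^3x_1+x_1^3x_0+x_2^3x_3+x_3^3x_2 - 4\psi x_0x_1x_2x_3$. Forming suitable linear combinations of $x_i\partial_i F_\psi$ isolates each of the four loop monomials $x_0^3x_1, x_1^3x_0, x_2^3x_3, x_3^3x_2$ up to a multiple of $x_0x_1x_2x_3$, yielding period-shift relations for each of the four vectors $(3,1,0,0), (1,3,0,0), (0,0,3,1), (0,0,1,3)$, completely analogous to \eqref{F1L3stair}, with linear coefficients of the form $\pm 3(v_i+1)\mp(v_j+1)$ divided by $8(k(\mathbf{v})+1)$. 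These, combined with the $\tfrac{\dD}{\dD\psi}$-shift \eqref{psiderivative}, generate all identities needed.

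For the three periods annihilated by $D(\tfrac{1}{4}, \tfrac{1}{2}, \tfrac{3}{4};1,1,1 \,|\,\psi^{-4})$, I apply the diagrammatic method to the invariant holomorphic period $\psi(0,0,0,0)$. Using the new shift operators to move from $(0,0,0,0)$ up to the highest monomial $(3,3,3,3)$ via intermediate loop shifts, and then closing the cycle via four applications of $\tfrac{\dD}{\dD\psi}$ exactly as in Lemmas \ref{lem:F4hol} and \ref{prop:F1L3hol}, produces the same third-order operator after multiplying by $\psi$ and substituting $t=\psi^{-4}$. The remaining two periods in the invariant cluster correspond to the Hodge pieces $H^{2,0}$ and $H^{0,2}$, which satisfy the same equation by Griffiths transversality applied to the holomorphic form.

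For the eight periods in $W_{\chi_1}\cup W_{\chi_3}$, the transposition $(x_0,x_1)\leftrightarrow(x_1,x_0)$ preserves the pencil and identifies the two clusters, so it suffices to treat $W_{\chi_1}$ with basis $\{x_0^2x_1x_2, x_1^2x_0x_3, x_2^2x_1x_3, x_3^2x_0x_2\}$. I would arrange these four monomials in a cyclic diagram where each loop shift moves from one monomial to the next (e.g.\ $(2,1,1,0)\to(1,2,0,1)\to(0,1,2,1)\to(1,0,1,2)\to(2,1,1,0)$ after a factor of $\psi^4$). Closing the loop yields a fourth-order relation in $\eta$; after rescaling by the appropriate power of $\psi$ and substituting $u=\psi^4$, $\sigma=u\tfrac{\dD}{\dD u}$, this becomes $D(\tfrac{1}{8}, \tfrac{3}{8}, \tfrac58, \tfrac78 ; 0, \tfrac{1}{4}, \tfrac12, \tfrac34 \,|\,u)$.

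For the two periods in $W_{\chi_2}$ annihilated by $D(\tfrac14, \tfrac34;1,\tfrac12\,|\,\psi^4)$, the main obstacle appears: the cluster $W_{\chi_2}$ is six-dimensional, so only a two-dimensional subspace can satisfy a second-order equation. I would exploit the additional $\Z/2\Z$-symmetry $(x_0,x_1,x_2,x_3)\mapsto (x_2,x_3,x_0,x_1)$ of $F_\psi$ to further decompose $W_{\chi_2}$, and identify the two-dimensional subspace spanned (modulo the Jacobian ideal) by $\psi(2,2,0,0)$ and $\psi(0,0,2,2)$ (which are exchanged by this symmetry). On this subspace, a diagram mirroring \eqref{eqn:0022} in Lemma \ref{lem:F4twoSquares} gives $\eta(2,2,0,0)=\psi^2(\eta+1)(0,0,2,2)$ and its conjugate; combining these yields the second-order operator, which after substitution becomes the claimed equation. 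The main subtlety is verifying that no extraneous relations in the Jacobian ideal for $\Lsf_2\Lsf_2$ collapse or modify this subspace, and that the complementary four-dimensional piece does not contribute hypergeometric periods; both can be checked directly from the shift relations with some bookkeeping but without new ideas.
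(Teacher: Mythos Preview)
Your proposal is correct and follows essentially the same route as the paper: the same period-shift relations, the same cyclic diagram for $W_{\chi_1}$ (and its transposed copy $W_{\chi_3}$), and the same two-period computation for $(2,2,0,0)$, $(0,0,2,2)$ in $W_{\chi_2}$ via a diagram mirroring Lemma~\ref{lem:F4twoSquares}. The only difference is that you invoke the extra $\Z/2\Z$ symmetry $(x_0,x_1,x_2,x_3)\mapsto(x_2,x_3,x_0,x_1)$ to \emph{explain} why $(2,2,0,0)$ and $(0,0,2,2)$ form a closed two-dimensional subsystem inside the six-dimensional $W_{\chi_2}$, whereas the paper simply selects these two monomials and computes directly; your added justification is sound but not needed for the argument, since the shift relations themselves already close up on this pair without reference to the remaining four monomials.
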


To prove Proposition~\ref{PFL2L2}, we use the diagram method above in a few cases and then use symmetry. We first do two calculations.

\begin{lem}\label{L2L2 chi11}
The Picard--Fuchs equation associated to the periods $(2,1,1,0)$, $(1,0,1,2)$, $(1,2,0,1),$ and $(0,1,2,1)$ is the hypergeometric differential equation $D(\tfrac{1}{8}, \tfrac{3}{8}, \tfrac58, \tfrac78 ; 0, \tfrac{1}{4}, \tfrac12, \tfrac34 \,|\,\psi^{4})$.
\end{lem}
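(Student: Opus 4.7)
The plan is to apply the diagrammatic Griffiths--Dwork method used throughout Section~\ref{S:PFEqns} to the four-element monomial basis of $W_{\chi_{1}}$, namely $\{x_0^2 x_1 x_2,\, x_0 x_1^2 x_3,\, x_1 x_2^2 x_3,\, x_0 x_2 x_3^2\}$, corresponding to the four periods $(2,1,1,0)$, $(1,2,0,1)$, $(0,1,2,1)$, $(1,0,1,2)$. Since the cluster has size four, we expect to obtain a fourth-order operator, matching the four numerator and four denominator parameters of $D(\tfrac18,\tfrac38,\tfrac58,\tfrac78;0,\tfrac14,\tfrac12,\tfrac34\,|\,\psi^{4})$.

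First I would assemble the appropriate diagram by repeatedly applying the $\psi$-derivative relation \eqref{psiderivative} together with the four stair relations for $\Lsf_2\Lsf_2$ derived above this lemma (the ones that adjoin $(3,1,0,0)$, $(1,3,0,0)$, $(0,0,3,1)$ or $(0,0,1,3)$ to a given exponent vector). The key observation is that the transpositions $x_0\leftrightarrow x_1$ and $x_2\leftrightarrow x_3$ permute the four periods cyclically: differentiating once and reducing via the $(3,1,0,0)$-stair relation sends $(2,1,1,0)$ toward $(1,2,0,1)$, and analogously for each of the other pairs. I would arrange the diagram so that each of the four periods is linked to the next one along this cycle by exactly one derivative followed by one pole-order reduction, producing four relations of the schematic form
\begin{equation*}
(\eta - b_i)\, P_i \;=\; c_i\,\psi\,(\eta + a_i)\, P_{i+1},
\end{equation*}
where the $b_i,a_i$ are explicit half-integer shifts and $c_i\in\C^\times$, with indices read modulo $4$.

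Composing all four of these relations produces a purely fourth-order equation
\begin{equation*}
\Bigl[\prod_{i=1}^4(\eta - b_i) \;-\; \psi^{4}\prod_{i=1}^4(\eta + a_i')\Bigr]\, P_1 \;=\; 0,
\end{equation*}
where the $a_i'$ absorb the shifts accumulated during composition (using $(\eta-a)\psi^{a}=\psi^{a}\eta$). After multiplying by a suitable power of $\psi$ to normalize the constant terms and substituting $u=\psi^{4}$ and $\sigma = u\,\tfrac{\dD}{\dD u} = -\eta/4$, the roots $\{b_i\}$ should rescale to $\{0,\tfrac14,\tfrac12,\tfrac34\}$ and the shifted roots $\{a_i'\}$ to $\{\tfrac18,\tfrac38,\tfrac58,\tfrac78\}$, which yields exactly $D(\tfrac18,\tfrac38,\tfrac58,\tfrac78;0,\tfrac14,\tfrac12,\tfrac34\,|\,u)$. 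The fact that the same operator annihilates all four periods is then automatic from the cyclic symmetry of the composition (one may cyclically reindex to obtain the same equation from any starting period).

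The main obstacle will be purely combinatorial: accurately bookkeeping the half-integer shifts $a_i$ and the scalar coefficients $c_i$ through four composition steps. In particular, one must verify that each Jacobian-ideal reduction closes up inside $W_{\chi_{1}}$ rather than producing leftover monomials in a different character space, and that the denominators appearing in the stair relations (the factors $1/(8(k(\mathbf{v})+1))$) conspire to give precisely the parameters $\tfrac{j}{8}$ after the $u=\psi^4$ rescaling. The rest of the argument then parallels Lemmas~\ref{F2L2 char1} and \ref{prop:F1L3char1} mutatis mutandis.
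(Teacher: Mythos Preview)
Your approach is essentially the same as the paper's: build the diagram linking the four periods cyclically, extract one first-order relation per step, compose, and substitute $u=\psi^4$. Two corrections worth noting. First, the sign in your substitution is wrong: with $u=\psi^4$ one has $\sigma = u\,\tfrac{\dD}{\dD u} = +\tfrac{1}{4}\eta$, not $-\tfrac14\eta$ (the minus sign belongs to the $t=\psi^{-4}$ substitution used for the holomorphic period). If you carry through $-\eta/4$ you will not land on the stated parameters. Second, the four relations are cleaner than your schematic suggests: one finds uniformly $\eta\,P_i = \psi(\eta+\tfrac12)\,P_{i+1}$ (all $b_i=0$, all $a_i=\tfrac12$, and the cycle is $(2,1,1,0)\to(0,1,2,1)\to(1,2,0,1)\to(1,0,1,2)\to(2,1,1,0)$, not the order you wrote), so no auxiliary $\psi$-power multiplication is needed before substituting---the composed equation is already $[(\eta-3)(\eta-2)(\eta-1)\eta - \psi^4(\eta+\tfrac72)(\eta+\tfrac52)(\eta+\tfrac32)(\eta+\tfrac12)]\,P_1=0$.
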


\begin{proof}
To find the Picard--Fuchs equations corresponding to all these cohomology pieces, we use the diagram
$$
\xymatrix{ &&& (2,1,1,0) \ar[r] \ar[d] & (3,2,2,1) \\
	&& (1,0,1,2) \ar[r] \ar[d] & (2,1,2,3) & \\
	& (1,2,0,1) \ar[r] \ar[d] & (2,3,1,2)&& \\
	(0,1,2,1) \ar[r] \ar[d] & (1,2,3,2) &&& \\
	(3,2,2,1) &&&&}
$$

We obtain the following relations:
\begin{equation}\begin{aligned}\label{perrelations char 1 L2L2}
\eta(1,0,1,2) &= \psi(\eta+\tfrac{1}{2}) (2,1,1,0) \\
\eta(1,2,0,1) &= \psi(\eta+\tfrac{1}{2}) (1,0,1,2) \\
\eta(0,1,2,1) &= \psi(\eta+\tfrac{1}{2}) (1,2,0,1) \\
\eta(2,1,1,0) &= \psi(\eta+\tfrac{1}{2}) (0,1,2,1) 
\end{aligned}\end{equation}

Using these relations, we can get the Picard--Fuchs equation:
\begin{equation}
\left[(\eta-3) (\eta-2)(\eta-1) \eta- \psi^4\left(\eta + \tfrac{7}{2}\right)\left(\eta + \tfrac{5}{2}\right)\left(\eta + \tfrac{3}{2} \right) \left(\eta + \tfrac{1}{2}\right) \right](1,0,1,2) = 0
\end{equation}

By substituting $u = \psi^4$ and $\sigma = u \displaystyle{\frac{\dD}{\dD u}} = \tfrac{1}{4} \eta$, we obtain:
\begin{align*}
\left[(4\sigma-3) (4\sigma-2)(4\sigma-1) 4\sigma- u\left(4\sigma + \tfrac{7}{2}\right)\left(\eta + \tfrac{5}{2}\right)\left(4\sigma + \tfrac{3}{2} \right) \left(4\sigma + \tfrac{1}{2}\right) \right](1,0,1,2)  &= 0; \\ 
\left[(\sigma-\tfrac34) (\sigma-\tfrac12)(\sigma-\tfrac14) \sigma- u\left(\sigma + \tfrac{7}{8}\right)\left(\sigma + \tfrac{5}{8}\right)\left(\sigma + \tfrac{3}{8} \right) \left(\sigma + \tfrac{1}{8}\right) \right](1,0,1,2) &= 0,
\end{align*}
which is the hypergeometric differential equation $D(\tfrac{1}{8}, \tfrac{3}{8}, \tfrac58, \tfrac78 ; 0, \tfrac{1}{4}, \tfrac12, \tfrac34 \,|\,\psi^{4})$. The other three Picard--Fuchs equations are the same due to the symmetry in \eqref{perrelations char 1 L2L2}.
\end{proof}

\begin{lem}\label{L2L2 chi20}
The Picard--Fuchs equations associated to the periods $(2,2,0,0)$ and $(0,0,2,2)$ is the hypergeometric differential equation $D(\tfrac{1}{4}, \tfrac{3}{4} ; 1,  \tfrac12 \,|\,\psi^{4})$.
\end{lem}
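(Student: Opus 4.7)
The plan is to imitate Lemma \ref{lem:F4twoSquares}, which handles exactly the same pair of monomial classes for the Dwork pencil; the key observation is that the $\Lsf_2\Lsf_2$ polynomial admits the involution swapping $\{x_0,x_1\}$ with $\{x_2,x_3\}$, which interchanges the periods $(2,2,0,0)$ and $(0,0,2,2)$, so a single relation between them suffices.

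I would first build a short diagram connecting the two periods by alternating the two partial-derivative relations stated just before the proposition (the $\mathbf{v}+(3,1,0,0)$ and $\mathbf{v}+(1,3,0,0)$ formulas, with numerator coefficients $f_0(\mathbf{v}) = 3(v_0+1) - (v_1+1)$ and $f_1(\mathbf{v}) = -(v_0+1) + 3(v_1+1)$) together with the $\psi$-derivative identity \eqref{psiderivative}. Concretely, $\eta(2,2,0,0) = 8\psi\,(3,3,1,1)$; applying the $(3,1,0,0)$-relation at $\mathbf{v}=(0,2,1,1)$, where $f_0 = 3-3 = 0$, collapses $(3,3,1,1)$ to $\psi(1,3,2,2)$; next the $(1,3,0,0)$-relation at $\mathbf{v}=(0,0,2,2)$, where $f_1 = -1+3 = 2$, expresses $(1,3,2,2) = \tfrac{1}{8}(0,0,2,2) + \psi(1,1,3,3)$; and finally $\psi(1,1,3,3) = \tfrac{1}{8}\eta(0,0,2,2)$. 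Chaining these four steps should yield the key identity
\[ \eta(2,2,0,0) = \psi^2(\eta+1)(0,0,2,2), \]
which is formally identical to what appears in Lemma \ref{lem:F4twoSquares}.

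Applying the coordinate symmetry then gives $\eta(0,0,2,2) = \psi^2(\eta+1)(2,2,0,0)$ for free. Combining these via $(\eta-2)\eta$ and the commutation $(\eta+1)\psi^2 = \psi^2(\eta+3)$ produces
\[ [(\eta-2)\eta - \psi^4(\eta+3)(\eta+1)](2,2,0,0) = 0, \]
and identically for $(0,0,2,2)$. Substituting $u = \psi^4$ directly so that $\sigma = u\tfrac{\mathrm{d}}{\mathrm{d}u} = \eta/4$---\emph{without} first multiplying by $\psi$, as is done in the Dwork case to convert to an operator in $\psi^{-4}$---this rearranges to $\sigma(\sigma-\tfrac12) - u(\sigma+\tfrac14)(\sigma+\tfrac34) = 0$, which is precisely $D(\tfrac14, \tfrac34; 1, \tfrac12 \,|\, \psi^4)$. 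The only real obstacle is the arithmetic bookkeeping: verifying that $f_0$ and $f_1$ evaluate to $0$ and $2$ respectively at the chosen intermediate points, so that the chain collapses to the single factor $(\eta+1)$ rather than to something whose coefficients would spoil the hypergeometric form; once this is confirmed, no conceptual work beyond the Dwork analogue is required.
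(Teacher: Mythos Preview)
Your proposal is correct and follows essentially the same approach as the paper. The only cosmetic difference is your choice of intermediate steps: you pass through $(0,2,1,1)$ and $(1,3,2,2)$ using the $(3,1,0,0)$- then $(1,3,0,0)$-relations, whereas the paper's diagram passes through $(2,0,1,1)$ and $(3,1,2,2)$ using the relations in the opposite order; these two paths are exchanged by the $x_0 \leftrightarrow x_1$ symmetry of $F_\psi$ and yield the identical key relation $\eta(2,2,0,0) = \psi^2(\eta+1)(0,0,2,2)$, after which the paper's computation and yours coincide verbatim.
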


\begin{proof}
We use the diagram
$$
\xymatrix{ &&& (2,2,0,0) \ar[r] \ar[d] & (3,3,1,1) \\
	&& (1,1,2,0) \ar[r] \ar[d]& (2,2,3,1) & \\
	& (0,0,2,2) \ar[r] \ar[d] & (1,1,3,3)&& \\
	(2,0,1,1) \ar[r] \ar[d] & (3,1,2,2) &&& \\
	(3,3,1,1) &&&&}
$$

We obtain the following relations:
\begin{equation}\begin{aligned}
\eta(0,0,2,2) &= \psi^2(\eta + 1) (2,2,0,0);\\
\eta(2,2,0,0) &= \psi^2(\eta + 1) (0,0,2,2),
\end{aligned}\end{equation}
giving the following Picard--Fuchs equations:
\begin{equation}\begin{aligned}
\left[(\eta-2)\eta - \psi^4 \left( \eta + 3\right)\left(\eta+ 1\right)\right] (2,2,0,0) &= 0;\\
\left[(\eta-2)\eta - \psi^4 \left( \eta + 3\right)\left(\eta+ 1\right)\right] (0,0,2,2) &= 0;
\end{aligned}\end{equation}
By substituting $u = \psi^4$ and $\sigma = u\tfrac{\dD}{\dD u} = \tfrac14 \eta$, we obtain the hypergeometric form:
\begin{align*}
\left[(\sigma-\tfrac12)\sigma -u \left( \sigma + \tfrac34\right)\left(\sigma+ \tfrac14\right)\right] (2,2,0,0) = 0;\\
\left[(\sigma-\tfrac12)\sigma -u \left( \sigma + \tfrac34\right)\left(\sigma+ \tfrac14\right)\right] (0,0,2,2) = 0,
\end{align*}
which is the hypergeometric differential equation $D(\tfrac{1}{4}, \tfrac{3}{4} ; 1,  \tfrac12 \,|\,\psi^{4})$.
\end{proof}

\begin{proof}[Proof of \textup{Proposition~\ref{PFL2L2}}]
The first three periods are the same for each family.  Next, by Lemma~\ref{L2L2 chi11}, all the monomial basis elements in $W_{\chi_{1}}$ are annihilated by the hypergeometric differential equation $D(\tfrac{1}{8}, \tfrac{3}{8}, \tfrac58, \tfrac78 ; 0, \tfrac{1}{4}, \tfrac12, \tfrac34 \,|\,\psi^{4})$. Since $W_{\chi_{1}}$ and $W_{\chi_{3}}$ are related by a transposition, we get 8 periods annihilated by it. The Picard--Fuchs equations for the last two periods are given by Lemma \ref{L2L2 chi20}.
\end{proof}

\subsection{The \texorpdfstring{$\Lsf_4$}{Lsf4} pencil}

Finally we consider
\[
F_{\psi}  \colonequals  x_0^3x_1+x_1^3x_2 + x_2^3 x_3 + x_3^3x_0 - 4\psi x_0x_1x_2x_3
\]
that defines the pencil of projective hypersurfaces $X_{\psi} = Z(F_\psi) \subset \mathbb{P}^3$. There is a $H=\Z/5\Z$ scaling symmetry on $X_{\psi}$ generated by the element 
$$
g(x_0:x_1:x_2:x_3) = (\xi x_0:\xi^2x_2:\xi^4x_2: \xi^3 x_3)
$$
where $\xi$ is a fifth root of unity. There are five characters $\chi_k: H\rightarrow \C^\times$ given by $\chi_k(g) = \xi^k$. We decompose $V$ into five subspaces $W_{\chi_k}$. The monomial bases for $W_{\chi_1}, W_{\chi_2}, W_{\chi_3},$ and $W_{\chi_4}$ are related by a rotation of the variables $x_1, x_2, x_3,$ and $x_4$, so their corresponding Picard--Fuchs equations are the same. We are then left with two types of monomial bases:
\begin{enumerate}[(i)]
\item $W_{\chi_0}$ has monomial basis $\{x_0x_1x_2x_3, x_0^2x_2^2, x_1^2x_3^2\}$; and 
\item $W_{\chi_1}$ has monomial basis $\{x_0^2x_1^2, x_1^2x_2x_3, x_3^2x_0x_2, x_2^2x_0x_1\}$.
\end{enumerate}

For this family, we can compute the period relations:
\begin{equation}\begin{aligned}\label{L4PeriodRelations}
\mathbf{v} + (3,1,0,0) = \frac{27(1+v_0)- (1+v_1) +3(1+v_2) - 9(1+v_3)}{80(\omega+1)} \mathbf{v} + \psi (\mathbf{v}+(1,1,1,1))
\end{aligned}\end{equation}
and its $4$ cyclic permutations.

\begin{prop} \label{prop:L4H2result}
For the family $\Lsf_4$, the primitive cohomology group $H^2_{\textup{prim}}(X_{\Lsf_4, \psi},\C)$ has $19$ periods whose Picard--Fuchs equations are hypergeometric differential equations as follows:
\[ \begin{gathered}
\text{$3$ periods are annihilated by $D(\tfrac{1}{4}, \tfrac{1}{2}, \tfrac{3}{4} ; 1, 1, 1 \,|\,\psi^{-4})$,} \\ 
\text{$4$ periods are annihilated by $D(\tfrac{1}{5}, \tfrac{2}{5}, \tfrac{3}{5}, \tfrac{4}{5}; 1, \tfrac{1}{4}, \tfrac{1}{2}, \tfrac 34 \,|\, \psi^4)$,} \\ 
\text{$4$ periods are annihilated by  $D(\tfrac{-1}{5}, \tfrac{1}{5}, \tfrac{2}{5}, \tfrac{3}{5}; 0, \tfrac{1}{4}, \tfrac{1}{2}, \tfrac 34 \,|\,\psi^4)$,} \\ 
\text{$4$ periods are annihilated by $D( \tfrac{-2}{5}, \tfrac{-1}{5}, \tfrac{1}{5}, \tfrac{2}{5}; \tfrac{-1}{4}, 0, \tfrac{1}{4}, \tfrac{1}{2} \,|\,\psi^4)$, and} \\ 
\text{$4$ periods are annihilated by  $D(\tfrac{-3}{5}, \tfrac{-2}{5}, \tfrac{-1}{5}, \tfrac{1}{5}; \tfrac{-1}{2}, \tfrac{-1}{4}, 0, \tfrac{1}{4} \,|\,\psi^4)$.} 
\end{gathered} \]
\end{prop}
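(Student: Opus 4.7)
The plan is to apply the diagrammatic Griffiths--Dwork technique from Sections~\ref{PF:Dwork} and~\ref{PF:KM}, exploiting the $\Z/5\Z$ cyclic symmetry of the $\Lsf_4$ pencil. Since the monomial bases for $W_{\chi_1}$, $W_{\chi_2}$, $W_{\chi_3}$, $W_{\chi_4}$ are identified by cyclic permutation of the variables $(x_0,x_1,x_2,x_3)$, it suffices to analyze only two clusters: the invariant cluster $W_{\chi_0}$ and the cluster $W_{\chi_1}$; the assertions for the remaining three characters then follow from symmetry, accounting for the multiplicity of $4$ for each of the four operators with fifth-denominator parameters. The period relations in~\eqref{L4PeriodRelations} and its four cyclic permutations, together with the general derivative relation~\eqref{psiderivative}, will be the main computational tools.

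For the invariant cluster $W_{\chi_0}$, I would start from the holomorphic period $(0,0,0,0)$ and build a diagram analogous to those in Lemmas~\ref{lem:F4hol} and~\ref{prop:F1L3hol}. Applying~\eqref{L4PeriodRelations} successively to climb the diagram, combined with derivative relations using $\eta = \psi \frac{\dD}{\dD \psi}$, I expect to derive the relation
\[ \left[(\eta-2)(\eta-1)\eta - \psi^4(\eta+1)^3\right]\psi(0,0,0,0) = 0 \]
which, upon multiplying by $\psi$ and substituting $t = \psi^{-4}$ and $\theta = t\frac{\dD}{\dD t}$, rearranges into the common-factor operator $D(\tfrac14, \tfrac12, \tfrac34; 1,1,1 \,|\, t)$ annihilating the three periods in the $W_{\chi_0}$ cluster. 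This is precisely the common factor observed in previous work \cite{commonfactor} and recovered in the first steps of each previous lemma.

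The main work, and the main obstacle, is the analysis of the cluster $W_{\chi_1}$ with monomial basis $\{x_0^2x_1^2,\, x_1^2x_2x_3,\, x_3^2x_0x_2,\, x_2^2x_0x_1\}$. The plan is to set up a four-cycle diagram, in analogy with the three-cycle diagrams appearing in Lemmas~\ref{prop:F1L3char1} and~\ref{prop:F1L3char3}, in which each edge of the cycle uses one of the four cyclic versions of~\eqref{L4PeriodRelations} to express one period as a shift of the next, modulo an $\eta$-term coming from~\eqref{psiderivative}. The expected pattern is a system
\[ \eta \cdot M_i = \psi\bigl(\eta + c_i\bigr) \cdot M_{i+1}, \qquad i \in \Z/4\Z, \]
where $M_i$ runs through the four monomials (with appropriate powers of $\psi$ placed in front), and the constants $c_i \in \Q$ encode the coefficients from $f_0, f_1, f_2, f_3$ arising from the loop structure. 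Composing this cycle four times should produce an order-four differential operator annihilating each $M_i$; after normalization and the substitution $u = \psi^4$ with $\sigma = u\frac{\dD}{\dD u}$, I expect the operator attached to the $i$-th monomial to take the hypergeometric form listed in the proposition, with numerator parameters shifted by $\tfrac{1}{5}$ at each step around the cycle. The delicate bookkeeping is to track how the successive shifts in numerator and denominator parameters interleave with the shifts by $\psi^j$ used to normalize each monomial, since a wrong choice of normalization yields the same operator written with shifted parameters rather than the stated one.

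The hardest step will be verifying that the resulting four operators match precisely the four listed hypergeometric operators $D(\tfrac{1}{5}, \tfrac{2}{5}, \tfrac{3}{5}, \tfrac{4}{5}; 1, \tfrac{1}{4}, \tfrac{1}{2}, \tfrac 34 \,|\, \psi^4)$, $D(-\tfrac15, \tfrac15, \tfrac25, \tfrac35; 0, \tfrac14, \tfrac12, \tfrac34\,|\,\psi^4)$, and so on; this requires solving a linear algebra problem to identify the correct constants $c_i$ from the cyclic coefficients in \eqref{L4PeriodRelations} (themselves built out of $27, -1, 3, -9$ divided by $80$). Once the four PFEs are established for $W_{\chi_1}$, invoking the $\Z/5\Z$-symmetry permuting the monomials of $W_{\chi_2}$, $W_{\chi_3}$, $W_{\chi_4}$ into those of $W_{\chi_1}$ yields four periods for each operator, giving the multiplicity-$4$ counts in the proposition. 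Combining with the three periods for $W_{\chi_0}$ gives the total of $19$ periods, which matches $h^2_{\textup{prim}}(X_{\Lsf_4,\psi})$ minus the two transcendental degrees absorbed into the invariant subspace.
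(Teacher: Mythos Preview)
Your proposal is correct and follows essentially the same approach as the paper: the invariant cluster is handled as in the earlier families, and for $W_{\chi_1}$ the paper sets up exactly the cyclic four-chain $\eta\, M_i = \psi(\eta + c_i)\, M_{i+1}$ you anticipate (with $c_i \in \{\tfrac{2}{5}, \tfrac{1}{5}, \tfrac{4}{5}, \tfrac{3}{5}\}$), composes around the cycle, substitutes $u=\psi^4$, and then invokes the $\Z/5\Z$ symmetry for the remaining characters. (Your closing remark is slightly off: the two periods not captured here are the monomials $x_0^2x_2^2$ and $x_1^2x_3^2$ in $W_{\chi_0}$, which correspond to the trivial $(1-qT)^2$ factor in the zeta function rather than anything transcendental---but this does not affect the argument.)
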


\begin{proof}
The period associated to the holomorphic form is found by the same strategy as before.

Lastly, we use \eqref{L4PeriodRelations} to construct the diagram: 
$$
\xymatrix{ &&& (2,2,0,0) \ar[r] \ar[d] & (3,3,1,1) \\
	&& (1,1,2,0) \ar[r] \ar[d] & (2,2,3,1) & \\
	& (1,0,1,2) \ar[r] \ar[d] & (2,1,2,3)&& \\
	(0,2,1,1) \ar[r] \ar[d] & (1,3,2,2) &&& \\
	(3,3,1,1) &&&&}
$$
and consequently obtain the following relations:
\begin{equation}\begin{aligned}
\eta(1,1,2,0) &= \psi\left(\eta + \tfrac{2}{5}\right) (2,2,0,0); \\
\eta(1,0,1,2) &= \psi\left(\eta + \tfrac{1}{5}\right) (1,1,2,0); \\
\eta(0,2,1,1) &= \psi\left(\eta + \tfrac{4}{5}\right) (1,0,1,2); \\
\eta(2,2,0,0) &= \psi\left(\eta + \tfrac{3}{5}\right) (0,2,1,1).
\end{aligned}\end{equation}
We then cyclically use these relations to find a recursion which yields the following Picard--Fuchs equations:
\begin{equation}\begin{aligned}
\left[\left(\eta + \tfrac{16}{5}\right)\left(\eta + \tfrac{12}{5}\right)\left(\eta + \tfrac{8}{5}\right)\left(\eta+ \tfrac{4}{5}\right) - \tfrac{1}{\psi^4} (\eta-3)(\eta-2)(\eta-1)\eta\right](1,0,1,2) &=0 \\
\left[\left(\eta + \tfrac{17}{5}\right)\left(\eta + \tfrac{13}{5}\right)\left(\eta + \tfrac{9}{5}\right)\left(\eta+ \tfrac{1}{5}\right) - \tfrac{1}{\psi^4} (\eta-3)(\eta-2)(\eta-1)\eta\right](1,1,2,0) &= 0 \\
\left[\left(\eta + \tfrac{18}{5}\right)\left(\eta + \tfrac{14}{5}\right)\left(\eta + \tfrac{6}{5}\right)\left(\eta+ \tfrac{2}{5}\right) - \tfrac{1}{\psi^4} (\eta-3)(\eta-2)(\eta-1)\eta\right](2,2,0,0) &=0\\
\left[\left(\eta + \tfrac{19}{5}\right)\left(\eta + \tfrac{11}{5}\right)\left(\eta + \tfrac{7}{5}\right)\left(\eta+ \tfrac{3}{5}\right) - \tfrac{1}{\psi^4} (\eta-3)(\eta-2)(\eta-1)\eta\right](0,2,1,1) &=0.
\end{aligned}\end{equation}

By multiplying these lines by 1, $\psi$,  $\psi^2$, and $\psi^3$, respectively, substituting $u = \psi^{4}$, $\sigma = u \displaystyle{\frac{\dD}{\dD u}}$, and then multiplying by $-u$, we obtain the following equations:
\begin{align*}
\left[(\sigma-\tfrac34)(\sigma-\tfrac12)(\sigma-\tfrac14)\sigma-
 u\left(\sigma + \tfrac{4}{5}\right)\left(\sigma + \tfrac{3}{5}\right)\left(\sigma + \tfrac{2}{5}\right)\left(\sigma+ \tfrac{1}{5}\right) \right](1,0,1,2) &=0 \\
\left[(\sigma-1)(\sigma-\tfrac{3}{4})(\sigma-\tfrac{1}{2})(\sigma-\tfrac{1}{4})- 
 u\left(\sigma + \tfrac{3}{5}\right)\left(\sigma + \tfrac{2}{5}\right)\left(\sigma + \tfrac{1}{5}\right)\left(\sigma- \tfrac{1}{5}\right)\right]\psi(1,1,2,0) &= 0 \\
\left[ (\sigma-\tfrac{5}{4})(\sigma-1)(\sigma-\tfrac{3}{4})(\sigma-\tfrac{1}{2})
- u\left(\sigma + \tfrac{2}{5}\right)\left(\sigma + \tfrac{1}{5}\right)\left(\sigma - \tfrac{1}{5}\right)\left(\sigma- \tfrac{2}{5}\right) \right]\psi^2(2,2,0,0) &=0\\
\left[(\sigma-\tfrac{3}{2})(\sigma-\tfrac{5}{4})(\sigma-1)(\sigma-\tfrac{3}{4})-
 u\left(\sigma + \tfrac{1}{5}\right)\left(\sigma - \tfrac{1}{5}\right)\left(\sigma - \tfrac{2}{5}\right)\left(\sigma- \tfrac{3}{5}\right) \right]\psi^3(0,2,1,1) &=0.
\end{align*}
These are the claimed hypergeometric differential equations.
\end{proof}

\section{Finite field hypergeometric sums} \label{appendix:pts}

In this part of the appendix, we write down the details of manipulations of hypergeometric sums.

\subsection{Hybrid definition}

In this section, we apply the argument of Beukers--Cohen--Mellit to show that the hybrid definition of the finite field hypergeometric sum reduces to the classical one.  We retain the notation from sections \ref{defn:hgmdef}--\ref{defn:hgmdef-hybrid}.
 
\begin{lem} \label{lem:appendixhybridagree}
Suppose that $q$ is good and splittable for $\pmb{\alpha},\pmb{\beta}$.  If $\alpha_i \qq, \beta_i \qq \in \Z$ for all $i=1,\dots,d$, then Definitions \textup{\ref{classic HGF over FF}} and \textup{\ref{Hybrid Definition}} agree.
\end{lem}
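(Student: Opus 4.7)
The proof will follow the strategy of Beukers--Cohen--Mellit in their proof of Proposition \ref{prop:bcmdef}, adapted to the hybrid setting. The plan is to start with the hybrid formula in Definition \ref{Hybrid Definition} and, under the hypothesis $\qq\alpha_i, \qq\beta_i \in \Z$, apply the Hasse--Davenport product relation (Lemma \ref{gausssumidentities}(c)) to convert the compact Gauss sum product $g(\pmb{p}m,-\pmb{q}m)$ back into the ``long'' product indexed by the parameters $\pmb{\alpha}_0, \pmb{\beta}_0$, thereby recovering the classical expression in Definition \ref{classic HGF over FF}.

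First, for each $p_i$ (respectively $q_i$) appearing in the numerator data I would apply Hasse--Davenport with $N=p_i$ to obtain
\[ g(p_i m) = -\omega(p_i)^{p_i m} \prod_{k=0}^{p_i-1} \frac{g(m+k\qq/p_i)}{g(k\qq/p_i)}, \]
and similarly for $g(-q_i m)$ with $N=q_i$. Because the partition of Definition \ref{Hybrid Definition} is chosen so that
\[ \frac{\prod_j(x^{p_j}-1)}{\prod_j(x^{q_j}-1)} = \frac{\prod_{\alpha_{0j}\in\pmb{\alpha}_0}(x-e^{2\pi\sqrt{-1}\alpha_{0j}})}{\prod_{\beta_{0j}\in\pmb{\beta}_0}(x-e^{2\pi\sqrt{-1}\beta_{0j}})}, \]
the multiset of roots $\{k/p_i\}$ coincides with the multiset of $\alpha_{0j}$, and similarly on the denominator side, \emph{except} for the common multiplicity recorded by the polynomial $D(x)$. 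Thus the Hasse--Davenport expansion rewrites the $g(\pmb{p}m,-\pmb{q}m)$ product as a product of factors $g(m+\alpha_{0j}\qq)$ and $g(-m-\beta_{0j}\qq)$ (matching the classical formula) together with $s(m)$ extra ``diagonal'' pairs $g(m+c\qq)g(-m-c\qq)$ coming from the cancellation encoded by $D(x)$.

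Next, I would apply Lemma \ref{gausssumidentities}(b), $g(\ell)g(-\ell)=(-1)^\ell q$ for $\ell\not\equiv 0\psmod{\qq}$, to each of these $s(m)$ diagonal pairs. Each such pair contributes a factor of $q$ (times a sign), and the total number of such pairs for a given $m$ is exactly the multiplicity $s(m)$ of $e^{2\pi\sqrt{-1} m/\qq}$ in $D(x)$. The evaluation at $m=0$ contributes the normalizing factor accounted for by $q^{-s(0)}$ in Definition \ref{Hybrid Definition}, and it is precisely what recovers the denominator $\prod_i g(\alpha_i\qq)g(-\beta_i\qq)$ from the classical formula together with the Hasse--Davenport denominators $g(k\qq/p_i)$ and $g(k\qq/q_i)$.

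Finally, I would track the signs and character values: the leading $-1$ in each of the $r+s$ Hasse--Davenport applications combines to $(-1)^{r+s}$; the product $\omega(p_i)^{p_i m}$ and $\omega(q_i)^{-q_i m}$ factors combine to $\omega(M)^m$ where $M=\prod p_i^{p_i}/\prod q_i^{q_i}$; the signs from $(-1)^\ell$ in the diagonal-pair evaluations produce a factor $(-1)^{\delta m}$ where $\delta=\deg D(x)$; and combined with the overall $\omega((-1)^{d+\delta}Mt)^m$ in the hybrid formula these collapse to $\omega((-1)^d t)^m$, matching Definition \ref{classic HGF over FF}. The overall prefactor $(-1)^{r+s}/(1-q)$ then becomes $-1/\qq$. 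The main obstacle is the careful bookkeeping of these signs and $\omega$-factors through the chain of Hasse--Davenport applications; once the combinatorics of $D(x)$ and the multiplicity function $s(m)$ are aligned with the common-root cancellations, the two formulas agree term by term in $m$.
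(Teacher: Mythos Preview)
Your approach is essentially the paper's proof run in reverse: the paper starts from the classical product $G(m+\pmb{\alpha}_0\qq,-m-\pmb{\beta}_0\qq)$ and uses Hasse--Davenport to compress it into $(-1)^{r+s}q^{s(m)-s(0)}g(\pmb{p}m,-\pmb{q}m)\omega((-1)^\delta M^{-1})^m$, while you expand $g(\pmb{p}m,-\pmb{q}m)$ in the other direction, but the ingredients and sign-tracking are identical. One small correction to your bookkeeping: the number of ``diagonal'' pairs $g(m+c_j)g(-m-c_j)$ is $\delta=\deg D$, not $s(m)$; rather, $s(m)$ counts how many of these $\delta$ pairs have $m+c_j\equiv 0\pmod{\qq}$ and hence contribute $1$ instead of $(-1)^{m+c_j}q$, so the ratio $\prod_j g(m+c_j)g(-m-c_j)\big/\prod_j g(c_j)g(-c_j)=(-1)^{\delta m}q^{s(0)-s(m)}$ is what cancels the $q^{-s(0)+s(m)}$ in the hybrid formula.
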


\begin{proof}
Our proof follows Beukers--Cohen--Mellit \cite[Theorem 1.3]{BCM}.  We consider 
\[ G(m+\pmb{\alpha}'\qq,-m-\pmb{\beta}'\qq) = \prod_{\alpha_i' \in \pmb{\alpha}'}\frac{g(m+ \alpha_i'\qq)}{g(\alpha_i \qq)} 
\prod_{\beta_i' \in \pmb{\beta}'}\frac{g(-m-\beta_i'\qq)}{g(-\beta_i \qq)}. \]
We massage this expression, and for simplicity drop the subscripts $0$.
First, 
\[ D(x) \prod_{\alpha_j \in \hat{\pmb{\alpha}}} (x- e^{2\pi\sqrt{-1} \alpha_j}) = \prod_{j=1}^r (x^{p_j} - 1) \quad \text{and} \quad
D(x) \prod_{\beta_j \in \tilde{\pmb{\beta}}}(x- e^{2\pi\sqrt{-1} \beta_j}) = \prod_{j=1}^s (x^{q_j} - 1). \]
Write $D(x) = \prod_{j=1}^\delta (x- e^{2\pi\sqrt{-1} c_j/\qq})$.  Then
\begin{equation} 
\begin{aligned}
&G(m+\pmb{\alpha}'\qq,-m-\pmb{\beta}'\qq) \\
&\quad= \left(\prod_{i=1}^r \prod_{j=0}^{p_i-1} \frac{g(m+ j\qq\qm/p_i)}{g(j\qq\qm/p_i)}\right)\left(\prod_{i=1}^s \prod_{j=0}^{q_i-1} \frac{g(-m-j\qq\qm/q_i)}{g(-j\qq\qm/q_i)}\right) \prod_{j=1}^\delta \frac{g(c_j)g(-c_j)}{g(m+c_j) g(-m-c_j)}.
\end{aligned}
\end{equation}
Since $p_i$ divides $\qq$, by the Hasse--Davenport relation (Lemma \ref{gausssumidentities}(c)) we have that
\begin{equation}\label{HD for pi}
 \prod_{j=0}^{p_i-1} \frac{g(m+ j\qq\qm/p_i)}{g(j\qq\qm/p_i)} = \frac{-g(p_i m)}{\omega(p_i)^{p_im}}.
\end{equation}
Analogously, since $q_i$ divides $\qq$, we use Hasse--Davenport to find that 
\begin{equation}\label{HD for qi}
\prod_{j=0}^{q_i-1} \frac{g(-m-j\qq\qm/q_i)}{g(-j\qq\qm/q_i)} = -g(-q_i m) \omega(q_i)^{q_i m}
\end{equation}

Note that if $c_j \neq 0$ then $g(c_j)g(-c_j) = (-1)^{c_j}q$ and $1$ otherwise, hence $$\prod_{i=1}^\delta g(c_j)g(-c_j) = (-1)^{\sum c_j} q^{\delta - s(0)},$$ where $s(0)$ is the multiplicity of $1$ in $D(x)$, or, equivalently, the number of times $c_j$ is $0$. Now note the number of times that $m+c_j = 0$ is the multiplicity of the root $e^{-2\pi\sqrt{-1} m / \qq}$ in $D(x)$, which, equivalently, is the multiplicity of $e^{2\pi\sqrt{-1} m / \qq}$ in $D(x)$ as $D(x)$ is a product of cyclotomic polynomials. This implies that
$$
\prod_{j=1}^\delta g(m+c_j)g(-m-c_j) = (-1)^{m+c_j} q^{\delta-\lambda(m)}.
$$
We then have that 
\begin{equation}\label{cjs}
\prod_{j=1}^\delta \frac{g(c_j)g(-c_j)}{g(m+c_j) g(-m-c_j)} = \frac{ (-1)^{\sum_j c_j} q^{\delta - s(0)}}{ (-1)^{\sum_j(m+c_j)} q^{\delta-s(m)}} = (-1)^{-\delta m} q^{s(m) - s(0)}.
\end{equation}
Combining Equations~\eqref{HD for pi}, \eqref{HD for qi}, and \eqref{cjs}, we then have
\begin{align*}
&G(m+\pmb{\alpha}'\qq,-m-\pmb{\beta}'\qq) \\
& \quad = \left(\prod_{i=1}^r \frac{-g(p_i m)}{\omega(p_i)^{p_im}}\right) \left(\prod_{i=1}^s -g(-q_i m) \omega(q_i)^{q_i m}\right) \left((-1)^{-\delta m} q^{s(m) - s(0)}\right)\\
	&\quad = (-1)^{r+s}q^{s(m) - s(0)} g(p_1m)\cdots g(p_rm) g(-q_1m) \cdots g(-q_sm) \omega((-1)^{\delta}p_1^{-p_1} \cdots p_r^{-p_r} q_1^{q_1} \cdots q_s^{q_s})^m\\
	&\quad =  (-1)^{r+s}q^{s(m) - s(0)} g(p_1m)\cdots g(p_rm) g(-q_1m) \cdots g(-q_sm) \omega((-1)^{\delta}M)^m.
\end{align*}
By plugging this equation into Definition~\ref{classic HGF over FF} for the appropriate factors we obtain the quantity given in Definition~\ref{Hybrid Definition}.
\end{proof}

\subsection{The pencil \texorpdfstring{$\Fsf_2\Lsf_2$}{Fsf2Lsf2}} 

\begin{prop}\label{prop:F2L2-appendix}
The number of $\F_q$-points on $\Fsf_2\Lsf_2$ can be written in terms of hypergeometric functions, as follows:
\begin{enumalph}
\item If $q \equiv 3\pmod4$, then
\begin{align*} 
X_{\Fsf_2\Lsf_2,\psi}(\F_q)&=q^2-q+1+H_q(\tfrac{1}{4},\tfrac{1}{2},\tfrac{3}{4};0,0,0\,|\,\psi^{-4}) - q H_q(\tfrac{1}{4},\tfrac{3}{4};0,\tfrac{1}{2} \,|\,\psi^{-4}).
\end{align*}

\item If $q\equiv5\pmod8$, then
\begin{align*}
X_{\Fsf_2\Lsf_2,\psi}(\F_q)&=q^2-q+1+H_q(\tfrac{1}{4},\tfrac{1}{2},\tfrac{3}{4};0,0,0\,|\,\psi^{-4})\\
	&\qquad+qH_q(\tfrac{1}{4},\tfrac{3}{4};0,\tfrac{1}{2} \,|\,\psi^{-4})-2qH_q(\tfrac{1}{2};0\,|\,\psi^{-4}).
\end{align*}

\item If $q\equiv1\pmod8$, then
\begin{align*}X_{\Fsf_2\Lsf_2,\psi}(\F_q)&= q^2+7q +1 +H_q(\tfrac{1}{4},\tfrac{1}{2},\tfrac{3}{4};0,0,0\,|\,\psi^{-4})  +qH_q(\tfrac14, \tfrac{3}{4}; 0, \tfrac12 \,|\, \psi^{-4})  \\
	&\qquad + 2q H_q(\tfrac12; 0 \,|\, \psi^{-4}) + 2\omega(2)^{\qq\qm/4}qH_q(\tfrac18, \tfrac58; 0, \tfrac14 \,|\, \psi^4) + 2\omega(2)^{\qq\qm/4}qH_q(\tfrac38, \tfrac78; 0, \tfrac34 \,|\, \psi^4).
\end{align*}
\end{enumalph}
\end{prop}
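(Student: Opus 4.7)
The proof will follow the same four-step template used for $\Fsf_4$ in Proposition~\ref{prop:F4} and $\Fsf_1\Lsf_3$ in Proposition~\ref{prop:F1L3}: (1) solve the Delsarte--Koblitz character system and cluster the solutions, (2) apply Theorem~\ref{Dels} to count points on the open torus and massage each cluster into a finite-field hypergeometric sum, (3) count the boundary contribution from points with a vanishing coordinate, and (4) assemble. The trisection into three cases $q\equiv 3\,(\mathrm{mod}\,4)$, $q\equiv 5\,(\mathrm{mod}\,8)$, $q\equiv 1\,(\mathrm{mod}\,8)$ will fall out naturally from the divisibility conditions on $\qq$ governing which cluster shifts are legal, exactly mirroring the Picard--Fuchs decomposition in Proposition~\ref{prop:F2L2diff-appendix}.

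For Step 1, I would solve
\[ \begin{pmatrix} 4&0&0&0&1\\ 0&4&0&0&1\\ 0&0&3&1&1\\ 0&0&1&3&1\\ 1&1&1&1&1 \end{pmatrix}\svec \equiv 0 \pmod{\qq}. \]
Adding and subtracting the third and fourth rows gives $s_3\equiv s_4\,(\mathrm{mod}\,\qq/2)$ and $4(s_3+s_4)+2s_5\equiv 0$. Together with the first two rows, this forces $s_5 \equiv -4s_1 \equiv -4s_2\,(\mathrm{mod}\,\qq)$ and produces the invariant generator $(1,1,1,1,-4)$. The torsion shifts orthogonal to this generator split as follows: a $\tfrac{\qq}{2}$-shift in the $(x_0,x_1)$-block (matching the $\pmb\alpha=\{\tfrac14,\tfrac34\}$ family of Picard--Fuchs operators), a $\tfrac{\qq}{2}$-shift in the $(x_2,x_3)$-block (matching $\pmb\alpha=\{\tfrac12\}$), and---only when $8\mid\qq$---the $\tfrac{\qq}{8}$-shifts in the loop-block (matching $\pmb\alpha=\{\tfrac18,\tfrac58\}$ and $\{\tfrac38,\tfrac78\}$). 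The Picard--Fuchs count in Proposition~\ref{prop:F2L2diff-appendix} already tells me the total multiplicity of each cluster type, so the combinatorial bookkeeping is fixed in advance.

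For Step 2, the invariant cluster is handled by Lemma~\ref{Cluster no shifts} verbatim. The $(x_0,x_1)$-block half-shift clusters are handled by Lemma~\ref{Cluster two half shifts}, each contributing an $H_q(\tfrac14,\tfrac34;0,\tfrac12\,|\,\psi^{-4})$ term with sign $(-1)^{\qq\qm/2}=\phi_{-1}(q)$. The $(x_2,x_3)$-block half-shift produces the $H_q(\tfrac12;0\,|\,\psi^{-4})$ contribution paralleling Lemma~\ref{Cluster 2 quarters 1 half shifts} but simpler (only one pair of indices); here the Hasse--Davenport relation for $N=2$ will combine $g(s_3)g(s_3+\tfrac{\qq}{2})$ into $g(2s_3)g(\tfrac{\qq}{2})$. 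The genuinely new work is in the $\tfrac{\qq}{8}$-shift clusters when $q\equiv 1\,(\mathrm{mod}\,8)$: these parallel Example~\ref{HypergeometricFunction for first F1L3 mod 7 example}, and I would use the hybrid definition together with Hasse--Davenport for $N=2$ applied to $g(k+\tfrac{\qq}{8})$ etc.\ to extract a factor of $\omega(2)^{\qq\qm/4}=\phi_{\sqrt 2}(q)$; this sign exactly explains why the hypergeometric $L$-series of Main Theorem~\ref{mainthm}(c) requires the twist by $\phi_{\sqrt 2}$.

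Step 3 counts points with at least one coordinate zero. Setting $x_0=0$ or $x_1=0$ leaves either the Fermat-type cubic curve $x_1^4+x_2^3x_3+x_3^3x_2=0$ (solve for $x_2^3/x_3^3$ via the resolvent $x_2^3x_3+x_3^3x_2=x_2x_3(x_2^2+x_3^2)$), and similarly for $x_2=0$ or $x_3=0$. Applying Theorem~\ref{Dels} again to each such slice yields polynomial contributions in $q$ plus the Gauss-sum ``error'' terms $g(\tfrac{\qq}{4})^2,g(\tfrac{3\qq}{4})^2$ that cancel the corresponding leftovers in Step 2, as happened in Lemmas~\ref{oneCoordZero q 1mod4 Dwork} and~\ref{oneCoordZero F1L3}. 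Step 4 is bookkeeping: sum over clusters with correct multiplicity (read off Proposition~\ref{prop:F2L2diff-appendix}) and add the boundary. The main obstacle I anticipate is the $q\equiv 1\,(\mathrm{mod}\,8)$ case, where orchestrating four interacting new cluster types while keeping the Hasse--Davenport cancellations clean against Step 3 requires the same kind of careful algebraic juggling as Lemma~\ref{q not 1 mod 4 shift 1 fourteenth}; the appearance of $\omega(2)^{\qq\qm/4}$ (rather than $(-1)^{\qq\qm/4}$) is the delicate combinatorial test that the calculation is correct.
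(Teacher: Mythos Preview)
Your four-step plan matches the paper's proof exactly, and your identification of the reusable lemmas (Lemmas~\ref{Cluster no shifts}, \ref{Cluster two half shifts}, \ref{Cluster 2 quarters 1 half shifts}) and of the $\omega(2)^{\qq\qm/4}$ twist is correct. However, your Step~1 clustering contains a concrete error that would propagate. The two half-shifts you describe, $\tfrac{\qq}{2}(1,1,0,0,0)$ and $\tfrac{\qq}{2}(0,0,1,1,0)$, are \emph{not} independent cosets: adding the $k=\qq/2$ element $\tfrac{\qq}{2}(1,1,1,1,0)$ of the invariant line to one gives the other, so both lie in a single cluster $\tyS_4$, which Lemma~\ref{Cluster two half shifts} matches to $H_q(\tfrac14,\tfrac34;0,\tfrac12\,|\,\psi^{-4})$. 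There is no separate $\qq/2$-cluster producing $H_q(\tfrac12;0\,|\,\psi^{-4})$; that term arises only when $4\mid\qq$, from the two \emph{quarter}-shift clusters $\tyS_5,\tyS_6=\{k(1,1,1,1,-4)\pm\tfrac{\qq}{4}(0,2,1,1,0)\}$, which you have not listed and which are handled by Lemmas~\ref{Cluster 2 quarters 1 half shifts}--\ref{Cluster 2 3quarter 1 half shifts}. This is exactly why the $H_q(\tfrac12;0)$ term is absent for $q\equiv 3\pmod 4$ and carries coefficient $\pm 2$ (not $\pm 1$) for $q\equiv 1\pmod 4$.

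With this correction your plan goes through unchanged; the paper's only other new ingredients are Lemmas~\ref{Cluster quarter eighth 5eighths shifts}--\ref{Cluster 3quarter 3eighths 7eighths shifts} for the eighth-shift clusters $\tyS_{10},\tyS_{11}$ and Lemma~\ref{oneCoordZero F2L2} for the boundary count.
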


\begin{rmk} Notice that the hypergeometric functions appearing in the point count correspond to the Picard--Fuchs equations in Proposition~\ref{prop:F2L2diff}. We also see the appearance of six additional trivial factors. 
\end{rmk}

\subsubsection*{Step 1: Computing and clustering the characters.}

To use Theorem \ref{Dels} we compute the subset $\tyS\subset (\Z/\qq\Z)^r$ given by the constraints in \eqref{WCong}. 
\begin{enumalph}
\item If $q\equiv 3 \pmod 4$ then $\tyS$ can be clustered in the following way:
\begin{enumerate}[(i)]
\item the set $\tyS_1 = \{ k(1,1,1,1,-4) : k \in \Z/\qq\Z\}$ and
\item the set $\tyS_4 = \{ k(1,1,1,1,-4) + \tfrac{\qq}{2}(0,0,1,1,0): k \in \Z/\qq\Z\}$.
\setcounter{L2L2Clusters}{\value{enumi}}
\end{enumerate}
\item If $q\equiv 5 \pmod 8$ then $\tyS$ contains the two sets above and:
\begin{enumerate}[(i)]
\setcounter{enumi}{\value{L2L2Clusters}}
\item the set $\tyS_5 = \{ k(1,1,1,1,-4) + \tfrac{\qq}{4}(0,2,1,1,0): k \in \Z/\qq\Z\}$ and
\item the set $\tyS_6 = \{ k(1,1,1,1,-4) + 3\tfrac{\qq}{4}(0,2,1,1,0): k \in \Z/\qq\Z\}$.
\setcounter{L2L2Clusters}{\value{enumi}}
\end{enumerate}
\item If $q\equiv 1 \pmod 8$ then $\tyS$ contains the four sets above and 
\begin{enumerate}[(i)]
\setcounter{enumi}{\value{L2L2Clusters}}
\item two sets of the form $S_{10} =\{ k(1,1,1,1,-4) + \tfrac{\qq}{8}(0,2,1,5,0): k \in \Z/\qq\Z\}$ and
\item two sets of the form $S_{11} =\{ k(1,1,1,1,-4) + \tfrac{\qq}{8}(0,6,7,3,0): k \in \Z/\qq\Z\}$.
\end{enumerate}
\end{enumalph}
\subsubsection*{Step 2: Counting points on the open subset with nonzero coordinates.}

\begin{lem}\label{OpenF2L2}
Suppose $\psi \in \F_q^{\times}$.

\begin{enumalph}
\item If $q \equiv 3 \pmod 4$ then
\begin{equation}
\#U_{\Fsf_2\Lsf_2, \psi}(\F_q) = q^2-3q+1+H_q(\tfrac{1}{4},\tfrac{1}{2},\tfrac{3}{4};0,0,0\,|\,\psi^{-4})- qH_q(\tfrac14, \tfrac{3}{4}; 0, \tfrac12 \,|\, \psi^{-4}).
\end{equation}
\item If $q \equiv 5 \pmod 8$ then
\begin{equation}\begin{aligned}
\#U_{\Fsf_2\Lsf_2, \psi}(\F_q) &= q^2-3q+3+H_q(\tfrac{1}{4},\tfrac{1}{2},\tfrac{3}{4};0,0,0\,|\,\psi^{-4}) + qH_q(\tfrac14, \tfrac{3}{4}; 0, \tfrac12 \,|\, \psi^{-4})\\
	&\qquad -2q H_q(\tfrac12; 0 \,|\, \psi^{-4}) - 2 \left(\frac{g(\tfrac{\qq}{4})^2 + g(\tfrac{3\qq}{4})^2}{g(\tfrac{\qq}{2})}\right).
\end{aligned}\end{equation}
\item If $q \equiv 1 \pmod 8$ then
\begin{equation}\begin{aligned}
\#U_{\Fsf_2\Lsf_2, \psi}(\F_q) &= q^2-3q+7+H_q(\tfrac{1}{4},\tfrac{1}{2},\tfrac{3}{4};0,0,0\,|\,\psi^{-4})  +qH_q(\tfrac14, \tfrac{3}{4}; 0, \tfrac12 \,|\, \psi^{-4})  \\
	&\qquad + 2q H_q(\tfrac12; 0 \,|\, \psi^{-4}) + 2\omega(2)^{\qq\qm/4}qH_q(\tfrac18, \tfrac58; 0, \tfrac14 \,|\, \psi^4) + 2\omega(2)^{\qq\qm/4}qH_q(\tfrac38, \tfrac78; 0, \tfrac34 \,|\, \psi^4) \\
	&\qquad - 2\left(\frac{g(\tfrac{\qq}{4})^2 + g(\tfrac{3\qq}{4})^2}{g(\tfrac{\qq}{2})}\right)  - \tfrac{4}{q}g(\tfrac{\qq}{8})g(\tfrac{5\qq}{8}) g(\tfrac{\qq}{4}) - \tfrac{4}{q}g(\tfrac{3\qq}{8})g(\tfrac{7\qq}{8})g(\tfrac{3\qq}{4}).
\end{aligned}\end{equation}
\end{enumalph}
\end{lem}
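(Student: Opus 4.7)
The plan is to apply Koblitz's formula (Theorem \ref{Dels}) to the character clustering from Step 1, and to evaluate each cluster sum by reducing it to a finite field hypergeometric sum via Hasse--Davenport identities. The clusters $\tyS_1$ and $\tyS_4$ are present for every odd $q$ and have already been computed in Lemmas \ref{Cluster no shifts} and \ref{Cluster two half shifts} respectively. When $q \equiv 1 \pmod 4$, the extra clusters $\tyS_5, \tyS_6$ appear and have been evaluated in Lemmas \ref{Cluster 2 quarters 1 half shifts} and \ref{Cluster 2 3quarter 1 half shifts}. So the only new work is for the clusters $\tyS_{10}$ (with multiplicity 2) and $\tyS_{11}$ (with multiplicity 2), which appear when $q \equiv 1 \pmod 8$.

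For $\tyS_{10}$ with shift vector $\tfrac{\qq}{8}(0,2,1,5,0)$, the Koblitz sum takes the form
\[ \sum_{s \in \tyS_{10}} \omega(a)^{-s}c_s = \frac{1}{q\qq}\sum_{k=0}^{q-2} \omega(-4\psi)^{4k}\, g(k)g(k+\tfrac{\qq}{4})g(k+\tfrac{\qq}{8})g(k+\tfrac{5\qq}{8})g(-4k). \]
The plan is to apply Hasse--Davenport (Lemma \ref{gausssumidentities}(c)) for $N=2$ to the product $g(k+\tfrac{\qq}{8})g(k+\tfrac{5\qq}{8})$, converting it to $\omega(2)^{-2k}g(2k+\tfrac{\qq}{4})g(\tfrac{\qq}{4})$, then reindex by $m = -k-\tfrac{\qq}{8}$ to put the summand into the shape of the hybrid hypergeometric sum of Definition \ref{Hybrid Definition} with parameters $\pmb{\alpha}=\{\tfrac18,\tfrac58\}$ and $\pmb{\beta}=\{0,\tfrac14\}$. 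Here the splittable partition has $\pmb{\alpha}_0=\emptyset$, $\pmb{\beta}_0=\{\tfrac14\}$, $\pmb{\alpha}'=\pmb{\alpha}$, $\pmb{\beta}'=\{0\}$, and the field of definition is $K_{\pmb{\alpha},\pmb{\beta}}=\Q(\sqrt{-1})$ (the stabilizer of $\{e^{2\pi\ii/8},e^{10\pi\ii/8}\}$ in $(\Z/8\Z)^\times$ is $\langle 5 \rangle$), so $q \equiv 1 \pmod 8$ guarantees splittability. The scaling factor $\omega(2)^{\qq\qm/4}$ in the final answer will emerge from the $\omega(2)^{-2k}$ that appears when applying Hasse--Davenport. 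The boundary terms $k=0, \tfrac{\qq}{2}, \tfrac{\qq}{4}, \tfrac{3\qq}{4}$ must be peeled off and handled separately, producing the ``ugly'' correction $\tfrac{4}{q}g(\tfrac{\qq}{8})g(\tfrac{5\qq}{8})g(\tfrac{\qq}{4})$. The cluster $\tyS_{11}$ is handled identically (or, more efficiently, by Galois conjugation using Lemma \ref{lem:fieldofdef}(a)) and yields $H_q(\tfrac38, \tfrac78; 0, \tfrac34 \,|\, \psi^4)$ together with the conjugate correction term $\tfrac{4}{q}g(\tfrac{3\qq}{8})g(\tfrac{7\qq}{8})g(\tfrac{3\qq}{4})$.

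Once these cluster sums are in hand, the result is obtained by summing the contributions in each congruence class: for $q \equiv 3 \pmod 4$, only $\tyS_1$ and $\tyS_4$ contribute, with the factor $(-1)^{\qq\qm/2}=-1$ from Lemma \ref{Cluster two half shifts}; for $q \equiv 5 \pmod 8$, we add $\tyS_5, \tyS_6$ (with $(-1)^{\qq\qm/2}=+1$ and $(-1)^{\qq\qm/4}=-1$); for $q \equiv 1 \pmod 8$, we further add $2(\tyS_{10}+\tyS_{11})$ with $(-1)^{\qq\qm/4}=+1$. The residual combinatorial terms such as $\tfrac{g(\qq/4)^2+g(3\qq/4)^2}{g(\qq/2)}$ and $\tfrac{1}{q}g(\tfrac{\qq}{8})g(\tfrac{5\qq}{8})g(\tfrac{\qq}{4})$ appear in $\#U$ but are expected to cancel against analogous contributions in Step 3 (counting points with at least one vanishing coordinate), in direct analogy with Lemmas \ref{oneCoordZero q 1mod4 Dwork} and \ref{oneCoordZero F1L3} for the preceding pencils.

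The main obstacle will be the Gauss sum manipulation for $\tyS_{10}$: correctly invoking Hasse--Davenport for $N=2$ to pair off $g(k+\tfrac{\qq}{8})$ with $g(k+\tfrac{5\qq}{8})$, identifying the resulting multiplicative character as the quartic residue symbol $\omega(2)^{\qq\qm/4}$, and matching the normalization constants $M, \delta, s(m)$ of the hybrid definition so that the sum genuinely collapses to $qH_q(\tfrac18, \tfrac58; 0, \tfrac14 \,|\, \psi^4)$ plus a single clean boundary term. This parallels the role that Lemma \ref{Gauss Identity Lemma F1L3} played in the Klein--Mukai calculation, but here applied to an eighth-root-of-unity shift rather than a fourteenth-root-of-unity shift.
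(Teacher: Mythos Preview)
Your overall architecture is exactly the paper's: sum the Koblitz cluster contributions, quote Lemmas \ref{Cluster no shifts}, \ref{Cluster two half shifts}, \ref{Cluster 2 quarters 1 half shifts}, \ref{Cluster 2 3quarter 1 half shifts} for $\tyS_1,\tyS_4,\tyS_5,\tyS_6$, and prove a new identity for $\tyS_{10},\tyS_{11}$. The final bookkeeping in each congruence class is also correct.

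Where you diverge is in the manipulation for $\tyS_{10}$. The paper does \emph{not} apply Hasse--Davenport with $N=2$ to the pair $g(k+\tfrac{\qq}{8})g(k+\tfrac{5\qq}{8})$ and reindex by $m=-k-\tfrac{\qq}{8}$. Instead, it expands $g(-4k)$ via Hasse--Davenport with $N=4$, uses reflection $g(m)g(-m)=(-1)^m q$ to cancel the factors $g(k)$ and $g(k+\tfrac{\qq}{4})$, reindexes by $m=k+\tfrac{\qq}{2}$, and then invokes the separate Gauss sum identity (Lemma \ref{F2L2 gauss sum identity})
\[
\frac{g(\tfrac{3\qq}{4})\,g(\tfrac{\qq}{8})\,g(\tfrac{5\qq}{8})}{g(\tfrac{\qq}{2})}=\omega(2)^{\qq/4}\,q
\]
to pull out the prefactor $\omega(2)^{\qq/4}q$ and match the hybrid definition. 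Your proposed reindex $m=-k-\tfrac{\qq}{8}$ leaves factors $g(-m-\tfrac{\qq}{8})$, $g(-m+\tfrac{\qq}{8})$, $g(4m+\tfrac{\qq}{2})$ that do not line up with the summand of $H_q(\tfrac18,\tfrac58;0,\tfrac14\,|\,\psi^4)$, so the route as sketched would not close without further (unstated) identities.

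Two smaller corrections. First, your splittable partition is off: $\{\tfrac14\}$ is not defined over $\Q$, so the maximal $\pmb{\beta}_0$ is $\{0\}$ and $\pmb{\beta}'=\{\tfrac14\}$ (not the reverse). Second, the boundary contributions do not split as ``$\tyS_{10}$ gives one correction, $\tyS_{11}$ gives the conjugate'': in the paper's Lemmas \ref{Cluster quarter eighth 5eighths shifts} and \ref{Cluster 3quarter 3eighths 7eighths shifts}, \emph{each} of $\tyS_{10}$ and $\tyS_{11}$ contributes both terms $-\tfrac{1}{q}g(\tfrac{\qq}{8})g(\tfrac{5\qq}{8})g(\tfrac{\qq}{4})$ and $-\tfrac{1}{q}g(\tfrac{3\qq}{8})g(\tfrac{7\qq}{8})g(\tfrac{3\qq}{4})$, and the coefficient $4$ in the final formula comes from summing over the two copies of each cluster.
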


\begin{proof}
If $q \equiv 3\pmod 4$ then by Lemmas~\ref{Cluster no shifts} and~\ref{Cluster two half shifts}
\begin{equation}\begin{aligned}
\#U_{\Fsf_2\Lsf_2, \psi}(\F_q) &= \sum_{s \in \tyS_1} \omega(a)^{-s} c_s +  \sum_{s \in \tyS_4} \omega(a)^{-s} c_s \\
	&= (q^2-3q+3+H_q(\tfrac{1}{4},\tfrac{1}{2},\tfrac{3}{4};0,0,0\,|\,\psi^{-4}))  + \\
	&\qquad  -2 - qH_q(\tfrac14, \tfrac{3}{4}; 0, \tfrac12 \,|\, \psi^{-4}) \\
	&= q^2-3q+1+H_q(\tfrac{1}{4},\tfrac{1}{2},\tfrac{3}{4};0,0,0\,|\,\psi^{-4})- qH_q(\tfrac14, \tfrac{3}{4}; 0, \tfrac12 \,|\, \psi^{-4}).
\end{aligned}\end{equation}

If $q \equiv 5 \pmod 8$ then $q\equiv 1 \pmod4 $ but $q\not\equiv 1 \pmod 8$, so by Lemmas~\ref{Cluster no shifts},~\ref{Cluster two half shifts},~\ref{Cluster 2 quarters 1 half shifts}, and~\ref{Cluster 2 3quarter 1 half shifts}

\begin{equation}\begin{aligned}
\#U_{\Fsf_2\Lsf_2, \psi}(\F_q) &= \sum_{s \in \tyS_1} \omega(a)^{-s} c_s +  \sum_{s \in \tyS_4} \omega(a)^{-s} c_s  + \sum_{s \in \tyS_5} \omega(a)^{-s} c_s   + \sum_{s \in \tyS_6} \omega(a)^{-s} c_s  \\
	&= (q^2-3q+3+H_q(\tfrac{1}{4},\tfrac{1}{2},\tfrac{3}{4};0,0,0\,|\,\psi^{-4}))  + (2 + qH_q(\tfrac14, \tfrac{3}{4}; 0, \tfrac12 \,|\, \psi^{-4}))  \\
	&\qquad + (-1)^{\qq\qm/4} q H_q(\tfrac12; 0 \,|\, \psi^{-4}) + (-1)^{\qq\qm/4} - \left(\frac{g(\tfrac{\qq}{4})^2 + g(\tfrac{3\qq}{4})^2}{g(\tfrac{\qq}{2})}\right) \\
	&\qquad + (-1)^{\qq\qm/4} q H_q(\tfrac12; 0 \,|\, \psi^{-4}) + (-1)^{\qq\qm/4} - \left(\frac{g(\tfrac{\qq}{4})^2 + g(\tfrac{3\qq}{4})^2}{g(\tfrac{\qq}{2})}\right) \\
	&= q^2-3q+3+H_q(\tfrac{1}{4},\tfrac{1}{2},\tfrac{3}{4};0,0,0\,|\,\psi^{-4}) + qH_q(\tfrac14, \tfrac{3}{4}; 0, \tfrac12 \,|\, \psi^{-4}))\\
	&\qquad -2q H_q(\tfrac12; 0 \,|\, \psi^{-4}) - 2 \left(\frac{g(\tfrac{\qq}{4})^2 + g(\tfrac{3\qq}{4})^2}{g(\tfrac{\qq}{2})}\right).
\end{aligned}\end{equation}

If $q\equiv 1 \pmod 8$, then by Lemmas~\ref{Cluster no shifts},~\ref{Cluster two half shifts},~\ref{Cluster 2 quarters 1 half shifts},~\ref{Cluster 2 3quarter 1 half shifts},~\ref{Cluster quarter eighth 5eighths shifts}, and~\ref{Cluster 3quarter 3eighths 7eighths shifts}, we have:
\begin{equation}\begin{aligned}
\#U_{\Fsf_2\Lsf_2, \psi}(\F_q) &= \sum_{s \in \tyS_1} \omega(a)^{-s} c_s +  \sum_{s \in \tyS_4} \omega(a)^{-s} c_s  + \sum_{s \in \tyS_5} \omega(a)^{-s} c_s   + \sum_{s \in \tyS_6} \omega(a)^{-s} c_s  \\
	&\qquad + \sum_{s \in \tyS_{10}} \omega(a)^{-s} c_s + \sum_{s \in \tyS_{11}} \omega(a)^{-s} c_s \\ 
	&= (q^2-3q+3+H_q(\tfrac{1}{4},\tfrac{1}{2},\tfrac{3}{4};0,0,0\,|\,\psi^{-4}))  + (2 + qH_q(\tfrac14, \tfrac{3}{4}; 0, \tfrac12 \,|\, \psi^{-4}))  \\
	&\qquad + 2q H_q(\tfrac12; 0 \,|\, \psi^{-4}) + 2 - 2\left(\frac{g(\tfrac{\qq}{4})^2 + g(\tfrac{3\qq}{4})^2}{g(\tfrac{\qq}{2})}\right) \\
	&\qquad + 2\omega(2)^{\qq\qm/4} qH_q(\tfrac18, \tfrac58; 0, \tfrac14 \,|\, \psi^4) + 2\omega(2)^{\qq\qm/4}qH_q(\tfrac38, \tfrac78; 0, \tfrac34 \,|\, \psi^4) \\
	&\qquad - \tfrac{4}{q}g(\tfrac{\qq}{8})g(\tfrac{5\qq}{8}) g(\tfrac{\qq}{4}) - \tfrac{4}{q}g(\tfrac{3\qq}{8})g(\tfrac{7\qq}{8})g(\tfrac{3\qq}{4})\\
	&= q^2-3q+7+H_q(\tfrac{1}{4},\tfrac{1}{2},\tfrac{3}{4};0,0,0\,|\,\psi^{-4})  +qH_q(\tfrac14, \tfrac{3}{4}; 0, \tfrac12 \,|\, \psi^{-4})  \\
	&\qquad + 2q H_q(\tfrac12; 0 \,|\, \psi^{-4}) + 2\omega(2)^{\qq\qm/4} qH_q(\tfrac18, \tfrac58; 0, \tfrac14 \,|\, \psi^4) \\
	& \qquad + 2\omega(2)^{\qq\qm/4}qH_q(\tfrac38, \tfrac78; 0, \tfrac34 \,|\, \psi^4)- 2\left(\frac{g(\tfrac{\qq}{4})^2 + g(\tfrac{3\qq}{4})^2}{g(\tfrac{\qq}{2})}\right)  \\
	&\qquad  - \tfrac{4}{q}g(\tfrac{\qq}{8})g(\tfrac{5\qq}{8}) g(\tfrac{\qq}{4}) - \tfrac{4}{q}g(\tfrac{3\qq}{8})g(\tfrac{7\qq}{8})g(\tfrac{3\qq}{4})
\end{aligned}\end{equation}
as claimed.
\end{proof}

Before proving the lemmas that associate the quantities $\sum_{s \in \tyS_{10}}\omega(a)^{-s} c_s$ and $\sum_{s \in \tyS_{11}}\omega(a)^{-s} c_s$ to hypergeometric sums, we need the following lemma.

\begin{lem}\label{F2L2 gauss sum identity}
Suppose $q \equiv 1 \pmod 8$ and $q = p^r$ for some natural number $r$ and prime $p$. Then 
$$
\frac{g(\tfrac{3\qq}{4}) g(\tfrac{\qq}{8}) g(\tfrac{5\qq}{8}) } {g(\tfrac{\qq}{2})} =   \omega(2)^{\qq\qm/4} q.
$$
\end{lem}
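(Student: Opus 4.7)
The plan is to derive the identity by applying the Hasse--Davenport product relation (Lemma \ref{gausssumidentities}(c)) with $N=2$ to $m = \tfrac{\qq}{8}$, then combining with the reflection formula $g(m)g(-m) = (-1)^m q$ from Lemma \ref{gausssumidentities}(b). This is exactly the same template used in the proof of Lemma \ref{Gauss Identity Lemma F1L3}, except now we use $N=2$ applied once rather than three times.

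Concretely, first I would write Hasse--Davenport at $N=2$, $m=\tfrac{\qq}{8}$, which (using $g(0)=-1$) rearranges to
\[
\frac{g(\tfrac{\qq}{8}) g(\tfrac{5\qq}{8})}{g(\tfrac{\qq}{2})} \;=\; \omega(2)^{-\qq\qm/4}\, g(\tfrac{\qq}{4}).
\]
Multiplying both sides by $g(\tfrac{3\qq}{4})$ and recognizing that $\tfrac{3\qq}{4} \equiv -\tfrac{\qq}{4} \pmod{\qq}$, Lemma \ref{gausssumidentities}(b) with $m=\tfrac{\qq}{4}$ gives $g(\tfrac{\qq}{4})g(\tfrac{3\qq}{4}) = (-1)^{\qq\qm/4} q$, so
\[
\frac{g(\tfrac{3\qq}{4})g(\tfrac{\qq}{8}) g(\tfrac{5\qq}{8})}{g(\tfrac{\qq}{2})} \;=\; (-1)^{\qq\qm/4}\,\omega(2)^{-\qq\qm/4}\, q.
\]

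It remains to massage the sign and exponent using the hypothesis $q \equiv 1 \pmod{8}$. Here $\qq\qm/4 = (q-1)/4$ is even, so $(-1)^{\qq\qm/4}=1$. For the character factor, I would observe that $\omega(2)^{\qq\qm/2}$ equals the quadratic residue symbol of $2$ in $\F_q^\times$; since $q\equiv 1\pmod 8$ forces $2$ to be a square in $\F_q$ (checking the two cases $p\equiv 1\pmod 8$ via the quadratic reciprocity supplement, and $p\not\equiv 1\pmod 8$ where $r=[\F_q:\F_p]$ must be even so $2\in\F_p$ is automatically a square in $\F_q$), we conclude $\omega(2)^{\qq\qm/2}=1$, hence $\omega(2)^{-\qq\qm/4}=\omega(2)^{\qq\qm/4}$. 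Substituting these simplifications yields the claimed identity.

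The computation is entirely routine; the only minor subtlety is the last step verifying that $2$ is a square in $\F_q$ when $q \equiv 1 \pmod 8$, which is standard but worth recording cleanly since both the sign $(-1)^{\qq\qm/4}$ and the exchange $\omega(2)^{-\qq\qm/4} \leftrightarrow \omega(2)^{\qq\qm/4}$ rely on it. No obstacles are anticipated.
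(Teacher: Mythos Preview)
Your proof is correct and follows essentially the same approach as the paper: apply Hasse--Davenport with $N=2$, $m=\tfrac{\qq}{8}$, multiply by $g(\tfrac{3\qq}{4})$, and simplify using Lemma \ref{gausssumidentities}(b). The paper's justification of the final step is terser (it simply asserts $\omega(2)^{\qq\qm/4}=\pm 1$ when $q\equiv 1\pmod 8$), while you spell out why $2$ is a square in $\F_q$; both arguments are equivalent.
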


\begin{proof}
Since $q\equiv1 \pmod 8$, we can use Hasse--Davenport with $N=2$ and $m = \tfrac{\qq}{8}$ to get that 
\begin{equation}
g(\tfrac{\qq}{4}) = \omega(2)^{\qq/4} \frac{g(\tfrac{\qq}{8})g(\tfrac{5\qq}{8})}{g(\tfrac{\qq}{2})}.
\end{equation}
By multiplying both sides by $g(\tfrac{3\qq}{4})$, and dividing by $\omega(2)^{\qq/4}$, we have
$$
\omega(2)^{-\qq/4}g(\tfrac{\qq}{4})g(\tfrac{3\qq}{4})  =\frac{g(\tfrac{\qq}{8})g(\tfrac{5\qq}{8})g(\tfrac{3\qq}{4}) }{g(\tfrac{\qq}{2})}.
$$
We obtain the identity above after noting that $g(\tfrac{\qq}{4})g(\tfrac{3\qq}{4}) = (-1)^{\qq/4} q = q$, since $q\equiv 1 \pmod 8$ and that $\omega(2)^{\qq\qm/4} = \pm 1$ when $q\equiv 1\pmod8$ hence $\omega(2)^{\qq\qm/4} = \omega(2)^{-\qq\qm/4}$.
\end{proof}

\begin{lem}\label{Cluster quarter eighth 5eighths shifts}
Suppose $q \equiv 1 \pmod 8$. Then for 
\[ \tyS_{10} = \{k(1,1,1,1,-4) + \tfrac{\qq}{8}(0,2,1,5,0) : k \in \Z/\qq\Z\} \]
we have
$$
\sum_{s \in \tyS_{10}} \omega(a)^{-s} c_s = \omega(2)^{\qq\qm/4}qH_q(\tfrac18, \tfrac58; 0, \tfrac14 \,|\, \psi^4) - \frac{1}{q}g(\tfrac{\qq}{8})g(\tfrac{5\qq}{8}) g(\tfrac{\qq}{4}) - \frac{1}{q}g(\tfrac{3\qq}{8})g(\tfrac{7\qq}{8})g(\tfrac{3\qq}{4}).
$$
\end{lem}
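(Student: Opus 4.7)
The strategy is to mirror the proof of Lemma \ref{q not 1 mod 4 shift 1 fourteenth}: I will transform the cluster sum into a multiple of $H_q(\tfrac18,\tfrac58;0,\tfrac14\,|\,\psi^4)$ by a Hasse--Davenport substitution followed by a shift in the summation index, and the two correction terms will emerge as boundary contributions at indices where the transformation degenerates.

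Expanding with $c_s = \frac{1}{q\qq}\prod_i g(s_i)$ gives
$$\sum_{s \in \tyS_{10}} \omega(a)^{-s} c_s = \frac{1}{q\qq} \sum_{k=0}^{q-2} \omega(-4\psi)^{4k}\, g(k)\, g(k{+}\tfrac{\qq}{4})\, g(k{+}\tfrac{\qq}{8})\, g(k{+}\tfrac{5\qq}{8})\, g(-4k).$$
My first step will be to isolate the two degenerate indices $k=0$ and $k=3\qq/4$, at each of which the product contains two factors of $g(0)=-1$ (one from $g(k)$ or $g(k{+}\qq/4)$ and one from $g(-4k)$). A direct evaluation gives boundary contributions $\frac{1}{q\qq}g(\qq/4)g(\qq/8)g(5\qq/8)$ and $\frac{1}{q\qq}g(3\qq/4)g(7\qq/8)g(3\qq/8)$ respectively.

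For the remaining indices, I plan to use the identity $g(x)g(-x)=(-1)^xq$ from Lemma \ref{gausssumidentities}(b) (applicable because $\qq/4$ is even when $q\equiv 1\pmod 8$) to replace $g(k)g(k{+}\qq/4)$ by $q^2/(g(-k)g(-k{-}\qq/4))$, and then apply Hasse--Davenport (Lemma \ref{gausssumidentities}(c)) with $N=4$ to $g(-4k)$; after simplifying via $g(\qq/4)g(\qq/2)g(3\qq/4)=qg(\qq/2)$ this produces
$$g(k)\, g(k{+}\tfrac{\qq}{4})\, g(-4k) = \frac{q\,\omega(4)^{-4k}\, g(-k{-}\tfrac{\qq}{2})\, g(-k{-}\tfrac{3\qq}{4})}{g(\qq/2)}.$$
Since $\omega(-4\psi)^{4k}\omega(4)^{-4k}=\omega(\psi^4)^k$, the complementary sum will become $\frac{q}{g(\qq/2)}\sum_{k\ne 0,3\qq/4}\omega(\psi^4)^k g(k{+}\qq/8)g(k{+}5\qq/8)g(-k{-}\qq/2)g(-k{-}3\qq/4)$.

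My final step is the shift $m=k-\qq/2$, which (using $\omega(\psi^4)^{\qq/2}=1$) transforms the summand into $g(m{+}\qq/8)g(m{+}5\qq/8)g(-m)g(-m{-}\qq/4)$, exactly the summand in $H_q(\tfrac18,\tfrac58;0,\tfrac14\,|\,\psi^4)$, but over all $m$ except $m=\qq/4,\qq/2$ (the images of the previously excluded indices). Adding and subtracting these two missing terms completes the hypergeometric sum; Lemma \ref{F2L2 gauss sum identity} then identifies $g(\qq/8)g(5\qq/8) = \omega(2)^{\qq\qm/4}g(\qq/2)g(\qq/4)$, yielding the expected leading coefficient $q\omega(2)^{\qq\qm/4}$ in front of $H_q$. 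The arithmetic identity $\frac{1}{q\qq}-\frac{1}{\qq}=-\frac{1}{q}$ will then combine the four boundary terms (two isolated initially, two added back at the end) into precisely $-\frac{1}{q}g(\qq/8)g(5\qq/8)g(\qq/4) - \frac{1}{q}g(3\qq/8)g(7\qq/8)g(3\qq/4)$. The principal obstacle is bookkeeping: I will need to track carefully the signs, the character factors $\omega(2)^{\pm\qq/4}$ and $\omega(\psi^4)^{\pm\qq/2}$, and the exact correspondence between excluded and reintroduced indices across the change of variable.
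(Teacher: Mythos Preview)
Your proposal is correct and follows essentially the same route as the paper's proof: both apply Hasse--Davenport with $N=4$ to $g(-4k)$, use the reflection formula $g(x)g(-x)=(-1)^xq$ to cancel $g(k)$ and $g(k+\qq/4)$ against two of the four HD factors, reindex by $\qq/2$, and invoke Lemma~\ref{F2L2 gauss sum identity} to produce the coefficient $\omega(2)^{\qq/4}q$. The only cosmetic difference is that the paper extracts all four indices with $4k\equiv 0\pmod{\qq}$ up front, whereas you extract only the two ($k=0$ and $k=3\qq/4$) at which the reflection formula actually fails; your bookkeeping is therefore slightly leaner, but the argument is the same.
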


\begin{proof}

First, we take the definition of the sum and take out all terms that are of the form $\ell\tfrac{\qq}{4}$ to obtain the equality:

\begin{equation}\begin{aligned}
\sum_{s \in \tyS_{10}} \omega(a)^{-s} c_s &= \frac{1}{q\qq} g(\tfrac{\qq}{4}) g(\tfrac{\qq}{8}) g(\tfrac{5\qq}{8}) - \frac{1}{q\qq} g(\tfrac{\qq}{4})g(\tfrac{\qq}{2}) g(\tfrac{3\qq}{8})g(\tfrac{7\qq}{8}) \\
	&\quad - \frac{1}{q\qq} g(\tfrac{\qq}{2})g(\tfrac{3\qq}{4})g(\tfrac{\qq}{8})g(\tfrac{5\qq}{8}) + \frac{1}{q\qq} g(\tfrac{3\qq}{4}) g(\tfrac{7\qq}{8})g(\tfrac{\qq}{8}) \\
	&\quad + \frac{1}{q\qq} \sum_{\stackrel{k=0}{4k\not\equiv 0\psmod{\qq}}}^{q-2}\omega(4\psi)^{4k}g(k) g(k+\tfrac{\qq}{4}) g(k + \tfrac{\qq}{8}) g(k+\tfrac{5\qq}{8})g(-4k).
\end{aligned}\end{equation}
Next, we use the Hasse-Davenport relationship to expand $g(-4k)$ and then use relation from~\ref{gausssumidentities}(b) to cancel out the $g(k+\tfrac{\qq}{4})$ factor in the summation. Through this, we obtain:
\begin{equation}\begin{aligned}
\sum_{s \in \tyS_{10}} \omega(a)^{-s} c_s &= \frac{1}{q\qq} g(\tfrac{\qq}{4}) g(\tfrac{\qq}{8}) g(\tfrac{5\qq}{8}) - \frac{1}{q\qq} g(\tfrac{\qq}{4})g(\tfrac{\qq}{2}) g(\tfrac{3\qq}{8})g(\tfrac{7\qq}{8}) \\
	&\quad - \frac{1}{q\qq} g(\tfrac{\qq}{2})g(\tfrac{3\qq}{4})g(\tfrac{\qq}{8})g(\tfrac{5\qq}{8}) + \frac{1}{q\qq} g(\tfrac{3\qq}{4}) g(\tfrac{7\qq}{8})g(\tfrac{\qq}{8}) \\
	&\quad + \frac{1}{\qq} \sum_{\stackrel{k=0}{4k\not\equiv 0\psmod{\qq}}}^{q-2} \omega(\psi)^{4k} \frac{g(k+\tfrac{\qq}{8}) g(k+\tfrac{5\qq}{8})g(-k+\tfrac{\qq}{4})g(-k+\tfrac{\qq}{2})}{g(\tfrac{\qq}{2})}.
\end{aligned}\end{equation}

Here, we re-index the summation by $m = k+ \tfrac{\qq}{2}$ to obtain:

\begin{equation}\begin{aligned}
\sum_{s \in \tyS_{10}} \omega(a)^{-s} c_s &= \frac{1}{q\qq} g(\tfrac{\qq}{4}) g(\tfrac{\qq}{8}) g(\tfrac{5\qq}{8}) - \frac{1}{q\qq} g(\tfrac{\qq}{4})g(\tfrac{\qq}{2}) g(\tfrac{3\qq}{8})g(\tfrac{7\qq}{8}) \\
	&\quad - \frac{1}{q\qq} g(\tfrac{\qq}{2})g(\tfrac{3\qq}{4})g(\tfrac{\qq}{8})g(\tfrac{5\qq}{8}) + \frac{1}{q\qq} g(\tfrac{3\qq}{4}) g(\tfrac{7\qq}{8})g(\tfrac{\qq}{8}) \\
	&\quad + \frac{1}{q-1} \sum_{\stackrel{m=0}{4m\not\equiv 0\psmod{\qq}}}^{q-2} \omega(\psi)^{4m} \frac{g(m+\tfrac{\qq}{8}) g(m+\tfrac{5\qq}{8})g(-m-\tfrac{\qq}{4})g(-m)}{g(\tfrac{\qq}{2})}.
\end{aligned}\end{equation}

We now multiply the final form by the expression
\[ \frac{g(\tfrac{3\qq}{4}) g(\tfrac{\qq}{8}) g(\tfrac{5\qq}{8}) } {g(\tfrac{3\qq}{4}) g(\tfrac{\qq}{8}) g(\tfrac{5\qq}{8})}=1. \] 
We put the denominator of this factor into the summation to relate the summation to a hypergeometric function but factor out the numerator along with a factor of $g(\tfrac{\qq}{2})$. We then apply Lemma~\ref{F2L2 gauss sum identity} to this factor ahead of the summation. We thus obtain:
\begin{equation}\begin{aligned}
\sum_{s \in \tyS_{10}} \omega(a)^{-s} c_s &= \frac{1}{q\qq} g(\tfrac{\qq}{4}) g(\tfrac{\qq}{8}) g(\tfrac{5\qq}{8}) - \frac{1}{q\qq} g(\tfrac{\qq}{4})g(\tfrac{\qq}{2}) g(\tfrac{3\qq}{8})g(\tfrac{7\qq}{8}) \\
	&\quad - \frac{1}{q\qq} g(\tfrac{\qq}{2})g(\tfrac{3\qq}{4})g(\tfrac{\qq}{8})g(\tfrac{5\qq}{8}) + \frac{1}{q\qq} g(\tfrac{3\qq}{4}) g(\tfrac{7\qq}{8})g(\tfrac{\qq}{8}) \\
	&\quad - \frac{ \omega(2)^{\qq\qm/4} q}{\qq} \sum_{\stackrel{m=0}{4m\not\equiv 0\psmod{\qq}}}^{q-2} \omega(\psi)^{4m} \frac{g(m+\tfrac{\qq}{8}) g(m+\tfrac{5\qq}{8})g(-m-\tfrac{\qq}{4})g(-m)}{g(\tfrac{\qq}{8}) g(\tfrac{5\qq}{8})g(-\tfrac{\qq}{4})g(0)}.
\end{aligned}\end{equation}
By comparing terms of the summations above and the hypergeometric function itself, we obtain the desired result.
\end{proof}

\begin{lem}\label{Cluster 3quarter 3eighths 7eighths shifts}
Suppose $q \equiv 1 \pmod 8$. Then for 
\[ \tyS_{11} = \{k(1,1,1,1,-4) +\tfrac{\qq}{8}(0,6,7,3,0) : k \in \Z/\qq\Z\} \]
we have
$$
\sum_{s \in \tyS_{11}} \omega(a)^{-s} c_s = \omega(2)^{\qq\qm/4} qH_q(\tfrac38, \tfrac78; 0, \tfrac34 \,|\, \psi^4) - \frac{1}{q}g(\tfrac{\qq}{8})g(\tfrac{5\qq}{8}) g(\tfrac{\qq}{4}) - \frac{1}{q}g(\tfrac{3\qq}{8})g(\tfrac{7\qq}{8})g(\tfrac{3\qq}{4}).
$$
\end{lem}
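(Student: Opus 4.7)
The plan is to mirror the proof of Lemma~\ref{Cluster quarter eighth 5eighths shifts} almost line by line, taking advantage of the fact that $\tfrac{\qq}{8}(0,6,7,3,0)\equiv -\tfrac{\qq}{8}(0,2,1,5,0)\pmod{\qq}$, so that $\tyS_{11}$ is, up to a reindexing $k\mapsto -k$, the ``negation'' of $\tyS_{10}$; correspondingly, the parameters $(\tfrac38,\tfrac78;0,\tfrac34)$ are the negatives modulo $\Z$ of $(\tfrac18,\tfrac58;0,\tfrac14)$. There are two natural routes: (i) a direct Gauss-sum computation duplicating the previous lemma's argument, or (ii) a Galois/complex-conjugation argument in the spirit of Lemma~\ref{shift by 3 fourteenths}, using Lemma~\ref{lem:fieldofdef}(a) with $k=-1$ to transport the identity of Lemma~\ref{Cluster quarter eighth 5eighths shifts} across.

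For the direct approach, I would first expand the definition to
\begin{equation*}
\sum_{s\in\tyS_{11}}\omega(a)^{-s}c_s = \frac{1}{q\qq}\sum_{k=0}^{q-2} \omega(-4\psi)^{4k}\,g(k)\,g(k+\tfrac{3\qq}{4})\,g(k+\tfrac{3\qq}{8})\,g(k+\tfrac{7\qq}{8})\,g(-4k),
\end{equation*}
then peel off the four boundary indices $k=0,\tfrac{\qq}{4},\tfrac{\qq}{2},\tfrac{3\qq}{4}$ where one of the Gauss-sum arguments vanishes. After simplifying these four terms using $g(0)=-1$ and $g(\tfrac{\qq}{4})g(\tfrac{3\qq}{4})=(-1)^{\qq\qm/4}q=q$ (valid since $q\equiv 1\pmod 8$), the two cross terms cancel and one is left with exactly the boundary contributions $-\tfrac{1}{q}g(\tfrac{\qq}{8})g(\tfrac{5\qq}{8})g(\tfrac{\qq}{4})$ and $-\tfrac{1}{q}g(\tfrac{3\qq}{8})g(\tfrac{7\qq}{8})g(\tfrac{3\qq}{4})$. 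For the restricted sum with $4k\not\equiv 0\pmod{\qq}$, I would apply Hasse--Davenport (Lemma~\ref{gausssumidentities}(c)) to $g(-4k)$ and then use Lemma~\ref{gausssumidentities}(b) to cancel the factor $g(k+\tfrac{3\qq}{4})$, exactly as in the previous lemma, and finally reindex by $m=k+\tfrac{\qq}{2}$.

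To match the hypergeometric sum on the right, I would multiply by $1$ in the form
\begin{equation*}
\frac{g(\tfrac{\qq}{4})\,g(\tfrac{3\qq}{8})\,g(\tfrac{7\qq}{8})}{g(\tfrac{\qq}{4})\,g(\tfrac{3\qq}{8})\,g(\tfrac{7\qq}{8})},
\end{equation*}
pushing the denominator into the summand to form the hypergeometric function and leaving a scalar that requires the analogue of Lemma~\ref{F2L2 gauss sum identity}, namely
\begin{equation*}
\frac{g(\tfrac{\qq}{4})\,g(\tfrac{3\qq}{8})\,g(\tfrac{7\qq}{8})}{g(\tfrac{\qq}{2})} = \omega(2)^{\qq\qm/4}\,q.
\end{equation*}
This identity follows by Hasse--Davenport with $N=2$ and $m=\tfrac{3\qq}{8}$, which gives $g(\tfrac{3\qq}{4})=\omega(2)^{3\qq\qm/4}g(\tfrac{3\qq}{8})g(\tfrac{7\qq}{8})/g(\tfrac{\qq}{2})$; multiplying by $g(\tfrac{\qq}{4})$, using $g(\tfrac{\qq}{4})g(\tfrac{3\qq}{4})=q$, and observing that $\omega(2)^{3\qq\qm/4}=\omega(2)^{-\qq\qm/4}=\omega(2)^{\qq\qm/4}$ (since $\omega(2)^{\qq\qm/4}=\pm 1$ when $q\equiv 1\pmod 8$) yields the claim.

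The main obstacle is purely bookkeeping: tracking signs, the scalar $\omega(2)^{\qq\qm/4}$, and the four boundary contributions. Because there is a clean symmetry $k\mapsto -k$ relating $\tyS_{11}$ to $\tyS_{10}$ and an analogous symmetry on the hypergeometric parameters and Gauss-sum products appearing on the right-hand side of Lemma~\ref{Cluster quarter eighth 5eighths shifts}, route (ii) above may be cleanest: one verifies that the action of $\sigma_{-1}\in\Gal(\Q(\zeta_{\qq})\,|\,\Q)$ sends both sides of Lemma~\ref{Cluster quarter eighth 5eighths shifts} to the corresponding sides of the present lemma, after which no further computation is required.
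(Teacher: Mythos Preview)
Your proposal is correct and matches the paper's own proof essentially verbatim: the paper simply states that the argument is analogous to Lemma~\ref{Cluster quarter eighth 5eighths shifts} with the reindexing $m=k+\tfrac{\qq}{2}$, or alternatively that one may apply complex conjugation to Lemma~\ref{Cluster quarter eighth 5eighths shifts} as in Lemma~\ref{shift by 3 fourteenths}. You have identified both routes, supplied the companion Gauss-sum identity $g(\tfrac{\qq}{4})g(\tfrac{3\qq}{8})g(\tfrac{7\qq}{8})/g(\tfrac{\qq}{2})=\omega(2)^{\qq\qm/4}q$ needed for the direct approach, and correctly noted that the conjugation route (your option (ii)) is the cleanest.
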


\begin{proof}
The proof is analogous to the proof in Lemma~\ref{Cluster quarter eighth 5eighths shifts} except we substitute $m = k + \tfrac{\qq}{2}$.  Alternatively, apply complex conjugation to the conclusion of Lemma  \ref{Cluster quarter eighth 5eighths shifts}, negating indices as in the proof of Lemma \ref{shift by 3 fourteenths}.
\end{proof}

\subsubsection*{Step 3: Count points when at least one coordinate is zero.}

\begin{lem}\label{oneCoordZero F2L2}
The following statements hold.
\begin{enumalph}
\item If $q\equiv 3 \pmod 4$, then
$$
\#X_{\Fsf_2\Lsf_2, \psi}(\F_q) - \#U_{\Fsf_2\Lsf_2, \psi}(\F_q) = 2q.
$$
\item If $q\equiv 5 \pmod 8$, then 
$$
\#X_{\Fsf_2\Lsf_2, \psi}(\F_q) - \#U_{\Fsf_2\Lsf_2, \psi}(\F_q) = 2q-2 + 2\left(\frac{g(\tfrac{\qq}{4})^2 + g(\tfrac{3\qq}{4})^2}{g(\tfrac{\qq}{2})}\right).
$$
\item If $q \equiv 1 \pmod 8$, then 
\begin{align*}
\#X_{\Fsf_2\Lsf_2, \psi}(\F_q) - \#U_{\Fsf_2\Lsf_2, \psi}(\F_q) &= 10q - 6 + 2\left(\frac{g(\tfrac{\qq}{4})^2 + g(\tfrac{3\qq}{4})^2}{g(\tfrac{\qq}{2})}\right)+ \tfrac{4}{q}g(\tfrac{\qq}{4})g(\tfrac{\qq}{8})g(\tfrac{5\qq}{8}) \\
	&\qquad  + \tfrac{4}{q}g(\tfrac{3\qq}{4})g(\tfrac{3\qq}{8})g(\tfrac{7\qq}{8}).
\end{align*}
\end{enumalph}
\end{lem}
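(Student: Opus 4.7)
The plan is to count $\#X_{\Fsf_2\Lsf_2,\psi}(\F_q) - \#U_{\Fsf_2\Lsf_2,\psi}(\F_q)$ by inclusion--exclusion on the four coordinate hyperplanes $H_i \colonequals \{x_i = 0\} \cap X_{\Fsf_2\Lsf_2,\psi}$, counting the torus points of each stratum via Koblitz's formula (Theorem~\ref{Dels}) exactly as we did for the ambient variety, and then summing with the usual $\pm$ signs. Splitting each $H_i$ into its toric pieces, the count reduces to purely monomial problems on $\PP^2$ and $\PP^1$, which are already familiar from the Dwork case.

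For the single hyperplanes, there are two essentially different behaviors. When $x_2 = 0$ or $x_3 = 0$, the defining equation collapses to $x_0^4 + x_1^4 = 0$ times a free linear coordinate ($x_3$ or $x_2$, respectively), so the torus count on $H_2 \cup H_3$ is governed by the Fermat curve $x_0^4 + x_1^4 = 0$ studied in the proof of Lemmas~\ref{oneCoordZero q 3mod4 Dwork} and~\ref{oneCoordZero q 1mod4 Dwork}; in particular, no points exist here when $q \equiv 3 \pmod 4$, whereas when $q \equiv 1 \pmod 4$ one gets $2(1 + (-1)^{\qq\qm/4})$ torus points times $(q-1)$ free choices of the remaining coordinate. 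When $x_0 = 0$ or $x_1 = 0$, by the $x_0 \leftrightarrow x_1$ symmetry of the pencil the two counts agree, and we are reduced to $y^4 + x_2^3 x_3 + x_2 x_3^3 = 0$ in $\PP^2$. Applying Theorem~\ref{Dels} to this equation requires solving a $3\times 3$ congruence over $\Z/\qq\Z$ whose solutions cluster according to $q \pmod 8$: trivial in all cases, a half-shift $\tfrac{\qq}{2}(0,1,1)$ cluster when $q \equiv 1 \pmod 2$, quarter-shifts when $q \equiv 1 \pmod 4$, and eighth-shifts when $q \equiv 1 \pmod 8$; each new cluster contributes Gauss-sum terms parallel to those appearing in Lemmas~\ref{Cluster 2 quarters 1 half shifts},~\ref{Cluster quarter eighth 5eighths shifts}, and~\ref{Cluster 3quarter 3eighths 7eighths shifts} but in one lower dimension.

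For the higher codimension strata I will enumerate directly. The pair $x_2 = x_3 = 0$ forces $x_0^4 + x_1^4 = 0$ in $\PP^1$, contributing $1 + (-1)^{\qq\qm/4}$ points when $q$ is $1\pmod 4$ and $0$ otherwise; pairs of the form $\{x_0 = x_1 = 0\}$ give $x_2 x_3 (x_2^2 + x_3^2) = 0$, which is a union of three lines (the $\PP^1$-points $[0:1],[1:0]$ plus the solutions to $x_2^2 + x_3^2 = 0$, which exist iff $q \equiv 1 \pmod 4$); and the mixed pairs $\{x_i = x_j = 0\}$ with $i \in \{0,1\}$, $j \in \{2,3\}$ give $x_k^4 + x_\ell^3 x_j' = 0$-type equations in $\PP^1$ whose point count is straightforward. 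Triple intersections always reduce to a single projective coordinate point, and the quadruple intersection is empty. Summing over all $4$ hyperplanes, subtracting the $\binom{4}{2}=6$ pairs, and adding back the four triples gives the three formulas of the lemma, case by case on $q \pmod 8$.

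The main obstacle will be bookkeeping rather than novel manipulation: one must verify that the Gauss-sum factors contributed by the codimension-one strata are exactly
\[
-2\,\frac{g(\tfrac{\qq}{4})^2 + g(\tfrac{3\qq}{4})^2}{g(\tfrac{\qq}{2})} \quad \text{and, when } q \equiv 1 \pmod 8, \quad \tfrac{4}{q}\bigl(g(\tfrac{\qq}{4})g(\tfrac{\qq}{8})g(\tfrac{5\qq}{8}) + g(\tfrac{3\qq}{4})g(\tfrac{3\qq}{8})g(\tfrac{7\qq}{8})\bigr),
\]
with the correct signs, so that in Step~4 of the main argument they cancel the analogous terms produced by Lemma~\ref{OpenF2L2} and leave only the hypergeometric sums in Proposition~\ref{prop:F2L2-appendix}. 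The Hasse--Davenport relation (Lemma~\ref{gausssumidentities}(c)) applied with $N=2$ and $N=4$ in the same spirit as Lemma~\ref{F2L2 gauss sum identity} should be sufficient to reorganize the Gauss-sum products into the displayed form.
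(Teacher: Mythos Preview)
Your plan is the same as the paper's: stratify the boundary by which coordinates vanish and count each piece via Theorem~\ref{Dels}, with the paper organizing this as a direct count over strata (exactly one coordinate zero, exactly two, etc.) rather than inclusion--exclusion. One small correction to your stratum descriptions: the mixed pairs $\{x_i = x_j = 0\}$ with $i\in\{0,1\}$ and $j\in\{2,3\}$ do not yield equations of the form $x_k^4 + x_\ell^3 x_{j'} = 0$, since both loop monomials $x_2^3 x_3$ and $x_3^3 x_2$ vanish once either $x_2$ or $x_3$ does---the equation collapses to $x_k^4 = 0$, forcing a third coordinate to zero, so each such pair contributes only the single point already counted among the triple intersections (and the Gauss-sum term in the lemma carries sign $+2$, not $-2$, so as to cancel the $-2$ in Lemma~\ref{OpenF2L2} in Step~4 as you anticipate).
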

\begin{proof}
We do this case by case. If $x_1$ is the only variable equalling zero, then we must count the number of solutions in the open torus for the hypersurface $Z(x^4 + y^3z+ z^3y)\subset \PP^2$. We can see by using Theorem~\ref{Dels} that this depends on $q$. Here, in case (a) we get $q-1$ points, in case (b) we get $q-3 + (g(\tfrac{\qq}{4})^2 + g(\tfrac{3\qq}{4})^2)g(\tfrac{\qq}{2})^{-1}$, and in case (c) we get $q-3 + (g(\tfrac{\qq}{4})^2 + g(\tfrac{3\qq}{4})^2)g(\tfrac{\qq}{2})^{-1} + 2q^{-1}(g(\tfrac{\qq}{4})g(\tfrac{\qq}{8})g(\tfrac{5\qq}{8}) + g(\tfrac{3\qq}{4})g(\tfrac{3\qq}{8})g(\tfrac{7\qq}{8}))$. There are two such cases, when either $x_1$ or $x_2$ is the only variable equalling 0.

Next is when both $x_1$ and $x_2$ are zero and the other two variables are nonzero. Here the number of solutions is $1 + (-1)^{\qq\qm/2}$. 

Next is when $x_3$ is zero but the rest are nonzero. Here this is $(q-1)$ times the number of solutions of $Z(x^4+y^4)$ in the open torus of $\PP^1$. We then get that the number of solutions is $0$ if $q \not \equiv 1 \pmod 8$ and $4$ if $q\equiv 1 \pmod 8$, hence $4q-4$ points. There are two such cases, when $x_3$ or $x_4$ are uniquely zero. 

The next case is when $x_3$ and $x_4$ are both zero. Then the number of nonzero solutions is exactly the number of solutions of $Z(x^4+y^4)$ in the open torus of $\PP^1$, i.e., $0$ if $q \not \equiv 1 \pmod 8$ and $4$ if $q\equiv 1 \pmod 8$.

There are no rational points where $x_1$, and $x_3$ are both zero and $x_2$ nonzero and the same when you swap $x_1$ with $x_2$ or $x_3$ with $x_4$. Finally, there are two more solutions: $(0:0:1:0)$ and $(0:0:0:1)$. We now count.

\begin{enumalph}
\item If $q\not\equiv 1 \pmod 4$ and $q$ is odd, then
$$
\#X_{\Fsf_2\Lsf_2, \psi}(\F_q) - \#U_{\Fsf_2\Lsf_2, \psi}(\F_q) = 2(q-1) + 0 + 0 + 0 + 2 = 2q.
$$
\item If $q\equiv 5 \pmod 8$, then 
\begin{align*}
\#X_{\Fsf_2\Lsf_2, \psi}(\F_q) - \#U_{\Fsf_2\Lsf_2, \psi}(\F_q) &= 2\left(q-3 +\frac{g(\tfrac{\qq}{4})^2 + g(\tfrac{3\qq}{4})^2}{g(\tfrac{\qq}{2})}\right) + 2 + 0 + 0  + 2 \\
	&= 2q-2 + 2\left(\frac{g(\tfrac{\qq}{4})^2 + g(\tfrac{3\qq}{4})^2}{g(\tfrac{\qq}{2})}\right).
\end{align*}
\item If $q\equiv 1 \pmod 8$, then
\begin{align*}
\#X_{\Fsf_2\Lsf_2, \psi}(\F_q) - \#U_{\Fsf_2\Lsf_2, \psi}(\F_q) &= 2\left(q-3 +\frac{g(\tfrac{\qq}{4})^2 + g(\tfrac{3\qq}{4})^2}{g(\tfrac{\qq}{2})} + \frac{2}{q}g(\tfrac{\qq}{4})g(\tfrac{\qq}{8})g(\tfrac{5\qq}{8}) \right.\\
	&\qquad \left.+ \frac{2}{q}g(\tfrac{3\qq}{4})g(\tfrac{3\qq}{8})g(\tfrac{7\qq}{8})\right) +  2 + 2(4(q-1)) + 4 + 2 \\
	&= 10q - 6 + 2\frac{g(\tfrac{\qq}{4})^2 + g(\tfrac{3\qq}{4})^2}{g(\tfrac{\qq}{2})}+ \tfrac{4}{q}g(\tfrac{\qq}{4})g(\tfrac{\qq}{8})g(\tfrac{5\qq}{8}) \\
	&\qquad  + \tfrac{4}{q}g(\tfrac{3\qq}{4})g(\tfrac{3\qq}{8})g(\tfrac{7\qq}{8}). \qedhere
\end{align*}
\end{enumalph}
\end{proof}

\subsubsection*{Step 4: Combine Steps 2 and 3 to reach the conclusion.}

\begin{proof}[Proof of Proposition~\ref{prop:F2L2}]

We now combine Lemmas~\ref{OpenF2L2} and~\ref{oneCoordZero F2L2} as follows.  For (a), for $q \equiv 3 \pmod{4}$,
\begin{align*}
\#X_{\Fsf_2\Lsf_2, \psi}(\F_q) &= q^2-3q+1+H_q(\tfrac{1}{4},\tfrac{1}{2},\tfrac{3}{4};0,0,0\,|\,\psi^{-4})- qH_q(\tfrac14, \tfrac{3}{4}; 0, \tfrac12 \,|\, \psi^{-4}) + 2q  \\
	&= q^2-q+1+H_q(\tfrac{1}{4},\tfrac{1}{2},\tfrac{3}{4};0,0,0\,|\,\psi^{-4})- qH_q(\tfrac14, \tfrac{3}{4}; 0, \tfrac12 \,|\, \psi^{-4}).
\end{align*}
For (b), for $q\equiv 5 \pmod 8$,
\begin{align*}
\#X_{\Fsf_2\Lsf_2, \psi}(\F_q) &=q^2-3q+3+H_q(\tfrac{1}{4},\tfrac{1}{2},\tfrac{3}{4};0,0,0\,|\,\psi^{-4}) + qH_q(\tfrac14, \tfrac{3}{4}; 0, \tfrac12 \,|\, \psi^{-4})\\
	&\quad -2q H_q(\tfrac12; 0 \,|\, \psi^{-4}) - 2 \left(\frac{g(\tfrac{\qq}{4})^2 + g(\tfrac{3\qq}{4})^2}{g(\tfrac{\qq}{2})}\right) + 2q-2 + 2\left(\frac{g(\tfrac{\qq}{4})^2 + g(\tfrac{3\qq}{4})^2}{g(\tfrac{\qq}{2})}\right) \\ 
	&= q^2-q+1+H_q(\tfrac{1}{4},\tfrac{1}{2},\tfrac{3}{4};0,0,0\,|\,\psi^{-4}) + qH_q(\tfrac14, \tfrac{3}{4}; 0, \tfrac12 \,|\, \psi^{-4})-2q H_q(\tfrac12; 0 \,|\, \psi^{-4})
\end{align*}
Finally, for (c) with $q\equiv 1 \pmod 8$,
\begin{align*}
\#X_{\Fsf_2\Lsf_2, \psi}(\F_q) &= q^2-3q+5+H_q(\tfrac{1}{4},\tfrac{1}{2},\tfrac{3}{4};0,0,0\,|\,\psi^{-4})  +qH_q(\tfrac14, \tfrac{3}{4}; 0, \tfrac12 \,|\, \psi^{-4})  \\
	&\qquad + 2q H_q(\tfrac12; 0 \,|\, \psi^{-4}) + 2\omega(2)^{\qq\qm/4}qH_q(\tfrac18, \tfrac58; 0, \tfrac14 \,|\, \psi^4) + 2\omega(2)^{\qq\qm/4}qH_q(\tfrac38, \tfrac78; 0, \tfrac34 \,|\, \psi^4) \\
	&\qquad - 2\left(\frac{g(\tfrac{\qq}{4})^2 + g(\tfrac{3\qq}{4})^2}{g(\tfrac{\qq}{2})}\right)  - \tfrac{4}{q}g(\tfrac{\qq}{8})g(\tfrac{5\qq}{8}) g(\tfrac{\qq}{4}) - \tfrac{4}{q}g(\tfrac{3\qq}{8})g(\tfrac{7\qq}{8})g(\tfrac{3\qq}{4}) \\
	&\qquad + 10q - 6 + 2\left(\frac{g(\tfrac{\qq}{4})^2 + g(\tfrac{3\qq}{4})^2}{g(\tfrac{\qq}{2})}\right)+ \tfrac{4}{q}g(\tfrac{\qq}{4})g(\tfrac{\qq}{8})g(\tfrac{5\qq}{8}) \\
	&\qquad  + \tfrac{4}{q}g(\tfrac{3\qq}{4})g(\tfrac{3\qq}{8})g(\tfrac{7\qq}{8}) \\
	&= q^2+7q +1 +H_q(\tfrac{1}{4},\tfrac{1}{2},\tfrac{3}{4};0,0,0\,|\,\psi^{-4})  +qH_q(\tfrac14, \tfrac{3}{4}; 0, \tfrac12 \,|\, \psi^{-4})  \\
	&\qquad + 2q H_q(\tfrac12; 0 \,|\, \psi^{-4}) + 2\omega(2)^{\qq\qm/4}qH_q(\tfrac18, \tfrac58; 0, \tfrac14 \,|\, \psi^4)+2\omega(2)^{\qq\qm/4}qH_q(\tfrac38, \tfrac78; 0, \tfrac34 \,|\, \psi^4). \qedhere
\end{align*}
\end{proof}

\subsection{The pencil \texorpdfstring{$\Lsf_2\Lsf_2$}{Lsf2Lsf2}}

\begin{prop}\label{prop:L2L2-appendix}
The number of $\F_q$-points on $\Lsf_2\Lsf_2$ can be written in terms of hypergeometric functions, as follows:
\begin{enumalph}
\item If $q\equiv3\pmod4$, then
\begin{equation}
\#X_{\Lsf_2\Lsf_2,\psi}(\F_q) = q^2 + q+1 + H_q(\tfrac14, \tfrac12, \tfrac34; 0,0,0 \,|\, \psi^{-4}) - qH_q(\tfrac14, \tfrac{3}{4}; 0, \tfrac12 \,|\, \psi^{-4}).
\end{equation}

\item If $q\equiv1\pmod4$, then
\begin{equation}\begin{aligned}
\#X_{\Lsf_2\Lsf_2,\psi}(\F_q) &= q^2+9q+1  + H_q(\tfrac14, \tfrac12, \tfrac34; 0,0,0 \,|\, \psi^{-4}) + qH_q(\tfrac14, \tfrac{3}{4}; 0, \tfrac12 \,|\, \psi^{-4}) \\
	&\qquad + 2 (-1)^{\qq\qm/4} \omega(\psi)^{\qq\qm/2} q H_q(\tfrac{1}{8},\tfrac{3}{8},\tfrac{5}{8},\tfrac{7}{8};0,\tfrac{1}{4},\tfrac{1}{2},\tfrac{3}{4}\,|\,\psi^{-4}).
\end{aligned}\end{equation}
\end{enumalph}
\end{prop}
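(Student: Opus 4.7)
The plan is to mirror the four-step strategy used for $\Fsf_4$ and $\Fsf_1\Lsf_3$ (Propositions~\ref{prop:F4} and~\ref{prop:F1L3}), exploiting the parallelism between the Picard--Fuchs decomposition in Proposition~\ref{PFL2L2} and the expected hypergeometric decomposition of the point count.

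\textbf{Step 1 (characters).} Solve the system \eqref{WCong} for the monomials of $F_\psi = x_0^3x_1+x_1^3x_0+x_2^3x_3+x_3^3x_2-4\psi x_0x_1x_2x_3$; the relevant matrix has rows arising from the two ``loop'' blocks $\bigl(\begin{smallmatrix}3&1\\1&3\end{smallmatrix}\bigr)$, both of determinant $8$. Over $\Z/\qq\Z$ this determinant forces a $q\bmod 4$ dichotomy exactly paralleling the $\Fsf_4$ case: for $q\equiv 3\pmod 4$ one finds two clusters, namely $\tyS_1=\{k(1,1,1,1,-4)\}$ and a ``half-shift'' cluster $\tyS_4'=\{k(1,1,1,1,-4)+\tfrac{\qq}{2}(0,0,1,1,0)\}$; for $q\equiv 1\pmod 4$ there are additional quarter-shift clusters of the form $\{k(1,1,1,1,-4)+\tfrac{\qq}{4}\svec\}$ for suitable $\svec$ supported across the two loops. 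The multiplicities of these clusters should match the degrees $3,2,8$ appearing in Proposition~\ref{PFL2L2}, giving a strong consistency check.

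\textbf{Step 2 (toric point count).} The cluster $\tyS_1$ yields the common factor $H_q(\tfrac14,\tfrac12,\tfrac34;0,0,0\,|\,\psi^{-4})$ exactly as in Lemma~\ref{Cluster no shifts}, and the half-shift cluster $\tyS_4'$ gives the factor $(-1)^{\qq\qm/2} qH_q(\tfrac14,\tfrac34;0,\tfrac12\,|\,\psi^{-4})$ by essentially the same manipulation as in Lemma~\ref{Cluster two half shifts} (using Hasse--Davenport with $N=2$ and $g(\tfrac{\qq}{2})^2=(-1)^{\qq\qm/2}q$). The heart of the argument is identifying the quarter-shift clusters (present only when $q\equiv 1\pmod 4$) with the length-four sum
\[
qH_q(\tfrac18,\tfrac38,\tfrac58,\tfrac78;0,\tfrac14,\tfrac12,\tfrac34\,|\,\psi^{-4}).
\]
Here the strategy is to start from the raw sum $\tfrac{1}{q\qq}\sum_k \omega(-4\psi)^{4k}\prod_{j=0}^{3}g(k+j\tfrac{\qq}{4})\,g(-4k)$, apply the Hasse--Davenport product relation for $N=4$ to trade $g(-4k)$ for a product of four Gauss sums with arguments differing by $\tfrac{\qq}{4}$, and then reindex so that the $\pmb{\alpha}$-indices $\tfrac{1}{8},\tfrac{3}{8},\tfrac{5}{8},\tfrac{7}{8}$ emerge from a further $N=2$ Hasse--Davenport reduction of $g(m+\tfrac{j\qq}{4})$. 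The four quarter-shift clusters should combine to give the multiplicity factor $2$, and after summing over the clusters (and tracking the $\omega(2)$-factors and signs from Hasse--Davenport) the isolated terms $m\in\tfrac{\qq}{8}\Z$ will leave the Gauss-sum ``garbage'' that must ultimately cancel against Step 3.

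\textbf{Step 3 (boundary strata).} When $x_0=0$ (say), the defining equation reduces to $x_1^3x_0+x_2^3x_3+x_3^3x_2=0$ restricted to $x_0=0$, i.e.\ $x_2^3x_3+x_3^3x_2=x_2x_3(x_2^2+x_3^2)=0$; this factors completely, so the count splits into manageable pieces over subtori. By symmetry one does this for each of the four coordinate hyperplanes, then for each pairwise, triple, and quadruple intersection; the case $q\equiv 1\pmod 4$ will pick up additional points from $x_2^2+x_3^2=0$ having solutions in $\F_q^\times$ precisely in that case. The Gauss-sum error terms produced this way should match (up to sign) those from Step 2.

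\textbf{Step 4 (assembly).} Combining Steps 2 and 3 yields the claimed formulas; the term $9q$ in part~(b) arises from $q^2+q+1$ plus the leading toric constants plus the boundary contribution $8q$ coming from the eight quadratic solutions in the four coordinate planes when $q\equiv 1\pmod 4$.

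\textbf{Main obstacle.} The subtlest step is the identification in Step 2 of the quarter-shift cluster sum with $\omega(\psi)^{\qq\qm/2}qH_q(\tfrac18,\tfrac38,\tfrac58,\tfrac78;0,\tfrac14,\tfrac12,\tfrac34\,|\,\psi^{-4})$, in particular the appearance of the quadratic twist $\omega(\psi)^{\qq\qm/2}=\phi_\psi(q)$. This factor does not come from any of the Gauss sums directly; rather, it must arise from the reindexing shift $m\leftarrow k+\tfrac{\qq}{4}$ or $k+\tfrac{\qq}{2}$ that converts the cluster sum into the hypergeometric normal form. Concretely, the character value $\omega(-4\psi)^{4k}$ transforms to $\omega(-4\psi)^{4m}\omega(-4\psi)^{-\qq}$-type expressions, and only by grouping the $\tfrac{\qq}{4}$- and $\tfrac{3\qq}{4}$-shifted clusters together (which behave as Galois conjugates over the field of definition $\Q(\sqrt{-1})$ of the parameters) will the $\omega(\psi)^{\qq\qm/2}$ emerge as the residual twist. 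Establishing this cleanly, together with the cancellation of all auxiliary $g(\tfrac{j\qq}{8})$ terms against the boundary count in Step 3, is where most of the work lies.
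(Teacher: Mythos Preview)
Your four-step outline is correct in spirit, and Steps~1, 3, 4 are essentially right (Step~3 in particular: the boundary count is a clean polynomial in $q$, namely $4q$ or $12q-4$). But Step~2 contains a genuine error in the treatment of the quarter-shift cluster, and your ``main obstacle'' paragraph misdiagnoses the source of the twist.

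The actual quarter-shift cluster is $\tyS_{12}=\{k(1,1,1,1,-4)+\tfrac{\qq}{4}(0,2,3,1,2)\}$, and there are only \emph{two} such clusters, not four. The crucial feature you missed is that the \emph{fifth} component of the shift is $\tfrac{\qq}{2}\neq 0$. This has two consequences. First, the character value is $\omega(-4\psi)^{4k+\qq\qm/2}$, so the factor $\omega(\psi)^{\qq\qm/2}$ (together with the sign $(-1)^{\qq\qm/4}$ coming later from $g(\tfrac{\qq}{4})g(\tfrac{3\qq}{4})$) appears \emph{immediately} upon writing down the sum --- it does not arise from any reindexing. Second, your raw sum is wrong: the fifth Gauss sum is $g(-4k+\tfrac{\qq}{2})$, not $g(-4k)$. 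The paper's manipulation therefore goes in the opposite direction from what you describe: one uses Hasse--Davenport with $N=4$ to \emph{collapse} $\prod_{j=0}^{3}g(k+j\tfrac{\qq}{4})$ into $g(4k)\cdot g(\tfrac{\qq}{4})g(\tfrac{\qq}{2})g(\tfrac{3\qq}{4})\omega(4)^{-4k}$, and then Hasse--Davenport with $N=2$ on $g(-4k+\tfrac{\qq}{2})$ to produce $g(-8k)g(\tfrac{\qq}{2})\omega(2)^{8k}/g(-4k)$. After invoking $g(4k)g(-4k)=q$ one lands directly on the Beukers--Cohen--Mellit form $q^{-1}g(4k)^2g(-8k)$ of $H_q(\tfrac{1}{8},\tfrac{3}{8},\tfrac{5}{8},\tfrac{7}{8};0,\tfrac{1}{4},\tfrac{1}{2},\tfrac{3}{4}\,|\,\cdot)$ (parameters $p_1=8$, $q_1=q_2=4$). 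The eighth-root indices come from the $g(-8k)$, which in turn comes from the half-shift in the fifth slot --- not from a further $N=2$ reduction of the $g(k+j\tfrac{\qq}{4})$ as you propose.

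Finally, this identification is \emph{clean}: there are no leftover Gauss-sum ``garbage'' terms from $\tyS_{12}$, so no cancellation against the boundary count is needed. Your expectation of such cancellation (and of isolating terms at $m\in\tfrac{\qq}{8}\Z$) is a symptom of having the wrong raw sum.
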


\begin{rmk} Again, notice that the hypergeometric functions appearing in the point count correspond to exactly one of the Picard--Fuchs equations in Proposition~\ref{PFL2L2}. We also see the appearance of eight additional trivial factors. 
\end{rmk}

 \subsubsection*{Step 1: Computing and clustering the characters}
 
 As with all the previous families, we first compute the set $\tyS$ of solutions to the system of congruences given by Theorem \ref{Dels}:
 
 \begin{enumalph} 
\item If $q\not\equiv 1\pmod4$, and $q$ is odd, then $\tyS$ consists of
\begin{enumerate}[(i)]
\item the set $\tyS_1 = \{k(1,1,1,1,-4) : k \in\Z/\qq\Z\}$ and 
\item the set $\tyS_4 = \{k(1,1,1,1,-4) + \tfrac{\qq}{2}(0,0,1,1,0) : k \in \Z/\qq\Z\}. $
\end{enumerate}

\item If $q\equiv 1 \pmod 4$, then
\begin{enumerate}[(i)]
\item the set $\tyS_1 = \{k(1,1,1,1,-4) : k \in\Z/\qq\Z\},$
\item the set $\tyS_4 = \{k(1,1,1,1,-4) + \tfrac{\qq}{2}(0,0,1,1,0) : k \in \Z/\qq\Z\}, $ and
\item two sets of the form $\tyS_{12} = \{k(1,1,1,1,-4) + \tfrac{\qq}{4}(0,2,3,1,2) : k \in \Z/\qq\Z\}$.
\end{enumerate}
\end{enumalph}

\subsubsection*{Step 2: Counting points on the open subset with nonzero coordinates}

\begin{lem}\label{open L2L2} Suppose $\psi \in \F_q^\times$. For $q$, we have:
\begin{enumalph}
\item  If $q\equiv 3\pmod4$, then
\begin{equation}
\#U_{\Lsf_2\Lsf_2, \psi}(\F_q) = q^2 -3q+1 + H_q(\tfrac14, \tfrac12, \tfrac34; 0,0,0 \,|\, \psi^{-4}) - qH_q(\tfrac14, \tfrac{3}{4}; 0, \tfrac12 \,|\, \psi^{-4}).
\end{equation}
\item If $q\equiv 1\pmod4$, then 
\begin{equation}\begin{aligned}
\#U_{\Lsf_2\Lsf_2, \psi}(\F_q) &= q^2-3q+5  + H_q(\tfrac14, \tfrac12, \tfrac34; 0,0,0 \,|\, \psi^{-4}) + qH_q(\tfrac14, \tfrac{3}{4}; 0, \tfrac12 \,|\, \psi^{-4}) \\
	&\qquad + 2 (-1)^{\qq\qm/4} \omega(\psi)^{\qq\qm/2} q H_q(\tfrac{1}{8},\tfrac{3}{8},\tfrac{5}{8},\tfrac{7}{8};0,\tfrac{1}{4},\tfrac{1}{2},\tfrac{3}{4}\,|\,\psi^{-4}).
\end{aligned}\end{equation}
\end{enumalph}
\end{lem}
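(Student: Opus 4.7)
The plan is to replicate the four-step blueprint already executed for the Dwork and Klein--Mukai pencils. Theorem \ref{Dels} reduces the toric count $\#U_{\Lsf_2\Lsf_2,\psi}(\F_q)$ to a sum over the set $\tyS$ of character tuples described in Step 1, organized into $H$-clusters. For $q \equiv 3 \pmod 4$ only $\tyS_1$ and $\tyS_4$ occur, while for $q \equiv 1 \pmod 4$ two further clusters of type $\tyS_{12}$ appear, each with multiplicity two. The contributions of $\tyS_1$ and $\tyS_4$ are already computed in Lemmas \ref{Cluster no shifts} and \ref{Cluster two half shifts}: substituting $(-1)^{\qq\qm/2} = -1$ yields
\[ \sum_{s \in \tyS_1 \cup \tyS_4} \omega(a)^{-s} c_s = q^2 - 3q + 1 + H_q(\tfrac14,\tfrac12,\tfrac34;0,0,0\,|\,\psi^{-4}) - qH_q(\tfrac14,\tfrac34;0,\tfrac12\,|\,\psi^{-4}) \]
in case (a), and with $(-1)^{\qq\qm/2} = +1$
\[ \sum_{s \in \tyS_1 \cup \tyS_4} \omega(a)^{-s} c_s = q^2 - 3q + 5 + H_q(\tfrac14,\tfrac12,\tfrac34;0,0,0\,|\,\psi^{-4}) + qH_q(\tfrac14,\tfrac34;0,\tfrac12\,|\,\psi^{-4}) \]
in case (b). Part (a) then follows immediately.

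The remaining task is the cluster sum for $\tyS_{12}$, which is the sole new computation. I would state it as a separate lemma: for $q \equiv 1 \pmod 4$,
\[ \sum_{s \in \tyS_{12}} \omega(a)^{-s} c_s = (-1)^{\qq\qm/4} \omega(\psi)^{\qq\qm/2} q H_q(\tfrac18,\tfrac38,\tfrac58,\tfrac78;0,\tfrac14,\tfrac12,\tfrac34\,|\,\psi^{-4}). \]
Expanding $c_s$ for $s = k(1,1,1,1,-4) + \tfrac{\qq}{4}(0,2,3,1,2)$ gives
\[ \frac{1}{q\qq} \sum_{k=0}^{q-2} \omega(-4\psi)^{4k - \qq/2} g(k)g(k + \tfrac{\qq}{4})g(k + \tfrac{\qq}{2})g(k + \tfrac{3\qq}{4})g(-4k+\tfrac{\qq}{2}). \]
The scalar $\omega(-4\psi)^{-\qq/2}$ factors out as the quadratic character of $-4\psi$, which simplifies to $(-1)^{\qq\qm/4} \omega(\psi)^{\qq\qm/2}$ using $(-1)^{\qq\qm/2} = 1$. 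Then the Hasse--Davenport product relation (Lemma \ref{gausssumidentities}(c)) with $N=4$ collapses the four shifted Gauss sums $g(k + j\qq/4)$ into a multiple of $g(4k)$. After reindexing via $m = -k - \tfrac{\qq}{2}$ (in parallel with the Dwork computation in Lemma \ref{Cluster 2 quarters 1 half shifts}), one compares term-by-term with the definition of $H_q(\tfrac18,\tfrac38,\tfrac58,\tfrac78;0,\tfrac14,\tfrac12,\tfrac34\,|\,\psi^{-4})$ to complete the identification. Since there are two such clusters, we double the contribution and obtain (b).

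The main obstacle is two-fold. First, when $q \equiv 5 \pmod 8$ the parameters $\tfrac{k\qq}{8}$ are not integers, so the hypergeometric sum must be interpreted through the Beukers--Cohen--Mellit definition; fortunately the field of definition $K_{\pmb{\alpha},\pmb{\beta}}$ for $(\tfrac18,\tfrac38,\tfrac58,\tfrac78;0,\tfrac14,\tfrac12,\tfrac34)$ is $\Q$ (since both $x^4 + 1$ and $x^4 - 1$ lie in $\Q[x]$), and Proposition \ref{prop:qalbet} guarantees Definition \ref{BCM HGF} applies whenever $q$ is odd. Second, one must carefully isolate the degenerate indices $k \in \{0, \tfrac{\qq}{4}, \tfrac{\qq}{2}, \tfrac{3\qq}{4}\}$ at which the Hasse--Davenport identity breaks down, match the resulting boundary Gauss sums exactly against the $s(m)$-correction built into the hybrid hypergeometric sum, and verify that the constant $1$ contributed to the count by the $k=0$ term cancels correctly so no extraneous rational terms survive. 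This cancellation requires the $\omega(\psi)^{\qq\qm/2}$ twist to be tracked with precision through all Gauss sum manipulations, but once established the point count formula (b) follows by summing the cluster contributions.
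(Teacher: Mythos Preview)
Your proposal is correct and follows the paper's approach: invoke Lemmas \ref{Cluster no shifts} and \ref{Cluster two half shifts} for $\tyS_1$ and $\tyS_4$, then handle $\tyS_{12}$ separately via Hasse--Davenport with $N=4$ (this is exactly the paper's Lemma \ref{L2L2 nonstandard shift}). Two small remarks. First, the scalar $\omega(-4\psi)^{\qq/2}$ simplifies to $\omega(\psi)^{\qq/2}$ alone when $q\equiv 1\pmod 4$, since $\legen{-4}{q}=1$; the factor $(-1)^{\qq/4}$ you want actually appears one step later, from $g(\tfrac{\qq}{4})g(\tfrac{3\qq}{4})=(-1)^{\qq/4}q$ after the $N=4$ collapse. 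Second, the paper's treatment of $\tyS_{12}$ is cleaner than you anticipate: rather than isolating degenerate indices $k\in\{0,\tfrac{\qq}{4},\tfrac{\qq}{2},\tfrac{3\qq}{4}\}$ and matching boundary terms against the $s(m)$-correction, it applies Hasse--Davenport a second time (with $N=2$, $m=-4k$) to convert $g(-4k+\tfrac{\qq}{2})$ into $g(-8k)g(\tfrac{\qq}{2})/g(-4k)$, after which the sum becomes $\sum_m q^{-s(0)+s(m)} g(8m)g(-4m)^2\omega(\cdot)^m$ on the nose and the BCM definition applies uniformly for all odd $q$.
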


\begin{proof}We do this by cases.  For (a), where $q\equiv 3 \pmod 4$, then by using Theorem~\ref{Dels} with Lemmas~\ref{Cluster no shifts} and~\ref{Cluster two half shifts}  we have that
\begin{equation}\begin{aligned}
\#U_{\Lsf_2\Lsf_2, \psi}(\F_q) &= \sum_{s \in \tyS_1} \omega(a)^{-s}c_s +  \sum_{s \in \tyS_4} \omega(a)^{-s}c_s \\
	&= q^2-3q+3  + H_q(\tfrac14, \tfrac12, \tfrac34; 0,0,0 \,|\, \psi^{-4}) -2 - qH_q(\tfrac14, \tfrac{3}{4}; 0, \tfrac12 \,|\, \psi^{-4}) \\
	&= q^2 -3q+1 + H_q(\tfrac14, \tfrac12, \tfrac34; 0,0,0 \,|\, \psi^{-4}) - qH_q(\tfrac14, \tfrac{3}{4}; 0, \tfrac12 \,|\, \psi^{-4}).
\end{aligned}\end{equation}

For (b) with $q\equiv 1 \pmod 4$, by using Theorem~\ref{Dels} with Lemmas~\ref{Cluster no shifts},~\ref{Cluster two half shifts}, and~\ref{L2L2 nonstandard shift} below, we have that
\begin{align}
\#U_{\Lsf_2\Lsf_2, \psi}(\F_q) &= \sum_{s \in \tyS_1} \omega(a)^{-s}c_s +  \sum_{s \in \tyS_4} \omega(a)^{-s}c_s +2\sum_{s \in \tyS_{12}} \omega(a)^{-s}c_s  \nonumber \\
	&= q^2-3q+3  + H_q(\tfrac14, \tfrac12, \tfrac34; 0,0,0 \,|\, \psi^{-4}) +  2 + qH_q(\tfrac14, \tfrac{3}{4}; 0, \tfrac12 \,|\, \psi^{-4}) \nonumber \\
	&\qquad + 2 (-1)^{\qq\qm/4} \omega(\psi)^{\qq\qm/2} q H_q(\frac{1}{8},\frac{3}{8},\frac{5}{8},\frac{7}{8};0,\frac{1}{4},\frac{1}{2},\frac{3}{4}\,|\,\psi^{-4}) \\
	&= q^2-3q+5  + H_q(\tfrac14, \tfrac12, \tfrac34; 0,0,0 \,|\, \psi^{-4}) + qH_q(\tfrac14, \tfrac{3}{4}; 0, \tfrac12 \,|\, \psi^{-4}) \nonumber \\
	&\qquad + 2 (-1)^{\qq\qm/4} \omega(\psi)^{\qq\qm/2} q H_q(\tfrac{1}{8},\tfrac{3}{8},\tfrac{5}{8},\tfrac{7}{8};0,\tfrac{1}{4},\tfrac{1}{2},\tfrac{3}{4}\,|\,\psi^{-4}). \nonumber \qedhere
\end{align}
\end{proof}

\begin{lem}\label{L2L2 nonstandard shift} Suppose that $q \equiv 1 \pmod 4$. Then
$$
\sum_{s \in \tyS_{12}} \omega(a)^{-s} c_s = (-1)^{\qq\qm/4} \omega(\psi)^{\qq\qm/2} q H_q(\tfrac{1}{8},\tfrac{3}{8},\tfrac{5}{8},\tfrac{7}{8};0,\tfrac{1}{4},\tfrac{1}{2},\tfrac{3}{4}\,|\,\psi^{-4}).
$$
\end{lem}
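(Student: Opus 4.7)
The strategy mirrors that of Lemma~\ref{Cluster quarter eighth 5eighths shifts}: expand the cluster sum, apply Hasse--Davenport to consolidate Gauss sums, and match the result to the hypergeometric sum in Beukers--Cohen--Mellit form (Definition~\ref{BCM HGF}). The latter applies because the parameters $\{\tfrac18,\tfrac38,\tfrac58,\tfrac78\}$ and $\{0,\tfrac14,\tfrac12,\tfrac34\}$ are defined over $\Q$, with associated polynomials $x^4+1$ and $x^4-1$; this yields $r=1$, $s=2$, $p_1=8$, $q_1=q_2=4$, $D(x)=x^4-1$, $M=2^8$, $\epsilon=1$, and $s(m)=1$ precisely for $m\in\{0,\tfrac{\qq}{4},\tfrac{\qq}{2},\tfrac{3\qq}{4}\}$ (all integers since $q\equiv 1\pmod 4$).

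First I would substitute $s=k(1,1,1,1,-4)+\tfrac{\qq}{4}(0,2,3,1,2)$ and compute $\omega(a)^{-s}=\omega(-4\psi)^{4k-\qq/2}$, noting $\omega(-4\psi)^{-\qq/2}=\omega(\psi)^{\qq/2}$ (since $\legen{-1}{q}=\legen{4}{q}=1$ for $q\equiv 1\pmod 4$). Applying Hasse--Davenport (Lemma~\ref{gausssumidentities}(c)) with $N=4$ and $m=k$ to the product $g(k)g(k+\tfrac{\qq}{4})g(k+\tfrac{\qq}{2})g(k+\tfrac{3\qq}{4})$, and simplifying $g(\tfrac{\qq}{4})g(\tfrac{3\qq}{4})=(-1)^{\qq/4}q$ via Lemma~\ref{gausssumidentities}(b), the cluster sum reduces to
\[
\sum_{s\in\tyS_{12}}\omega(a)^{-s}c_s=\frac{(-1)^{\qq/4}\omega(\psi)^{\qq/2}\,g(\tfrac{\qq}{2})}{\qq}\sum_{k=0}^{q-2}\omega(\psi)^{4k}\,g(4k)\,g(-4k+\tfrac{\qq}{2}).
\]

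It then suffices to establish the pointwise Gauss sum identity
\[
g(\tfrac{\qq}{2})\,g(-4k+\tfrac{\qq}{2})=\omega(2)^{8k}\,q^{s(k)}\,g(4k)\,g(-8k)
\]
for every $k\in\Z/\qq\Z$. For generic $k$ this chains together $g(-4k+\tfrac{\qq}{2})=q/g(4k+\tfrac{\qq}{2})$ via Lemma~\ref{gausssumidentities}(b), then Hasse--Davenport with $N=2$ and $m=4k$ giving $g(4k+\tfrac{\qq}{2})=\omega(2)^{-8k}g(\tfrac{\qq}{2})g(8k)/g(4k)$, and finally $g(8k)=q/g(-8k)$. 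The four values $k\in\{0,\tfrac{\qq}{4},\tfrac{\qq}{2},\tfrac{3\qq}{4}\}$ where $s(k)=1$ must be verified by hand, with both sides collapsing to $q$ via $g(\tfrac{\qq}{2})^2=q$; the four additional values $k\in\{\tfrac{\qq}{8},\tfrac{3\qq}{8},\tfrac{5\qq}{8},\tfrac{7\qq}{8}\}$ arising when $q\equiv 1\pmod 8$ similarly collapse both sides to $-g(\tfrac{\qq}{2})$ by direct evaluation using $g(0)=-1$.

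Substituting this identity and re-indexing by $m=-k$ (using $\qq$-periodicity of $g$ and the symmetry $s(-k)=s(k)$ from $D(x)\in\Q[x]$) converts the sum into
\[
\frac{(-1)^{\qq/4}\omega(\psi)^{\qq/2}\,q}{\qq}\sum_{m=0}^{q-2}q^{s(m)-1}g(8m)g(-4m)^2\,\omega(2^{-8}\psi^{-4})^m,
\]
which is exactly $(-1)^{\qq/4}\omega(\psi)^{\qq/2}\,q\,H_q(\tfrac18,\tfrac38,\tfrac58,\tfrac78;0,\tfrac14,\tfrac12,\tfrac34\,|\,\psi^{-4})$ by the BCM expansion. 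The main obstacle will be the careful bookkeeping of powers of $\omega(2)$ through the nested Hasse--Davenport applications, and the verification that the degenerate values of $k$---where Gauss sum arguments vanish modulo $\qq$---contribute exactly the factor $q^{s(k)}$ that appears in the BCM formula, which is ultimately what makes the whole identity self-consistent.
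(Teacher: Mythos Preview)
Your proposal is correct and follows essentially the same route as the paper: apply Hasse--Davenport with $N=4$ to collapse the four shifted Gauss sums into $g(4k)$, then Hasse--Davenport with $N=2$ to rewrite $g(-4k+\tfrac{\qq}{2})$ in terms of $g(-8k)$, simplify via Lemma~\ref{gausssumidentities}(b), and reindex by $m=-k$ to land on the BCM form. The only difference is packaging: you encode the second step and its degenerate cases as a single pointwise identity with the $q^{s(k)}$ factor built in, whereas the paper pulls out the four terms with $4k\equiv 0\pmod{\qq}$ explicitly (obtaining the $-4/\qq$ contribution) before matching; your formulation is arguably cleaner and makes the handling of the boundary terms more transparent.
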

\begin{proof}

We start with the definition, factor out $\omega(\psi)^{\qq\qm/2}$, and then use the Hasse--Davenport relation~\eqref{HasseDavenport} with $N=4$ with respect to $m=k$ to obtain:
\begin{equation}\begin{aligned}
\sum_{s \in \tyS_{12}} \omega(a)^{-s} c_s &= \frac{1}{q\qq} \sum_{k=0}^{q-2} \omega(-4\psi)^{4k + \tfrac{\qq}{2}} g(k)g(k+ \tfrac{\qq}{4})g(k+\tfrac{\qq}{2})g(k+\tfrac{3\qq}{4}) g(-4k+\tfrac{\qq}{2}) \\ 
	&= \frac{\omega(\psi)^{\qq\qm/2}}{q\qq} \sum_{k=0}^{q-2} \omega(-4\psi)^{4k} g(4k)\omega(4)^{-4k} g(\tfrac{\qq}{2})g(\tfrac{\qq}{4})g(\tfrac{3\qq}{4})g(-4k+\tfrac{\qq}{2}).
\end{aligned}\end{equation}
Simplify with Lemma~\ref{gausssumidentities}(b) to get 
\begin{equation}
\sum_{s \in \tyS_{12}} \omega(a)^{-s} c_s = \frac{(-1)^{\qq\qm/4}\omega(\psi)^{\qq\qm/2}}{q-1} \sum_{k=0}^{q-2} \omega(-\psi)^{4k} g(4k)g(\tfrac{\qq}{2})g(-4k+\tfrac{\qq}{2}).
\end{equation}
Now we use the Hasse--Davenport relation again with $N=2$ and $m=-4k$ to find:
\begin{equation}
\sum_{s \in \tyS_{12}} \omega(a)^{-s} c_s = \frac{(-1)^{\qq\qm/4}\omega(\psi)^{\qq\qm/2}}{q-1} \sum_{k=0}^{q-2} \omega(-\psi)^{4k} g(4k)g(\tfrac{\qq}{2})\left(\frac{g(-8k) g(\tfrac{\qq}{2})\omega(2)^{8k}}{g(-4k)}\right).
\end{equation}

We now simplify using Lemma~\ref{gausssumidentities}(b) again and then expand the summation to get:
\begin{equation}\begin{aligned}
\sum_{s \in \tyS_{12}} \omega(a)^{-s} c_s &= (-1)^{\qq\qm/4}\omega(\psi)^{\qq\qm/2}q\left(-\frac{4}{\qq}  + \frac{1}{\qq} \sum_{\stackrel{k=0}{4k \not\equiv 0 \psmod{\qq}}}^{q-2} \omega(4\psi)^{4k} q^{-1}g(4k)^2g(-8k) \right).
	\end{aligned}\end{equation}
We finally reindex the sum with $m=-k$, yielding
\begin{equation}\begin{aligned}
\sum_{s \in \tyS_{12}} \omega(a)^{-s} c_s&= (-1)^{\qq\qm/4}\omega(\psi)^{\qq\qm/2}q\left(-\frac{4}{\qq} + \frac{1}{\qq} \sum_{\stackrel{m=0}{4m \not\equiv 0 \psmod{\qq}}}^{q-2} \omega(4\psi)^{-4m} q^{-1}g(-4m)^2g(8m) \right) \\
	&=  (-1)^{\qq\qm/4}\omega(\psi)^{\qq\qm/2}q H_q(\tfrac{1}{8},\tfrac{3}{8},\tfrac{5}{8},\tfrac{7}{8};0,\tfrac{1}{4},\tfrac{1}{2},\tfrac{3}{4}\,|\,\psi^{-4})     
\end{aligned} \end{equation} 
relating back to the finite field hypergeometric sum.
\end{proof}

\subsubsection*{Step 3: Count points when at least one coordinate is zero.}

\begin{lem}\label{oneCoordZero L2L2}
Let $q$ be an odd prime that is not $7$. Then
\begin{enumalph}
\item If $q\equiv 3 \pmod 4$, then
$$
\#X_{\Lsf_2\Lsf_2, \psi}(\F_q) - \#U_{\Lsf_2\Lsf_2, \psi}(\F_q) = 4q.
$$
\item If $q\equiv 1 \pmod 4$, then 
$$
\#X_{\Lsf_2\Lsf_2, \psi}(\F_q) - \#U_{\Lsf_2\Lsf_2, \psi}(\F_q) = 12q - 4.
$$
\end{enumalph}
\end{lem}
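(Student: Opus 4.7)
The plan is to stratify $X_{\Lsf_2\Lsf_2,\psi} \setminus U_{\Lsf_2\Lsf_2,\psi}$ by the set of vanishing coordinates and count each stratum separately. For a subset $S \subseteq \{0,1,2,3\}$, let $Z_S \subseteq \P^3$ be the locally closed stratum where $x_i = 0$ precisely for $i \in S$; then
\[ \#X_{\Lsf_2\Lsf_2,\psi}(\F_q) - \#U_{\Lsf_2\Lsf_2,\psi}(\F_q) = \sum_{\emptyset \neq S \subsetneq \{0,1,2,3\}} \#(X_{\Lsf_2\Lsf_2,\psi} \cap Z_S)(\F_q). \]
The four vertices ($|S|=3$) all lie on $X$ since each monomial of $F_\psi$ has at least two distinct variables, contributing $4$ points.

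First I would organize the $|S|=2$ strata into two types. The key observation is that each monomial of $F_\psi$ involves either both of $\{x_0,x_1\}$ or both of $\{x_2,x_3\}$; consequently $F_\psi$ vanishes identically on the four ``mixed'' planes $\{x_0=x_2=0\}$, $\{x_0=x_3=0\}$, $\{x_1=x_2=0\}$, $\{x_1=x_3=0\}$, each contributing a full $\P^1$ with two vertices removed, i.e.\ $q-1$ points, for a total of $4(q-1)$. On the two ``matched'' planes $\{x_0=x_1=0\}$ and $\{x_2=x_3=0\}$, the equation reduces to $x_ix_j(x_i^2+x_j^2)=0$, which in the torus of the $\P^1$ forces $x_i^2 = -x_j^2$. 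This has $2$ solutions if $q \equiv 1 \pmod 4$ and none if $q \equiv 3 \pmod 4$, so these two strata together contribute $4$ or $0$ points accordingly.

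Next I would handle the $|S|=1$ strata. By the evident symmetries $x_0 \leftrightarrow x_1$, $x_2 \leftrightarrow x_3$, and $(x_0,x_1)\leftrightarrow (x_2,x_3)$, the four cases $S=\{i\}$ contribute equally, so it suffices to treat $S=\{3\}$. Here $F_\psi$ restricts to $x_0x_1(x_0^2+x_1^2)=0$ in $(x_0:x_1:x_2)$ with $x_0x_1x_2 \neq 0$; requiring $x_0^2=-x_1^2$ gives $2(q-1)$ points when $q\equiv 1 \pmod 4$ and none when $q\equiv 3 \pmod 4$ (with $x_2 \in \F_q^\times$ free up to the $\P^1$-rescaling used to normalize $x_0$). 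So the four $|S|=1$ strata contribute $8(q-1)$ or $0$ depending on the parity.

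Finally I would add these up. In case $q\equiv 3\psmod 4$ the total is $4 + 4(q-1) + 0 + 0 = 4q$, and in case $q\equiv 1\psmod 4$ the total is $4 + 4(q-1) + 4 + 8(q-1) = 12q - 4$, matching the claim. The main (minor) obstacle is bookkeeping the strata correctly and checking that the two types of $2$-plane behave so differently; once the structural observation about the monomials is made, the rest is direct counting of $\P^1$ points and solutions to $x_i^2+x_j^2=0$.
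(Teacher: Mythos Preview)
Your proof is correct and follows essentially the same stratification-by-vanishing-coordinates approach as the paper. The only cosmetic difference is that the paper invokes Koblitz's formula (Theorem~\ref{Dels}) for the single-vanishing-coordinate strata, whereas you factor the restriction $x_0x_1(x_0^2+x_1^2)$ directly; both routes give the same $2(q-1)$ or $0$ count and the final tallies agree.
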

\begin{proof}
Suppose that $x_1=0$ and the rest are nonzero. Then, by using Theorem~\ref{Dels}, we can see that there are $(q-1)((-1)^{\qq\qm/2} +1)$ such points. Since there are four choices of one coordinate being zero, this counts $4(q-1)((-1)^{\qq\qm/2} +1)$ points.

Suppose now that $x_1=x_2=0$ and the rest nonzero, then by Theorem~\ref{Dels} again, we have $((-1)^{\qq\qm/2} +1)$ points. By symmetry, this is the same as the case where $x_3=x_4=0$ and the rest nonzero, so we now count $2((-1)^{\qq\qm/2} +1)$. 

Next, suppose $x_1=x_3=0$ and the rest nonzero. Automatically, the polynomial vanishes, hence there are $q-1$ such points. There are 4 such cases from choosing one of $x_1$ and $x_2$ and another from $x_3$ and $x_4$ to equal zero, hence we count $4(q-1)$ points. 
Finally, the four points where three coordinates are zero are all solutions, hence we count 4 more points. Thus
$$
\#X_{\Lsf_2\Lsf_2, \psi}(\F_q) - \#U_{\Lsf_2\Lsf_2, \psi}(\F_q) = 4(q-1)((-1)^{\qq\qm/2} +1) + 2((-1)^{\qq\qm/2} +1) + 4(q-1) + 4.
$$
If $q\equiv 3 \pmod 4$, then $(-1)^{\qq\qm/2} = -1$, so
$
\#X_{\Lsf_2\Lsf_2, \psi}(\F_q) - \#U_{\Lsf_2\Lsf_2, \psi}(\F_q) = 4q$.  
If $q\equiv 1 \pmod 4$, then $(-1)^{\qq\qm/2} = 1$, so 
$
\#X_{\Lsf_2\Lsf_2, \psi}(\F_q) - \#U_{\Lsf_2\Lsf_2, \psi}(\F_q) = 12q-4$.  
\end{proof}

\subsubsection*{Step 4: Combine Steps 2 and 3 to reach the conclusion}
\begin{proof}[Proof of Proposition~\ref{prop:L2L2}]
If $q \not\equiv 1 \pmod 4$ then, by Lemmas~\ref{open L2L2} and~\ref{oneCoordZero L2L2}, we have that 
\begin{equation}\begin{aligned}
\#X_{\Lsf_2\Lsf_2,\psi}(\F_q) &= (q^2 -3q+1 + H_q(\tfrac14, \tfrac12, \tfrac34; 0,0,0 \,|\, \psi^{-4}) - qH_q(\tfrac14, \tfrac{3}{4}; 0, \tfrac12 \,|\, \psi^{-4})) + 4q \\
	&= q^2 + q+1 + H_q(\tfrac14, \tfrac12, \tfrac34; 0,0,0 \,|\, \psi^{-4}) - qH_q(\tfrac14, \tfrac{3}{4}; 0, \tfrac12 \,|\, \psi^{-4}).
\end{aligned}\end{equation}
If $q \equiv 1 \pmod 4$ then, by Lemmas~\ref{open L2L2} and~\ref{oneCoordZero L2L2}, we have that 
\begin{align}
\#X_{\Lsf_2\Lsf_2,\psi}(\F_q) &= q^2-3q+5  + H_q(\tfrac14, \tfrac12, \tfrac34; 0,0,0 \,|\, \psi^{-4}) + qH_q(\tfrac14, \tfrac{3}{4}; 0, \tfrac12 \,|\, \psi^{-4}) \nonumber \\
	&\qquad + 2 (-1)^{\qq\qm/4} \omega(\psi)^{\qq\qm/2} q H_q(\tfrac{1}{8},\tfrac{3}{8},\tfrac{5}{8},\tfrac{7}{8};0,\tfrac{1}{4},\tfrac{1}{2},\tfrac{3}{4}\,|\,\psi^{-4}) + 12q-4 \nonumber \\
	&= q^2+9q+1  + H_q(\tfrac14, \tfrac12, \tfrac34; 0,0,0 \,|\, \psi^{-4}) + qH_q(\tfrac14, \tfrac{3}{4}; 0, \tfrac12 \,|\, \psi^{-4}) \\
	&\qquad + 2 (-1)^{\qq\qm/4} \omega(\psi)^{\qq\qm/2} q H_q(\tfrac{1}{8},\tfrac{3}{8},\tfrac{5}{8},\tfrac{7}{8};0,\tfrac{1}{4},\tfrac{1}{2},\tfrac{3}{4}\,|\,\psi^{-4}). \nonumber \qedhere
\end{align}
\end{proof}

\subsection{The pencil \texorpdfstring{$\Lsf_4$}{Lsf4}}

\begin{prop}\label{prop:L4-appendix}
The number of $\F_q$ points on $\Lsf_4$ for $q$ odd is given in terms of hypergeometric functions as follows.
\begin{enumalph}
\item If $q\not\equiv1\pmod5$, then
\[\#X_{\Lsf_4,\psi}(\F_q)=q^2+3q+1+H_q(\tfrac{1}{4},\tfrac{1}{2},\tfrac{3}{4};0,0,0\,|\,\psi^{-4}).\]
\item If $q\equiv 1 \pmod 5$, then
\[\#X_{\Lsf_4,\psi}(\F_q)=q^2+3q+1+H_q(\tfrac{1}{4},\tfrac{1}{2},\tfrac{3}{4};0,0,0\,|\,\psi^{-4})+4qH_q(\tfrac{1}{5},\tfrac{2}{5}, \tfrac{3}{5},\tfrac{4}{5} ;0, \tfrac{1}{4},\tfrac{1}{2}, \tfrac{3}{4}\,|\,\psi^{4}).\]
\end{enumalph}
\end{prop}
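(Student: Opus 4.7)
The plan is to follow the four-step template established for the pencils $\Fsf_4, \Fsf_1\Lsf_3, \Fsf_2\Lsf_2$, and $\Lsf_2\Lsf_2$, adapted to the cyclic symmetry of $\Lsf_4$.

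First, I would solve the system of congruences from Theorem \ref{Dels} given by the exponent matrix of $\Lsf_4$, namely
\[
\begin{pmatrix} 3 & 0 & 0 & 1 & 1 \\ 1 & 3 & 0 & 0 & 1 \\ 0 & 1 & 3 & 0 & 1 \\ 0 & 0 & 1 & 3 & 1 \\ 1 & 1 & 1 & 1 & 1 \end{pmatrix} \svec \equiv 0 \pmod{\qq}.
\]
Elimination via the constraint $w_5 = -\sum_{i=1}^4 w_i$ reduces this to a four-variable system whose coefficient matrix has determinant $\pm 5$. Consequently $\tyS$ depends on whether $5 \mid \qq$: when $q \not\equiv 1 \psmod 5$, only the diagonal cluster $\tyS_1 = \{k(1,1,1,1,-4) : k \in \Z/\qq\Z\}$ appears; when $q \equiv 1 \psmod 5$, four additional clusters of the form $\tyS_{1,j} = \{k(1,1,1,1,-4) + \tfrac{\qq}{5}(0,-j,j,-3j,3j) : k\}$ for $j=1,2,3,4$ arise, paralleling the $\mu_5$-symmetry of the pencil and matching the four character sectors in the decomposition from Proposition \ref{prop:L4H2result}.

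Second, I would carry out the toric point count via Theorem~\ref{Dels}. The cluster $\tyS_1$ contributes $q^2-3q+3 + H_q(\tfrac{1}{4},\tfrac{1}{2},\tfrac{3}{4};0,0,0\,|\,\psi^{-4})$ by direct application of Lemma~\ref{Cluster no shifts}. For the new clusters $\tyS_{1,j}$, I would prove a lemma analogous to Lemmas~\ref{q not 1 mod 4 shift 1 fourteenth} and \ref{shift by 3 fourteenths}: write the sum $\sum_{s \in \tyS_{1,j}} \omega(a)^{-s}c_s$ using the definition of $c_s$, isolate the four terms where one of $k, k-\tfrac{\qq}{5}j, k+\tfrac{\qq}{5}j, k-3\tfrac{\qq}{5}j$ vanishes, then use the Hasse--Davenport product relation (Lemma \ref{gausssumidentities}(c)) with $N=5$ to convert the product $g(k)g(k-\tfrac{\qq}{5}j)g(k+\tfrac{\qq}{5}j)g(k-3\tfrac{\qq}{5}j)$ into $g(5k) \cdot (\text{constants involving } g(\tfrac{j\qq}{5}))$. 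Reindexing $m = -k$ and comparing to Definition \ref{BCM HGF} for $\pmb{\alpha} = \{\tfrac{1}{5},\tfrac{2}{5},\tfrac{3}{5},\tfrac{4}{5}\}, \pmb{\beta} = \{0,\tfrac{1}{4},\tfrac{1}{2},\tfrac{3}{4}\}$ (which are defined over $\Q$ since $\prod_j(x^{p_j}-1)/\prod_j(x^{q_j}-1) = (x^5-1)/((x-1)(x^4-1))$) will identify each cluster contribution as $qH_q(\tfrac{1}{5},\tfrac{2}{5},\tfrac{3}{5},\tfrac{4}{5};0,\tfrac{1}{4},\tfrac{1}{2},\tfrac{3}{4}\,|\,\psi^4)$ plus a Gauss sum error term involving $g(j\qq\qm/5)$'s. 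Summing over $j=1,\dots,4$ and using Lemma~\ref{lem:fieldofdef}(a) to note that all four contributions give the same hypergeometric value yields $4qH_q(\tfrac{1}{5},\tfrac{2}{5},\tfrac{3}{5},\tfrac{4}{5};0,\tfrac{1}{4},\tfrac{1}{2},\tfrac{3}{4}\,|\,\psi^4)$ plus a combined error term symmetric in $j$.

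Third, I would count points on the coordinate boundary by stratification. When exactly one coordinate vanishes, say $x_0=0$, the equation reduces to $x_2(x_1^3+x_2^2 x_3)=0$, giving $q-1$ points in the relevant subtorus; cyclic symmetry yields $4(q-1)$ points with a unique zero coordinate. For exactly two zeros, adjacent pairs (e.g., $x_0=x_1=0$) force a third coordinate to vanish and contribute nothing, while the two opposite pairs $\{x_0,x_2\}$ and $\{x_1,x_3\}$ make every monomial vanish identically, contributing $2(q-1)$ points. The four coordinate axis points each lie on $X_{\Lsf_4,\psi}$, adding $4$. The total is $4(q-1)+2(q-1)+4 = 6q-2$, independent of $q \psmod 5$.

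Finally I would combine steps 2 and 3. For $q\not\equiv 1\psmod 5$ the combination yields $(q^2-3q+3+H_q)+(6q-2) = q^2+3q+1+H_q$, giving (a) directly. For $q\equiv 1\psmod 5$ the result follows provided the cumulative Gauss-sum error terms from the four new clusters cancel; this is the step I expect to be the main obstacle, since it requires a careful Hasse--Davenport-driven simplification of expressions like $\sum_{j=1}^4 g(\tfrac{j\qq}{5})\cdot(\cdot)$ to show they sum to zero (or are absorbed into the boundary count). By analogy with the corresponding cancellations in the $\Fsf_1\Lsf_3$ argument (where the $g(\tfrac{\qq}{7})$-terms cancelled between the toric and boundary contributions), I expect the symmetry of the four $\mu_5$-orbits to force the cancellation, leaving precisely the stated formula.
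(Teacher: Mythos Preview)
Your four-step outline, the clustering, and the boundary count in Step~3 (yielding $6q-2$) all match the paper. The divergence is entirely in Step~2, and there your plan is more complicated than necessary and, as written, does not quite work.

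With your shift representative $\tfrac{\qq}{5}(0,-j,j,-3j,3j)$, the first four Gauss sums are
\[
g(k)\,g\!\bigl(k-\tfrac{j\qq}{5}\bigr)\,g\!\bigl(k+\tfrac{j\qq}{5}\bigr)\,g\!\bigl(k-\tfrac{3j\qq}{5}\bigr),
\]
and for each $j$ these four shifts hit only four of the five residues $\{0,1,2,3,4\}\cdot\tfrac{\qq}{5}$. The Hasse--Davenport relation with $N=5$ requires all five, so you cannot collapse this product into $g(5k)$ as stated; the ``missing'' fifth shift sits in the $s_5$-slot as $g(-4k+\tfrac{3j\qq}{5})$, which has the wrong index to complete the product. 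This is the source of the error terms you anticipate, but they are an artifact of the parametrization rather than a genuine feature.

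The paper avoids all of this by choosing the shift representative so that the fifth coordinate is $0$: up to permutation, the cluster is $\{k(1,1,1,1,-4)+\tfrac{\qq}{5}(1,2,4,3,0)\}$. Then the four Gauss sums from $s_1,\dots,s_4$ are exactly
\[
g\!\bigl(k+\tfrac{\qq}{5}\bigr)\,g\!\bigl(k+\tfrac{2\qq}{5}\bigr)\,g\!\bigl(k+\tfrac{3\qq}{5}\bigr)\,g\!\bigl(k+\tfrac{4\qq}{5}\bigr),
\]
and $s_5=-4k$ contributes $g(-4k)$. Dividing and multiplying by
\[
g\!\bigl(\tfrac{\qq}{5}\bigr)\,g\!\bigl(\tfrac{2\qq}{5}\bigr)\,g\!\bigl(\tfrac{3\qq}{5}\bigr)\,g\!\bigl(\tfrac{4\qq}{5}\bigr)=q^2
\]
(from Lemma~\ref{gausssumidentities}(b), pairing $g(m)g(-m)=(-1)^m q$ and using $10\mid\qq$) matches the hybrid Definition~\ref{Hybrid Definition} for $H_q(\tfrac15,\tfrac25,\tfrac35,\tfrac45;0,\tfrac14,\tfrac12,\tfrac34\,|\,\psi^4)$ on the nose, with no Hasse--Davenport step and no residual Gauss-sum terms. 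So there is nothing to cancel against the boundary; your analogy with $\Fsf_1\Lsf_3$ overcomplicates what is in fact the cleanest of the five cases.
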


\begin{rmk} As before, we can identify the parameters of the hypergeometric function $$H_q(\tfrac{1}{5},\tfrac{2}{5}, \tfrac{3}{5},\tfrac{4}{5} ;0, \tfrac{1}{4},\tfrac{1}{2}, \tfrac{3}{4}\,|\,\psi^{4})$$ with the parameters of the second Picard--Fuchs equation in Proposition~\ref{prop:L4H2result}. If we use Theorem 3.4 of \cite{BCM} again to shift parameters,  then we see that in fact all of the Picard--Fuchs equations satisfied by the non-holomorphic periods correspond to this same hypergeometric motive over $\Q$.

Also notice that in the discussion following Proposition~\ref{prop:L4H2result}, we see two periods that are ``missed" by the Griffiths--Dwork method, and here they clearly correspond to the two additional trivial factors coming from the $3q$ term in the point count.  
\end{rmk}

\subsubsection*{Step 1: Computing and clustering the characters}

Again, we compute the solutions to the system of congruences given by Theorem \ref{Dels}. We obtain

\begin{enumalph}
\item  If $q\not\equiv 1\pmod5$, the solution set is 
\begin{enumerate}[(i)]
\item the set $\tyS_1 = \{ k(1,1,1,1,-4) : k \in \Z / \qq\Z\}$.

\end{enumerate}
\item If $q\equiv 1\pmod5$, then clusters of solutions are
\begin{enumerate}[(i)]
\item the set $\tyS_1 = \{ k(1,1,1,1,-4) : k \in \Z / \qq\Z\}$ and
\item four sets of the form $\tyS_{13} = \{ k(1,1,1,1,-4) + \frac{\qq}{5}(1,2,4,3,0) : k \in \Z / \qq\Z\}$.
\end{enumerate}
\end{enumalph}

\subsubsection*{Step 2: Counting points on the open subset with nonzero coordinates}

\begin{lem}\label{openL4} Suppose $\psi \in \F_q^\times$. For $q$ odd, we have:
\begin{enumalph}
\item  If $q\not\equiv 1\pmod5$, then
\begin{equation}
\#U_{\Lsf_4, \psi}(\F_q) = q^2-3q+3  + H_q(\tfrac14, \tfrac12, \tfrac34; 0,0,0 \,|\, \psi^{-4}).
\end{equation}
\item If $q\equiv 1\pmod5$, then 
\begin{equation}
\#U_{\Lsf_4, \psi}(\F_q) = q^2-3q+3  + H_q(\tfrac14, \tfrac12, \tfrac34; 0,0,0 \,|\, \psi^{-4}) + 4q H_q(\tfrac{1}{5},\tfrac{2}{5}, \tfrac{3}{5},\tfrac{4}{5} ;0, \tfrac{1}{4},\tfrac{1}{2}, \tfrac{3}{4}\,|\,\psi^{4}).
\end{equation}
\end{enumalph}
\end{lem}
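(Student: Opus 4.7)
The plan is to mirror the strategy used in the previous sections for the other four pencils, namely, to apply Theorem~\ref{Dels} to each cluster of characters and then match the resulting Gauss-sum expressions against the finite-field hypergeometric sums via Hasse--Davenport.

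For part (a) with $q \not\equiv 1 \pmod{5}$, only the cluster $\tyS_1 = \{k(1,1,1,1,-4) : k \in \Z/\qq\Z\}$ appears. Since the pencil $\Lsf_4$ carries the monomial $x_0x_1x_2x_3$ with coefficient $-4\psi$ (as in the Dwork pencil) and all four remaining monomials have coefficient~$1$, the character factor $\omega(a)^{-s}$ reduces to $\omega(-4\psi)^{4k}$ exactly as in the Dwork setting. Consequently the sum $\sum_{s \in \tyS_1}\omega(a)^{-s}c_s$ is formally identical to the one computed in Lemma~\ref{Cluster no shifts}, and part~(a) follows at once.

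For part (b) with $q \equiv 1 \pmod{5}$, the same $\tyS_1$-contribution reappears, and the remaining contribution comes from the four clusters of the form $\tyS_{13}$. The four nontrivial shifts are the $(\Z/5\Z)^\times$-multiples of $\tfrac{\qq}{5}(1,2,4,3,0)$, so all four produce the same product of four Gauss sums $g(k+\tfrac{j\qq}{5})$ (merely in a different order) and the same character factor $\omega(-4\psi)^{4k}$; hence all four clusters yield equal sums. It therefore suffices to establish the single identity
\begin{equation*}
\sum_{s \in \tyS_{13}}\omega(a)^{-s}c_s \;=\; qH_q\bigl(\tfrac{1}{5},\tfrac{2}{5},\tfrac{3}{5},\tfrac{4}{5};\,0,\tfrac{1}{4},\tfrac{1}{2},\tfrac{3}{4}\,\big|\,\psi^{4}\bigr),
\end{equation*}
since the four copies then contribute $4qH_q$, establishing the claim.

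The proof of this last identity proceeds in the spirit of Lemma~\ref{q not 1 mod 4 shift 1 fourteenth}. Apply the Hasse--Davenport product relation (Lemma~\ref{gausssumidentities}(c)) with $N=5$ and $m=k$ to collapse $g(k+\tfrac{\qq}{5})g(k+\tfrac{2\qq}{5})g(k+\tfrac{3\qq}{5})g(k+\tfrac{4\qq}{5})$ into $\omega(5)^{-5k}g(5k)g(\tfrac{\qq}{5})g(\tfrac{2\qq}{5})g(\tfrac{3\qq}{5})g(\tfrac{4\qq}{5})/g(k)$. Since $q$ is odd and $q\equiv 1\pmod 5$ forces $\qq/5$ to be even, Lemma~\ref{gausssumidentities}(b) yields $g(\tfrac{\qq}{5})g(\tfrac{4\qq}{5}) = g(\tfrac{2\qq}{5})g(\tfrac{3\qq}{5}) = q$, so the product of four Gauss sums of fifth-root-of-unity arguments equals $q^{2}$. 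Combined with the character collapse $\omega(-4\psi)^{4k}\omega(5)^{-5k} = \omega(256\psi^{4}/3125)^{k}$, the sum becomes $\tfrac{q}{\qq}\sum_k \omega(256\psi^{4}/3125)^{k}\,g(5k)g(-4k)/g(k)$. Isolating $k=0$ (which contributes $-q/\qq$) and, for $k\neq 0$, converting $1/g(k)$ via $g(k)g(-k)=(-1)^k q$ into $\omega(-1)^{k}g(-k)/q$ absorbs the sign into the character, giving $\omega(-256\psi^{4}/3125)^{k}$; comparison with Definition~\ref{BCM HGF} for the parameters $\pmb{\alpha},\pmb{\beta}=\{\tfrac{1}{5},\tfrac{2}{5},\tfrac{3}{5},\tfrac{4}{5}\},\{0,\tfrac{1}{4},\tfrac{1}{2},\tfrac{3}{4}\}$ (for which $p_{1}=5$, $q_{1}=1$, $q_{2}=4$, $M = 3125/256$, $\epsilon = -1$, and $s(0)=1$) produces exactly $qH_q$ term by term. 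The main obstacle is the careful bookkeeping with Hasse--Davenport and the reciprocity $g(m)g(-m)=(-1)^m q$: one must verify that $\qq/5$ is even so that the two paired Gauss sums simplify to $+q$ rather than $-q$, and track the sign $(-1)^{k}=\omega(-1)^{k}$ that flips $256/3125$ into $-256/3125=\epsilon M^{-1}$ to align with BCM; once this is in place, the identification is immediate and, unlike in the Klein--Mukai case, no spurious correction terms appear because the parameter set $\pmb{\alpha},\pmb{\beta}$ is already defined over $\Q$.
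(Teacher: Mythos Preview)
Your proof is correct and follows the same overall structure as the paper: part (a) is an immediate application of Lemma~\ref{Cluster no shifts}, and part (b) adds the four equal contributions from the clusters of type $\tyS_{13}$. The only difference lies in how you evaluate $\sum_{s\in\tyS_{13}}\omega(a)^{-s}c_s$. The paper (in Lemma~\ref{Shifts by fifths}) divides and multiplies by $\prod_{j=1}^{4}g(j\qq/5)=q^{2}$ and then recognizes the resulting expression directly as the hybrid form of $qH_q$, invoking Definition~\ref{Hybrid Definition}. You instead apply Hasse--Davenport with $N=5$ explicitly, convert $1/g(k)$ to $(-1)^{k}g(-k)/q$, and match term-by-term against the BCM expression (Definition~\ref{BCM HGF}). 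These are the same computation at different levels of abstraction: the equivalence of the hybrid and BCM definitions (Proposition~\ref{prop:qalbet}) is itself proved via Hasse--Davenport in Lemma~\ref{lem:appendixhybridagree}, so you are simply unpacking what the paper packages as a single citation. Your route has the mild advantage of staying entirely within the standard BCM definition and making the parity check $2\mid\qq/5$ (hence $g(j\qq/5)g((5-j)\qq/5)=+q$) explicit; the paper's route is shorter once the hybrid machinery is accepted.
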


\begin{proof}
When $q \not\equiv 1 \pmod 5$, we know that there is only one cluster of characters, $\tyS_1$. By Lemma~\ref{Cluster no shifts}, we know that 
\begin{equation}
\#U_{\Lsf_4, \psi}(\F_q) =  \sum_{s \in \tyS_1} \omega(a)^{-s} c_s=  q^2-3q+3  + H_q(\tfrac14, \tfrac12, \tfrac34; 0,0,0 \,|\, \psi^{-4}).
\end{equation}
When $q\equiv 1 \pmod 5$ we have two types of clusters of characters. By Lemmas~\ref{Cluster no shifts} and~\ref{Shifts by fifths},
\begin{align}
\#U_{\Lsf_4, \psi}(\F_q) &= \sum_{s \in \tyS_1} \omega(a)^{-s} c_s + 4 \sum_{s \in \tyS_{13}} \omega(a)^{-s} c_s \nonumber \\
	&= q^2-3q+3  + H_q(\tfrac14, \tfrac12, \tfrac34; 0,0,0 \,|\, \psi^{-4}) \\
	&\qquad + 4q H_q(\tfrac{1}{5},\tfrac{2}{5}, \tfrac{3}{5},\tfrac{4}{5} ;0, \tfrac{1}{4},\tfrac{1}{2}, \tfrac{3}{4}\,|\,\psi^{4}). \nonumber \qedhere
\end{align}
\end{proof}

We now just need a hypergeometric way to write the point count associated to the cluster $\tyS_{13}$.

\begin{lem}\label{Shifts by fifths}
If $q \equiv 1 \pmod 5$ and $q$ is odd then 
\begin{equation}
\sum_{s\in \tyS_{13}} \omega(a)^{-s} c_s = qH_q(\tfrac{1}{5},\tfrac{2}{5}, \tfrac{3}{5},\tfrac{4}{5} ;0, \tfrac{1}{4},\tfrac{1}{2}, \tfrac{3}{4}\,|\,\psi^{4}) 
\end{equation}
\end{lem}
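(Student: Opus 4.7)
The plan is to expand the hypergeometric sum $H_q(\tfrac{1}{5},\tfrac{2}{5}, \tfrac{3}{5},\tfrac{4}{5} ;0, \tfrac{1}{4},\tfrac{1}{2}, \tfrac{3}{4}\,|\,\psi^{4})$ via Definition \ref{BCM HGF} and massage it using the Hasse--Davenport product relation until its summand is identical to the summand of $\frac{1}{q\qq}\sum_{k=0}^{q-2}\omega(-4\psi)^{4k}\,g(k+\tfrac{\qq}{5})g(k+\tfrac{2\qq}{5})g(k+\tfrac{3\qq}{5})g(k+\tfrac{4\qq}{5})g(-4k)$, which is the expansion of $\sum_{s\in\tyS_{13}}\omega(a)^{-s}c_s$.

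Applying Definition \ref{BCM HGF}, one computes $\pmb{p}=(5)$, $\pmb{q}=(1,4)$, $D(x)=x-1$ (so $s(0)=1$ and $s(m)=0$ otherwise), $M = 5^{5}/4^{4}$, and $\epsilon=-1$, yielding
\[
H_q = -\frac{1}{\qq} + \frac{1}{q\qq}\sum_{m=1}^{q-2} g(5m)\,g(-m)\,g(-4m)\,\omega(-M^{-1}\psi^4)^m.
\]
Since $q\equiv 1\pmod 5$, Lemma \ref{gausssumidentities}(c) with $N=5$ expresses $g(5m)$ as a product over $g(m+j\qq/5)$ divided by $g(j\qq/5)$, up to a factor $\omega(5)^{5m}$. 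Combining this with $g(m)g(-m)=(-1)^m q$ (valid for $m\neq 0$) produces
\[
g(5m)g(-m) = \omega(5)^{5m}(-1)^m q\cdot \frac{g(m+\qq/5)g(m+2\qq/5)g(m+3\qq/5)g(m+4\qq/5)}{g(\qq/5)g(2\qq/5)g(3\qq/5)g(4\qq/5)}.
\]
After substitution, the character factor reduces via $\omega(5)^{5m}\omega(-M^{-1})^m = \omega(-4^{4})^m$ and the identity $\omega(-1)=-1$ (so $(-1)^m\omega(-1)^m=1$); the result is $\omega(4\psi)^{4m}=\omega(-4\psi)^{4m}$.

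The crucial numerical input is then the evaluation
\[
g(\tfrac{\qq}{5})g(\tfrac{2\qq}{5})g(\tfrac{3\qq}{5})g(\tfrac{4\qq}{5}) = q^2,
\]
which follows from pairing $j\qq/5$ with $(5-j)\qq/5$ and applying Lemma \ref{gausssumidentities}(b): the resulting sign $(-1)^{\qq/5+2\qq/5}=(-1)^{3\qq/5}$ is $+1$ because $q$ odd and $q\equiv 1\pmod 5$ force $10\mid\qq$, hence $\qq/5$ even. With this simplification, multiplying $H_q$ by $q$ yields
\[
qH_q = -\frac{q}{\qq} + \frac{1}{q\qq}\sum_{m=1}^{q-2} g(m+\tfrac{\qq}{5})g(m+\tfrac{2\qq}{5})g(m+\tfrac{3\qq}{5})g(m+\tfrac{4\qq}{5})g(-4m)\,\omega(-4\psi)^{4m}.
\]
To close the argument, one checks that reintroducing the $m=0$ term of this sum exactly absorbs the $-q/\qq$, giving a single sum over all $m\in\Z/\qq\Z$ which coincides with $\sum_{s\in\tyS_{13}}\omega(a)^{-s}c_s$.

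The main obstacle is bookkeeping: carefully tracking the signs arising from $(-1)^m$, $\omega(-1)^m$, and the Hasse--Davenport prefactor $\omega(5)^{5m}$, together with verifying that $(-1)^{3\qq/5}=1$ under the standing hypotheses so that $g(\qq/5)g(2\qq/5)g(3\qq/5)g(4\qq/5)$ equals $q^2$ on the nose. All other manipulations follow the pattern already established in Lemmas \ref{q not 1 mod 4 shift 1 fourteenth} and \ref{Cluster quarter eighth 5eighths shifts}.
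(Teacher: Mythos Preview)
Your argument is correct. You start from Definition \ref{BCM HGF} and unwind $g(5m)$ via Hasse--Davenport, track the character carefully, invoke the evaluation $g(\tfrac{\qq}{5})g(\tfrac{2\qq}{5})g(\tfrac{3\qq}{5})g(\tfrac{4\qq}{5})=q^{2}$, and then check that reinserting $m=0$ cancels the $-q/\qq$ constant. Each step is sound, including the sign analysis (the point that $q\equiv 1\pmod{10}$ makes $\qq/5$ even is exactly what is needed).

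The paper proceeds differently and more briefly. It works in the opposite direction: starting from the point-count side, it uses the same identity $\prod_{j=1}^{4}g(j\qq/5)=q^{2}$ to replace the $1/q$ by $q/\prod_j g(j\qq/5)$, and then simply observes that the resulting expression
\[
\frac{q}{\qq}\sum_{k=0}^{q-2}\omega(4^{4}\psi^{4})^{k}\,\frac{\prod_{j=1}^{4}g(k+j\qq/5)}{\prod_{j=1}^{4}g(j\qq/5)}\,g(-4k)
\]
is, on the nose, $qH_q(\tfrac15,\tfrac25,\tfrac35,\tfrac45;0,\tfrac14,\tfrac12,\tfrac34\,|\,\psi^{4})$ read off from the \emph{hybrid} Definition~\ref{Hybrid Definition} (taking $\pmb{\alpha}'=\pmb{\alpha}$, $\pmb{\beta}_0=\pmb{\beta}$, so $\pmb{q}=(4)$, $D=1$, $M=4^{-4}$, and there is no term-splitting at all). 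The paper remarks that although the parameters are defined over~$\Q$, the hybrid form gets there faster; indeed your route is effectively redoing, for this particular instance, the comparison between Definitions~\ref{BCM HGF} and~\ref{Hybrid Definition} already established in Proposition~\ref{prop:qalbet} and Lemma~\ref{lem:appendixhybridagree}. Your approach trades brevity for transparency: every sign is accounted for by hand, whereas the paper's recognition step hides the Hasse--Davenport application inside the equivalence of the two definitions.
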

\begin{proof}
By using the hybrid hypergeometric definition, this equality is found quickly:
\begin{equation}\begin{aligned}
\sum_{s\in \tyS_{13}} \omega(a)^{-s} c_s &=  \frac{1}{q\qq} \sum_{k=0}^{q-2} \omega(-4^4\psi^4)g(k+\tfrac{\qq}{5})g(k+\tfrac{2\qq}{5})g(k+\tfrac{3\qq}{5})g(k + \tfrac{4\qq}{5})g(-4k) \\
	&= \frac{1}{q\qq} \sum_{k=0}^{q-2} \omega(4^4\psi^4)g(k+\tfrac{\qq}{5})g(k+\tfrac{2\qq}{5})g(k+\tfrac{3\qq}{5})g(k + \tfrac{4\qq}{5})g(-4k) \\
	&= \frac{q}{\qq} \sum_{k=0}^{q-2} \omega(4^4\psi^4)\frac{g(k+\tfrac{\qq}{5})g(k+\tfrac{2\qq}{5})g(k+\tfrac{3\qq}{5})g(k + \tfrac{4\qq}{5})}{q^2}g(-4k)  \\
	&= \frac{q}{\qq} \sum_{k=0}^{q-2} \omega(4^4\psi^4) \frac{g(k+\tfrac{\qq}{5})g(k+\tfrac{2\qq}{5})g(k+\tfrac{3\qq}{5})g(k + \tfrac{4\qq}{5})}{g(\tfrac{\qq}{5})g(\tfrac{2\qq}{5})g(\tfrac{3\qq}{5})g( \tfrac{4\qq}{5})} g(-4k) \\
	&= qH_q(\tfrac{1}{5},\tfrac{2}{5}, \tfrac{3}{5},\tfrac{4}{5} ;0, \tfrac{1}{4},\tfrac{1}{2}, \tfrac{3}{4}\,|\,\psi^{4}) 
\end{aligned}\end{equation}
The last line uses the hybrid definition (Definition~\ref{Hybrid Definition}) of the hypergeometric function $$H_q(\tfrac{1}{5},\tfrac{2}{5}, \tfrac{3}{5},\tfrac{4}{5} ;0, \tfrac{1}{4},\tfrac{1}{2}, \tfrac{3}{4}\,|\,\psi^{4}).$$ Even though the hypergeometric function is defined over $\Q$, we can get to the relation much more quickly using the hybrid definition.
\end{proof}

\subsubsection*{Step 3: Count points when at least one coordinate is zero.}

\begin{lem}\label{anyCoordZero L4}
If $q$ is odd and not 7, then 
$$
\#X_{\Lsf_4,\psi}(\F_q) - \#U_{\Lsf_4,\psi}(\F_q) = 6q-2.
$$
\end{lem}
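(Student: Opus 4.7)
The plan is to enumerate the solutions with at least one vanishing coordinate directly, stratifying by the set of zero coordinates. The key observation is that whenever any $x_i = 0$, the deformation monomial $4\psi x_0 x_1 x_2 x_3$ automatically vanishes, so $\psi$ plays no role here and we are counting $\F_q$-points of the ``loop zero" hypersurface
\[
F_0 \colonequals x_0^3 x_1 + x_1^3 x_2 + x_2^3 x_3 + x_3^3 x_0 = 0
\]
on the three coordinate strata. The cyclic symmetry $x_i \mapsto x_{i+1}$ of $F_0$ will let me collapse most of the bookkeeping.

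First I would handle the \emph{one-coordinate-zero} stratum. Setting $x_0 = 0$ (and $x_1 x_2 x_3 \neq 0$) reduces $F_0$ to $x_2(x_1^3 + x_2^2 x_3) = 0$, so in affine charts one sees that $x_3$ is determined by $x_1, x_2$ as $x_3 = -x_1^3 / x_2^2$; normalizing $x_2=1$ gives $q-1$ projective points (all automatically lying in the torus $x_1 x_2 x_3 \neq 0$). By the four-fold cyclic symmetry of $F_0$, each of the four possible vanishing coordinates contributes $q-1$, for a total of $4(q-1)$.

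Next I would treat the \emph{two-coordinates-zero} stratum, splitting the six pairs according to whether they are adjacent or opposite in the cyclic ordering $(0,1,2,3)$. The four adjacent pairs (such as $\{x_0,x_1\}$) leave a single surviving monomial (e.g.\ $x_2^3 x_3$) which does not vanish on the open stratum, so they contribute $0$. The two opposite pairs $\{x_0,x_2\}$ and $\{x_1,x_3\}$ make every monomial of $F_0$ vanish identically; each opposite pair then contributes the $q-1$ points of the $\PP^1$ in the open stratum, totaling $2(q-1)$.

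Finally I would count the \emph{three-coordinates-zero} stratum: all four coordinate vertices lie on $V(F_0)$, giving $4$. Summing,
\[
\#X_{\Lsf_4,\psi}(\F_q)-\#U_{\Lsf_4,\psi}(\F_q)=4(q-1)+2(q-1)+4=6q-2.
\]
I do not anticipate a serious obstacle; the only thing requiring care is the cyclic-versus-opposite bookkeeping in the two-coordinate stratum, where the asymmetry between the two kinds of pairs must be respected.
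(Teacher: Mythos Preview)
Your proposal is correct and follows essentially the same approach as the paper: stratify by the set of vanishing coordinates, use the cyclic symmetry to reduce to one representative in each orbit, and distinguish adjacent (consecutive) pairs from opposite (non-consecutive) pairs in the two-zero case. The paper's argument is identical in substance, differing only in terminology and in folding the three-zero count into the consecutive-pair discussion.
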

\begin{proof}
First, we count the number of rational points when exactly variable equals zero. Without loss of generality, assume $x_1=0$. Then we want solutions of 
$$
x_2^3 x_3 + x_3^3 x_4 = 0
$$
which we can solve for $x_4$. Since $x_4$ is completely determined by  $x_2$ and $x_3$, we can normalize $x_2=1$ and see there are exactly $q-1$ solutions when only $x_1$ is zero. By symmetry, this shows that there are $4q-4$ solutions when exactly one variable equals zero. If two consecutive variables are zero (say $x_1=x_2=0$) then we then want solutions of the form $x_3^3x_4 = 0$ which implies that a third variable equals zero. Thus there are 4 solutions with 3 variables equaling zero and no solutions when exactly two variables equal zero  and those variables are consecutive. Lastly, if two non-consecutive variables are zero then any other solution works. For any pair of non-consecutive variables (of which there are two), we then have $q-1$ solutions. Therefore 
$$
\#X_{\Lsf_4,\psi}(\F_q) - \#U_{\Lsf_4,\psi}(\F_q) = 4q-4 + 4 + 2(q-1) = 6q-2.
$$
\end{proof}
\subsubsection*{Step 4: Combine Steps 2 and 3 to find conclusion}

We now prove Proposition~\ref{prop:L4}.

\begin{proof}[Proof of Proposition~\ref{prop:L4}]
Combining Lemmas~\ref{openL4} and~\ref{anyCoordZero L4}, we have
\begin{enumalph}
\item  If $q\not\equiv 1\pmod5$, then
\begin{equation}\begin{aligned}
\#X_{\Lsf_4, \psi}(\F_q) &= (q^2-3q+3  + H_q(\tfrac14, \tfrac12, \tfrac34; 0,0,0 \,|\, \psi^{-4})) + (6q-2) \\
	&= q^2+3q+1  + H_q(\tfrac14, \tfrac12, \tfrac34; 0,0,0 \,|\, \psi^{-4}).
\end{aligned}\end{equation}
\item If $q\equiv 1\pmod5$, then 
\begin{equation}\begin{aligned}
\#X_{\Lsf_4, \psi}(\F_q) &= (q^2-3q+3  + H_q(\tfrac14, \tfrac12, \tfrac34; 0,0,0 \,|\, \psi^{-4}) + 4q H_q(\tfrac{1}{5},\tfrac{2}{5}, \tfrac{3}{5},\tfrac{4}{5} ;0, \tfrac{1}{4},\tfrac{1}{2}, \tfrac{3}{4}\,|\,\psi^{4})) + (6q-2) \\
	&= q^2+3q+1  + H_q(\tfrac14, \tfrac12, \tfrac34; 0,0,0 \,|\, \psi^{-4}) + 4q H_q(\tfrac{1}{5},\tfrac{2}{5}, \tfrac{3}{5},\tfrac{4}{5} ;0, \tfrac{1}{4},\tfrac{1}{2}, \tfrac{3}{4}\,|\,\psi^{4}).
\end{aligned}\end{equation}
\end{enumalph}
This completes the proof.
\end{proof}

\end{document}